\DeclareMathAlphabet{\mathbbm}{U}{bbm}{m}{n}% from bbm.sty
\global\let\figforTeXisloaded=\relax\fi
\def\ctr@ln@m#1{\ifx#1\undefined\else%
    \immediate\write16{*** Fig4TeX WARNING : \string#1 already defined.}\fi}
\def\ctr@ld@f#1#2{\ctr@ln@m#2#1#2}
\def\ctr@ln@w#1#2{\ctr@ln@m#2\csname#1\endcsname#2}
{\catcode`\/=0 \catcode`/\=12 /ctr@ld@f/gdef/BS@{\}}
\ctr@ld@f\def\ctr@lcsn@m#1{\expandafter\ifx\csname#1\endcsname\relax\else%
    \immediate\write16{*** Fig4TeX WARNING : \BS@\expandafter\string#1\space already defined.}\fi}
\ctr@ld@f\edef\colonc@tcode{\the\catcode`\:}
\ctr@ld@f\edef\semicolonc@tcode{\the\catcode`\;}
\ctr@ld@f\def\t@stc@tcodech@nge{{\let\c@tcodech@nged=\z@%
    \ifnum\colonc@tcode=\the\catcode`\:\else\let\c@tcodech@nged=\@ne\fi%
    \ifnum\semicolonc@tcode=\the\catcode`\;\else\let\c@tcodech@nged=\@ne\fi%
    \ifx\c@tcodech@nged\@ne%
    \immediate\write16{}
    \immediate\write16{!!!=============================================================!!!}
    \immediate\write16{ Fig4TeX WARNING:}
    \immediate\write16{ The category code of some characters has been changed, which will}
    \immediate\write16{ result in an error (message "Runaway argument?").}
    \immediate\write16{ This probably comes from another package that changed the category}
    \immediate\write16{ code after Fig4TeX was loaded. If that proves to be exact, the}
    \immediate\write16{ solution is to exchange the loading commands on top of your file}
    \immediate\write16{ so that Fig4TeX is loaded last. For example, in LaTeX, we should}
    \immediate\write16{ say :}
    \immediate\write16{\BS@ usepackage[french]{babel}}
    \immediate\write16{\BS@ usepackage{fig4tex}}
    \immediate\write16{!!!=============================================================!!!}
    \immediate\write16{}
    \fi}}
\ctr@ld@f\def\FigforTeX{F\kern-.05em i\kern-.05em g\kern-.1em\raise-.14em\hbox{4}\kern-.19em\TeX}
\ctr@ld@f\def\W@rnmesoldA#1{\W@rnmesold}
\ctr@ld@f\def\W@rnmesoldAB#1(#2){\W@rnmesold}
\ctr@ld@f\def\W@rnmesold{%
    \immediate\write16{}
    \immediate\write16{!!!=============================================================!!!}
    \immediate\write16{ Fig4TeX WARNING:}
    \immediate\write16{ The file to be compiled is not compatible with the current version}
    \immediate\write16{ of Fig4TeX. To fix that, upgrade the source file (mainly change \BS@ ps*}
    \immediate\write16{ macros by \BS@ fig* macros), or use fig4tex184.tex instead (\BS@ input fig4tex184}
    \immediate\write16{ or \BS@ usepackage{fig4tex184}).}
    \immediate\write16{!!!=============================================================!!!}
    \immediate\write16{}}
\ctr@ln@m\psbeginfig\let\psbeginfig\W@rnmesoldA
\ctr@ln@m\psset\let\psset\W@rnmesoldAB
\ctr@ln@m\pssetdefault\let\pssetdefault\W@rnmesoldAB
\ctr@ln@m\pssetupdate\let\pssetupdate\W@rnmesoldA
\ctr@ln@w{newdimen}\epsil@n\epsil@n=0.00005pt
\ctr@ln@w{newdimen}\Cepsil@n\Cepsil@n=0.005pt
\ctr@ln@w{newdimen}\dcq@\dcq@=254pt
\ctr@ln@w{newdimen}\PI@\PI@=3.141592pt
\ctr@ln@w{newdimen}\DemiPI@deg\DemiPI@deg=90pt
\ctr@ln@w{newdimen}\PI@deg\PI@deg=180pt
\ctr@ln@w{newdimen}\DePI@deg\DePI@deg=360pt
\ctr@ld@f\chardef\t@n=10
\ctr@ld@f\chardef\c@nt=100
\ctr@ld@f\chardef\@lxxiv=74
\ctr@ld@f\chardef\@xci=91
\ctr@ld@f\mathchardef\@nMnCQn=9949
\ctr@ld@f\chardef\@vi=6
\ctr@ld@f\chardef\@xxx=30
\ctr@ld@f\chardef\@lvi=56
\ctr@ld@f\chardef\@@lxxi=71
\ctr@ld@f\chardef\@lxxxv=85
\ctr@ld@f\mathchardef\@@mmmmlxviii=4068
\ctr@ld@f\mathchardef\@ccclx=360
\ctr@ld@f\mathchardef\@dccxx=720
\ctr@ln@w{newcount}\p@rtent \ctr@ln@w{newcount}\f@ctech \ctr@ln@w{newcount}\result@tent
\ctr@ln@w{newdimen}\v@lmin \ctr@ln@w{newdimen}\v@lmax \ctr@ln@w{newdimen}\v@leur
\ctr@ln@w{newdimen}\result@t\ctr@ln@w{newdimen}\result@@t
\ctr@ln@w{newdimen}\mili@u \ctr@ln@w{newdimen}\c@rre \ctr@ln@w{newdimen}\delt@
\ctr@ld@f\def\degT@rd{0.017453 }  % pi/180
\ctr@ld@f\def\rdT@deg{57.295779 } % 180/pi
\ctr@ln@m\v@leurseule
{\catcode`p=12 \catcode`t=12 \gdef\v@leurseule#1pt{#1}}
\ctr@ld@f\def\repdecn@mb#1{\expandafter\v@leurseule\the#1\space}
\ctr@ld@f\def\arct@n#1(#2,#3){{\v@lmin=#2\v@lmax=#3%
    \maxim@m{\mili@u}{-\v@lmin}{\v@lmin}\maxim@m{\c@rre}{-\v@lmax}{\v@lmax}%
    \delt@=\mili@u\m@ech\mili@u%
    \ifdim\c@rre>\@nMnCQn\mili@u\divide\v@lmax\tw@\c@lATAN\v@leur(\z@,\v@lmax)% DY > 9949 DX
    \else%
    \maxim@m{\mili@u}{-\v@lmin}{\v@lmin}\maxim@m{\c@rre}{-\v@lmax}{\v@lmax}%
    \m@ech\c@rre%
    \ifdim\mili@u>\@nMnCQn\c@rre\divide\v@lmin\tw@% DX > 9949 DY
    \maxim@m{\mili@u}{-\v@lmin}{\v@lmin}\c@lATAN\v@leur(\mili@u,\z@)%
    \else\c@lATAN\v@leur(\delt@,\v@lmax)\fi\fi%
    \ifdim\v@lmin<\z@\v@leur=-\v@leur\ifdim\v@lmax<\z@\advance\v@leur-\PI@%
    \else\advance\v@leur\PI@\fi\fi%
    \global\result@t=\v@leur}#1=\result@t}
\ctr@ld@f\def\m@ech#1{\ifdim#1>1.646pt\divide\mili@u\t@n\divide\c@rre\t@n\m@ech#1\fi}
\ctr@ld@f\def\c@lATAN#1(#2,#3){{\v@lmin=#2\v@lmax=#3\v@leur=\z@\delt@=\tw@ pt%
    \un@iter{0.785398}{\v@lmax<}%
    \un@iter{0.463648}{\v@lmax<}%
    \un@iter{0.244979}{\v@lmax<}%
    \un@iter{0.124355}{\v@lmax<}%
    \un@iter{0.062419}{\v@lmax<}%
    \un@iter{0.031240}{\v@lmax<}%
    \un@iter{0.015624}{\v@lmax<}%
    \un@iter{0.007812}{\v@lmax<}%
    \un@iter{0.003906}{\v@lmax<}%
    \un@iter{0.001953}{\v@lmax<}%
    \un@iter{0.000976}{\v@lmax<}%
    \un@iter{0.000488}{\v@lmax<}%
    \un@iter{0.000244}{\v@lmax<}%
    \un@iter{0.000122}{\v@lmax<}%
    \un@iter{0.000061}{\v@lmax<}%
    \un@iter{0.000030}{\v@lmax<}%
    \un@iter{0.000015}{\v@lmax<}%
    \global\result@t=\v@leur}#1=\result@t}
\ctr@ld@f\def\un@iter#1#2{%
    \divide\delt@\tw@\edef\dpmn@{\repdecn@mb{\delt@}}%
    \mili@u=\v@lmin%
    \ifdim#2\z@%
      \advance\v@lmin-\dpmn@\v@lmax\advance\v@lmax\dpmn@\mili@u%
      \advance\v@leur-#1pt%
    \else%
      \advance\v@lmin\dpmn@\v@lmax\advance\v@lmax-\dpmn@\mili@u%
      \advance\v@leur#1pt%
    \fi}
\ctr@ld@f\def\c@ssin#1#2#3{\expandafter\ifx\csname COS@\number#3\endcsname\relax\c@lCS{#3pt}%
    \expandafter\xdef\csname COS@\number#3\endcsname{\repdecn@mb\result@t}%
    \expandafter\xdef\csname SIN@\number#3\endcsname{\repdecn@mb\result@@t}\fi%
    \edef#1{\csname COS@\number#3\endcsname}\edef#2{\csname SIN@\number#3\endcsname}}
\ctr@ld@f\def\c@lCS#1{{\mili@u=#1\p@rtent=\@ne%
    \relax\ifdim\mili@u<\z@\red@ng<-\else\red@ng>+\fi\f@ctech=\p@rtent%
    \relax\ifdim\mili@u<\z@\mili@u=-\mili@u\f@ctech=-\f@ctech\fi\c@@lCS}}
\ctr@ld@f\def\c@@lCS{\v@lmin=\mili@u\c@rre=-\mili@u\advance\c@rre\DemiPI@deg\v@lmax=\c@rre%
    \mili@u\@@lxxi\mili@u\divide\mili@u\@@mmmmlxviii%
    \edef\v@larg{\repdecn@mb{\mili@u}}\mili@u=-\v@larg\mili@u%
    \edef\v@lmxde{\repdecn@mb{\mili@u}}%
    \c@rre\@@lxxi\c@rre\divide\c@rre\@@mmmmlxviii%
    \edef\v@largC{\repdecn@mb{\c@rre}}\c@rre=-\v@largC\c@rre%
    \edef\v@lmxdeC{\repdecn@mb{\c@rre}}%
    \fctc@s\mili@u\v@lmin\global\result@t\p@rtent\v@leur%
    \let\t@mp=\v@larg\let\v@larg=\v@largC\let\v@largC=\t@mp%
    \let\t@mp=\v@lmxde\let\v@lmxde=\v@lmxdeC\let\v@lmxdeC=\t@mp%
    \fctc@s\c@rre\v@lmax\global\result@@t\f@ctech\v@leur}
\ctr@ld@f\def\fctc@s#1#2{\v@leur=#1\relax\ifdim#2<\@lxxxv\p@\cosser@h\else\sinser@t\fi}
\ctr@ld@f\def\cosser@h{\advance\v@leur\@lvi\p@\divide\v@leur\@lvi%
    \v@leur=\v@lmxde\v@leur\advance\v@leur\@xxx\p@%
    \v@leur=\v@lmxde\v@leur\advance\v@leur\@ccclx\p@%
    \v@leur=\v@lmxde\v@leur\advance\v@leur\@dccxx\p@\divide\v@leur\@dccxx}
\ctr@ld@f\def\sinser@t{\v@leur=\v@lmxdeC\p@\advance\v@leur\@vi\p@%
    \v@leur=\v@largC\v@leur\divide\v@leur\@vi}
\ctr@ld@f\def\red@ng#1#2{\relax\ifdim\mili@u#1#2\DemiPI@deg\advance\mili@u#2-\PI@deg%
    \p@rtent=-\p@rtent\red@ng#1#2\fi}
\ctr@ld@f\def\pr@c@lCS#1#2#3{\ctr@lcsn@m{COS@\number#3 }%
    \expandafter\xdef\csname COS@\number#3\endcsname{#1}%
    \expandafter\xdef\csname SIN@\number#3\endcsname{#2}}
\pr@c@lCS{1}{0}{0}
\pr@c@lCS{0.7071}{0.7071}{45}\pr@c@lCS{0.7071}{-0.7071}{-45}
\pr@c@lCS{0}{1}{90}          \pr@c@lCS{0}{-1}{-90}
\pr@c@lCS{-1}{0}{180}        \pr@c@lCS{-1}{0}{-180}
\pr@c@lCS{0}{-1}{270}        \pr@c@lCS{0}{1}{-270}
\ctr@ld@f\def\invers@#1#2{{\v@leur=#2\maxim@m{\v@lmax}{-\v@leur}{\v@leur}%
    \f@ctech=\@ne\m@inv@rs%
    \multiply\v@leur\f@ctech\edef\v@lv@leur{\repdecn@mb{\v@leur}}%
    \p@rtentiere{\p@rtent}{\v@leur}\v@lmin=\p@\divide\v@lmin\p@rtent%
    \inv@rs@\multiply\v@lmax\f@ctech\global\result@t=\v@lmax}#1=\result@t}
\ctr@ld@f\def\m@inv@rs{\ifdim\v@lmax<\p@\multiply\v@lmax\t@n\multiply\f@ctech\t@n\m@inv@rs\fi}
\ctr@ld@f\def\inv@rs@{\v@lmax=-\v@lmin\v@lmax=\v@lv@leur\v@lmax%
    \advance\v@lmax\tw@ pt\v@lmax=\repdecn@mb{\v@lmin}\v@lmax%
    \delt@=\v@lmax\advance\delt@-\v@lmin\ifdim\delt@<\z@\delt@=-\delt@\fi%
    \ifdim\delt@>\epsil@n\v@lmin=\v@lmax\inv@rs@\fi}
\ctr@ld@f\def\minim@m#1#2#3{\relax\ifdim#2<#3#1=#2\else#1=#3\fi}
\ctr@ld@f\def\maxim@m#1#2#3{\relax\ifdim#2>#3#1=#2\else#1=#3\fi}
\ctr@ld@f\def\p@rtentiere#1#2{#1=#2\divide#1by65536 }
\ctr@ld@f\def\r@undint#1#2{{\v@leur=#2\divide\v@leur\t@n\p@rtentiere{\p@rtent}{\v@leur}%
    \v@leur=\p@rtent pt\global\result@t=\t@n\v@leur}#1=\result@t}
\ctr@ld@f\def\sqrt@#1#2{{\v@leur=#2%
    \minim@m{\v@lmin}{\p@}{\v@leur}\maxim@m{\v@lmax}{\p@}{\v@leur}%
    \f@ctech=\@ne\m@sqrt@\sqrt@@%
    \mili@u=\v@lmin\advance\mili@u\v@lmax\divide\mili@u\tw@\multiply\mili@u\f@ctech%
    \global\result@t=\mili@u}#1=\result@t}
\ctr@ld@f\def\m@sqrt@{\ifdim\v@leur>\dcq@\divide\v@leur\c@nt\v@lmax=\v@leur%
    \multiply\f@ctech\t@n\m@sqrt@\fi}
\ctr@ld@f\def\sqrt@@{\mili@u=\v@lmin\advance\mili@u\v@lmax\divide\mili@u\tw@%
    \c@rre=\repdecn@mb{\mili@u}\mili@u%
    \ifdim\c@rre<\v@leur\v@lmin=\mili@u\else\v@lmax=\mili@u\fi%
    \delt@=\v@lmax\advance\delt@-\v@lmin\ifdim\delt@>\epsil@n\sqrt@@\fi}
\ctr@ld@f\def\extrairelepremi@r#1\de#2{\expandafter\lepremi@r#2@#1#2}
\ctr@ld@f\def\lepremi@r#1,#2@#3#4{\def#3{#1}\def#4{#2}\ignorespaces}
\ctr@ld@f\def\@cfor#1:=#2\do#3{%
  \edef\@fortemp{#2}%
  \ifx\@fortemp\empty\else\@cforloop#2,\@nil,\@nil\@@#1{#3}\fi}
\ctr@ln@m\@nextwhile
\ctr@ld@f\def\@cforloop#1,#2\@@#3#4{%
  \def#3{#1}%
  \ifx#3\Fig@nnil\let\@nextwhile=\Fig@fornoop\else#4\relax\let\@nextwhile=\@cforloop\fi%
  \@nextwhile#2\@@#3{#4}}

\ctr@ld@f\def\@ecfor#1:=#2\do#3{%
  \def\@@cfor{\@cfor#1:=}%
  \edef\@@@cfor{#2}%
  \expandafter\@@cfor\@@@cfor\do{#3}}
\ctr@ld@f\def\Fig@nnil{\@nil}
\ctr@ld@f\def\Fig@fornoop#1\@@#2#3{}
\ctr@ln@m\list@@rg
\ctr@ld@f\def\trtlis@rg#1#2{\def\list@@rg{#1}%
    \@ecfor\p@rv@l:=\list@@rg\do{\expandafter#2\p@rv@l|}}
\ctr@ld@f\def\trtlis@rgtok#1{\let@xte={}\let\n@xt\addt@t@xt\addt@t@xt #1}
\ctr@ln@m\M@cro
\ctr@ln@m\n@xt
\ctr@ld@f\def\addt@t@xt#1{\if#1|\let\n@xt\relax\else%
    \if#1,\expandafter\M@cro\the\let@xte|\let@xte={}%
    \else\let@xte=\expandafter{\the\let@xte #1}\fi\fi\n@xt}
\ctr@ln@w{newbox}\b@xvisu
\ctr@ln@w{newtoks}\let@xte
\ctr@ln@w{newif}\ifitis@K
\ctr@ln@w{newcount}\s@mme
\ctr@ln@w{newcount}\l@mbd@un \ctr@ln@w{newcount}\l@mbd@de
\ctr@ln@w{newcount}\superc@ntr@l\superc@ntr@l=\@ne        % Controle impose
\ctr@ln@w{newcount}\typec@ntr@l\typec@ntr@l=\superc@ntr@l % Controle souhaite
\ctr@ln@w{newdimen}\v@lX  \ctr@ln@w{newdimen}\v@lY  \ctr@ln@w{newdimen}\v@lZ
\ctr@ln@w{newdimen}\v@lXa \ctr@ln@w{newdimen}\v@lYa \ctr@ln@w{newdimen}\v@lZa
\ctr@ln@w{newdimen}\unit@\unit@=\p@ % Initialisation a la valeur par defaut.
\ctr@ld@f\def\unit@util{pt}
\ctr@ld@f\def\ptT@ptps{0.996264 }
\ctr@ld@f\def\ptpsT@pt{1.00375 }
\ctr@ld@f\def\ptT@unit@{1} % Initialisation correspondant a la valeur par defaut de \unit@
\ctr@ld@f\def\setunit@#1{\def\unit@util{#1}\setunit@@#1:\invers@{\result@t}{\unit@}%
    \edef\ptT@unit@{\repdecn@mb\result@t}}
\ctr@ld@f\def\setunit@@#1#2:{\ifcat#1a\unit@=\@ne#1#2\else\unit@=#1#2\fi}
\ctr@ld@f\def\d@fm@cdim#1#2{{\v@leur=#2\v@leur=\ptT@unit@\v@leur\xdef#1{\repdecn@mb\v@leur}}}
\ctr@ln@w{newif}\ifBdingB@x\BdingB@xtrue
\ctr@ln@w{newdimen}\c@@rdXmin \ctr@ln@w{newdimen}\c@@rdYmin  % Dimensions de la BoundingBox
\ctr@ln@w{newdimen}\c@@rdXmax \ctr@ln@w{newdimen}\c@@rdYmax
\ctr@ld@f\def\b@undb@x#1#2{\ifBdingB@x%
    \relax\ifdim#1<\c@@rdXmin\global\c@@rdXmin=#1\fi%
    \relax\ifdim#2<\c@@rdYmin\global\c@@rdYmin=#2\fi%
    \relax\ifdim#1>\c@@rdXmax\global\c@@rdXmax=#1\fi%
    \relax\ifdim#2>\c@@rdYmax\global\c@@rdYmax=#2\fi\fi}
\ctr@ld@f\def\b@undb@xP#1{{\Figg@tXY{#1}\b@undb@x{\v@lX}{\v@lY}}}
\ctr@ld@f\def\ellBB@x#1;#2,#3(#4,#5,#6){{\s@uvc@ntr@l\et@tellBB@x%
    \setc@ntr@l{2}\figptell-2::#1;#2,#3(#4,#6)\b@undb@xP{-2}%
    \figptell-2::#1;#2,#3(#5,#6)\b@undb@xP{-2}%
    \c@ssin{\C@}{\S@}{#6}\v@lmin=\C@ pt\v@lmax=\S@ pt%
    \mili@u=#3\v@lmin\delt@=#2\v@lmax\arct@n\v@leur(\delt@,\mili@u)%
    \mili@u=-#3\v@lmax\delt@=#2\v@lmin\arct@n\c@rre(\delt@,\mili@u)%
    \v@leur=\rdT@deg\v@leur\advance\v@leur-\DePI@deg%
    \c@rre=\rdT@deg\c@rre\advance\c@rre-\DePI@deg%
    \v@lmin=#4pt\v@lmax=#5pt%
    \loop\ifdim\v@leur<\v@lmax\ifdim\v@leur>\v@lmin%
    \edef\@ngle{\repdecn@mb\v@leur}\figptell-2::#1;#2,#3(\@ngle,#6)%
    \b@undb@xP{-2}\fi\advance\v@leur\PI@deg\repeat%
    \loop\ifdim\c@rre<\v@lmax\ifdim\c@rre>\v@lmin%
    \edef\@ngle{\repdecn@mb\c@rre}\figptell-2::#1;#2,#3(\@ngle,#6)%
    \b@undb@xP{-2}\fi\advance\c@rre\PI@deg\repeat%
    \resetc@ntr@l\et@tellBB@x}\ignorespaces}
\ctr@ld@f\def\initb@undb@x{\c@@rdXmin=\maxdimen\c@@rdYmin=\maxdimen%
    \c@@rdXmax=-\maxdimen\c@@rdYmax=-\maxdimen}
\ctr@ld@f\def\c@ntr@lnum#1{%
    \relax\ifnum\typec@ntr@l=\@ne%
    \ifnum#1<\z@%
    \immediate\write16{*** Forbidden point number (#1). Abort.}\end\fi\fi%
    \set@bjc@de{#1}}
\ctr@ln@m\objc@de
\ctr@ld@f\def\set@bjc@de#1{\edef\objc@de{@BJ\ifnum#1<\z@ M\romannumeral-#1\else\romannumeral#1\fi}}
\s@mme=\m@ne\loop\ifnum\s@mme>-19
  \set@bjc@de{\s@mme}\ctr@lcsn@m\objc@de\ctr@lcsn@m{\objc@de T}
\advance\s@mme\m@ne\repeat
\s@mme=\@ne\loop\ifnum\s@mme<6
  \set@bjc@de{\s@mme}\ctr@lcsn@m\objc@de\ctr@lcsn@m{\objc@de T}
\advance\s@mme\@ne\repeat
\ctr@ld@f\def\setc@ntr@l#1{\ifnum\superc@ntr@l>#1\typec@ntr@l=\superc@ntr@l%
    \else\typec@ntr@l=#1\fi}
\ctr@ld@f\def\resetc@ntr@l#1{\global\superc@ntr@l=#1\setc@ntr@l{#1}}
\ctr@ld@f\def\s@uvc@ntr@l#1{\edef#1{\the\superc@ntr@l}}
\ctr@ln@m\c@lproscal
\ctr@ld@f\def\c@lproscalDD#1[#2,#3]{{\Figg@tXY{#2}%
    \edef\Xu@{\repdecn@mb{\v@lX}}\edef\Yu@{\repdecn@mb{\v@lY}}\Figg@tXY{#3}%
    \global\result@t=\Xu@\v@lX\global\advance\result@t\Yu@\v@lY}#1=\result@t}
\ctr@ld@f\def\c@lproscalTD#1[#2,#3]{{\Figg@tXY{#2}\edef\Xu@{\repdecn@mb{\v@lX}}%
    \edef\Yu@{\repdecn@mb{\v@lY}}\edef\Zu@{\repdecn@mb{\v@lZ}}%
    \Figg@tXY{#3}\global\result@t=\Xu@\v@lX\global\advance\result@t\Yu@\v@lY%
    \global\advance\result@t\Zu@\v@lZ}#1=\result@t}
\ctr@ld@f\def\c@lprovec#1{%
    \det@rmC\v@lZa(\v@lX,\v@lY,\v@lmin,\v@lmax)%
    \det@rmC\v@lXa(\v@lY,\v@lZ,\v@lmax,\v@leur)%
    \det@rmC\v@lYa(\v@lZ,\v@lX,\v@leur,\v@lmin)%
    \Figv@ctCreg#1(\v@lXa,\v@lYa,\v@lZa)}
\ctr@ld@f\def\det@rm#1[#2,#3]{{\Figg@tXY{#2}\Figg@tXYa{#3}%
    \delt@=\repdecn@mb{\v@lX}\v@lYa\advance\delt@-\repdecn@mb{\v@lY}\v@lXa%
    \global\result@t=\delt@}#1=\result@t}
\ctr@ld@f\def\det@rmC#1(#2,#3,#4,#5){{\global\result@t=\repdecn@mb{#2}#5%
    \global\advance\result@t-\repdecn@mb{#3}#4}#1=\result@t}
\ctr@ld@f\def\getredf@ctDD#1(#2,#3){{\maxim@m{\v@lXa}{-#2}{#2}\maxim@m{\v@lYa}{-#3}{#3}%
    \maxim@m{\v@lXa}{\v@lXa}{\v@lYa}% \v@lXa = ||X||inf
    \ifdim\v@lXa>\@xci pt\divide\v@lXa\@xci%
    \p@rtentiere{\p@rtent}{\v@lXa}\advance\p@rtent\@ne\else\p@rtent=\@ne\fi%
    \global\result@tent=\p@rtent}#1=\result@tent\ignorespaces}
\ctr@ld@f\def\getredf@ctTD#1(#2,#3,#4){{\maxim@m{\v@lXa}{-#2}{#2}\maxim@m{\v@lYa}{-#3}{#3}%
    \maxim@m{\v@lZa}{-#4}{#4}\maxim@m{\v@lXa}{\v@lXa}{\v@lYa}%
    \maxim@m{\v@lXa}{\v@lXa}{\v@lZa}% \v@lXa = ||X||inf
    \ifdim\v@lXa>\@lxxiv pt\divide\v@lXa\@lxxiv%
    \p@rtentiere{\p@rtent}{\v@lXa}\advance\p@rtent\@ne\else\p@rtent=\@ne\fi%
    \global\result@tent=\p@rtent}#1=\result@tent\ignorespaces}
\ctr@ln@m\getredf@ctB
\ctr@ld@f\def\getredf@ctBDD#1{\getredf@ctDD#1(\v@lX,\v@lY)}
\ctr@ld@f\def\getredf@ctBTD#1{\getredf@ctTD#1(\v@lX,\v@lY,\v@lZ)}
\ctr@ld@f\def\FigptintercircB@zDD#1:#2:#3,#4[#5,#6,#7,#8]{{\s@uvc@ntr@l\et@tfigptintercircB@zDD%
    \setc@ntr@l{2}\figvectPDD-1[#5,#8]\Figg@tXY{-1}\getredf@ctDD\f@ctech(\v@lX,\v@lY)%
    \mili@u=#4\unit@\divide\mili@u\f@ctech\c@rre=\repdecn@mb{\mili@u}\mili@u%
    \figptBezierDD-5::#3[#5,#6,#7,#8]%
    \v@lmin=#3\p@\v@lmax=\v@lmin\advance\v@lmax0.1\p@%
    \loop\edef\T@{\repdecn@mb{\v@lmax}}\figptBezierDD-2::\T@[#5,#6,#7,#8]%
    \figvectPDD-1[-5,-2]\n@rmeucCDD{\delt@}{-1}\ifdim\delt@<\c@rre\v@lmin=\v@lmax%
    \advance\v@lmax0.1\p@\repeat%
    \loop\mili@u=\v@lmin\advance\mili@u\v@lmax%
    \divide\mili@u\tw@\edef\T@{\repdecn@mb{\mili@u}}\figptBezierDD-2::\T@[#5,#6,#7,#8]%
    \figvectPDD-1[-5,-2]\n@rmeucCDD{\delt@}{-1}\ifdim\delt@>\c@rre\v@lmax=\mili@u%
    \else\v@lmin=\mili@u\fi\v@leur=\v@lmax\advance\v@leur-\v@lmin%
    \ifdim\v@leur>\epsil@n\repeat\figptcopyDD#1:#2/-2/%
    \resetc@ntr@l\et@tfigptintercircB@zDD}\ignorespaces}
\ctr@ln@m\figptinterlines
\ctr@ld@f\def\inters@cDD#1:#2[#3,#4;#5,#6]{{\s@uvc@ntr@l\et@tinters@cDD%
    \setc@ntr@l{2}\vecunit@{-1}{#4}\vecunit@{-2}{#6}%
    \Figg@tXY{-1}\setc@ntr@l{1}\Figg@tXYa{#3}%
    \edef\A@{\repdecn@mb{\v@lX}}\edef\B@{\repdecn@mb{\v@lY}}%
    \v@lmin=\B@\v@lXa\advance\v@lmin-\A@\v@lYa%
    \Figg@tXYa{#5}\setc@ntr@l{2}\Figg@tXY{-2}%
    \edef\C@{\repdecn@mb{\v@lX}}\edef\D@{\repdecn@mb{\v@lY}}%
    \v@lmax=\D@\v@lXa\advance\v@lmax-\C@\v@lYa%
    \delt@=\A@\v@lY\advance\delt@-\B@\v@lX%
    \invers@{\v@leur}{\delt@}\edef\v@ldelta{\repdecn@mb{\v@leur}}%
    \v@lXa=\A@\v@lmax\advance\v@lXa-\C@\v@lmin%
    \v@lYa=\B@\v@lmax\advance\v@lYa-\D@\v@lmin%
    \v@lXa=\v@ldelta\v@lXa\v@lYa=\v@ldelta\v@lYa%
    \setc@ntr@l{1}\Figp@intregDD#1:{#2}(\v@lXa,\v@lYa)%
    \resetc@ntr@l\et@tinters@cDD}\ignorespaces}
\ctr@ld@f\def\inters@cTD#1:#2[#3,#4;#5,#6]{{\s@uvc@ntr@l\et@tinters@cTD%
    \setc@ntr@l{2}\figvectNVTD-1[#4,#6]\figvectNVTD-2[#6,-1]\figvectPTD-1[#3,#5]%
    \r@pPSTD\v@leur[-2,-1,#4]\edef\v@lcoef{\repdecn@mb{\v@leur}}%
    \figpttraTD#1:{#2}=#3/\v@lcoef,#4/\resetc@ntr@l\et@tinters@cTD}\ignorespaces}
\ctr@ld@f\def\r@pPSTD#1[#2,#3,#4]{{\Figg@tXY{#2}\edef\Xu@{\repdecn@mb{\v@lX}}%
    \edef\Yu@{\repdecn@mb{\v@lY}}\edef\Zu@{\repdecn@mb{\v@lZ}}%
    \Figg@tXY{#3}\v@lmin=\Xu@\v@lX\advance\v@lmin\Yu@\v@lY\advance\v@lmin\Zu@\v@lZ%
    \Figg@tXY{#4}\v@lmax=\Xu@\v@lX\advance\v@lmax\Yu@\v@lY\advance\v@lmax\Zu@\v@lZ%
    \invers@{\v@leur}{\v@lmax}\global\result@t=\repdecn@mb{\v@leur}\v@lmin}%
    #1=\result@t}
\ctr@ln@m\n@rminf
\ctr@ld@f\def\n@rminfDD#1#2{{\Figg@tXY{#2}\maxim@m{\v@lX}{\v@lX}{-\v@lX}%
    \maxim@m{\v@lY}{\v@lY}{-\v@lY}\maxim@m{\global\result@t}{\v@lX}{\v@lY}}%
    #1=\result@t}
\ctr@ld@f\def\n@rminfTD#1#2{{\Figg@tXY{#2}\maxim@m{\v@lX}{\v@lX}{-\v@lX}%
    \maxim@m{\v@lY}{\v@lY}{-\v@lY}\maxim@m{\v@lZ}{\v@lZ}{-\v@lZ}%
    \maxim@m{\v@lX}{\v@lX}{\v@lY}\maxim@m{\global\result@t}{\v@lX}{\v@lZ}}%
    #1=\result@t}
\ctr@ln@m\n@rmeucC
\ctr@ld@f\def\n@rmeucCDD#1#2{\Figg@tXY{#2}\divide\v@lX\f@ctech\divide\v@lY\f@ctech%
    #1=\repdecn@mb{\v@lX}\v@lX\v@lX=\repdecn@mb{\v@lY}\v@lY\advance#1\v@lX}
\ctr@ld@f\def\n@rmeucCTD#1#2{\Figg@tXY{#2}%
    \divide\v@lX\f@ctech\divide\v@lY\f@ctech\divide\v@lZ\f@ctech%
    #1=\repdecn@mb{\v@lX}\v@lX\v@lX=\repdecn@mb{\v@lY}\v@lY\advance#1\v@lX%
    \v@lX=\repdecn@mb{\v@lZ}\v@lZ\advance#1\v@lX}
\ctr@ln@m\n@rmeucSV
\ctr@ld@f\def\n@rmeucSVDD#1#2{{\Figg@tXY{#2}%
    \v@lXa=\repdecn@mb{\v@lX}\v@lX\v@lYa=\repdecn@mb{\v@lY}\v@lY%
    \advance\v@lXa\v@lYa\sqrt@{\global\result@t}{\v@lXa}}#1=\result@t}
\ctr@ld@f\def\n@rmeucSVTD#1#2{{\Figg@tXY{#2}\v@lXa=\repdecn@mb{\v@lX}\v@lX%
    \v@lYa=\repdecn@mb{\v@lY}\v@lY\v@lZa=\repdecn@mb{\v@lZ}\v@lZ%
    \advance\v@lXa\v@lYa\advance\v@lXa\v@lZa\sqrt@{\global\result@t}{\v@lXa}}#1=\result@t}
\ctr@ln@m\n@rmeuc
\ctr@ld@f\def\n@rmeucDD#1#2{{\Figg@tXY{#2}\getredf@ctDD\f@ctech(\v@lX,\v@lY)%
    \divide\v@lX\f@ctech\divide\v@lY\f@ctech%
    \v@lXa=\repdecn@mb{\v@lX}\v@lX\v@lYa=\repdecn@mb{\v@lY}\v@lY%
    \advance\v@lXa\v@lYa\sqrt@{\global\result@t}{\v@lXa}%
    \global\multiply\result@t\f@ctech}#1=\result@t}
\ctr@ld@f\def\n@rmeucTD#1#2{{\Figg@tXY{#2}\getredf@ctTD\f@ctech(\v@lX,\v@lY,\v@lZ)%
    \divide\v@lX\f@ctech\divide\v@lY\f@ctech\divide\v@lZ\f@ctech%
    \v@lXa=\repdecn@mb{\v@lX}\v@lX%
    \v@lYa=\repdecn@mb{\v@lY}\v@lY\v@lZa=\repdecn@mb{\v@lZ}\v@lZ%
    \advance\v@lXa\v@lYa\advance\v@lXa\v@lZa\sqrt@{\global\result@t}{\v@lXa}%
    \global\multiply\result@t\f@ctech}#1=\result@t}
\ctr@ln@m\vecunit@
\ctr@ld@f\def\vecunit@DD#1#2{{\Figg@tXY{#2}\getredf@ctDD\f@ctech(\v@lX,\v@lY)%
    \divide\v@lX\f@ctech\divide\v@lY\f@ctech%
    \Figv@ctCreg#1(\v@lX,\v@lY)\n@rmeucSV{\v@lYa}{#1}%
    \invers@{\v@lXa}{\v@lYa}\edef\v@lv@lXa{\repdecn@mb{\v@lXa}}%
    \v@lX=\v@lv@lXa\v@lX\v@lY=\v@lv@lXa\v@lY%
    \Figv@ctCreg#1(\v@lX,\v@lY)\multiply\v@lYa\f@ctech\global\result@t=\v@lYa}}
\ctr@ld@f\def\vecunit@TD#1#2{{\Figg@tXY{#2}\getredf@ctTD\f@ctech(\v@lX,\v@lY,\v@lZ)%
    \divide\v@lX\f@ctech\divide\v@lY\f@ctech\divide\v@lZ\f@ctech%
    \Figv@ctCreg#1(\v@lX,\v@lY,\v@lZ)\n@rmeucSV{\v@lYa}{#1}%
    \invers@{\v@lXa}{\v@lYa}\edef\v@lv@lXa{\repdecn@mb{\v@lXa}}%
    \v@lX=\v@lv@lXa\v@lX\v@lY=\v@lv@lXa\v@lY\v@lZ=\v@lv@lXa\v@lZ%
    \Figv@ctCreg#1(\v@lX,\v@lY,\v@lZ)\multiply\v@lYa\f@ctech\global\result@t=\v@lYa}}
\ctr@ld@f\def\vecunitC@TD[#1,#2]{\Figg@tXYa{#1}\Figg@tXY{#2}%
    \advance\v@lX-\v@lXa\advance\v@lY-\v@lYa\advance\v@lZ-\v@lZa\c@lvecunitTD}
\ctr@ld@f\def\vecunitCV@TD#1{\Figg@tXY{#1}\c@lvecunitTD}
\ctr@ld@f\def\c@lvecunitTD{\getredf@ctTD\f@ctech(\v@lX,\v@lY,\v@lZ)%
    \divide\v@lX\f@ctech\divide\v@lY\f@ctech\divide\v@lZ\f@ctech%
    \v@lXa=\repdecn@mb{\v@lX}\v@lX%
    \v@lYa=\repdecn@mb{\v@lY}\v@lY\v@lZa=\repdecn@mb{\v@lZ}\v@lZ%
    \advance\v@lXa\v@lYa\advance\v@lXa\v@lZa\sqrt@{\v@lYa}{\v@lXa}%
    \invers@{\v@lXa}{\v@lYa}\edef\v@lv@lXa{\repdecn@mb{\v@lXa}}%
    \v@lX=\v@lv@lXa\v@lX\v@lY=\v@lv@lXa\v@lY\v@lZ=\v@lv@lXa\v@lZ}
\ctr@ln@m\figgetangle
\ctr@ld@f\def\figgetangleDD#1[#2,#3,#4]{\ifGR@cri{\s@uvc@ntr@l\et@tfiggetangleDD\setc@ntr@l{2}%
    \figvectPDD-1[#2,#3]\figvectPDD-2[#2,#4]\vecunit@{-1}{-1}%
    \c@lproscalDD\delt@[-2,-1]\figvectNVDD-1[-1]\c@lproscalDD\v@leur[-2,-1]%
    \arct@n\v@lmax(\delt@,\v@leur)\v@lmax=\rdT@deg\v@lmax%
    \ifdim\v@lmax<\z@\advance\v@lmax\DePI@deg\fi\xdef#1{\repdecn@mb{\v@lmax}}%
    \resetc@ntr@l\et@tfiggetangleDD}\ignorespaces\fi}
\ctr@ld@f\def\figgetangleTD#1[#2,#3,#4,#5]{\ifGR@cri{\s@uvc@ntr@l\et@tfiggetangleTD\setc@ntr@l{2}%
    \figvectPTD-1[#2,#3]\figvectPTD-2[#2,#5]\figvectNVTD-3[-1,-2]%
    \figvectPTD-2[#2,#4]\figvectNVTD-4[-3,-1]%
    \vecunit@{-1}{-1}\c@lproscalTD\delt@[-2,-1]\c@lproscalTD\v@leur[-2,-4]%
    \arct@n\v@lmax(\delt@,\v@leur)\v@lmax=\rdT@deg\v@lmax%
    \ifdim\v@lmax<\z@\advance\v@lmax\DePI@deg\fi\xdef#1{\repdecn@mb{\v@lmax}}%
    \resetc@ntr@l\et@tfiggetangleTD}\ignorespaces\fi}    
\ctr@ld@f\def\figgetdist#1[#2,#3]{\ifGR@cri{\s@uvc@ntr@l\et@tfiggetdist\setc@ntr@l{2}%
    \figvectP-1[#2,#3]\n@rmeuc{\v@lX}{-1}\v@lX=\ptT@unit@\v@lX\xdef#1{\repdecn@mb{\v@lX}}%
    \resetc@ntr@l\et@tfiggetdist}\ignorespaces\fi}
\ctr@ld@f\def\figget#1=#2[#3]{\keln@mun#1|%
    \def\n@mref{a}\ifx\l@debut\n@mref\figgetangle#2[#3]\else% angle
    \def\n@mref{d}\ifx\l@debut\n@mref\figgetdist#2[#3]\else% distance
    \W@rnmeskwd{figget}{#1}\fi\fi\ignorespaces}
\ctr@ld@f\def\Figg@tT#1{\c@ntr@lnum{#1}%
    {\expandafter\expandafter\expandafter\extr@ctT\csname\objc@de\endcsname:%
     \ifnum\B@@ltxt=\z@\ptn@me{#1}\else\csname\objc@de T\endcsname\fi}}
\ctr@ld@f\def\extr@ctT#1,#2,#3/#4:{\def\B@@ltxt{#3}}
\ctr@ld@f\def\Figg@tXY#1{\c@ntr@lnum{#1}%
    \expandafter\expandafter\expandafter\extr@ctC\csname\objc@de\endcsname:}
\ctr@ln@m\extr@ctC
\ctr@ld@f\def\extr@ctCDD#1/#2,#3,#4:{\v@lX=#2\v@lY=#3}
\ctr@ld@f\def\extr@ctCTD#1/#2,#3,#4:{\v@lX=#2\v@lY=#3\v@lZ=#4}
\ctr@ld@f\def\Figg@tXYa#1{\c@ntr@lnum{#1}%
    \expandafter\expandafter\expandafter\extr@ctCa\csname\objc@de\endcsname:}
\ctr@ln@m\extr@ctCa
\ctr@ld@f\def\extr@ctCaDD#1/#2,#3,#4:{\v@lXa=#2\v@lYa=#3}
\ctr@ld@f\def\extr@ctCaTD#1/#2,#3,#4:{\v@lXa=#2\v@lYa=#3\v@lZa=#4}
\ctr@ln@m\t@xt@
\ctr@ld@f\def\figinit#1{\t@stc@tcodech@nge\initpr@lim\Figinit@#1,:\initpss@ttings\ignorespaces}
\ctr@ld@f\def\Figinit@#1,#2:{\setunit@{#1}\def\t@xt@{#2}\ifx\t@xt@\empty\else\Figinit@@#2:\fi}
\ctr@ld@f\def\Figinit@@#1#2:{\if#12 \else\Figs@tproj{#1}\initTD@\fi}
\ctr@ln@w{newif}\ifTr@isDim
\ctr@ld@f\def\UnD@fined{UNDEFINED}
\ctr@ln@m\@utoFN
\ctr@ln@m\@utoFInDone
\ctr@ln@m\disob@unit
\ctr@ld@f\def\initpr@lim{\initb@undb@x\figsetmark{}\figsetptname{$A_{##1}$}\def\Sc@leFact{1}%
    \initDD@\figsetroundcoord{yes}\GR@critrue\expandafter\setupd@te\D@FTupdate:%
    \edef\disob@unit{\UnD@fined}\edef\t@rgetpt{\UnD@fined}\gdef\@utoFInDone{1}\gdef\@utoFN{0}}
\ctr@ld@f\def\initDD@{\Tr@isDimfalse%
    \ifPDFm@ke%
     \let\Ps@rcerc=\Ps@rcercBz%
     \let\Ps@rell=\Ps@rellBz%
    \fi
    \let\c@lDCUn=\c@lDCUnDD%
    \let\c@lDCDeux=\c@lDCDeuxDD%
    \let\c@ldefproj=\relax%
    \let\c@lproscal=\c@lproscalDD%
    \let\c@lprojSP=\relax%
    \let\extr@ctC=\extr@ctCDD%
    \let\extr@ctCa=\extr@ctCaDD%
    \let\extr@ctCF=\extr@ctCFDD%
    \let\Figp@intreg=\Figp@intregDD%
    \let\Figpts@xes=\Figpts@xesDD%
    \let\getredf@ctB=\getredf@ctBDD%
    \let\n@rmeucSV=\n@rmeucSVDD\let\n@rmeuc=\n@rmeucDD\let\n@rmeucC\n@rmeucCDD\let\n@rminf=\n@rminfDD%
    \let\pr@dMatV=\pr@dMatVDD%
    \let\Q@@xes=\Q@@xesDD%
    \let\vecunit@=\vecunit@DD%
    \let\figcoord=\figcoordDD%
    \let\figgetangle=\figgetangleDD%
    \let\figpt=\figptDD%
    \let\figptBezier=\figptBezierDD%
    \let\figptbary=\figptbaryDD%
    \let\figptcirc=\figptcircDD%
    \let\figptcircumcenter=\figptcircumcenterDD%
    \let\figptcopy=\figptcopyDD%
    \let\figptcurvcenter=\figptcurvcenterDD%
    \let\figptell=\figptellDD%
    \let\figptendnormal=\figptendnormalDD%
    \let\figptinterlineplane=\figptinterlineplaneDD%
    \let\figptinterlines=\inters@cDD%
    \let\figptorthocenter=\figptorthocenterDD%
    \let\figptorthoprojline=\figptorthoprojlineDD%
    \let\figptorthoprojplane=\figptorthoprojplaneDD%
    \let\figptrot=\figptrotDD%
    \let\figptscontrol=\figptscontrolDD%
    \let\figptsintercirc=\figptsintercircDD%
    \let\figptsinterlinell=\figptsinterlinellDD%
    \let\figptsorthoprojline=\figptsorthoprojlineDD%
    \let\figptorthoprojplane=\figptorthoprojplaneDD%
    \let\figptsrot=\figptsrotDD%
    \let\figptssym=\figptssymDD%
    \let\figptstra=\figptstraDD%
    \let\figptsym=\figptsymDD%
    \let\figpttraC=\figpttraCDD%
    \let\figpttra=\figpttraDD%
    \let\figptvisilimSL=\figptvisilimSLDD%
    \let\figsetobdist=\figsetobdistDD%
    \let\figsettarget=\figsettargetDD%
    \let\figsetview=\figsetviewDD%
    \let\figvectDBezier=\figvectDBezierDD%
    \let\figvectN=\figvectNDD%
    \let\figvectNV=\figvectNVDD%
    \let\figvectP=\figvectPDD%
    \let\figvectU=\figvectUDD%
    \let\figdrawarccircP=\Q@arccircPDD%
    \let\figdrawarccirc=\Q@arccircDD%
    \let\figdrawarcell=\Q@arcellDD%
    \let\figdrawarcellPA=\Q@arcellPADD%
    \let\figdrawarrowBezier=\Q@arrowBezierDD%
    \let\figdrawarrowcircP=\Q@arrowcircPDD%
    \let\figdrawarrowcirc=\Q@arrowcircDD%
    \let\figdrawarrowhead=\Q@arrowheadDD%
    \let\figdrawarrow=\Q@arrowDD%
    \let\figdrawBezier=\Q@BezierDD%
    \let\figdrawcirc=\Q@circDD%
    \let\figdrawcurve=\Q@curveDD%
    \let\figdrawnormal=\Q@normalDD%
    }
\ctr@ld@f\def\initTD@{\Tr@isDimtrue\initb@undb@xTD\newt@rgetptfalse\newdis@bfalse%
    \let\c@lDCUn=\c@lDCUnTD%
    \let\c@lDCDeux=\c@lDCDeuxTD%
    \let\c@ldefproj=\c@ldefprojTD%
    \let\c@lproscal=\c@lproscalTD%
    \let\extr@ctC=\extr@ctCTD%
    \let\extr@ctCa=\extr@ctCaTD%
    \let\extr@ctCF=\extr@ctCFTD%
    \let\Figp@intreg=\Figp@intregTD%
    \let\Figpts@xes=\Figpts@xesTD%
    \let\getredf@ctB=\getredf@ctBTD%
    \let\n@rmeucSV=\n@rmeucSVTD\let\n@rmeuc=\n@rmeucTD\let\n@rmeucC\n@rmeucCTD\let\n@rminf=\n@rminfTD%
    \let\pr@dMatV=\pr@dMatVTD%
    \let\Q@@xes=\Q@@xesTD%
    \let\vecunit@=\vecunit@TD%
    \let\figcoord=\figcoordTD%
    \let\figgetangle=\figgetangleTD%
    \let\figpt=\figptTD%
    \let\figptBezier=\figptBezierTD%
    \let\figptbary=\figptbaryTD%
    \let\figptcirc=\figptcircTD%
    \let\figptcircumcenter=\figptcircumcenterTD%
    \let\figptcopy=\figptcopyTD%
    \let\figptcurvcenter=\figptcurvcenterTD%
    \let\figptinterlineplane=\figptinterlineplaneTD%
    \let\figptinterlines=\inters@cTD%
    \let\figptorthocenter=\figptorthocenterTD%
    \let\figptorthoprojline=\figptorthoprojlineTD%
    \let\figptorthoprojplane=\figptorthoprojplaneTD%
    \let\figptrot=\figptrotTD%
    \let\figptscontrol=\figptscontrolTD%
    \let\figptsintercirc=\figptsintercircTD%
    \let\figptsorthoprojline=\figptsorthoprojlineTD%
    \let\figptsorthoprojplane=\figptsorthoprojplaneTD%
    \let\figptsrot=\figptsrotTD%
    \let\figptssym=\figptssymTD%
    \let\figptstra=\figptstraTD%
    \let\figptsym=\figptsymTD%
    \let\figpttraC=\figpttraCTD%
    \let\figpttra=\figpttraTD%
    \let\figptvisilimSL=\figptvisilimSLTD%
    \let\figsetobdist=\figsetobdistTD%
    \let\figsettarget=\figsettargetTD%
    \let\figsetview=\figsetviewTD%
    \let\figvectDBezier=\figvectDBezierTD%
    \let\figvectN=\figvectNTD%
    \let\figvectNV=\figvectNVTD%
    \let\figvectP=\figvectPTD%
    \let\figvectU=\figvectUTD%
    \let\figdrawarccircP=\Q@arccircPTD%
    \let\figdrawarccirc=\Q@arccircTD%
    \let\figdrawarcell=\Q@arcellTD%
    \let\figdrawarcellPA=\Q@arcellPATD%
    \let\figdrawarrowBezier=\Q@arrowBezierTD%
    \let\figdrawarrowcircP=\Q@arrowcircPTD%
    \let\figdrawarrowcirc=\Q@arrowcircTD%
    \let\figdrawarrowhead=\Q@arrowheadTD%
    \let\figdrawarrow=\Q@arrowTD%
    \let\figdrawBezier=\Q@BezierTD%
    \let\figdrawcirc=\Q@circTD%
    \let\figdrawcurve=\Q@curveTD%
    }
\ctr@ld@f\def\un@v@ilable#1{\immediate\write16{*** The macro #1 is not available in the current context.}}
\ctr@ld@f\def\figinsert#1{{\def\t@xt@{#1}\relax%
    \ifx\t@xt@\empty\ifnum\@utoFInDone>\z@\Figinsert@\DefGIfilen@me,:\fi%
    \else\expandafter\FiginsertNu@#1 :\fi}\ignorespaces}
\ctr@ld@f\def\FiginsertNu@#1 #2:{\def\t@xt@{#1}\relax\ifx\t@xt@\empty\def\t@xt@{#2}%
    \ifx\t@xt@\empty\ifnum\@utoFInDone>\z@\Figinsert@\DefGIfilen@me,:\fi%
    \else\FiginsertNu@#2:\fi\else\expandafter\FiginsertNd@#1 #2:\fi}
\ctr@ld@f\def\FiginsertNd@#1#2:{\ifcat#1a\Figinsert@#1#2,:\else%
    \ifnum\@utoFInDone>\z@\Figinsert@\DefGIfilen@me,#1#2,:\fi\fi}
\ctr@ln@m\Sc@leFact
\ctr@ld@f\def\Figinsert@#1,#2:{\def\t@xt@{#2}\ifx\t@xt@\empty\xdef\Sc@leFact{1}\else%
    \X@rgdeux@#2\xdef\Sc@leFact{\@rgdeux}\fi%
    \Figdisc@rdLTS{#1}{\t@xt@}\@psfgetbb{\t@xt@}%
    \v@lX=\@psfllx\p@\v@lX=\ptpsT@pt\v@lX\v@lX=\Sc@leFact\v@lX%
    \v@lY=\@psflly\p@\v@lY=\ptpsT@pt\v@lY\v@lY=\Sc@leFact\v@lY%
    \b@undb@x{\v@lX}{\v@lY}%
    \v@lX=\@psfurx\p@\v@lX=\ptpsT@pt\v@lX\v@lX=\Sc@leFact\v@lX%
    \v@lY=\@psfury\p@\v@lY=\ptpsT@pt\v@lY\v@lY=\Sc@leFact\v@lY%
    \b@undb@x{\v@lX}{\v@lY}%
    \ifPDFm@ke\Figinclud@PDF{\t@xt@}{\Sc@leFact}\else%
    \v@lX=\c@nt pt\v@lX=\Sc@leFact\v@lX\edef\F@ct{\repdecn@mb{\v@lX}}%
    \ifx\TeXturesonMacOSltX\special{postscriptfile #1 vscale=\F@ct\space hscale=\F@ct}%
    \else\includegraphics{#1}\fi\fi%
    \message{[\t@xt@]}\ignorespaces}
\ctr@ld@f\def\Figdisc@rdLTS#1#2{\expandafter\Figdisc@rdLTS@#1 :#2}
\ctr@ld@f\def\Figdisc@rdLTS@#1 #2:#3{\def#3{#1}\relax\ifx#3\empty\expandafter\Figdisc@rdLTS@#2:#3\fi}
\ctr@ld@f\def\figinsertE#1{\FiginsertE@#1,:\ignorespaces}
\ctr@ld@f\def\FiginsertE@#1,#2:{{\def\t@xt@{#2}\ifx\t@xt@\empty\xdef\Sc@leFact{1}\else%
    \X@rgdeux@#2\xdef\Sc@leFact{\@rgdeux}\fi%
    \Figdisc@rdLTS{#1}{\t@xt@}\pdfximage{\t@xt@}%
    \setbox\Gb@x=\hbox{\pdfrefximage\pdflastximage}%
    \v@lX=\z@\v@lY=-\Sc@leFact\dp\Gb@x\b@undb@x{\v@lX}{\v@lY}%
    \advance\v@lX\Sc@leFact\wd\Gb@x\advance\v@lY\Sc@leFact\dp\Gb@x%
    \advance\v@lY\Sc@leFact\ht\Gb@x\b@undb@x{\v@lX}{\v@lY}%
    \v@lX=\Sc@leFact\wd\Gb@x\pdfximage width \v@lX {\t@xt@}%
    \rlap{\pdfrefximage\pdflastximage}\message{[\t@xt@]}}\ignorespaces}
\ctr@ld@f\def\X@rgdeux@#1,{\edef\@rgdeux{#1}}
\ctr@ln@m\figpt
\ctr@ld@f\def\figptDD#1:#2(#3,#4){\ifGR@cri\c@ntr@lnum{#1}%
    {\v@lX=#3\unit@\v@lY=#4\unit@\Fig@dmpt{#2}{\z@}}\ignorespaces\fi}
\ctr@ld@f\def\Fig@dmpt#1#2{\def\t@xt@{#1}\ifx\t@xt@\empty\def\B@@ltxt{\z@}%
    \else\expandafter\gdef\csname\objc@de T\endcsname{#1}\def\B@@ltxt{\@ne}\fi%
    \expandafter\xdef\csname\objc@de\endcsname{\ifitis@vect@r\C@dCl@svect%
    \else\C@dCl@spt\fi,\z@,\B@@ltxt/\the\v@lX,\the\v@lY,#2}}
\ctr@ld@f\def\C@dCl@spt{P}
\ctr@ld@f\def\C@dCl@svect{V}
\ctr@ln@m\c@@rdYZ
\ctr@ln@m\c@@rdY
\ctr@ld@f\def\figptTD#1:#2(#3,#4){\ifGR@cri\c@ntr@lnum{#1}%
    \def\c@@rdYZ{#4,0,0}\extrairelepremi@r\c@@rdY\de\c@@rdYZ%
    \extrairelepremi@r\c@@rdZ\de\c@@rdYZ%
    {\v@lX=#3\unit@\v@lY=\c@@rdY\unit@\v@lZ=\c@@rdZ\unit@\Fig@dmpt{#2}{\the\v@lZ}%
    \b@undb@xTD{\v@lX}{\v@lY}{\v@lZ}}\ignorespaces\fi}
\ctr@ln@m\Figp@intreg
\ctr@ld@f\def\Figp@intregDD#1:#2(#3,#4){\c@ntr@lnum{#1}%
    {\result@t=#4\v@lX=#3\v@lY=\result@t\Fig@dmpt{#2}{\z@}}\ignorespaces}
\ctr@ld@f\def\Figp@intregTD#1:#2(#3,#4){\c@ntr@lnum{#1}%
    \def\c@@rdYZ{#4,\z@,\z@}\extrairelepremi@r\c@@rdY\de\c@@rdYZ%
    \extrairelepremi@r\c@@rdZ\de\c@@rdYZ%
    {\v@lX=#3\v@lY=\c@@rdY\v@lZ=\c@@rdZ\Fig@dmpt{#2}{\the\v@lZ}%
    \b@undb@xTD{\v@lX}{\v@lY}{\v@lZ}}\ignorespaces}
\ctr@ln@m\figptBezier
\ctr@ld@f\def\figptBezierDD#1:#2:#3[#4,#5,#6,#7]{\ifGR@cri{\s@uvc@ntr@l\et@tfigptBezierDD%
    \FigptBezier@#3[#4,#5,#6,#7]\Figp@intregDD#1:{#2}(\v@lX,\v@lY)%
    \resetc@ntr@l\et@tfigptBezierDD}\ignorespaces\fi}
\ctr@ld@f\def\figptBezierTD#1:#2:#3[#4,#5,#6,#7]{\ifGR@cri{\s@uvc@ntr@l\et@tfigptBezierTD%
    \FigptBezier@#3[#4,#5,#6,#7]\Figp@intregTD#1:{#2}(\v@lX,\v@lY,\v@lZ)%
    \resetc@ntr@l\et@tfigptBezierTD}\ignorespaces\fi}
\ctr@ld@f\def\FigptBezier@#1[#2,#3,#4,#5]{\setc@ntr@l{2}%
    \edef\T@{#1}\v@leur=\p@\advance\v@leur-#1pt\edef\UNmT@{\repdecn@mb{\v@leur}}%
    \figptcopy-4:/#2/\figptcopy-3:/#3/\figptcopy-2:/#4/\figptcopy-1:/#5/%
    \l@mbd@un=-4 \l@mbd@de=-\thr@@\p@rtent=\m@ne\c@lDecast%
    \l@mbd@un=-4 \l@mbd@de=-\thr@@\p@rtent=-\tw@\c@lDecast%
    \l@mbd@un=-4 \l@mbd@de=-\thr@@\p@rtent=-\thr@@\c@lDecast\Figg@tXY{-4}}
\ctr@ln@m\c@lDCUn
\ctr@ld@f\def\c@lDCUnDD#1#2{\Figg@tXY{#1}\v@lX=\UNmT@\v@lX\v@lY=\UNmT@\v@lY%
    \Figg@tXYa{#2}\advance\v@lX\T@\v@lXa\advance\v@lY\T@\v@lYa%
    \Figp@intregDD#1:(\v@lX,\v@lY)}
\ctr@ld@f\def\c@lDCUnTD#1#2{\Figg@tXY{#1}\v@lX=\UNmT@\v@lX\v@lY=\UNmT@\v@lY\v@lZ=\UNmT@\v@lZ%
    \Figg@tXYa{#2}\advance\v@lX\T@\v@lXa\advance\v@lY\T@\v@lYa\advance\v@lZ\T@\v@lZa%
    \Figp@intregTD#1:(\v@lX,\v@lY,\v@lZ)}
\ctr@ld@f\def\c@lDecast{\relax\ifnum\l@mbd@un<\p@rtent\c@lDCUn{\l@mbd@un}{\l@mbd@de}%
    \advance\l@mbd@un\@ne\advance\l@mbd@de\@ne\c@lDecast\fi}
\ctr@ld@f\def\figptmap#1:#2=#3/#4/#5/{\ifGR@cri{\s@uvc@ntr@l\et@tfigptmap%
    \setc@ntr@l{2}\figvectP-1[#4,#3]\Figg@tXY{-1}%
    \pr@dMatV/#5/\figpttra#1:{#2}=#4/1,-1/%
    \resetc@ntr@l\et@tfigptmap}\ignorespaces\fi}
\ctr@ln@m\pr@dMatV
\ctr@ld@f\def\pr@dMatVDD/#1,#2;#3,#4/{\v@lXa=#1\v@lX\advance\v@lXa#2\v@lY%
    \v@lYa=#3\v@lX\advance\v@lYa#4\v@lY\Figv@ctCreg-1(\v@lXa,\v@lYa)}
\ctr@ld@f\def\pr@dMatVTD/#1,#2,#3;#4,#5,#6;#7,#8,#9/{%
    \v@lXa=#1\v@lX\advance\v@lXa#2\v@lY\advance\v@lXa#3\v@lZ%
    \v@lYa=#4\v@lX\advance\v@lYa#5\v@lY\advance\v@lYa#6\v@lZ%
    \v@lZa=#7\v@lX\advance\v@lZa#8\v@lY\advance\v@lZa#9\v@lZ%
    \Figv@ctCreg-1(\v@lXa,\v@lYa,\v@lZa)}
\ctr@ln@m\figptbary
\ctr@ld@f\def\figptbaryDD#1:#2[#3;#4]{\ifGR@cri{\edef\list@num{#3}\extrairelepremi@r\p@int\de\list@num%
    \s@mme=\z@\@ecfor\c@ef:=#4\do{\advance\s@mme\c@ef}%
    \edef\listec@ef{#4,0}\extrairelepremi@r\c@ef\de\listec@ef%
    \Figg@tXY{\p@int}\divide\v@lX\s@mme\divide\v@lY\s@mme%
    \multiply\v@lX\c@ef\multiply\v@lY\c@ef%
    \@ecfor\p@int:=\list@num\do{\extrairelepremi@r\c@ef\de\listec@ef%
           \Figg@tXYa{\p@int}\divide\v@lXa\s@mme\divide\v@lYa\s@mme%
           \multiply\v@lXa\c@ef\multiply\v@lYa\c@ef%
           \advance\v@lX\v@lXa\advance\v@lY\v@lYa}%
    \Figp@intregDD#1:{#2}(\v@lX,\v@lY)}\ignorespaces\fi}
\ctr@ld@f\def\figptbaryTD#1:#2[#3;#4]{\ifGR@cri{\edef\list@num{#3}\extrairelepremi@r\p@int\de\list@num%
    \s@mme=\z@\@ecfor\c@ef:=#4\do{\advance\s@mme\c@ef}%
    \edef\listec@ef{#4,0}\extrairelepremi@r\c@ef\de\listec@ef%
    \Figg@tXY{\p@int}\divide\v@lX\s@mme\divide\v@lY\s@mme\divide\v@lZ\s@mme%
    \multiply\v@lX\c@ef\multiply\v@lY\c@ef\multiply\v@lZ\c@ef%
    \@ecfor\p@int:=\list@num\do{\extrairelepremi@r\c@ef\de\listec@ef%
           \Figg@tXYa{\p@int}\divide\v@lXa\s@mme\divide\v@lYa\s@mme\divide\v@lZa\s@mme%
           \multiply\v@lXa\c@ef\multiply\v@lYa\c@ef\multiply\v@lZa\c@ef%
           \advance\v@lX\v@lXa\advance\v@lY\v@lYa\advance\v@lZ\v@lZa}%
    \Figp@intregTD#1:{#2}(\v@lX,\v@lY,\v@lZ)}\ignorespaces\fi}
\ctr@ld@f\def\figptbaryR#1:#2[#3;#4]{\ifGR@cri{%
    \v@leur=\z@\@ecfor\c@ef:=#4\do{\maxim@m{\v@lmax}{\c@ef pt}{-\c@ef pt}%
    \ifdim\v@lmax>\v@leur\v@leur=\v@lmax\fi}%
    \ifdim\v@leur<\p@\f@ctech=\@M\else\ifdim\v@leur<\t@n\p@\f@ctech=\@m\else%
    \ifdim\v@leur<\c@nt\p@\f@ctech=\c@nt\else\ifdim\v@leur<\@m\p@\f@ctech=\t@n\else%
    \f@ctech=\@ne\fi\fi\fi\fi%
    \def\listec@ef{0}%
    \@ecfor\c@ef:=#4\do{\sc@lec@nvRI{\c@ef pt}\edef\listec@ef{\listec@ef,\the\s@mme}}%
    \extrairelepremi@r\c@ef\de\listec@ef\figptbary#1:#2[#3;\listec@ef]}\ignorespaces\fi}
\ctr@ld@f\def\sc@lec@nvRI#1{\v@leur=#1\p@rtentiere{\s@mme}{\v@leur}\advance\v@leur-\s@mme\p@%
    \multiply\v@leur\f@ctech\p@rtentiere{\p@rtent}{\v@leur}%
    \multiply\s@mme\f@ctech\advance\s@mme\p@rtent}
\ctr@ln@m\figptcirc
\ctr@ld@f\def\figptcircDD#1:#2:#3;#4(#5){\ifGR@cri{\s@uvc@ntr@l\et@tfigptcircDD%
    \c@lptellDD#1:{#2}:#3;#4,#4(#5)\resetc@ntr@l\et@tfigptcircDD}\ignorespaces\fi}
\ctr@ld@f\def\figptcircTD#1:#2:#3,#4,#5;#6(#7){\ifGR@cri{\s@uvc@ntr@l\et@tfigptcircTD%
    \setc@ntr@l{2}\c@lExtAxes#3,#4,#5(#6)\figptellP#1:{#2}:#3,-4,-5(#7)%
    \resetc@ntr@l\et@tfigptcircTD}\ignorespaces\fi}
\ctr@ln@m\figptcircumcenter
\ctr@ld@f\def\figptcircumcenterDD#1:#2[#3,#4,#5]{\ifGR@cri{\s@uvc@ntr@l\et@tfigptcircumcenterDD%
    \setc@ntr@l{2}\figvectNDD-5[#3,#4]\figptbaryDD-3:[#3,#4;1,1]%
                  \figvectNDD-6[#4,#5]\figptbaryDD-4:[#4,#5;1,1]%
    \resetc@ntr@l{2}\inters@cDD#1:{#2}[-3,-5;-4,-6]%
    \resetc@ntr@l\et@tfigptcircumcenterDD}\ignorespaces\fi}
\ctr@ld@f\def\figptcircumcenterTD#1:#2[#3,#4,#5]{\ifGR@cri{\s@uvc@ntr@l\et@tfigptcircumcenterTD%
    \setc@ntr@l{2}\figvectNTD-1[#3,#4,#5]%
    \figvectPTD-3[#3,#4]\figvectNVTD-5[-1,-3]\figptbaryTD-3:[#3,#4;1,1]%
    \figvectPTD-4[#4,#5]\figvectNVTD-6[-1,-4]\figptbaryTD-4:[#4,#5;1,1]%
    \resetc@ntr@l{2}\inters@cTD#1:{#2}[-3,-5;-4,-6]%
    \resetc@ntr@l\et@tfigptcircumcenterTD}\ignorespaces\fi}
\ctr@ln@m\figptcopy
\ctr@ld@f\def\figptcopyDD#1:#2/#3/{\ifGR@cri{\Figg@tXY{#3}%
    \Figp@intregDD#1:{#2}(\v@lX,\v@lY)}\ignorespaces\fi}
\ctr@ld@f\def\figptcopyTD#1:#2/#3/{\ifGR@cri{\Figg@tXY{#3}%
    \Figp@intregTD#1:{#2}(\v@lX,\v@lY,\v@lZ)}\ignorespaces\fi}
\ctr@ln@m\figptcurvcenter
\ctr@ld@f\def\figptcurvcenterDD#1:#2:#3[#4,#5,#6,#7]{\ifGR@cri{\s@uvc@ntr@l\et@tfigptcurvcenterDD%
    \setc@ntr@l{2}\c@lcurvradDD#3[#4,#5,#6,#7]\edef\Sprim@{\repdecn@mb{\result@t}}%
    \figptBezierDD-1::#3[#4,#5,#6,#7]\figpttraDD#1:{#2}=-1/\Sprim@,-5/%
    \resetc@ntr@l\et@tfigptcurvcenterDD}\ignorespaces\fi}
\ctr@ld@f\def\figptcurvcenterTD#1:#2:#3[#4,#5,#6,#7]{\ifGR@cri{\s@uvc@ntr@l\et@tfigptcurvcenterTD%
    \setc@ntr@l{2}\figvectDBezierTD -5:1,#3[#4,#5,#6,#7]%
    \figvectDBezierTD -6:2,#3[#4,#5,#6,#7]\vecunit@TD{-5}{-5}%
    \edef\Sprim@{\repdecn@mb{\result@t}}\figvectNVTD-1[-6,-5]%
    \figvectNVTD-5[-5,-1]\c@lproscalTD\v@leur[-6,-5]%
    \invers@{\v@leur}{\v@leur}\v@leur=\Sprim@\v@leur\v@leur=\Sprim@\v@leur%
    \figptBezierTD-1::#3[#4,#5,#6,#7]\edef\Sprim@{\repdecn@mb{\v@leur}}%
    \figpttraTD#1:{#2}=-1/\Sprim@,-5/\resetc@ntr@l\et@tfigptcurvcenterTD}\ignorespaces\fi}
\ctr@ld@f\def\c@lcurvradDD#1[#2,#3,#4,#5]{{\figvectDBezierDD -5:1,#1[#2,#3,#4,#5]%
    \figvectDBezierDD -6:2,#1[#2,#3,#4,#5]\vecunit@DD{-5}{-5}%
    \edef\Sprim@{\repdecn@mb{\result@t}}\figvectNVDD-5[-5]\c@lproscalDD\v@leur[-6,-5]%
    \invers@{\v@leur}{\v@leur}\v@leur=\Sprim@\v@leur\v@leur=\Sprim@\v@leur%
    \global\result@t=\v@leur}}
\ctr@ln@m\figptell
\ctr@ld@f\def\figptellDD#1:#2:#3;#4,#5(#6,#7){\ifGR@cri{\s@uvc@ntr@l\et@tfigptell%
    \c@lptellDD#1::#3;#4,#5(#6)\figptrotDD#1:{#2}=#1/#3,#7/%
    \resetc@ntr@l\et@tfigptell}\ignorespaces\fi}
\ctr@ld@f\def\c@lptellDD#1:#2:#3;#4,#5(#6){\c@ssin{\C@}{\S@}{#6}\v@lmin=\C@ pt\v@lmax=\S@ pt%
    \v@lmin=#4\v@lmin\v@lmax=#5\v@lmax%
    \edef\Xc@mp{\repdecn@mb{\v@lmin}}\edef\Yc@mp{\repdecn@mb{\v@lmax}}%
    \setc@ntr@l{2}\figvectC-1(\Xc@mp,\Yc@mp)\figpttraDD#1:{#2}=#3/1,-1/}
\ctr@ld@f\def\figptellP#1:#2:#3,#4,#5(#6){\ifGR@cri{\s@uvc@ntr@l\et@tfigptellP%
    \setc@ntr@l{2}\figvectP-1[#3,#4]\figvectP-2[#3,#5]%
    \v@leur=#6pt\c@lptellP{#3}{-1}{-2}\figptcopy#1:{#2}/-3/%
    \resetc@ntr@l\et@tfigptellP}\ignorespaces\fi}
\ctr@ln@m\@ngle
\ctr@ld@f\def\c@lptellP#1#2#3{\edef\@ngle{\repdecn@mb\v@leur}\c@ssin{\C@}{\S@}{\@ngle}%
    \figpttra-3:=#1/\C@,#2/\figpttra-3:=-3/\S@,#3/}
\ctr@ln@m\figptendnormal
\ctr@ld@f\def\figptendnormalDD#1:#2:#3,#4[#5,#6]{\ifGR@cri{\s@uvc@ntr@l\et@tfigptendnormal%
    \Figg@tXYa{#5}\Figg@tXY{#6}%
    \advance\v@lX-\v@lXa\advance\v@lY-\v@lYa%
    \setc@ntr@l{2}\Figv@ctCreg-1(\v@lX,\v@lY)\vecunit@{-1}{-1}\Figg@tXY{-1}%
    \delt@=#3\unit@\maxim@m{\delt@}{\delt@}{-\delt@}\edef\l@ngueur{\repdecn@mb{\delt@}}%
    \v@lX=\l@ngueur\v@lX\v@lY=\l@ngueur\v@lY%
    \delt@=\p@\advance\delt@-#4pt\edef\l@ngueur{\repdecn@mb{\delt@}}%
    \figptbaryR-1:[#5,#6;#4,\l@ngueur]\Figg@tXYa{-1}%
    \advance\v@lXa\v@lY\advance\v@lYa-\v@lX%
    \setc@ntr@l{1}\Figp@intregDD#1:{#2}(\v@lXa,\v@lYa)\resetc@ntr@l\et@tfigptendnormal}%
    \ignorespaces\fi}
\ctr@ld@f\def\figptexcenter#1:#2[#3,#4,#5]{\ifGR@cri{\let@xte={-}% for computational macro
    \Figptexinsc@nter#1:#2[#3,#4,#5]}\ignorespaces\fi}
\ctr@ld@f\def\figptincenter#1:#2[#3,#4,#5]{\ifGR@cri{\let@xte={}% for computational macro
    \Figptexinsc@nter#1:#2[#3,#4,#5]}\ignorespaces\fi}
\ctr@ld@f% for compatibility with older versions
\ctr@ld@f\def\Figptexinsc@nter#1:#2[#3,#4,#5]{%
    \figgetdist\LA@[#4,#5]\figgetdist\LB@[#3,#5]\figgetdist\LC@[#3,#4]%
    \figptbaryR#1:{#2}[#3,#4,#5;\the\let@xte\LA@,\LB@,\LC@]}
\ctr@ln@m\figptinterlineplane
\ctr@ld@f\def\figptinterlineplaneDD{\un@v@ilable{figptinterlineplane}}
\ctr@ld@f\def\figptinterlineplaneTD#1:#2[#3,#4;#5,#6]{\ifGR@cri{\s@uvc@ntr@l\et@tfigptinterlineplane%
    \setc@ntr@l{2}\figvectPTD-1[#3,#5]\vecunit@TD{-2}{#6}%
    \r@pPSTD\v@leur[-2,-1,#4]\edef\v@lcoef{\repdecn@mb{\v@leur}}%
    \figpttraTD#1:{#2}=#3/\v@lcoef,#4/\resetc@ntr@l\et@tfigptinterlineplane}\ignorespaces\fi}
\ctr@ln@m\figptorthocenter
\ctr@ld@f\def\figptorthocenterDD#1:#2[#3,#4,#5]{\ifGR@cri{\s@uvc@ntr@l\et@tfigptorthocenterDD%
    \setc@ntr@l{2}\figvectNDD-3[#3,#4]\figvectNDD-4[#4,#5]%
    \resetc@ntr@l{2}\inters@cDD#1:{#2}[#5,-3;#3,-4]%
    \resetc@ntr@l\et@tfigptorthocenterDD}\ignorespaces\fi}
\ctr@ld@f\def\figptorthocenterTD#1:#2[#3,#4,#5]{\ifGR@cri{\s@uvc@ntr@l\et@tfigptorthocenterTD%
    \setc@ntr@l{2}\figvectNTD-1[#3,#4,#5]%
    \figvectPTD-2[#3,#4]\figvectNVTD-3[-1,-2]%
    \figvectPTD-2[#4,#5]\figvectNVTD-4[-1,-2]%
    \resetc@ntr@l{2}\inters@cTD#1:{#2}[#5,-3;#3,-4]%
    \resetc@ntr@l\et@tfigptorthocenterTD}\ignorespaces\fi}
\ctr@ln@m\figptorthoprojline
\ctr@ld@f\def\figptorthoprojlineDD#1:#2=#3/#4,#5/{\ifGR@cri{\s@uvc@ntr@l\et@tfigptorthoprojlineDD%
    \setc@ntr@l{2}\figvectPDD-3[#4,#5]\figvectNVDD-4[-3]\resetc@ntr@l{2}%
    \inters@cDD#1:{#2}[#3,-4;#4,-3]\resetc@ntr@l\et@tfigptorthoprojlineDD}\ignorespaces\fi}
\ctr@ld@f\def\figptorthoprojlineTD#1:#2=#3/#4,#5/{\ifGR@cri{\s@uvc@ntr@l\et@tfigptorthoprojlineTD%
    \setc@ntr@l{2}\figvectPTD-1[#4,#3]\figvectPTD-2[#4,#5]\vecunit@TD{-2}{-2}%
    \c@lproscalTD\v@leur[-1,-2]\edef\v@lcoef{\repdecn@mb{\v@leur}}%
    \figpttraTD#1:{#2}=#4/\v@lcoef,-2/\resetc@ntr@l\et@tfigptorthoprojlineTD}\ignorespaces\fi}
\ctr@ln@m\figptorthoprojplane
\ctr@ld@f\def\figptorthoprojplaneDD{\un@v@ilable{figptorthoprojplane}}
\ctr@ld@f\def\figptorthoprojplaneTD#1:#2=#3/#4,#5/{\ifGR@cri{\s@uvc@ntr@l\et@tfigptorthoprojplane%
    \setc@ntr@l{2}\figvectPTD-1[#3,#4]\vecunit@TD{-2}{#5}%
    \c@lproscalTD\v@leur[-1,-2]\edef\v@lcoef{\repdecn@mb{\v@leur}}%
    \figpttraTD#1:{#2}=#3/\v@lcoef,-2/\resetc@ntr@l\et@tfigptorthoprojplane}\ignorespaces\fi}
\ctr@ld@f\def\figpthom#1:#2=#3/#4,#5/{\ifGR@cri{\s@uvc@ntr@l\et@tfigpthom%
    \setc@ntr@l{2}\figvectP-1[#4,#3]\figpttra#1:{#2}=#4/#5,-1/%
    \resetc@ntr@l\et@tfigpthom}\ignorespaces\fi}
\ctr@ld@f\def\figptinv#1:#2=#3/#4,#5/{\ifGR@cri{\s@uvc@ntr@l\et@tfigptinv%
    \setc@ntr@l{2}\figvectP-1[#4,#3]\Figg@tXY{-1}%
    \getredf@ctB\f@ctech\n@rmeucC{\delt@}{-1}%
    \delt@=\ptT@unit@\delt@\delt@=\ptT@unit@\delt@%
    \invers@{\delt@}{\delt@}\multiply\f@ctech\f@ctech\divide\delt@\f@ctech%
    \delt@=#5\delt@\edef\v@lcoef{\repdecn@mb{\delt@}}\figpttra#1:{#2}=#4/\v@lcoef,-1/%
    \resetc@ntr@l\et@tfigptinv}\ignorespaces\fi}
\ctr@ln@m\figptrot
\ctr@ld@f\def\figptrotDD#1:#2=#3/#4,#5/{\ifGR@cri{\s@uvc@ntr@l\et@tfigptrotDD%
    \c@ssin{\C@}{\S@}{#5}\setc@ntr@l{2}\figvectPDD-1[#4,#3]\Figg@tXY{-1}%
    \v@lXa=\C@\v@lX\advance\v@lXa-\S@\v@lY%
    \v@lYa=\S@\v@lX\advance\v@lYa\C@\v@lY%
    \Figv@ctCreg-1(\v@lXa,\v@lYa)\figpttraDD#1:{#2}=#4/1,-1/%
    \resetc@ntr@l\et@tfigptrotDD}\ignorespaces\fi}
\ctr@ld@f\def\figptrotTD#1:#2=#3/#4,#5,#6/{\ifGR@cri{\s@uvc@ntr@l\et@tfigptrotTD%
    \c@ssin{\C@}{\S@}{#5}%
    \setc@ntr@l{2}\figptorthoprojplaneTD-3:=#4/#3,#6/\figvectPTD-2[-3,#3]%
    \n@rmeucTD\v@leur{-2}\ifdim\v@leur<\Cepsil@n\Figg@tXYa{#3}\else%
    \edef\v@lcoef{\repdecn@mb{\v@leur}}\figvectNVTD-1[#6,-2]%
    \Figg@tXYa{-1}\v@lXa=\v@lcoef\v@lXa\v@lYa=\v@lcoef\v@lYa\v@lZa=\v@lcoef\v@lZa%
    \v@lXa=\S@\v@lXa\v@lYa=\S@\v@lYa\v@lZa=\S@\v@lZa\Figg@tXY{-2}%
    \advance\v@lXa\C@\v@lX\advance\v@lYa\C@\v@lY\advance\v@lZa\C@\v@lZ%
    \Figg@tXY{-3}\advance\v@lXa\v@lX\advance\v@lYa\v@lY\advance\v@lZa\v@lZ\fi%
    \Figp@intregTD#1:{#2}(\v@lXa,\v@lYa,\v@lZa)\resetc@ntr@l\et@tfigptrotTD}\ignorespaces\fi}
\ctr@ln@m\figptsym
\ctr@ld@f\def\figptsymDD#1:#2=#3/#4,#5/{\ifGR@cri{\s@uvc@ntr@l\et@tfigptsymDD%
    \resetc@ntr@l{2}\figptorthoprojlineDD-5:=#3/#4,#5/\figvectPDD-2[#3,-5]%
    \figpttraDD#1:{#2}=#3/2,-2/\resetc@ntr@l\et@tfigptsymDD}\ignorespaces\fi}
\ctr@ld@f\def\figptsymTD#1:#2=#3/#4,#5/{\ifGR@cri{\s@uvc@ntr@l\et@tfigptsymTD%
    \resetc@ntr@l{2}\figptorthoprojplaneTD-3:=#3/#4,#5/\figvectPTD-2[#3,-3]%
    \figpttraTD#1:{#2}=#3/2,-2/\resetc@ntr@l\et@tfigptsymTD}\ignorespaces\fi}
\ctr@ln@m\figpttra
\ctr@ld@f\def\figpttraDD#1:#2=#3/#4,#5/{\ifGR@cri{\Figg@tXYa{#5}\v@lXa=#4\v@lXa\v@lYa=#4\v@lYa%
    \Figg@tXY{#3}\advance\v@lX\v@lXa\advance\v@lY\v@lYa%
    \Figp@intregDD#1:{#2}(\v@lX,\v@lY)}\ignorespaces\fi}
\ctr@ld@f\def\figpttraTD#1:#2=#3/#4,#5/{\ifGR@cri{\Figg@tXYa{#5}\v@lXa=#4\v@lXa\v@lYa=#4\v@lYa%
    \v@lZa=#4\v@lZa\Figg@tXY{#3}\advance\v@lX\v@lXa\advance\v@lY\v@lYa%
    \advance\v@lZ\v@lZa\Figp@intregTD#1:{#2}(\v@lX,\v@lY,\v@lZ)}\ignorespaces\fi}
\ctr@ln@m\figpttraC
\ctr@ld@f\def\figpttraCDD#1:#2=#3/#4,#5/{\ifGR@cri{\v@lXa=#4\unit@\v@lYa=#5\unit@%
    \Figg@tXY{#3}\advance\v@lX\v@lXa\advance\v@lY\v@lYa%
    \Figp@intregDD#1:{#2}(\v@lX,\v@lY)}\ignorespaces\fi}
\ctr@ld@f\def\figpttraCTD#1:#2=#3/#4,#5,#6/{\ifGR@cri{\v@lXa=#4\unit@\v@lYa=#5\unit@\v@lZa=#6\unit@%
    \Figg@tXY{#3}\advance\v@lX\v@lXa\advance\v@lY\v@lYa\advance\v@lZ\v@lZa%
    \Figp@intregTD#1:{#2}(\v@lX,\v@lY,\v@lZ)}\ignorespaces\fi}
\ctr@ld@f\def\figptsaxes#1:#2(#3){\ifGR@cri{\an@lys@xes#3,:\ifx\t@xt@\empty%
    \ifTr@isDim\Figpts@xes#1:#2(0,#3,0,#3,0,#3)\else\Figpts@xes#1:#2(0,#3,0,#3)\fi%
    \else\Figpts@xes#1:#2(#3)\fi}\ignorespaces\fi}
\ctr@ln@m\Figpts@xes
\ctr@ld@f\def\Figpts@xesDD#1:#2(#3,#4,#5,#6){%
    \s@mme=#1\figpttraC\the\s@mme:$x$=#2/#4,0/%
    \advance\s@mme\@ne\figpttraC\the\s@mme:$y$=#2/0,#6/}
\ctr@ld@f\def\Figpts@xesTD#1:#2(#3,#4,#5,#6,#7,#8){%
    \s@mme=#1\figpttraC\the\s@mme:$x$=#2/#4,0,0/%
    \advance\s@mme\@ne\figpttraC\the\s@mme:$y$=#2/0,#6,0/%
    \advance\s@mme\@ne\figpttraC\the\s@mme:$z$=#2/0,0,#8/}
\ctr@ld@f\def\figptsmap#1=#2/#3/#4/{\ifGR@cri{\s@uvc@ntr@l\et@tfigptsmap%
    \setc@ntr@l{2}\def\list@num{#2}\s@mme=#1%
    \@ecfor\p@int:=\list@num\do{\figvectP-1[#3,\p@int]\Figg@tXY{-1}%
    \pr@dMatV/#4/\figpttra\the\s@mme:=#3/1,-1/\advance\s@mme\@ne}%
    \resetc@ntr@l\et@tfigptsmap}\ignorespaces\fi}
\ctr@ln@m\figptscontrol
\ctr@ld@f\def\figptscontrolDD#1[#2,#3,#4,#5]{\ifGR@cri{\s@uvc@ntr@l\et@tfigptscontrolDD\setc@ntr@l{2}%
    \v@lX=\z@\v@lY=\z@\Figtr@nptDD{-5}{#2}\Figtr@nptDD{2}{#5}%
    \divide\v@lX\@vi\divide\v@lY\@vi%
    \Figtr@nptDD{3}{#3}\Figtr@nptDD{-1.5}{#4}\Figp@intregDD-1:(\v@lX,\v@lY)%
    \v@lX=\z@\v@lY=\z@\Figtr@nptDD{2}{#2}\Figtr@nptDD{-5}{#5}%
    \divide\v@lX\@vi\divide\v@lY\@vi\Figtr@nptDD{-1.5}{#3}\Figtr@nptDD{3}{#4}%
    \s@mme=#1\advance\s@mme\@ne\Figp@intregDD\the\s@mme:(\v@lX,\v@lY)%
    \figptcopyDD#1:/-1/\resetc@ntr@l\et@tfigptscontrolDD}\ignorespaces\fi}
\ctr@ld@f\def\figptscontrolTD#1[#2,#3,#4,#5]{\ifGR@cri{\s@uvc@ntr@l\et@tfigptscontrolTD\setc@ntr@l{2}%
    \v@lX=\z@\v@lY=\z@\v@lZ=\z@\Figtr@nptTD{-5}{#2}\Figtr@nptTD{2}{#5}%
    \divide\v@lX\@vi\divide\v@lY\@vi\divide\v@lZ\@vi%
    \Figtr@nptTD{3}{#3}\Figtr@nptTD{-1.5}{#4}\Figp@intregTD-1:(\v@lX,\v@lY,\v@lZ)%
    \v@lX=\z@\v@lY=\z@\v@lZ=\z@\Figtr@nptTD{2}{#2}\Figtr@nptTD{-5}{#5}%
    \divide\v@lX\@vi\divide\v@lY\@vi\divide\v@lZ\@vi\Figtr@nptTD{-1.5}{#3}\Figtr@nptTD{3}{#4}%
    \s@mme=#1\advance\s@mme\@ne\Figp@intregTD\the\s@mme:(\v@lX,\v@lY,\v@lZ)%
    \figptcopyTD#1:/-1/\resetc@ntr@l\et@tfigptscontrolTD}\ignorespaces\fi}
\ctr@ld@f\def\Figtr@nptDD#1#2{\Figg@tXYa{#2}\v@lXa=#1\v@lXa\v@lYa=#1\v@lYa%
    \advance\v@lX\v@lXa\advance\v@lY\v@lYa}
\ctr@ld@f\def\Figtr@nptTD#1#2{\Figg@tXYa{#2}\v@lXa=#1\v@lXa\v@lYa=#1\v@lYa\v@lZa=#1\v@lZa%
    \advance\v@lX\v@lXa\advance\v@lY\v@lYa\advance\v@lZ\v@lZa}
\ctr@ld@f\def\figptscontrolcurve#1,#2[#3]{\ifGR@cri{\s@uvc@ntr@l\et@tfigptscontrolcurve%
    \def\list@num{#3}\extrairelepremi@r\Ak@\de\list@num%
    \extrairelepremi@r\Ai@\de\list@num\extrairelepremi@r\Aj@\de\list@num%
    \s@mme=#1\figptcopy\the\s@mme:/\Ai@/%
    \setc@ntr@l{2}\figvectP -1[\Ak@,\Aj@]%
    \@ecfor\Ak@:=\list@num\do{\advance\s@mme\@ne\figpttra\the\s@mme:=\Ai@/\curv@roundness,-1/%
       \figvectP -1[\Ai@,\Ak@]\advance\s@mme\@ne\figpttra\the\s@mme:=\Aj@/-\curv@roundness,-1/%
       \advance\s@mme\@ne\figptcopy\the\s@mme:/\Aj@/%
       \edef\Ai@{\Aj@}\edef\Aj@{\Ak@}}\advance\s@mme-#1\divide\s@mme\thr@@%
       \xdef#2{\the\s@mme}%
    \resetc@ntr@l\et@tfigptscontrolcurve}\ignorespaces\fi}
\ctr@ln@m\figptsintercirc
\ctr@ld@f\def\figptsintercircDD#1[#2,#3;#4,#5]{\ifGR@cri{\s@uvc@ntr@l\et@tfigptsintercircDD%
    \setc@ntr@l{2}\let\c@lNVintc=\c@lNVintcDD\Figptsintercirc@#1[#2,#3;#4,#5]%    
    \resetc@ntr@l\et@tfigptsintercircDD}\ignorespaces\fi}
\ctr@ld@f\def\figptsintercircTD#1[#2,#3;#4,#5;#6]{\ifGR@cri{\s@uvc@ntr@l\et@tfigptsintercircTD%
    \setc@ntr@l{2}\let\c@lNVintc=\c@lNVintcTD\vecunitC@TD[#2,#6]%
    \Figv@ctCreg-3(\v@lX,\v@lY,\v@lZ)\Figptsintercirc@#1[#2,#3;#4,#5]%
    \resetc@ntr@l\et@tfigptsintercircTD}\ignorespaces\fi}
\ctr@ld@f\def\Figptsintercirc@#1[#2,#3;#4,#5]{\figvectP-1[#2,#4]%
    \vecunit@{-1}{-1}\delt@=\result@t\f@ctech=\result@tent%
    \s@mme=#1\advance\s@mme\@ne\figptcopy#1:/#2/\figptcopy\the\s@mme:/#4/%
    \ifdim\delt@=\z@\else%
    \v@lmin=#3\unit@\v@lmax=#5\unit@\v@leur=\v@lmin\advance\v@leur\v@lmax%
    \ifdim\v@leur>\delt@%
    \v@leur=\v@lmin\advance\v@leur-\v@lmax\maxim@m{\v@leur}{\v@leur}{-\v@leur}%
    \ifdim\v@leur<\delt@%
    \divide\v@lmin\f@ctech\divide\v@lmax\f@ctech\divide\delt@\f@ctech%
    \v@lmin=\repdecn@mb{\v@lmin}\v@lmin\v@lmax=\repdecn@mb{\v@lmax}\v@lmax%
    \invers@{\v@leur}{\delt@}\advance\v@lmax-\v@lmin%
    \v@lmax=-\repdecn@mb{\v@leur}\v@lmax\advance\delt@\v@lmax\delt@=.5\delt@%
    \v@lmax=\delt@\multiply\v@lmax\f@ctech%
    \edef\t@ille{\repdecn@mb{\v@lmax}}\figpttra-2:=#2/\t@ille,-1/%
    \delt@=\repdecn@mb{\delt@}\delt@\advance\v@lmin-\delt@%
    \sqrt@{\v@leur}{\v@lmin}\multiply\v@leur\f@ctech\edef\t@ille{\repdecn@mb{\v@leur}}%
    \c@lNVintc\figpttra#1:=-2/-\t@ille,-1/\figpttra\the\s@mme:=-2/\t@ille,-1/\fi\fi\fi}
\ctr@ld@f\def\c@lNVintcDD{\Figg@tXY{-1}\Figv@ctCreg-1(-\v@lY,\v@lX)} % <=> \figvectNVDD-1[-1]
\ctr@ld@f\def\c@lNVintcTD{{\Figg@tXY{-3}\v@lmin=\v@lX\v@lmax=\v@lY\v@leur=\v@lZ%
    \Figg@tXY{-1}\c@lprovec{-3}\vecunit@{-3}{-3}% <=> \figvectNVTD-3[-1,-3]\vecunit@{-3}{-3}
    \Figg@tXY{-1}\v@lmin=\v@lX\v@lmax=\v@lY%
    \v@leur=\v@lZ\Figg@tXY{-3}\c@lprovec{-1}}} % <=> \figvectNVTD-1[-3,-1]
\ctr@ln@m\figptsinterlinell
\ctr@ld@f\def\figptsinterlinellDD#1[#2,#3,#4,#5;#6,#7]{\ifGR@cri{\s@uvc@ntr@l\et@tfigptsinterlinellDD%
    \figptcopy#1:/#6/\s@mme=#1\advance\s@mme\@ne\figptcopy\the\s@mme:/#7/%
    \v@lmin=#3\unit@\v@lmax=#4\unit@% a, b
    \setc@ntr@l{2}\figptbaryDD-4:[#6,#7;1,1]\figptsrotDD-3=-4,#7/#2,-#5/% D et rotation
    \Figg@tXY{-3}\Figg@tXYa{#2}\advance\v@lX-\v@lXa\advance\v@lY-\v@lYa% alpha, beta
    \figvectP-1[-3,-2]\Figg@tXYa{-1}\figvectP-3[-4,#7]\Figptsint@rLE{#1}% u1, u2
    \resetc@ntr@l\et@tfigptsinterlinellDD}\ignorespaces\fi}
\ctr@ld@f\def\figptsinterlinellP#1[#2,#3,#4;#5,#6]{\ifGR@cri{\s@uvc@ntr@l\et@tfigptsinterlinellP%
    \figptcopy#1:/#5/\s@mme=#1\advance\s@mme\@ne\figptcopy\the\s@mme:/#6/\setc@ntr@l{2}%
    \figvectP-1[#2,#3]\vecunit@{-1}{-1}\v@lmin=\result@t% a
    \figvectP-2[#2,#4]\vecunit@{-2}{-2}\v@lmax=\result@t% b
    \figptbary-4:[#5,#6;1,1]% D
    \figvectP-3[#2,-4]\c@lproscal\v@lX[-3,-1]\c@lproscal\v@lY[-3,-2]% alpha, beta
    \figvectP-3[-4,#6]\c@lproscal\v@lXa[-3,-1]\c@lproscal\v@lYa[-3,-2]% u1, u2
    \Figptsint@rLE{#1}\resetc@ntr@l\et@tfigptsinterlinellP}\ignorespaces\fi}
\ctr@ld@f\def\Figptsint@rLE#1{%
    \getredf@ctDD\f@ctech(\v@lmin,\v@lmax)%
    \getredf@ctDD\p@rtent(\v@lX,\v@lY)\ifnum\p@rtent>\f@ctech\f@ctech=\p@rtent\fi%
    \getredf@ctDD\p@rtent(\v@lXa,\v@lYa)\ifnum\p@rtent>\f@ctech\f@ctech=\p@rtent\fi%
    \divide\v@lmin\f@ctech\divide\v@lmax\f@ctech\divide\v@lX\f@ctech\divide\v@lY\f@ctech%
    \divide\v@lXa\f@ctech\divide\v@lYa\f@ctech%
    \c@rre=\repdecn@mb\v@lXa\v@lmax\mili@u=\repdecn@mb\v@lYa\v@lmin%
    \getredf@ctDD\f@ctech(\c@rre,\mili@u)%
    \c@rre=\repdecn@mb\v@lX\v@lmax\mili@u=\repdecn@mb\v@lY\v@lmin%
    \getredf@ctDD\p@rtent(\c@rre,\mili@u)\ifnum\p@rtent>\f@ctech\f@ctech=\p@rtent\fi%
    \divide\v@lmin\f@ctech\divide\v@lmax\f@ctech\divide\v@lX\f@ctech\divide\v@lY\f@ctech%
    \divide\v@lXa\f@ctech\divide\v@lYa\f@ctech%
    \v@lmin=\repdecn@mb{\v@lmin}\v@lmin\v@lmax=\repdecn@mb{\v@lmax}\v@lmax%
    \edef\G@xde{\repdecn@mb\v@lmin}\edef\P@xde{\repdecn@mb\v@lmax}%
    \c@rre=-\v@lmax\v@leur=\repdecn@mb\v@lY\v@lY\advance\c@rre\v@leur\c@rre=\G@xde\c@rre%
    \v@leur=\repdecn@mb\v@lX\v@lX\v@leur=\P@xde\v@leur\advance\c@rre\v@leur% C
    \v@lmin=\repdecn@mb\v@lYa\v@lmin\v@lmax=\repdecn@mb\v@lXa\v@lmax%
    \mili@u=\repdecn@mb\v@lX\v@lmax\advance\mili@u\repdecn@mb\v@lY\v@lmin% B
    \v@lmax=\repdecn@mb\v@lXa\v@lmax\advance\v@lmax\repdecn@mb\v@lYa\v@lmin% A
    \ifdim\v@lmax>\epsil@n%
    \maxim@m{\v@leur}{\c@rre}{-\c@rre}\maxim@m{\v@lmin}{\mili@u}{-\mili@u}%
    \maxim@m{\v@leur}{\v@leur}{\v@lmin}\maxim@m{\v@lmin}{\v@lmax}{-\v@lmax}%
    \maxim@m{\v@leur}{\v@leur}{\v@lmin}\p@rtentiere{\p@rtent}{\v@leur}\advance\p@rtent\@ne%
    \divide\c@rre\p@rtent\divide\mili@u\p@rtent\divide\v@lmax\p@rtent%
    \delt@=\repdecn@mb{\mili@u}\mili@u\v@leur=\repdecn@mb{\v@lmax}\c@rre%
    \advance\delt@-\v@leur\ifdim\delt@<\z@\else\sqrt@\delt@\delt@%
    \invers@\v@lmax\v@lmax\edef\Uns@rAp{\repdecn@mb\v@lmax}%
    \v@leur=-\mili@u\advance\v@leur-\delt@\v@leur=\Uns@rAp\v@leur%
    \edef\t@ille{\repdecn@mb\v@leur}\figpttra#1:=-4/\t@ille,-3/\s@mme=#1\advance\s@mme\@ne%
    \v@leur=-\mili@u\advance\v@leur\delt@\v@leur=\Uns@rAp\v@leur%
    \edef\t@ille{\repdecn@mb\v@leur}\figpttra\the\s@mme:=-4/\t@ille,-3/\fi\fi}
\ctr@ln@m\figptsorthoprojline
\ctr@ld@f\def\figptsorthoprojlineDD#1=#2/#3,#4/{\ifGR@cri{\s@uvc@ntr@l\et@tfigptsorthoprojlineDD%
    \setc@ntr@l{2}\figvectPDD-3[#3,#4]\figvectNVDD-4[-3]\resetc@ntr@l{2}%
    \def\list@num{#2}\s@mme=#1\@ecfor\p@int:=\list@num\do{%
    \inters@cDD\the\s@mme:[\p@int,-4;#3,-3]\advance\s@mme\@ne}%
    \resetc@ntr@l\et@tfigptsorthoprojlineDD}\ignorespaces\fi}
\ctr@ld@f\def\figptsorthoprojlineTD#1=#2/#3,#4/{\ifGR@cri{\s@uvc@ntr@l\et@tfigptsorthoprojlineTD%
    \setc@ntr@l{2}\figvectPTD-2[#3,#4]\vecunit@TD{-2}{-2}%
    \def\list@num{#2}\s@mme=#1\@ecfor\p@int:=\list@num\do{%
    \figvectPTD-1[#3,\p@int]\c@lproscalTD\v@leur[-1,-2]%
    \edef\v@lcoef{\repdecn@mb{\v@leur}}\figpttraTD\the\s@mme:=#3/\v@lcoef,-2/%
    \advance\s@mme\@ne}\resetc@ntr@l\et@tfigptsorthoprojlineTD}\ignorespaces\fi}
\ctr@ln@m\figptsorthoprojplane
\ctr@ld@f\def\figptsorthoprojplaneDD{\un@v@ilable{figptsorthoprojplane}}
\ctr@ld@f\def\figptsorthoprojplaneTD#1=#2/#3,#4/{\ifGR@cri{\s@uvc@ntr@l\et@tfigptsorthoprojplane%
    \setc@ntr@l{2}\vecunit@TD{-2}{#4}%
    \def\list@num{#2}\s@mme=#1\@ecfor\p@int:=\list@num\do{\figvectPTD-1[\p@int,#3]%
    \c@lproscalTD\v@leur[-1,-2]\edef\v@lcoef{\repdecn@mb{\v@leur}}%
    \figpttraTD\the\s@mme:=\p@int/\v@lcoef,-2/\advance\s@mme\@ne}%
    \resetc@ntr@l\et@tfigptsorthoprojplane}\ignorespaces\fi}
\ctr@ld@f\def\figptshom#1=#2/#3,#4/{\ifGR@cri{\s@uvc@ntr@l\et@tfigptshom%
    \setc@ntr@l{2}\def\list@num{#2}\s@mme=#1%
    \@ecfor\p@int:=\list@num\do{\figvectP-1[#3,\p@int]%
    \figpttra\the\s@mme:=#3/#4,-1/\advance\s@mme\@ne}%
    \resetc@ntr@l\et@tfigptshom}\ignorespaces\fi}
\ctr@ld@f\def\figptsinv#1=#2/#3,#4/{\ifGR@cri{\s@uvc@ntr@l\et@tfigptsinv%
    \setc@ntr@l{2}\def\list@num{#2}\s@mme=#1%
    \@ecfor\p@int:=\list@num\do{\figvectP-1[#3,\p@int]\Figg@tXY{-1}%
    \getredf@ctB\f@ctech\n@rmeucC{\delt@}{-1}%
    \delt@=\ptT@unit@\delt@\delt@=\ptT@unit@\delt@%
    \invers@{\delt@}{\delt@}\multiply\f@ctech\f@ctech\divide\delt@\f@ctech%
    \delt@=#4\delt@\edef\v@lcoef{\repdecn@mb{\delt@}}\figpttra\the\s@mme:=#3/\v@lcoef,-1/%
    \advance\s@mme\@ne}\resetc@ntr@l\et@tfigptsinv}\ignorespaces\fi}
\ctr@ln@m\figptsrot
\ctr@ld@f\def\figptsrotDD#1=#2/#3,#4/{\ifGR@cri{\s@uvc@ntr@l\et@tfigptsrotDD%
    \c@ssin{\C@}{\S@}{#4}\setc@ntr@l{2}\def\list@num{#2}\s@mme=#1%
    \@ecfor\p@int:=\list@num\do{\figvectPDD-1[#3,\p@int]\Figg@tXY{-1}%
    \v@lXa=\C@\v@lX\advance\v@lXa-\S@\v@lY%
    \v@lYa=\S@\v@lX\advance\v@lYa\C@\v@lY%
    \Figv@ctCreg-1(\v@lXa,\v@lYa)\figpttraDD\the\s@mme:=#3/1,-1/\advance\s@mme\@ne}%
    \resetc@ntr@l\et@tfigptsrotDD}\ignorespaces\fi}
\ctr@ld@f\def\figptsrotTD#1=#2/#3,#4,#5/{\ifGR@cri{\s@uvc@ntr@l\et@tfigptsrotTD%
    \c@ssin{\C@}{\S@}{#4}%
    \setc@ntr@l{2}\def\list@num{#2}\s@mme=#1%
    \@ecfor\p@int:=\list@num\do{\figptorthoprojplaneTD-3:=#3/\p@int,#5/%
    \figvectPTD-2[-3,\p@int]%
    \figvectNVTD-1[#5,-2]\n@rmeucTD\v@leur{-2}\edef\v@lcoef{\repdecn@mb{\v@leur}}%
    \Figg@tXYa{-1}\v@lXa=\v@lcoef\v@lXa\v@lYa=\v@lcoef\v@lYa\v@lZa=\v@lcoef\v@lZa%
    \v@lXa=\S@\v@lXa\v@lYa=\S@\v@lYa\v@lZa=\S@\v@lZa\Figg@tXY{-2}%
    \advance\v@lXa\C@\v@lX\advance\v@lYa\C@\v@lY\advance\v@lZa\C@\v@lZ%
    \Figg@tXY{-3}\advance\v@lXa\v@lX\advance\v@lYa\v@lY\advance\v@lZa\v@lZ%
    \Figp@intregTD\the\s@mme:(\v@lXa,\v@lYa,\v@lZa)\advance\s@mme\@ne}%
    \resetc@ntr@l\et@tfigptsrotTD}\ignorespaces\fi}
\ctr@ln@m\figptssym
\ctr@ld@f\def\figptssymDD#1=#2/#3,#4/{\ifGR@cri{\s@uvc@ntr@l\et@tfigptssymDD%
    \setc@ntr@l{2}\figvectPDD-3[#3,#4]\Figg@tXY{-3}\Figv@ctCreg-4(-\v@lY,\v@lX)%
    \resetc@ntr@l{2}\def\list@num{#2}\s@mme=#1%
    \@ecfor\p@int:=\list@num\do{\inters@cDD-5:[#3,-3;\p@int,-4]\figvectPDD-2[\p@int,-5]%
    \figpttraDD\the\s@mme:=\p@int/2,-2/\advance\s@mme\@ne}%
    \resetc@ntr@l\et@tfigptssymDD}\ignorespaces\fi}
\ctr@ld@f\def\figptssymTD#1=#2/#3,#4/{\ifGR@cri{\s@uvc@ntr@l\et@tfigptssymTD%
    \setc@ntr@l{2}\vecunit@TD{-2}{#4}\def\list@num{#2}\s@mme=#1%
    \@ecfor\p@int:=\list@num\do{\figvectPTD-1[\p@int,#3]%
    \c@lproscalTD\v@leur[-1,-2]\v@leur=2\v@leur\edef\v@lcoef{\repdecn@mb{\v@leur}}%
    \figpttraTD\the\s@mme:=\p@int/\v@lcoef,-2/\advance\s@mme\@ne}%
    \resetc@ntr@l\et@tfigptssymTD}\ignorespaces\fi}
\ctr@ln@m\figptstra
\ctr@ld@f\def\figptstraDD#1=#2/#3,#4/{\ifGR@cri{\Figg@tXYa{#4}\v@lXa=#3\v@lXa\v@lYa=#3\v@lYa%
    \def\list@num{#2}\s@mme=#1\@ecfor\p@int:=\list@num\do{\Figg@tXY{\p@int}%
    \advance\v@lX\v@lXa\advance\v@lY\v@lYa%
    \Figp@intregDD\the\s@mme:(\v@lX,\v@lY)\advance\s@mme\@ne}}\ignorespaces\fi}
\ctr@ld@f\def\figptstraTD#1=#2/#3,#4/{\ifGR@cri{\Figg@tXYa{#4}\v@lXa=#3\v@lXa\v@lYa=#3\v@lYa%
    \v@lZa=#3\v@lZa\def\list@num{#2}\s@mme=#1\@ecfor\p@int:=\list@num\do{\Figg@tXY{\p@int}%
    \advance\v@lX\v@lXa\advance\v@lY\v@lYa\advance\v@lZ\v@lZa%
    \Figp@intregTD\the\s@mme:(\v@lX,\v@lY,\v@lZ)\advance\s@mme\@ne}}\ignorespaces\fi}
\ctr@ln@m\figptvisilimSL
\ctr@ld@f\def\figptvisilimSLDD{\un@v@ilable{figptvisilimSL}}
\ctr@ld@f\def\figptvisilimSLTD#1:#2[#3,#4;#5,#6]{\ifGR@cri{\s@uvc@ntr@l\et@tfigptvisilimSLTD%
    \setc@ntr@l{2}\figvectP-1[#3,#4]\n@rminf{\delt@}{-1}%
    \ifcase\CUR@proj\v@lX=\cxa@\p@\v@lY=-\p@\v@lZ=\cxb@\p@% Proj cav
    \Figv@ctCreg-2(\v@lX,\v@lY,\v@lZ)\figvectP-3[#5,#6]\figvectNV-1[-2,-3]%
    \or\figvectP-1[#5,#6]\vecunitCV@TD{-1}\v@lmin=\v@lX\v@lmax=\v@lY% Proj ortho
    \v@leur=\v@lZ\v@lX=\cza@\p@\v@lY=\czb@\p@\v@lZ=\czc@\p@\c@lprovec{-1}%
    \or\c@ley@pt{-2}\figvectN-1[#5,#6,-2]\fi% Proj rea
    \edef\Ai@{#3}\edef\Aj@{#4}\figvectP-2[#5,\Ai@]\c@lproscal\v@leur[-1,-2]%
    \ifdim\v@leur>\z@\p@rtent=\@ne\else\p@rtent=\m@ne\fi%
    \figvectP-2[#5,\Aj@]\c@lproscal\v@leur[-1,-2]%
    \ifdim\p@rtent\v@leur>\z@\figptcopy#1:#2/#3/%
    \message{*** \BS@ figptvisilimSL: points are on the same side.}\else%
    \figptcopy-3:/#3/\figptcopy-4:/#4/%
    \loop\figptbary-5:[-3,-4;1,1]\figvectP-2[#5,-5]\c@lproscal\v@leur[-1,-2]%
    \ifdim\p@rtent\v@leur>\z@\figptcopy-3:/-5/\else\figptcopy-4:/-5/\fi%
    \divide\delt@\tw@\ifdim\delt@>\epsil@n\repeat%
    \figptbary#1:#2[-3,-4;1,1]\fi\resetc@ntr@l\et@tfigptvisilimSLTD}\ignorespaces\fi}
\ctr@ld@f\def\c@ley@pt#1{\t@stp@r\ifitis@K\v@lX=\cza@\p@\v@lY=\czb@\p@\v@lZ=\czc@\p@%
    \Figv@ctCreg-1(\v@lX,\v@lY,\v@lZ)\Figp@intreg-2:(\wd\Bt@rget,\ht\Bt@rget,\dp\Bt@rget)%
    \figpttra#1:=-2/-\disob@intern,-1/\else\end\fi}
\ctr@ld@f\def\t@stp@r{\itis@Ktrue\ifnewt@rgetpt\else\itis@Kfalse%
    \message{*** \BS@ figptvisilimXX: target point undefined.}\fi\ifnewdis@b\else%
    \itis@Kfalse\message{*** \BS@ figptvisilimXX: observation distance undefined.}\fi%
    \ifitis@K\else\message{*** This macro must be called after \BS@ figdrawbegin or after
    having set the missing parameter(s) with \BS@ figset proj()}\fi}
\ctr@ld@f\def\figscan#1(#2,#3){{\s@uvc@ntr@l\et@tfigscan\@psfgetbb{#1}\if@psfbbfound\else%
    \def\@psfllx{0}\def\@psflly{20}\def\@psfurx{540}\def\@psfury{640}\fi\figscan@{#2}{#3}%
    \resetc@ntr@l\et@tfigscan}\ignorespaces}
\ctr@ld@f\def\figscan@#1#2{%
    \unit@=\@ne bp\setc@ntr@l{2}\figsetmark{}%
    \def\minst@p{20pt}%
    \v@lX=\@psfllx\p@\v@lX=\Sc@leFact\v@lX\r@undint\v@lX\v@lX%
    \v@lY=\@psflly\p@\v@lY=\Sc@leFact\v@lY\ifdim\v@lY>\z@\r@undint\v@lY\v@lY\fi%
    \delt@=\@psfury\p@\delt@=\Sc@leFact\delt@%
    \advance\delt@-\v@lY\v@lXa=\@psfurx\p@\v@lXa=\Sc@leFact\v@lXa\v@leur=\minst@p%
    \edef\valv@lY{\repdecn@mb{\v@lY}}\edef\LgTr@it{\the\delt@}%
    \loop\ifdim\v@lX<\v@lXa\edef\valv@lX{\repdecn@mb{\v@lX}}%
    \figptDD -1:(\valv@lX,\valv@lY)\figwriten -1:\hbox{\vrule height\LgTr@it}(0)%
    \ifdim\v@leur<\minst@p\else\figsetmark{\raise-8bp\hbox{$\scriptscriptstyle\triangle$}}%
    \figwrites -1:\@ffichnb{0}{\valv@lX}(6)\v@leur=\z@\figsetmark{}\fi%
    \advance\v@leur#1pt\advance\v@lX#1pt\repeat%
    \def\minst@p{10pt}%
    \v@lX=\@psfllx\p@\v@lX=\Sc@leFact\v@lX\ifdim\v@lX>\z@\r@undint\v@lX\v@lX\fi%
    \v@lY=\@psflly\p@\v@lY=\Sc@leFact\v@lY\r@undint\v@lY\v@lY%
    \delt@=\@psfurx\p@\delt@=\Sc@leFact\delt@%
    \advance\delt@-\v@lX\v@lYa=\@psfury\p@\v@lYa=\Sc@leFact\v@lYa\v@leur=\minst@p%
    \edef\valv@lX{\repdecn@mb{\v@lX}}\edef\LgTr@it{\the\delt@}%
    \loop\ifdim\v@lY<\v@lYa\edef\valv@lY{\repdecn@mb{\v@lY}}%
    \figptDD -1:(\valv@lX,\valv@lY)\figwritee -1:\vbox{\hrule width\LgTr@it}(0)%
    \ifdim\v@leur<\minst@p\else\figsetmark{$\triangleright$\kern4bp}%
    \figwritew -1:\@ffichnb{0}{\valv@lY}(6)\v@leur=\z@\figsetmark{}\fi%
    \advance\v@leur#2pt\advance\v@lY#2pt\repeat}
\ctr@ld@f
\ctr@ld@f\def\figscan@E#1(#2,#3){{\s@uvc@ntr@l\et@tfigscan@E%
    \Figdisc@rdLTS{#1}{\t@xt@}\pdfximage{\t@xt@}%
    \setbox\Gb@x=\hbox{\pdfrefximage\pdflastximage}%
    \edef\@psfllx{0}\v@lY=-\dp\Gb@x\edef\@psflly{\repdecn@mb{\v@lY}}%
    \edef\@psfurx{\repdecn@mb{\wd\Gb@x}}%
    \v@lY=\dp\Gb@x\advance\v@lY\ht\Gb@x\edef\@psfury{\repdecn@mb{\v@lY}}%
    \figscan@{#2}{#3}\resetc@ntr@l\et@tfigscan@E}\ignorespaces}
\ctr@ld@f\def\figshowpts[#1,#2]{{\figsetmark{$\bullet$}\figsetptname{\bf ##1}%
    \p@rtent=#2\relax\ifnum\p@rtent<\z@\p@rtent=\z@\fi%
    \s@mme=#1\relax\ifnum\s@mme<\z@\s@mme=\z@\fi%
    \loop\ifnum\s@mme<\p@rtent\pt@rvect{\s@mme}%
    \ifitis@K\figwriten{\the\s@mme}:(4pt)\fi\advance\s@mme\@ne\repeat%
    \pt@rvect{\s@mme}\ifitis@K\figwriten{\the\s@mme}:(4pt)\fi}\ignorespaces}
\ctr@ld@f\def\pt@rvect#1{\set@bjc@de{#1}%
    \expandafter\expandafter\expandafter\inqpt@rvec\csname\objc@de\endcsname:}
\ctr@ld@f\def\inqpt@rvec#1#2:{\if#1\C@dCl@spt\itis@Ktrue\else\itis@Kfalse\fi}
\ctr@ld@f\def\figshowsettings{{%
    \immediate\write16{====================================================================}%
    \immediate\write16{ Current settings are (DDV means "with dynamic default value"):}%
    \immediate\write16{ --- GENERAL ---}%
    \immediate\write16{Scale factor and Unit = \unit@util\space (\the\unit@)
     \space -> \BS@ figinit{ScaleFactorUnit}}%
    \immediate\write16{Update mode = \ifGRupdatem@de yes\else no\fi
     \space-> \BS@ figset(update=yes/no) or \BS@ figsetdefault(update=yes/no)}%
    \immediate\write16{ --- WRITING ---}%
    \immediate\write16{Implicit point name = \ptn@me{i} \space-> \BS@ figset write(ptname={Name})}%
    \immediate\write16{Point marker = \the\c@nsymb \space -> \BS@ figset write(mark=Mark)}%
    \immediate\write16{Print rounded coordinates = \ifr@undcoord yes\else no\fi
     \space-> \BS@ figset write(roundcoord=yes/no)}%
    \immediate\write16{ --- GRAPHICAL (general) ---}%
    \immediate\write16{Color = \CUR@color \space-> \BS@ figset(color=ColorDefinition)}%
    \immediate\write16{Filling mode = \iffillm@de yes\else no\fi
     \space-> \BS@ figset(fillmode=yes/no)}%
    \immediate\write16{Line join = \CUR@join \space-> \BS@ figset(join=miter/round/bevel)}%
    \immediate\write16{Line style = \CUR@dash \space-> \BS@ figset(dash=Index/Pattern)}%
    \immediate\write16{Line width = \CUR@width
     \space-> \BS@ figset(width=real in PostScript units)}%
    \immediate\write16{ --- GRAPHICAL (specific) ---}%
    \immediate\write16{Altitude (all the following attributes are DDV):}%
    \immediate\write16{ Base line color =
     \ifx\DDV@blcolor\D@FTref general color\else\DDV@blcolor\fi
     \space-> \BS@ figset altitude(blcolor=ColorDefinition)}%
    \immediate\write16{ Base line style =
     \ifx\DDV@bldash\D@FTref general style\else\DDV@bldash\fi
     \space-> \BS@ figset altitude(bldash=Index/Pattern)}%
    \immediate\write16{ Base line width =
     \ifx\DDV@blwidth\D@FTref general width\else\DDV@blwidth\fi
     \space-> \BS@ figset altitude(blwidth=real in PostScript units)}%
    \immediate\write16{ Square line color =
     \ifx\DDV@sqcolor\D@FTref general color\else\DDV@sqcolor\fi
     \space-> \BS@ figset altitude(sqcolor=ColorDefinition)}%
    \immediate\write16{ Square line style =
     \ifx\DDV@sqdash\D@FTref general style\else\DDV@sqdash\fi
     \space-> \BS@ figset altitude(sqdash=Index/Pattern)}%
    \immediate\write16{ Square line width =
     \ifx\DDV@sqwidth\D@FTref general width\else\DDV@sqwidth\fi
     \space-> \BS@ figset altitude(sqwidth=real in PostScript units)}%
    \immediate\write16{Arrowhead:}%
    \immediate\write16{ (half-)Angle = \@rrowheadangle
     \space-> \BS@ figset arrowhead(angle=real in degrees)}%
    \immediate\write16{ Filling mode = \if@rrowhfill yes\else no\fi
     \space-> \BS@ figset arrowhead(fillmode=yes/no)}%
    \immediate\write16{ "Outside" = \if@rrowhout yes\else no\fi
     \space-> \BS@ figset arrowhead(out=yes/no)}%
    \immediate\write16{ Length = \@rrowheadlength
     \if@rrowratio\space(not active)\else\space(active)\fi
     \space-> \BS@ figset arrowhead(length=real in user coord.)}%
    \immediate\write16{ Ratio = \@rrowheadratio
     \if@rrowratio\space(active)\else\space(not active)\fi
     \space-> \BS@ figset arrowhead(ratio=real in [0,1])}%
    \immediate\write16{Curve:}%
    \immediate\write16{ Roundness = \curv@roundness
     \space-> \BS@ figset curve(roundness=real in [0,0.5])}%
    \immediate\write16{Flow chart:}%
    \immediate\write16{ Arrow position = \@rrowp@s
     \space-> \BS@ figset flowchart(arrowposition=real in [0,1])}%
    \immediate\write16{ Arrow reference point = \ifcase\@rrowr@fpt start\else end\fi
     \space-> \BS@ figset flowchart(arrowrefpt = start/end)}%     
    \immediate\write16{ Background color = \fcbgc@lor
     \space-> \BS@ figset flowchart(bgcolor=ColorDefinition)}%
    \immediate\write16{ Line type = \ifcase\fclin@typ@ curve\else polygon\fi
     \space-> \BS@ figset flowchart(line=polygon/curve)}%
    \immediate\write16{ Padding = (\Xp@dd, \Yp@dd)
     \space-> \BS@ figset flowchart(padding = real in user coord.)}%
    \immediate\write16{\space\space\space\space(or
     \BS@ figset flowchart(xpadding=real, ypadding=real) )}%
    \immediate\write16{ Radius = \fclin@r@d
     \space-> \BS@ figset flowchart(radius=positive real in user coord.)}%
    \immediate\write16{ Shape = \fcsh@pe
     \space-> \BS@ figset flowchart(shape = rectangle, ellipse or lozenge)}%
    \immediate\write16{ Thickness color (DDV) = 
     \ifx\DDV@thickcolor\D@FTref general color\else\DDV@thickcolor\fi
     \space-> \BS@ figset flowchart(thickcolor=ColorDefinition)}%
    \immediate\write16{ Thickness = \thickn@ss
     \space-> \BS@ figset flowchart(thickness = real in user coord.)}%
    \immediate\write16{Mesh:}%
    \immediate\write16{ Diagonal = \c@ntrolmesh
     \space-> \BS@ figset mesh(diag=integer in {-1,0,1})}%
    \immediate\write16{ Lines color (DDV) =
     \ifx\DDV@meshcolor\D@FTref general color\else\DDV@meshcolor\fi
     \space-> \BS@ figset mesh(color=ColorDefinition)}%
    \immediate\write16{ Lines style (DDV) =
     \ifx\DDV@meshdash\D@FTref general style\else\DDV@meshdash\fi
     \space-> \BS@ figset mesh(dash=Index/Pattern)}%
    \immediate\write16{ Lines width (DDV) =
     \ifx\DDV@meshwidth\D@FTref general width\else\DDV@meshwidth\fi
     \space-> \BS@ figset mesh(width=real in PostScript units)}%
    \immediate\write16{Trimesh:}%
    \immediate\write16{ Lines color (DDV) =
     \ifx\DDV@tmeshcolor\D@FTref general color\else\DDV@tmeshcolor\fi
     \space-> \BS@ figset trimesh(color=ColorDefinition)}%
    \immediate\write16{ Lines style (DDV) =
     \ifx\DDV@tmeshdash\D@FTref general style\else\DDV@tmeshdash\fi
     \space-> \BS@ figset trimesh(dash=Index/Pattern)}%
    \immediate\write16{ Lines width (DDV) =
     \ifx\DDV@tmeshwidth\D@FTref general width\else\DDV@tmeshwidth\fi
     \space-> \BS@ figset trimesh(width=real in PostScript units)}%
    \ifTr@isDim%
    \immediate\write16{ --- 3D to 2D PROJECTION ---}%
    \immediate\write16{Projection : \typ@proj \space-> \BS@ figinit{ScaleFactorUnit, ProjType}}%
    \immediate\write16{Longitude (psi) = \v@lPsi \space-> \BS@ figset proj(psi=real in degrees)}%
    \ifcase\CUR@proj\immediate\write16{Depth coeff. (Lambda)
     \space = \v@lTheta \space-> \BS@ figset proj(lambda=real in [0,1])}%
    \else\immediate\write16{Latitude (theta)
     \space = \v@lTheta \space-> \BS@ figset proj(theta=real in degrees)}%
    \fi%
    \ifnum\CUR@proj=\tw@%
    \immediate\write16{Observation distance = \disob@unit
     \space-> \BS@ figset proj(dist=real in user coord.)}%
    \immediate\write16{Target point = \t@rgetpt \space-> \BS@ figset proj(targetpt=pt number)}%
     \v@lX=\ptT@unit@\wd\Bt@rget\v@lY=\ptT@unit@\ht\Bt@rget\v@lZ=\ptT@unit@\dp\Bt@rget%
    \immediate\write16{ Its coordinates are
     (\repdecn@mb{\v@lX}, \repdecn@mb{\v@lY}, \repdecn@mb{\v@lZ})}%
    \fi%
    \fi%
    \immediate\write16{====================================================================}%
    \ignorespaces}}
\ctr@ln@w{newif}\ifitis@vect@r
\ctr@ld@f\def\figvectC#1(#2,#3){{\itis@vect@rtrue\figpt#1:(#2,#3)}\ignorespaces}
\ctr@ld@f\def\Figv@ctCreg#1(#2,#3){{\itis@vect@rtrue\Figp@intreg#1:(#2,#3)}\ignorespaces}
\ctr@ln@m\figvectDBezier
\ctr@ld@f\def\figvectDBezierDD#1:#2,#3[#4,#5,#6,#7]{\ifGR@cri{\s@uvc@ntr@l\et@tfigvectDBezierDD%
    \FigvectDBezier@#2,#3[#4,#5,#6,#7]\v@lX=\c@ef\v@lX\v@lY=\c@ef\v@lY%
    \Figv@ctCreg#1(\v@lX,\v@lY)\resetc@ntr@l\et@tfigvectDBezierDD}\ignorespaces\fi}
\ctr@ld@f\def\figvectDBezierTD#1:#2,#3[#4,#5,#6,#7]{\ifGR@cri{\s@uvc@ntr@l\et@tfigvectDBezierTD%
    \FigvectDBezier@#2,#3[#4,#5,#6,#7]\v@lX=\c@ef\v@lX\v@lY=\c@ef\v@lY\v@lZ=\c@ef\v@lZ%
    \Figv@ctCreg#1(\v@lX,\v@lY,\v@lZ)\resetc@ntr@l\et@tfigvectDBezierTD}\ignorespaces\fi}
\ctr@ld@f\def\FigvectDBezier@#1,#2[#3,#4,#5,#6]{\setc@ntr@l{2}%
    \edef\T@{#2}\v@leur=\p@\advance\v@leur-#2pt\edef\UNmT@{\repdecn@mb{\v@leur}}%
    \ifnum#1=\tw@\def\c@ef{6}\else\def\c@ef{3}\fi%
    \figptcopy-4:/#3/\figptcopy-3:/#4/\figptcopy-2:/#5/\figptcopy-1:/#6/%
    \l@mbd@un=-4 \l@mbd@de=-\thr@@\p@rtent=\m@ne\c@lDecast%
    \ifnum#1=\tw@\c@lDCDeux{-4}{-3}\c@lDCDeux{-3}{-2}\c@lDCDeux{-4}{-3}\else%
    \l@mbd@un=-4 \l@mbd@de=-\thr@@\p@rtent=-\tw@\c@lDecast%
    \c@lDCDeux{-4}{-3}\fi\Figg@tXY{-4}}
\ctr@ln@m\c@lDCDeux
\ctr@ld@f\def\c@lDCDeuxDD#1#2{\Figg@tXY{#2}\Figg@tXYa{#1}%
    \advance\v@lX-\v@lXa\advance\v@lY-\v@lYa\Figp@intregDD#1:(\v@lX,\v@lY)}
\ctr@ld@f\def\c@lDCDeuxTD#1#2{\Figg@tXY{#2}\Figg@tXYa{#1}\advance\v@lX-\v@lXa%
    \advance\v@lY-\v@lYa\advance\v@lZ-\v@lZa\Figp@intregTD#1:(\v@lX,\v@lY,\v@lZ)}
\ctr@ln@m\figvectN
\ctr@ld@f\def\figvectNDD#1[#2,#3]{\ifGR@cri{\Figg@tXYa{#2}\Figg@tXY{#3}%
    \advance\v@lX-\v@lXa\advance\v@lY-\v@lYa%
    \Figv@ctCreg#1(-\v@lY,\v@lX)}\ignorespaces\fi}
\ctr@ld@f\def\figvectNTD#1[#2,#3,#4]{\ifGR@cri{\vecunitC@TD[#2,#4]\v@lmin=\v@lX\v@lmax=\v@lY%
    \v@leur=\v@lZ\vecunitC@TD[#2,#3]\c@lprovec{#1}}\ignorespaces\fi}
\ctr@ln@m\figvectNV
\ctr@ld@f\def\figvectNVDD#1[#2]{\ifGR@cri{\Figg@tXY{#2}\Figv@ctCreg#1(-\v@lY,\v@lX)}\ignorespaces\fi}
\ctr@ld@f\def\figvectNVTD#1[#2,#3]{\ifGR@cri{\vecunitCV@TD{#3}\v@lmin=\v@lX\v@lmax=\v@lY%
    \v@leur=\v@lZ\vecunitCV@TD{#2}\c@lprovec{#1}}\ignorespaces\fi}
\ctr@ln@m\figvectP
\ctr@ld@f\def\figvectPDD#1[#2,#3]{\ifGR@cri{\Figg@tXYa{#2}\Figg@tXY{#3}%
    \advance\v@lX-\v@lXa\advance\v@lY-\v@lYa%
    \Figv@ctCreg#1(\v@lX,\v@lY)}\ignorespaces\fi}
\ctr@ld@f\def\figvectPTD#1[#2,#3]{\ifGR@cri{\Figg@tXYa{#2}\Figg@tXY{#3}%
    \advance\v@lX-\v@lXa\advance\v@lY-\v@lYa\advance\v@lZ-\v@lZa%
    \Figv@ctCreg#1(\v@lX,\v@lY,\v@lZ)}\ignorespaces\fi}
\ctr@ln@m\figvectU
\ctr@ld@f\def\figvectUDD#1[#2]{\ifGR@cri{\n@rmeuc\v@leur{#2}\invers@\v@leur\v@leur%
    \delt@=\repdecn@mb{\v@leur}\unit@\edef\v@ldelt@{\repdecn@mb{\delt@}}%
    \Figg@tXY{#2}\v@lX=\v@ldelt@\v@lX\v@lY=\v@ldelt@\v@lY%
    \Figv@ctCreg#1(\v@lX,\v@lY)}\ignorespaces\fi}
\ctr@ld@f\def\figvectUTD#1[#2]{\ifGR@cri{\n@rmeuc\v@leur{#2}\invers@\v@leur\v@leur%
    \delt@=\repdecn@mb{\v@leur}\unit@\edef\v@ldelt@{\repdecn@mb{\delt@}}%
    \Figg@tXY{#2}\v@lX=\v@ldelt@\v@lX\v@lY=\v@ldelt@\v@lY\v@lZ=\v@ldelt@\v@lZ%
    \Figv@ctCreg#1(\v@lX,\v@lY,\v@lZ)}\ignorespaces\fi}
\ctr@ld@f\def\figvisu#1#2#3{\c@ldefproj\initb@undb@x\xdef\figforTeXFigno{\figforTeXnextFigno}%
    \s@mme=\figforTeXnextFigno\advance\s@mme\@ne\xdef\figforTeXnextFigno{\number\s@mme}%
    \setbox\b@xvisu=\hbox{\ifnum\@utoFN>\z@\figinsert{}\gdef\@utoFInDone{0}\fi\ignorespaces#3}%
    \gdef\@utoFInDone{1}\gdef\@utoFN{0}%
    \v@lXa=-\c@@rdYmin\v@lYa=\c@@rdYmax\advance\v@lYa-\c@@rdYmin%
    \v@lX=\c@@rdXmax\advance\v@lX-\c@@rdXmin%
    \setbox#1=\hbox{#2}\v@lY=-\v@lX\maxim@m{\v@lX}{\v@lX}{\wd#1}%
    \advance\v@lY\v@lX\divide\v@lY\tw@\advance\v@lY-\c@@rdXmin%
    \setbox#1=\vbox{\parindent\z@\hsize=\v@lX\vskip\v@lYa%
    \rlap{\hskip\v@lY\smash{\raise\v@lXa\box\b@xvisu}}%
    \def\t@xt@{#2}\ifx\t@xt@\empty\else\medskip\centerline{#2}\fi}\wd#1=\v@lX}
\ctr@ld@f\def\figDecrementFigno{{\xdef\figforTeXnextFigno{\figforTeXFigno}%
    \s@mme=\figforTeXFigno\advance\s@mme\m@ne\xdef\figforTeXFigno{\number\s@mme}}}
\ctr@ln@w{newbox}\Bt@rget\setbox\Bt@rget=\null
\ctr@ln@w{newbox}\BminTD@\setbox\BminTD@=\null
\ctr@ln@w{newbox}\BmaxTD@\setbox\BmaxTD@=\null
\ctr@ln@w{newif}\ifnewt@rgetpt\ctr@ln@w{newif}\ifnewdis@b
\ctr@ld@f\def\b@undb@xTD#1#2#3{%
    \relax\ifdim#1<\wd\BminTD@\global\wd\BminTD@=#1\fi%
    \relax\ifdim#2<\ht\BminTD@\global\ht\BminTD@=#2\fi%
    \relax\ifdim#3<\dp\BminTD@\global\dp\BminTD@=#3\fi%
    \relax\ifdim#1>\wd\BmaxTD@\global\wd\BmaxTD@=#1\fi%
    \relax\ifdim#2>\ht\BmaxTD@\global\ht\BmaxTD@=#2\fi%
    \relax\ifdim#3>\dp\BmaxTD@\global\dp\BmaxTD@=#3\fi}
\ctr@ld@f\def\c@ldefdisob{{\ifdim\wd\BminTD@<\maxdimen\v@leur=\wd\BmaxTD@\advance\v@leur-\wd\BminTD@%
    \delt@=\ht\BmaxTD@\advance\delt@-\ht\BminTD@\maxim@m{\v@leur}{\v@leur}{\delt@}%
    \delt@=\dp\BmaxTD@\advance\delt@-\dp\BminTD@\maxim@m{\v@leur}{\v@leur}{\delt@}%
    \v@leur=5\v@leur\else\v@leur=800pt\fi\c@ldefdisob@{\v@leur}}}
\ctr@ln@m\disob@intern
\ctr@ln@m\disob@
\ctr@ln@m\divf@ctproj
\ctr@ld@f\def\c@ldefdisob@#1{{\v@leur=#1\ifdim\v@leur<\p@\v@leur=800pt\fi%
    \xdef\disob@intern{\repdecn@mb{\v@leur}}%
    \delt@=\ptT@unit@\v@leur\xdef\disob@unit{\repdecn@mb{\delt@}}%
    \f@ctech=\@ne\loop\ifdim\v@leur>\t@n pt\divide\v@leur\t@n\multiply\f@ctech\t@n\repeat%
    \xdef\disob@{\repdecn@mb{\v@leur}}\xdef\divf@ctproj{\the\f@ctech}}%
    \global\newdis@btrue}
\ctr@ln@m\t@rgetpt
\ctr@ld@f\def\c@ldeft@rgetpt{\newt@rgetpttrue\def\t@rgetpt{CenterBoundBox}{%
    \delt@=\wd\BmaxTD@\advance\delt@-\wd\BminTD@\divide\delt@\tw@%
    \v@leur=\wd\BminTD@\advance\v@leur\delt@\global\wd\Bt@rget=\v@leur%
    \delt@=\ht\BmaxTD@\advance\delt@-\ht\BminTD@\divide\delt@\tw@%
    \v@leur=\ht\BminTD@\advance\v@leur\delt@\global\ht\Bt@rget=\v@leur%
    \delt@=\dp\BmaxTD@\advance\delt@-\dp\BminTD@\divide\delt@\tw@%
    \v@leur=\dp\BminTD@\advance\v@leur\delt@\global\dp\Bt@rget=\v@leur}}
\ctr@ln@m\c@ldefproj
\ctr@ld@f\def\c@ldefprojTD{\ifnewt@rgetpt\else\c@ldeft@rgetpt\fi\ifnewdis@b\else\c@ldefdisob\fi}
\ctr@ld@f\def\c@lprojcav{% Projection cavaliere : X = x + y L cos t, Y = z + y L sin t
    \v@lZa=\cxa@\v@lY\advance\v@lX\v@lZa%
    \v@lZa=\cxb@\v@lY\v@lY=\v@lZ\advance\v@lY\v@lZa\ignorespaces}
\ctr@ln@m\v@lcoef
\ctr@ld@f\def\c@lprojrea{% Projection realiste
    \advance\v@lX-\wd\Bt@rget\advance\v@lY-\ht\Bt@rget\advance\v@lZ-\dp\Bt@rget%
    \v@lZa=\cza@\v@lX\advance\v@lZa\czb@\v@lY\advance\v@lZa\czc@\v@lZ%
    \divide\v@lZa\divf@ctproj\advance\v@lZa\disob@ pt\invers@{\v@lZa}{\v@lZa}%
    \v@lZa=\disob@\v@lZa\edef\v@lcoef{\repdecn@mb{\v@lZa}}%
    \v@lXa=\cxa@\v@lX\advance\v@lXa\cxb@\v@lY\v@lXa=\v@lcoef\v@lXa%
    \v@lY=\cyb@\v@lY\advance\v@lY\cya@\v@lX\advance\v@lY\cyc@\v@lZ%
    \v@lY=\v@lcoef\v@lY\v@lX=\v@lXa\ignorespaces}
\ctr@ld@f\def\c@lprojort{% Projection orthogonale
    \v@lXa=\cxa@\v@lX\advance\v@lXa\cxb@\v@lY%
    \v@lY=\cyb@\v@lY\advance\v@lY\cya@\v@lX\advance\v@lY\cyc@\v@lZ%
    \v@lX=\v@lXa\ignorespaces}
\ctr@ld@f\def\Figptpr@j#1:#2/#3/{{\Figg@tXY{#3}\superc@lprojSP%
    \Figp@intregDD#1:{#2}(\v@lX,\v@lY)}\ignorespaces}
\ctr@ln@m\figsetobdist
\ctr@ld@f\def\figsetobdistDD{\un@v@ilable{figsetobdist}}
\ctr@ld@f\def\figsetobdistTD(#1){{\ifCUR@PS\W@rnmesIgn{figset proj(dist=...)}%
    \else\v@leur=#1\unit@\c@ldefdisob@{\v@leur}\fi}\ignorespaces}
\ctr@ln@m\c@lprojSP
\ctr@ln@m\CUR@proj
\ctr@ln@m\typ@proj
\ctr@ln@m\superc@lprojSP
\ctr@ld@f\def\Figs@tproj#1{%
    \if#13 \def@ultproj\else\if#1c\def@ultproj%
    \else\if#1o\xdef\CUR@proj{1}\xdef\typ@proj{orthogonal}%
         \figsetviewTD(\def@ultpsi,\def@ulttheta)%
         \global\let\c@lprojSP=\c@lprojort\global\let\superc@lprojSP=\c@lprojort%
    \else\if#1r\xdef\CUR@proj{2}\xdef\typ@proj{realistic}%
         \figsetviewTD(\def@ultpsi,\def@ulttheta)%
         \global\let\c@lprojSP=\c@lprojrea\global\let\superc@lprojSP=\c@lprojrea%
    \else\def@ultproj\message{*** Unknown projection. Cavalier projection assumed.}%
    \fi\fi\fi\fi}
\ctr@ld@f\def\def@ultproj{\xdef\CUR@proj{0}\xdef\typ@proj{cavalier}\figsetviewTD(\def@ultpsi,0.5)%
         \global\let\c@lprojSP=\c@lprojcav\global\let\superc@lprojSP=\c@lprojcav}
\ctr@ln@m\figsettarget
\ctr@ld@f\def\figsettargetDD{\un@v@ilable{figsettarget}}
\ctr@ld@f\def\figsettargetTD[#1]{{\ifCUR@PS\W@rnmesIgn{figset proj(targetpt=...)}%
    \else\global\newt@rgetpttrue\xdef\t@rgetpt{#1}\Figg@tXY{#1}\global\wd\Bt@rget=\v@lX%
    \global\ht\Bt@rget=\v@lY\global\dp\Bt@rget=\v@lZ\fi}\ignorespaces}
\ctr@ln@m\figsetview
\ctr@ld@f\def\figsetviewDD{\un@v@ilable{figsetview}}
\ctr@ld@f\def\figsetviewTD(#1){\ifCUR@PS\W@rnmesIgn{figset proj(Psi|Theta|Lambda=...)}%
     \else\Figsetview@#1,:\fi\ignorespaces}
\ctr@ld@f\def\Figsetview@#1,#2:{{\xdef\v@lPsi{#1}\def\t@xt@{#2}%
    \ifx\t@xt@\empty\def\@rgdeux{\v@lTheta}\else\X@rgdeux@#2\fi%
    \c@ssin{\costhet@}{\sinthet@}{#1}\v@lmin=\costhet@ pt\v@lmax=\sinthet@ pt%
    \ifcase\CUR@proj%
    \v@leur=\@rgdeux\v@lmin\xdef\cxa@{\repdecn@mb{\v@leur}}%
    \v@leur=\@rgdeux\v@lmax\xdef\cxb@{\repdecn@mb{\v@leur}}\v@leur=\@rgdeux pt%
    \relax\ifdim\v@leur>\p@\message{*** Lambda too large ! See \BS@ figset proj() !}\fi%
    \else%
    \v@lmax=-\v@lmax\xdef\cxa@{\repdecn@mb{\v@lmax}}\xdef\cxb@{\costhet@}%
    \ifx\t@xt@\empty\edef\@rgdeux{\def@ulttheta}\fi\c@ssin{\C@}{\S@}{\@rgdeux}%
    \v@lmax=-\S@ pt%
    \v@leur=\v@lmax\v@leur=\costhet@\v@leur\xdef\cya@{\repdecn@mb{\v@leur}}%
    \v@leur=\v@lmax\v@leur=\sinthet@\v@leur\xdef\cyb@{\repdecn@mb{\v@leur}}%
    \xdef\cyc@{\C@}\v@lmin=-\C@ pt%
    \v@leur=\v@lmin\v@leur=\costhet@\v@leur\xdef\cza@{\repdecn@mb{\v@leur}}%
    \v@leur=\v@lmin\v@leur=\sinthet@\v@leur\xdef\czb@{\repdecn@mb{\v@leur}}%
    \xdef\czc@{\repdecn@mb{\v@lmax}}\fi%
    \xdef\v@lTheta{\@rgdeux}}}
\ctr@ld@f\def\def@ultpsi{40}
\ctr@ld@f\def\def@ulttheta{25}
\ctr@ln@m\l@debut
\ctr@ln@m\n@mref
\ctr@ld@f\def\Figsetpr@j#1=#2|{\keln@mtr#1|%
    \def\n@mref{dep}\ifx\l@debut\n@mref\Figsetd@p{#2}\else% depth (lambda)
    \def\n@mref{dis}\ifx\l@debut\n@mref%
     \ifnum\CUR@proj=\tw@\figsetobdist(#2)\else\Figset@rr\fi\else% dist
    \def\n@mref{lam}\ifx\l@debut\n@mref\Figsetd@p{#2}\else% depth (lambda)
    \def\n@mref{lat}\ifx\l@debut\n@mref\Figsetth@{#2}\else% latitude (theta)
    \def\n@mref{lon}\ifx\l@debut\n@mref\figsetview(#2)\else% longitude (psi)
    \def\n@mref{psi}\ifx\l@debut\n@mref\figsetview(#2)\else% longitude (psi)
    \def\n@mref{tar}\ifx\l@debut\n@mref%
     \ifnum\CUR@proj=\tw@\figsettarget[#2]\else\Figset@rr\fi\else% target point
    \def\n@mref{the}\ifx\l@debut\n@mref\Figsetth@{#2}\else% latitude (theta)
    \W@rnmesAttr{figset proj}{#1}\fi\fi\fi\fi\fi\fi\fi\fi}
\ctr@ld@f\def\Figsetd@p#1{\ifnum\CUR@proj=\z@\figsetview(\v@lPsi,#1)\else\Figset@rr\fi}
\ctr@ld@f\def\Figsetth@#1{\ifnum\CUR@proj=\z@\Figset@rr\else\figsetview(\v@lPsi,#1)\fi}
\ctr@ld@f\def\Figset@rr{\message{*** \BS@ figset proj(): Attribute "\n@mref" ignored, incompatible
    with current projection}}
\ctr@ld@f\def\initb@undb@xTD{\wd\BminTD@=\maxdimen\ht\BminTD@=\maxdimen\dp\BminTD@=\maxdimen%
    \wd\BmaxTD@=-\maxdimen\ht\BmaxTD@=-\maxdimen\dp\BmaxTD@=-\maxdimen}
\ctr@ln@w{newbox}\Gb@x      % boite a tout faire
\ctr@ln@w{newbox}\Gb@xSC    % boite qui contient le point marker
\ctr@ln@w{newtoks}\c@nsymb  % the point marker
\ctr@ln@w{newif}\ifr@undcoord\ctr@ln@w{newif}\ifunitpr@sent
\ctr@ld@f\def\unssqrttw@{0.707106 }
\ctr@ld@f\def\figAst{\raise-1.15ex\hbox{$\ast$}}
\ctr@ld@f\def\figBullet{\raise-1.15ex\hbox{$\bullet$}}
\ctr@ld@f\def\figCirc{\raise-1.15ex\hbox{$\circ$}}
\ctr@ld@f\def\figDiamond{\raise-1.15ex\hbox{$\diamond$}}%
\ctr@ld@f\def\boxit#1#2{\leavevmode\hbox{\vrule\vbox{\hrule\vglue#1%
    \vtop{\hbox{\kern#1{#2}\kern#1}\vglue#1\hrule}}\vrule}}
\ctr@ld@f
\ctr@ld@f
\ctr@ld@f\def\c@nterpt{\ignorespaces%
    \kern-.5\wd\Gb@xSC%
    \raise-.5\ht\Gb@xSC\rlap{\hbox{\raise.5\dp\Gb@xSC\hbox{\copy\Gb@xSC}}}%
    \kern .5\wd\Gb@xSC\ignorespaces}
\ctr@ld@f\def\b@undb@xSC#1#2{{\v@lXa=#1\v@lYa=#2%
    \v@leur=\ht\Gb@xSC\advance\v@leur\dp\Gb@xSC%
    \advance\v@lXa-.5\wd\Gb@xSC\advance\v@lYa-.5\v@leur\b@undb@x{\v@lXa}{\v@lYa}%
    \advance\v@lXa\wd\Gb@xSC\advance\v@lYa\v@leur\b@undb@x{\v@lXa}{\v@lYa}}}
\ctr@ln@m\Dist@n
\ctr@ln@m\l@suite
\ctr@ld@f\def\@keldist#1#2{\edef\Dist@n{#2}\y@tiunit{\Dist@n}%
    \ifunitpr@sent#1=\Dist@n\else#1=\Dist@n\unit@\fi}
\ctr@ld@f\def\y@tiunit#1{\unitpr@sentfalse\expandafter\y@tiunit@#1:}
\ctr@ld@f\def\y@tiunit@#1#2:{\ifcat#1a\unitpr@senttrue\else\def\l@suite{#2}%
    \ifx\l@suite\empty\else\y@tiunit@#2:\fi\fi}
\ctr@ln@m\figcoord
\ctr@ld@f\def\figcoordDD#1{{\v@lX=\ptT@unit@\v@lX\v@lY=\ptT@unit@\v@lY%
    \ifr@undcoord\ifcase#1\v@leur=0.5pt\or\v@leur=0.05pt\or\v@leur=0.005pt%
    \or\v@leur=0.0005pt\else\v@leur=\z@\fi%
    \ifdim\v@lX<\z@\advance\v@lX-\v@leur\else\advance\v@lX\v@leur\fi%
    \ifdim\v@lY<\z@\advance\v@lY-\v@leur\else\advance\v@lY\v@leur\fi\fi%
    (\@ffichnb{#1}{\repdecn@mb{\v@lX}},\ifmmode\else\thinspace\fi%
    \@ffichnb{#1}{\repdecn@mb{\v@lY}})}}
\ctr@ld@f\def\@ffichnb#1#2{{\def\@@ffich{\@ffich#1(}\edef\n@mbre{#2}%
    \expandafter\@@ffich\n@mbre)}}
\ctr@ld@f\def\@ffich#1(#2.#3){{#2\ifnum#1>\z@.\fi\def\dig@ts{#3}\s@mme=\z@%
    \loop\ifnum\s@mme<#1\expandafter\@ffichdec\dig@ts:\advance\s@mme\@ne\repeat}}
\ctr@ld@f\def\@ffichdec#1#2:{\relax#1\def\dig@ts{#20}}
\ctr@ld@f\def\figcoordTD#1{{\v@lX=\ptT@unit@\v@lX\v@lY=\ptT@unit@\v@lY\v@lZ=\ptT@unit@\v@lZ%
    \ifr@undcoord\ifcase#1\v@leur=0.5pt\or\v@leur=0.05pt\or\v@leur=0.005pt%
    \or\v@leur=0.0005pt\else\v@leur=\z@\fi%
    \ifdim\v@lX<\z@\advance\v@lX-\v@leur\else\advance\v@lX\v@leur\fi%
    \ifdim\v@lY<\z@\advance\v@lY-\v@leur\else\advance\v@lY\v@leur\fi%
    \ifdim\v@lZ<\z@\advance\v@lZ-\v@leur\else\advance\v@lZ\v@leur\fi\fi%
    (\@ffichnb{#1}{\repdecn@mb{\v@lX}},\ifmmode\else\thinspace\fi%
     \@ffichnb{#1}{\repdecn@mb{\v@lY}},\ifmmode\else\thinspace\fi%
     \@ffichnb{#1}{\repdecn@mb{\v@lZ}})}}
\ctr@ld@f\def\figsetroundcoord#1{\expandafter\Figsetr@undcoord#1:\ignorespaces}
\ctr@ld@f\def\Figsetr@undcoord#1#2:{\if#1n\r@undcoordfalse\else\r@undcoordtrue\fi}
\ctr@ld@f\def\Figsetwr@te#1=#2|{\keln@mun#1|%
    \def\n@mref{m}\ifx\l@debut\n@mref\figsetmark{#2}\else% mark
    \def\n@mref{p}\ifx\l@debut\n@mref\figsetptname{#2}\else% ptname
    \def\n@mref{r}\ifx\l@debut\n@mref\figsetroundcoord{#2}\else% roundcoord
    \W@rnmesAttr{figset write}{#1}\fi\fi\fi}
\ctr@ld@f\def\figsetmark#1{\c@nsymb={#1}\setbox\Gb@xSC=\hbox{\the\c@nsymb}\ignorespaces}
\ctr@ln@m\ptn@me
\ctr@ld@f\def\figsetptname#1{\def\ptn@me##1{#1}\ignorespaces}
\ctr@ld@f\def\FigWrit@L#1:#2(#3,#4){\ignorespaces\@keldist\v@leur{#3}\@keldist\delt@{#4}%
    \C@rp@r@m\def\list@num{#1}\@ecfor\p@int:=\list@num\do{\FigWrit@pt{\p@int}{#2}}}
\ctr@ld@f\def\FigWrit@pt#1#2{\FigWp@r@m{#1}{#2}\Vc@rrect\figWp@si%
    \ifdim\wd\Gb@xSC>\z@\b@undb@xSC{\v@lX}{\v@lY}\fi\figWBB@x}
\ctr@ld@f\def\FigWp@r@m#1#2{\Figg@tXY{#1}%
    \setbox\Gb@x=\hbox{\def\t@xt@{#2}\ifx\t@xt@\empty\Figg@tT{#1}\else#2\fi}\c@lprojSP}
\ctr@ld@f\let\Vc@rrect=\relax
\ctr@ld@f\let\C@rp@r@m=\relax
\ctr@ld@f\def\figwrite[#1]#2{{\ignorespaces\def\list@num{#1}\@ecfor\p@int:=\list@num\do{%
    \setbox\Gb@x=\hbox{\def\t@xt@{#2}\ifx\t@xt@\empty\Figg@tT{\p@int}\else#2\fi}%
    \Figwrit@{\p@int}}}\ignorespaces}
\ctr@ld@f\def\Figwrit@#1{\Figg@tXY{#1}\c@lprojSP%
    \rlap{\kern\v@lX\raise\v@lY\hbox{\unhcopy\Gb@x}}\v@leur=\v@lY%
    \advance\v@lY\ht\Gb@x\b@undb@x{\v@lX}{\v@lY}\advance\v@lX\wd\Gb@x%
    \v@lY=\v@leur\advance\v@lY-\dp\Gb@x\b@undb@x{\v@lX}{\v@lY}}
\ctr@ld@f\def\figwritec[#1]#2{{\ignorespaces\def\list@num{#1}%
    \@ecfor\p@int:=\list@num\do{\Figwrit@c{\p@int}{#2}}}\ignorespaces}
\ctr@ld@f\def\Figwrit@c#1#2{\FigWp@r@m{#1}{#2}%
    \rlap{\kern\v@lX\raise\v@lY\hbox{\rlap{\kern-.5\wd\Gb@x%
    \raise-.5\ht\Gb@x\hbox{\raise.5\dp\Gb@x\hbox{\unhcopy\Gb@x}}}}}%
    \v@leur=\ht\Gb@x\advance\v@leur\dp\Gb@x%
    \advance\v@lX-.5\wd\Gb@x\advance\v@lY-.5\v@leur\b@undb@x{\v@lX}{\v@lY}%
    \advance\v@lX\wd\Gb@x\advance\v@lY\v@leur\b@undb@x{\v@lX}{\v@lY}}
\ctr@ld@f\def\figwritep[#1]{{\ignorespaces\def\list@num{#1}\setbox\Gb@x=\hbox{\c@nterpt}%
    \@ecfor\p@int:=\list@num\do{\Figwrit@{\p@int}}}\ignorespaces}
\ctr@ld@f\def\figwritew#1:#2(#3){\figwritegcw#1:{#2}(#3,0pt)}
\ctr@ld@f\def\figwritee#1:#2(#3){\figwritegce#1:{#2}(#3,0pt)}
\ctr@ld@f\def\figwriten#1:#2(#3){{\def\Vc@rrect{\v@lZ=\v@leur\advance\v@lZ\dp\Gb@x}%
    \Figwrit@NS#1:{#2}(#3)}\ignorespaces}
\ctr@ld@f\def\figwrites#1:#2(#3){{\def\Vc@rrect{\v@lZ=-\v@leur\advance\v@lZ-\ht\Gb@x}%
    \Figwrit@NS#1:{#2}(#3)}\ignorespaces}
\ctr@ld@f\def\Figwrit@NS#1:#2(#3){\let\figWp@si=\FigWp@siNS\let\figWBB@x=\FigWBB@xNS%
    \FigWrit@L#1:{#2}(#3,0pt)}
\ctr@ld@f\def\FigWp@siNS{\rlap{\kern\v@lX\raise\v@lY\hbox{\rlap{\kern-.5\wd\Gb@x%
    \raise\v@lZ\hbox{\unhcopy\Gb@x}}\c@nterpt}}}
\ctr@ld@f\def\FigWBB@xNS{\advance\v@lY\v@lZ%
    \advance\v@lY-\dp\Gb@x\advance\v@lX-.5\wd\Gb@x\b@undb@x{\v@lX}{\v@lY}%
    \advance\v@lY\ht\Gb@x\advance\v@lY\dp\Gb@x%
    \advance\v@lX\wd\Gb@x\b@undb@x{\v@lX}{\v@lY}}
\ctr@ld@f\def\figwritenw#1:#2(#3){{\let\figWp@si=\FigWp@sigW\let\figWBB@x=\FigWBB@xgWE%
    \def\C@rp@r@m{\v@leur=\unssqrttw@\v@leur\delt@=\v@leur%
    \ifdim\delt@=\z@\delt@=\epsil@n\fi}\let@xte={-}\FigWrit@L#1:{#2}(#3,0pt)}\ignorespaces}
\ctr@ld@f\def\figwritesw#1:#2(#3){{\let\figWp@si=\FigWp@sigW\let\figWBB@x=\FigWBB@xgWE%
    \def\C@rp@r@m{\v@leur=\unssqrttw@\v@leur\delt@=-\v@leur%
    \ifdim\delt@=\z@\delt@=-\epsil@n\fi}\let@xte={-}\FigWrit@L#1:{#2}(#3,0pt)}\ignorespaces}
\ctr@ld@f\def\figwritene#1:#2(#3){{\let\figWp@si=\FigWp@sigE\let\figWBB@x=\FigWBB@xgWE%
    \def\C@rp@r@m{\v@leur=\unssqrttw@\v@leur\delt@=\v@leur%
    \ifdim\delt@=\z@\delt@=\epsil@n\fi}\let@xte={}\FigWrit@L#1:{#2}(#3,0pt)}\ignorespaces}
\ctr@ld@f\def\figwritese#1:#2(#3){{\let\figWp@si=\FigWp@sigE\let\figWBB@x=\FigWBB@xgWE%
    \def\C@rp@r@m{\v@leur=\unssqrttw@\v@leur\delt@=-\v@leur%
    \ifdim\delt@=\z@\delt@=-\epsil@n\fi}\let@xte={}\FigWrit@L#1:{#2}(#3,0pt)}\ignorespaces}
\ctr@ld@f\def\figwritegw#1:#2(#3,#4){{\let\figWp@si=\FigWp@sigW\let\figWBB@x=\FigWBB@xgWE%
    \let@xte={-}\FigWrit@L#1:{#2}(#3,#4)}\ignorespaces}
\ctr@ld@f\def\figwritege#1:#2(#3,#4){{\let\figWp@si=\FigWp@sigE\let\figWBB@x=\FigWBB@xgWE%
    \let@xte={}\FigWrit@L#1:{#2}(#3,#4)}\ignorespaces}
\ctr@ld@f\def\FigWp@sigW{\v@lXa=\z@\v@lYa=\ht\Gb@x\advance\v@lYa\dp\Gb@x%
    \ifdim\delt@>\z@\relax%
    \rlap{\kern\v@lX\raise\v@lY\hbox{\rlap{\kern-\wd\Gb@x\kern-\v@leur%
          \raise\delt@\hbox{\raise\dp\Gb@x\hbox{\unhcopy\Gb@x}}}\c@nterpt}}%
    \else\ifdim\delt@<\z@\relax\v@lYa=-\v@lYa%
    \rlap{\kern\v@lX\raise\v@lY\hbox{\rlap{\kern-\wd\Gb@x\kern-\v@leur%
          \raise\delt@\hbox{\raise-\ht\Gb@x\hbox{\unhcopy\Gb@x}}}\c@nterpt}}%
    \else\v@lXa=-.5\v@lYa%
    \rlap{\kern\v@lX\raise\v@lY\hbox{\rlap{\kern-\wd\Gb@x\kern-\v@leur%
          \raise-.5\ht\Gb@x\hbox{\raise.5\dp\Gb@x\hbox{\unhcopy\Gb@x}}}\c@nterpt}}%
    \fi\fi}
\ctr@ld@f\def\FigWp@sigE{\v@lXa=\z@\v@lYa=\ht\Gb@x\advance\v@lYa\dp\Gb@x%
    \ifdim\delt@>\z@\relax%
    \rlap{\kern\v@lX\raise\v@lY\hbox{\c@nterpt\kern\v@leur%
          \raise\delt@\hbox{\raise\dp\Gb@x\hbox{\unhcopy\Gb@x}}}}%
    \else\ifdim\delt@<\z@\relax\v@lYa=-\v@lYa%
    \rlap{\kern\v@lX\raise\v@lY\hbox{\c@nterpt\kern\v@leur%
          \raise\delt@\hbox{\raise-\ht\Gb@x\hbox{\unhcopy\Gb@x}}}}%
    \else\v@lXa=-.5\v@lYa%
    \rlap{\kern\v@lX\raise\v@lY\hbox{\c@nterpt\kern\v@leur%
          \raise-.5\ht\Gb@x\hbox{\raise.5\dp\Gb@x\hbox{\unhcopy\Gb@x}}}}%
    \fi\fi}
\ctr@ld@f\def\FigWBB@xgWE{\advance\v@lY\delt@%
    \advance\v@lX\the\let@xte\v@leur\advance\v@lY\v@lXa\b@undb@x{\v@lX}{\v@lY}%
    \advance\v@lX\the\let@xte\wd\Gb@x\advance\v@lY\v@lYa\b@undb@x{\v@lX}{\v@lY}}
\ctr@ld@f\def\figwritegcw#1:#2(#3,#4){{\let\figWp@si=\FigWp@sigcW\let\figWBB@x=\FigWBB@xgcWE%
    \let@xte={-}\FigWrit@L#1:{#2}(#3,#4)}\ignorespaces}
\ctr@ld@f\def\figwritegce#1:#2(#3,#4){{\let\figWp@si=\FigWp@sigcE\let\figWBB@x=\FigWBB@xgcWE%
    \let@xte={}\FigWrit@L#1:{#2}(#3,#4)}\ignorespaces}
\ctr@ld@f\def\FigWp@sigcW{\rlap{\kern\v@lX\raise\v@lY\hbox{\rlap{\kern-\wd\Gb@x\kern-\v@leur%
     \raise-.5\ht\Gb@x\hbox{\raise\delt@\hbox{\raise.5\dp\Gb@x\hbox{\unhcopy\Gb@x}}}}%
     \c@nterpt}}}
\ctr@ld@f\def\FigWp@sigcE{\rlap{\kern\v@lX\raise\v@lY\hbox{\c@nterpt\kern\v@leur%
    \raise-.5\ht\Gb@x\hbox{\raise\delt@\hbox{\raise.5\dp\Gb@x\hbox{\unhcopy\Gb@x}}}}}}
\ctr@ld@f\def\FigWBB@xgcWE{\v@lZ=\ht\Gb@x\advance\v@lZ\dp\Gb@x%
    \advance\v@lX\the\let@xte\v@leur\advance\v@lY\delt@\advance\v@lY.5\v@lZ%
    \b@undb@x{\v@lX}{\v@lY}%
    \advance\v@lX\the\let@xte\wd\Gb@x\advance\v@lY-\v@lZ\b@undb@x{\v@lX}{\v@lY}}
\ctr@ld@f\def\figwritebn#1:#2(#3){{\def\Vc@rrect{\v@lZ=\v@leur}\Figwrit@NS#1:{#2}(#3)}\ignorespaces}
\ctr@ld@f\def\figwritebs#1:#2(#3){{\def\Vc@rrect{\v@lZ=-\v@leur}\Figwrit@NS#1:{#2}(#3)}\ignorespaces}
\ctr@ld@f\def\figwritebw#1:#2(#3){{\let\figWp@si=\FigWp@sibW\let\figWBB@x=\FigWBB@xbWE%
    \let@xte={-}\FigWrit@L#1:{#2}(#3,0pt)}\ignorespaces}
\ctr@ld@f\def\figwritebe#1:#2(#3){{\let\figWp@si=\FigWp@sibE\let\figWBB@x=\FigWBB@xbWE%
    \let@xte={}\FigWrit@L#1:{#2}(#3,0pt)}\ignorespaces}
\ctr@ld@f\def\FigWp@sibW{\rlap{\kern\v@lX\raise\v@lY\hbox{\rlap{\kern-\wd\Gb@x\kern-\v@leur%
          \hbox{\unhcopy\Gb@x}}\c@nterpt}}}
\ctr@ld@f\def\FigWp@sibE{\rlap{\kern\v@lX\raise\v@lY\hbox{\c@nterpt\kern\v@leur%
          \hbox{\unhcopy\Gb@x}}}}
\ctr@ld@f\def\FigWBB@xbWE{\v@lZ=\ht\Gb@x\advance\v@lZ\dp\Gb@x%
    \advance\v@lX\the\let@xte\v@leur\advance\v@lY\ht\Gb@x\b@undb@x{\v@lX}{\v@lY}%
    \advance\v@lX\the\let@xte\wd\Gb@x\advance\v@lY-\v@lZ\b@undb@x{\v@lX}{\v@lY}}
\ctr@ln@w{newread}\frf@g  \ctr@ln@w{newwrite}\fwf@g
\ctr@ln@w{newif}\ifCUR@PS
\ctr@ln@w{newif}\ifGR@cri
\ctr@ln@w{newif}\ifUse@llipse
\ctr@ln@w{newif}\ifGRdebugm@de \GRdebugm@defalse 
\ctr@ln@w{newif}\ifPDFm@ke
\ifx\pdfliteral\undefined\else\ifnum\pdfoutput>\z@\PDFm@ketrue\fi\fi
\ctr@ld@f\def\initPDF@rDVI{%
\ifPDFm@ke
 \let\figscan=\figscan@E
 \let\newGr@FN=\newGr@FNPDF
 \ctr@ld@f\def\c@mcurveto{c}
 \ctr@ld@f\def\c@mfill{f}
 \ctr@ld@f\def\c@mgsave{q}
 \ctr@ld@f\def\c@mgrestore{Q}
 \ctr@ld@f\def\c@mlineto{l}
 \ctr@ld@f\def\c@mmoveto{m}
 \ctr@ld@f\def\c@msetgray{g}     \ctr@ld@f\def\c@msetgrayStroke{G}
 \ctr@ld@f\def\c@msetcmykcolor{k}\ctr@ld@f\def\c@msetcmykcolorStroke{K}
 \ctr@ld@f\def\c@msetrgbcolor{rg}\ctr@ld@f\def\c@msetrgbcolorStroke{RG}
 \ctr@ld@f\def\d@fprimarC@lor{\CUR@color\space\CUR@colorc@md%
               \space\CUR@color\space\CUR@colorc@mdStroke}
 \ctr@ld@f\def\c@msetdash{d}
 \ctr@ld@f\def\c@msetlinejoin{j}
 \ctr@ld@f\def\c@msetlinewidth{w}
 \ctr@ld@f\def\f@gclosestroke{\immediate\write\fwf@g{s}}
 \ctr@ld@f\def\f@gfill{\immediate\write\fwf@g{\fillc@md}}% Voir la def de \fillc@md ****
 \ctr@ld@f\def\f@gnewpath{}
 \ctr@ld@f\def\f@gstroke{\immediate\write\fwf@g{S}}
\else
 \let\figinsertE=\figinsert
 \let\newGr@FN=\newGr@FNDVI
 \ctr@ld@f\def\c@mcurveto{curveto}
 \ctr@ld@f\def\c@mfill{fill}
 \ctr@ld@f\def\c@mgsave{gsave}
 \ctr@ld@f\def\c@mgrestore{grestore}
 \ctr@ld@f\def\c@mlineto{lineto}
 \ctr@ld@f\def\c@mmoveto{moveto}
 \ctr@ld@f\def\c@msetgray{setgray}          \ctr@ld@f\def\c@msetgrayStroke{}
 \ctr@ld@f\def\c@msetcmykcolor{setcmykcolor}\ctr@ld@f\def\c@msetcmykcolorStroke{}
 \ctr@ld@f\def\c@msetrgbcolor{setrgbcolor}  \ctr@ld@f\def\c@msetrgbcolorStroke{}
 \ctr@ld@f\def\d@fprimarC@lor{\CUR@color\space\CUR@colorc@md}
 \ctr@ld@f\def\c@msetdash{setdash}
 \ctr@ld@f\def\c@msetlinejoin{setlinejoin}
 \ctr@ld@f\def\c@msetlinewidth{setlinewidth}
 \ctr@ld@f\def\f@gclosestroke{\immediate\write\fwf@g{closepath\space stroke}}
 \ctr@ld@f\def\f@gfill{\immediate\write\fwf@g{\fillc@md}}
 \ctr@ld@f\def\f@gnewpath{\immediate\write\fwf@g{newpath}}
 \ctr@ld@f\def\f@gstroke{\immediate\write\fwf@g{stroke}}
\fi}
\ctr@ld@f\def\c@pypsfile#1#2{\c@pyfil@{\immediate\write#1}{#2}}
\ctr@ld@f\def\Figinclud@PDF#1#2{\openin\frf@g=#1\pdfliteral{q #2 0 0 #2 0 0 cm}%
    \c@pyfil@{\pdfliteral}{\frf@g}\pdfliteral{Q}\closein\frf@g}
\ctr@ln@w{newif}\ifmored@ta
\ctr@ln@m\bl@nkline
\ctr@ld@f\def\c@pyfil@#1#2{\def\bl@nkline{\par}{\catcode`\%=12
    \loop\ifeof#2\mored@tafalse\else\mored@tatrue\immediate\read#2 to\tr@c
    \ifx\tr@c\bl@nkline\else#1{\tr@c}\fi\fi\ifmored@ta\repeat}}
\ctr@ld@f\def\keln@mun#1#2|{\def\l@debut{#1}\def\l@suite{#2}}
\ctr@ld@f\def\keln@mde#1#2#3|{\def\l@debut{#1#2}\def\l@suite{#3}}
\ctr@ld@f\def\keln@mtr#1#2#3#4|{\def\l@debut{#1#2#3}\def\l@suite{#4}}
\ctr@ld@f\def\keln@mqu#1#2#3#4#5|{\def\l@debut{#1#2#3#4}\def\l@suite{#5}}
\ctr@ld@f\let\@psffilein=\frf@g % file to \read
\ctr@ln@w{newif}\if@psffileok    % continue looking for the bounding box?
\ctr@ln@w{newif}\if@psfbbfound   % success?
\ctr@ln@w{newif}\if@psfverbose   % report what you're making?
\@psfverbosetrue
\ctr@ln@m\@psfllx \ctr@ln@m\@psflly
\ctr@ln@m\@psfurx \ctr@ln@m\@psfury
\ctr@ln@m\resetcolonc@tcode
\ctr@ld@f\def\@psfgetbb#1{\global\@psfbbfoundfalse%
\global\def\@psfllx{0}\global\def\@psflly{0}%
\global\def\@psfurx{30}\global\def\@psfury{30}%
\openin\@psffilein=#1\relax
\ifeof\@psffilein\errmessage{I couldn't open #1, will ignore it}\else
   \edef\resetcolonc@tcode{\catcode`\noexpand\:\the\catcode`\:\relax}%
   {\@psffileoktrue \chardef\other=12
    \def\do##1{\catcode`##1=\other}\dospecials \catcode`\ =10 \resetcolonc@tcode
    \loop
       \read\@psffilein to \@psffileline
       \ifeof\@psffilein\@psffileokfalse\else
          \expandafter\@psfaux\@psffileline:. \\%
       \fi
   \if@psffileok\repeat
   \if@psfbbfound\else
    \if@psfverbose\message{No bounding box comment in #1; using defaults}\fi\fi
   }\closein\@psffilein\fi}%
\ctr@ln@m\@psfbblit
\ctr@ln@m\@psfpercent
{\catcode`\%=12 \global\let\@psfpercent=%\global\def\@psfbblit{%BoundingBox}}%
\ctr@ln@m\@psfaux
\long\def\@psfaux#1#2:#3\\{\ifx#1\@psfpercent
   \def\testit{#2}\ifx\testit\@psfbblit
      \@psfgrab #3 . . . \\%
      \@psffileokfalse
      \global\@psfbbfoundtrue
   \fi\else\ifx#1\par\else\@psffileokfalse\fi\fi}%
\ctr@ld@f\def\@psfempty{}%
\ctr@ld@f\def\@psfgrab #1 #2 #3 #4 #5\\{%
\global\def\@psfllx{#1}\ifx\@psfllx\@psfempty
      \@psfgrab #2 #3 #4 #5 .\\\else
   \global\def\@psflly{#2}%
   \global\def\@psfurx{#3}\global\def\@psfury{#4}\fi}%
\ctr@ld@f\def\PSwrit@cmd#1#2#3{{\Figg@tXY{#1}\c@lprojSP\b@undb@x{\v@lX}{\v@lY}%
    \v@lX=\ptT@ptps\v@lX\v@lY=\ptT@ptps\v@lY%
    \immediate\write#3{\repdecn@mb{\v@lX}\space\repdecn@mb{\v@lY}\space#2}}}
\ctr@ld@f\def\PSwrit@cmdS#1#2#3#4#5{{\Figg@tXY{#1}\c@lprojSP\b@undb@x{\v@lX}{\v@lY}%
    \global\result@t=\v@lX\global\result@@t=\v@lY%
    \v@lX=\ptT@ptps\v@lX\v@lY=\ptT@ptps\v@lY%
    \immediate\write#3{\repdecn@mb{\v@lX}\space\repdecn@mb{\v@lY}\space#2}}%
    \edef#4{\the\result@t}\edef#5{\the\result@@t}}
\ctr@ld@f\def\update@ttr#1#2#3{\Figdisc@rdLTS{#3}{\n@mref}%
    \ifx\n@mref\D@FTref#2{#1}\else#2{#3}\fi}
\ctr@ld@f\def\D@FTref{default}
\ctr@ld@f\def\W@rnmesAttr#1#2{%
    \immediate\write16{*** Unknown attribute: \BS@ #1(..., #2=...)}}
\ctr@ld@f\def\W@rnmeskwd#1#2{%
    \immediate\write16{*** Unknown keyword #2 in \BS@ #1}}
\ctr@ld@f\def\W@rnmesIgn#1{\immediate\write16{*** \BS@ #1 is ignored inside a
     \BS@ figdrawbegin-\BS@ figdrawend block.}}
\ctr@ld@f\def\Psset@lti#1=#2|{\keln@mtr#1|%
    \def\n@mref{blc}\ifx\l@debut\n@mref\update@ttr\D@FTref\P@setblcolor{#2}\else% base line color
    \def\n@mref{bld}\ifx\l@debut\n@mref\update@ttr\D@FTref\P@setbldash{#2}\else% base line dash
    \def\n@mref{blw}\ifx\l@debut\n@mref\update@ttr\D@FTref\P@setblwidth{#2}\else% base line width
    \def\n@mref{sqc}\ifx\l@debut\n@mref\update@ttr\D@FTref\P@setsqcolor{#2}\else% square color
    \def\n@mref{sqd}\ifx\l@debut\n@mref\update@ttr\D@FTref\P@setsqdash{#2}\else% square dash
    \def\n@mref{sqw}\ifx\l@debut\n@mref\update@ttr\D@FTref\P@setsqwidth{#2}\else% square width
    \W@rnmesAttr{figset altitude}{#1}\fi\fi\fi\fi\fi\fi}
\ctr@ln@m\DDV@blcolor
\ctr@ld@f\def\P@setblcolor#1{\edef\DDV@blcolor{#1}}
\ctr@ln@m\DDV@bldash
\ctr@ld@f\def\P@setbldash#1{\edef\DDV@bldash{#1}}
\ctr@ln@m\DDV@blwidth
\ctr@ld@f\def\P@setblwidth#1{\edef\DDV@blwidth{#1}}
\ctr@ln@m\DDV@sqcolor
\ctr@ld@f\def\P@setsqcolor#1{\edef\DDV@sqcolor{#1}}
\ctr@ln@m\DDV@sqdash
\ctr@ld@f\def\P@setsqdash#1{\edef\DDV@sqdash{#1}}
\ctr@ln@m\DDV@sqwidth
\ctr@ld@f\def\P@setsqwidth#1{\edef\DDV@sqwidth{#1}}
\ctr@ld@f\def\figdrawaltitude#1[#2,#3,#4]{{\ifCUR@PS\ifGR@cri%
    \PSc@mment{altitude Square Dim=#1, Triangle=[#2 / #3,#4]}%
    \s@uvc@ntr@l\et@tpsaltitude\resetc@ntr@l{2}\figptorthoprojline-5:=#2/#3,#4/%
    \figvectP -1[#3,#4]\n@rminf{\v@leur}{-1}\vecunit@{-3}{-1}%
    \figvectP -1[-5,#3]\n@rminf{\v@lmin}{-1}\figvectP -2[-5,#4]\n@rminf{\v@lmax}{-2}%
    \ifdim\v@lmin<\v@lmax\s@mme=#3\else\v@lmax=\v@lmin\s@mme=#4\fi%
    \figvectP -4[-5,#2]\vecunit@{-4}{-4}\delt@=#1\unit@%
    \edef\t@ille{\repdecn@mb{\delt@}}\figpttra-1:=-5/\t@ille,-3/%
    \figptstra-3=-5,-1/\t@ille,-4/\figdrawline[#2,-5]%
    \Pss@tspecifSt{color=\DDV@sqcolor,dash=\DDV@sqdash,width=\DDV@sqwidth}%
    \figdrawline[-1,-2,-3]%
    \Psrest@reSt{color=\DDV@sqcolor,dash=\DDV@sqdash,width=\DDV@sqwidth}%
    \ifdim\v@leur<\v@lmax%
    \Pss@tspecifSt{color=\DDV@blcolor,dash=\DDV@bldash,width=\DDV@blwidth}%
    \figdrawline[-5,\the\s@mme]%
    \Psrest@reSt{color=\DDV@blcolor,dash=\DDV@bldash,width=\DDV@blwidth}%
    \fi\PSc@mment{End altitude}\resetc@ntr@l\et@tpsaltitude\fi\fi}}
\ctr@ld@f\def\Ps@rcerc#1;#2(#3,#4){\ellBB@x#1;#2,#2(#3,#4,0)%
    \f@gnewpath{\delt@=#2\unit@\delt@=\ptT@ptps\delt@%
    \BdingB@xfalse%
    \PSwrit@cmd{#1}{\repdecn@mb{\delt@}\space #3\space #4\space arc}{\fwf@g}}}
\ctr@ln@m\figdrawarccirc
\ctr@ld@f\def\Q@arccircDD#1;#2(#3,#4){\ifCUR@PS\ifGR@cri%
    \PSc@mment{arccircDD Center=#1 ; Radius=#2 (Ang1=#3, Ang2=#4)}%
    \iffillm@de\Ps@rcerc#1;#2(#3,#4)%
    \f@gfill%
    \else\Ps@rcerc#1;#2(#3,#4)\f@gstroke\fi%
    \PSc@mment{End arccircDD}\fi\fi}
\ctr@ld@f\def\Q@arccircTD#1,#2,#3;#4(#5,#6){{\ifCUR@PS\ifGR@cri\s@uvc@ntr@l\et@tpsarccircTD%
    \PSc@mment{arccircTD Center=#1,P1=#2,P2=#3 ; Radius=#4 (Ang1=#5, Ang2=#6)}%
    \setc@ntr@l{2}\c@lExtAxes#1,#2,#3(#4)\Q@arcellPATD#1,-4,-5(#5,#6)%
    \PSc@mment{End arccircTD}\resetc@ntr@l\et@tpsarccircTD\fi\fi}}
\ctr@ld@f\def\c@lExtAxes#1,#2,#3(#4){%
    \figvectPTD-5[#1,#2]\vecunit@{-5}{-5}\figvectNTD-4[#1,#2,#3]\vecunit@{-4}{-4}%
    \figvectNVTD-3[-4,-5]\delt@=#4\unit@\edef\r@yon{\repdecn@mb{\delt@}}%
    \figpttra-4:=#1/\r@yon,-5/\figpttra-5:=#1/\r@yon,-3/}
\ctr@ln@m\figdrawarccircP
\ctr@ld@f\def\Q@arccircPDD#1;#2[#3,#4]{{\ifCUR@PS\ifGR@cri\s@uvc@ntr@l\et@tpsarccircPDD%
    \PSc@mment{arccircPDD Center=#1; Radius=#2, [P1=#3, P2=#4]}%
    \Ps@ngleparam#1;#2[#3,#4]\ifdim\v@lmin>\v@lmax\advance\v@lmax\DePI@deg\fi%
    \edef\@ngdeb{\repdecn@mb{\v@lmin}}\edef\@ngfin{\repdecn@mb{\v@lmax}}%
    \figdrawarccirc#1;\r@dius(\@ngdeb,\@ngfin)%
    \PSc@mment{End arccircPDD}\resetc@ntr@l\et@tpsarccircPDD\fi\fi}}
\ctr@ld@f\def\Q@arccircPTD#1;#2[#3,#4,#5]{{\ifCUR@PS\ifGR@cri\s@uvc@ntr@l\et@tpsarccircPTD%
    \PSc@mment{arccircPTD Center=#1; Radius=#2, [P1=#3, P2=#4, P3=#5]}%
    \setc@ntr@l{2}\c@lExtAxes#1,#3,#5(#2)\figdrawarcellPP#1,-4,-5[#3,#4]%
    \PSc@mment{End arccircPTD}\resetc@ntr@l\et@tpsarccircPTD\fi\fi}}
\ctr@ld@f\def\Ps@ngleparam#1;#2[#3,#4]{\setc@ntr@l{2}%
    \figvectPDD-1[#1,#3]\vecunit@{-1}{-1}\Figg@tXY{-1}\arct@n\v@lmin(\v@lX,\v@lY)%
    \figvectPDD-2[#1,#4]\vecunit@{-2}{-2}\Figg@tXY{-2}\arct@n\v@lmax(\v@lX,\v@lY)%
    \v@lmin=\rdT@deg\v@lmin\v@lmax=\rdT@deg\v@lmax%
    \v@leur=#2pt\maxim@m{\mili@u}{-\v@leur}{\v@leur}%
    \edef\r@dius{\repdecn@mb{\mili@u}}}
\ctr@ld@f\def\Ps@rcercBz#1;#2(#3,#4){\Ps@rellBz#1;#2,#2(#3,#4,0)}
\ctr@ld@f\def\Ps@rellBz#1;#2,#3(#4,#5,#6){%
    \ellBB@x#1;#2,#3(#4,#5,#6)\BdingB@xfalse%
    \c@lNbarcs{#4}{#5}\v@leur=#4pt\setc@ntr@l{2}\figptell-13::#1;#2,#3(#4,#6)%
    \f@gnewpath\PSwrit@cmd{-13}{\c@mmoveto}{\fwf@g}%
    \s@mme=\z@\bcl@rellBz#1;#2,#3(#6)\BdingB@xtrue}
\ctr@ld@f\def\bcl@rellBz#1;#2,#3(#4){\relax%
    \ifnum\s@mme<\p@rtent\advance\s@mme\@ne%
    \advance\v@leur\delt@\edef\@ngle{\repdecn@mb\v@leur}\figptell-14::#1;#2,#3(\@ngle,#4)%
    \advance\v@leur\delt@\edef\@ngle{\repdecn@mb\v@leur}\figptell-15::#1;#2,#3(\@ngle,#4)%
    \advance\v@leur\delt@\edef\@ngle{\repdecn@mb\v@leur}\figptell-16::#1;#2,#3(\@ngle,#4)%
    \figptscontrolDD-18[-13,-14,-15,-16]%
    \PSwrit@cmd{-18}{}{\fwf@g}\PSwrit@cmd{-17}{}{\fwf@g}%
    \PSwrit@cmd{-16}{\c@mcurveto}{\fwf@g}%
    \figptcopyDD-13:/-16/\bcl@rellBz#1;#2,#3(#4)\fi}
\ctr@ld@f\def\Ps@rell#1;#2,#3(#4,#5,#6){\ellBB@x#1;#2,#3(#4,#5,#6)%
    \f@gnewpath{\v@lmin=#2\unit@\v@lmin=\ptT@ptps\v@lmin%
    \v@lmax=#3\unit@\v@lmax=\ptT@ptps\v@lmax\BdingB@xfalse%
    \PSwrit@cmd{#1}%
    {#6\space\repdecn@mb{\v@lmin}\space\repdecn@mb{\v@lmax}\space #4\space #5\space ellipse}{\fwf@g}}%
    \global\Use@llipsetrue}
\ctr@ln@m\figdrawarcell
\ctr@ld@f\def\Q@arcellDD#1;#2,#3(#4,#5,#6){{\ifCUR@PS\ifGR@cri%
    \PSc@mment{arcellDD Center=#1 ; XRad=#2, YRad=#3 (Ang1=#4, Ang2=#5, Inclination=#6)}%
    \iffillm@de\Ps@rell#1;#2,#3(#4,#5,#6)%
    \f@gfill%
    \else\Ps@rell#1;#2,#3(#4,#5,#6)\f@gstroke\fi%
    \PSc@mment{End arcellDD}\fi\fi}}
\ctr@ld@f\def\Q@arcellTD#1;#2,#3(#4,#5,#6){{\ifCUR@PS\ifGR@cri\s@uvc@ntr@l\et@tpsarcellTD%
    \PSc@mment{arcellTD Center=#1 ; XRad=#2, YRad=#3 (Ang1=#4, Ang2=#5, Inclination=#6)}%
    \setc@ntr@l{2}\figpttraC -8:=#1/#2,0,0/\figpttraC -7:=#1/0,#3,0/%
    \figvectC -4(0,0,1)\figptsrot -8=-8,-7/#1,#6,-4/\Q@arcellPATD#1,-8,-7(#4,#5)%
    \PSc@mment{End arcellTD}\resetc@ntr@l\et@tpsarcellTD\fi\fi}}
\ctr@ln@m\figdrawarcellPA
\ctr@ld@f\def\Q@arcellPADD#1,#2,#3(#4,#5){{\ifCUR@PS\ifGR@cri\s@uvc@ntr@l\et@tpsarcellPADD%
    \PSc@mment{arcellPADD Center=#1,PtAxis1=#2,PtAxis2=#3 (Ang1=#4, Ang2=#5)}%
    \setc@ntr@l{2}\figvectPDD-1[#1,#2]\vecunit@DD{-1}{-1}\v@lX=\ptT@unit@\result@t%
    \edef\XR@d{\repdecn@mb{\v@lX}}\Figg@tXY{-1}\arct@n\v@lmin(\v@lX,\v@lY)%
    \v@lmin=\rdT@deg\v@lmin\edef\Inclin@{\repdecn@mb{\v@lmin}}%
    \figgetdist\YR@d[#1,#3]\Q@arcellDD#1;\XR@d,\YR@d(#4,#5,\Inclin@)%
    \PSc@mment{End arcellPADD}\resetc@ntr@l\et@tpsarcellPADD\fi\fi}}
\ctr@ld@f\def\Q@arcellPATD#1,#2,#3(#4,#5){{\ifCUR@PS\ifGR@cri\s@uvc@ntr@l\et@tpsarcellPATD%
    \PSc@mment{arcellPATD Center=#1,PtAxis1=#2,PtAxis2=#3 (Ang1=#4, Ang2=#5)}%
    \iffillm@de\Ps@rellPATD#1,#2,#3(#4,#5)%
    \f@gfill%
    \else\Ps@rellPATD#1,#2,#3(#4,#5)\f@gstroke\fi%
    \PSc@mment{End arcellPATD}\resetc@ntr@l\et@tpsarcellPATD\fi\fi}}
\ctr@ld@f\def\Ps@rellPATD#1,#2,#3(#4,#5){\let\c@lprojSP=\relax%
    \setc@ntr@l{2}\figvectPTD-1[#1,#2]\figvectPTD-2[#1,#3]\c@lNbarcs{#4}{#5}%
    \v@leur=#4pt\c@lptellP{#1}{-1}{-2}\Figptpr@j-5:/-3/%
    \f@gnewpath\PSwrit@cmdS{-5}{\c@mmoveto}{\fwf@g}{\X@un}{\Y@un}%
    \edef\C@nt@r{#1}\s@mme=\z@\bcl@rellPATD}
\ctr@ld@f\def\bcl@rellPATD{\relax%
    \ifnum\s@mme<\p@rtent\advance\s@mme\@ne%
    \advance\v@leur\delt@\c@lptellP{\C@nt@r}{-1}{-2}\Figptpr@j-4:/-3/%
    \advance\v@leur\delt@\c@lptellP{\C@nt@r}{-1}{-2}\Figptpr@j-6:/-3/%
    \advance\v@leur\delt@\c@lptellP{\C@nt@r}{-1}{-2}\Figptpr@j-3:/-3/%
    \v@lX=\z@\v@lY=\z@\Figtr@nptDD{-5}{-5}\Figtr@nptDD{2}{-3}%
    \divide\v@lX\@vi\divide\v@lY\@vi%
    \Figtr@nptDD{3}{-4}\Figtr@nptDD{-1.5}{-6}\v@lmin=\v@lX\v@lmax=\v@lY%
    \v@lX=\z@\v@lY=\z@\Figtr@nptDD{2}{-5}\Figtr@nptDD{-5}{-3}%
    \divide\v@lX\@vi\divide\v@lY\@vi\Figtr@nptDD{-1.5}{-4}\Figtr@nptDD{3}{-6}%
    \BdingB@xfalse%
    \Figp@intregDD-4:(\v@lmin,\v@lmax)\PSwrit@cmdS{-4}{}{\fwf@g}{\X@de}{\Y@de}%
    \Figp@intregDD-4:(\v@lX,\v@lY)\PSwrit@cmdS{-4}{}{\fwf@g}{\X@tr}{\Y@tr}%
    \BdingB@xtrue\PSwrit@cmdS{-3}{\c@mcurveto}{\fwf@g}{\X@qu}{\Y@qu}%
    \B@zierBB@x{1}{\Y@un}(\X@un,\X@de,\X@tr,\X@qu)%
    \B@zierBB@x{2}{\X@un}(\Y@un,\Y@de,\Y@tr,\Y@qu)%
    \edef\X@un{\X@qu}\edef\Y@un{\Y@qu}\figptcopyDD-5:/-3/\bcl@rellPATD\fi}
\ctr@ld@f\def\c@lNbarcs#1#2{%
    \delt@=#2pt\advance\delt@-#1pt\maxim@m{\v@lmax}{\delt@}{-\delt@}%
    \v@leur=\v@lmax\divide\v@leur45 \p@rtentiere{\p@rtent}{\v@leur}\advance\p@rtent\@ne%
    \s@mme=\p@rtent\multiply\s@mme\thr@@\divide\delt@\s@mme}
\ctr@ld@f\def\figdrawarcellPP#1,#2,#3[#4,#5]{{\ifCUR@PS\ifGR@cri\s@uvc@ntr@l\et@tpsarcellPP%
    \PSc@mment{arcellPP Center=#1,PtAxis1=#2,PtAxis2=#3 [Point1=#4, Point2=#5]}%
    \setc@ntr@l{2}\figvectP-2[#1,#3]\vecunit@{-2}{-2}\v@lmin=\result@t%
    \invers@{\v@lmax}{\v@lmin}%
    \figvectP-1[#1,#2]\vecunit@{-1}{-1}\v@leur=\result@t%
    \v@leur=\repdecn@mb{\v@lmax}\v@leur\edef\AsB@{\repdecn@mb{\v@leur}}% a/b
    \c@lAngle{#1}{#4}{\v@lmin}\edef\@ngdeb{\repdecn@mb{\v@lmin}}%
    \c@lAngle{#1}{#5}{\v@lmax}\ifdim\v@lmin>\v@lmax\advance\v@lmax\DePI@deg\fi%
    \edef\@ngfin{\repdecn@mb{\v@lmax}}\figdrawarcellPA#1,#2,#3(\@ngdeb,\@ngfin)%
    \PSc@mment{End arcellPP}\resetc@ntr@l\et@tpsarcellPP\fi\fi}}
\ctr@ld@f\def\c@lAngle#1#2#3{\figvectP-3[#1,#2]%
    \c@lproscal\delt@[-3,-1]\c@lproscal\v@leur[-3,-2]%
    \v@leur=\AsB@\v@leur\arct@n#3(\delt@,\v@leur)#3=\rdT@deg#3}
\ctr@ln@w{newif}\if@rrowratio\@rrowratiotrue
\ctr@ln@w{newif}\if@rrowhfill
\ctr@ln@w{newif}\if@rrowhout
\ctr@ld@f\def\Psset@rrowhe@d#1=#2|{\keln@mun#1|%
    \def\n@mref{a}\ifx\l@debut\n@mref\update@ttr\D@FTarrowheadangle\Q@s@tarrowheadangle{#2}\else% angle
    \def\n@mref{f}\ifx\l@debut\n@mref\update@ttr\D@FTarrowheadfill\Q@s@tarrowheadfill{#2}\else% fillmode
    \def\n@mref{l}\ifx\l@debut\n@mref\update@ttr\D@FTarrowheadlength\Q@s@tarrowheadlength{#2}\else% length
    \def\n@mref{o}\ifx\l@debut\n@mref\update@ttr\D@FTarrowheadout\Q@s@tarrowheadout{#2}\else% out
    \def\n@mref{r}\ifx\l@debut\n@mref\update@ttr\D@FTarrowheadratio\Q@s@tarrowheadratio{#2}\else% ratio
    \W@rnmesAttr{figset arrowhead}{#1}\fi\fi\fi\fi\fi}
\ctr@ln@m\@rrowheadangle
\ctr@ln@m\C@AHANG \ctr@ln@m\S@AHANG \ctr@ln@m\UNSS@N
\ctr@ld@f\def\Q@s@tarrowheadangle#1{\edef\@rrowheadangle{#1}{\c@ssin{\C@}{\S@}{#1}%
    \xdef\C@AHANG{\C@}\xdef\S@AHANG{\S@}\v@lmax=\S@ pt%
    \invers@{\v@leur}{\v@lmax}\maxim@m{\v@leur}{\v@leur}{-\v@leur}%
    \xdef\UNSS@N{\the\v@leur}}}
\ctr@ld@f\def\Q@s@tarrowheadfill#1{\expandafter\set@rrowhfill#1:}
\ctr@ld@f\def\set@rrowhfill#1#2:{\if#1n\@rrowhfillfalse\else\@rrowhfilltrue\fi}
\ctr@ld@f\def\Q@s@tarrowheadout#1{\expandafter\set@rrowhout#1:}
\ctr@ld@f\def\set@rrowhout#1#2:{\if#1n\@rrowhoutfalse\else\@rrowhouttrue\fi}
\ctr@ln@m\@rrowheadlength
\ctr@ld@f\def\Q@s@tarrowheadlength#1{\edef\@rrowheadlength{#1}\@rrowratiofalse}
\ctr@ln@m\@rrowheadratio
\ctr@ld@f\def\Q@s@tarrowheadratio#1{\edef\@rrowheadratio{#1}\@rrowratiotrue}
\ctr@ln@m\D@FTarrowheadlength
\ctr@ld@f\def\figresetarrowhead{%
    \Q@s@tarrowheadangle{\D@FTarrowheadangle}%
    \Q@s@tarrowheadfill{\D@FTarrowheadfill}%
    \Q@s@tarrowheadout{\D@FTarrowheadout}%
    \Q@s@tarrowheadratio{\D@FTarrowheadratio}%
    \d@fm@cdim\D@FTarrowheadlength{\D@FTh@rdahlength}% Valeur par defaut...
    \Q@s@tarrowheadlength{\D@FTarrowheadlength}}
\ctr@ld@f\def\D@FTarrowheadratio{0.1}
\ctr@ld@f\def\D@FTarrowheadangle{20}
\ctr@ld@f\def\D@FTarrowheadfill{no}
\ctr@ld@f\def\D@FTarrowheadout{no}
\ctr@ld@f\def\D@FTh@rdahlength{8pt}
\ctr@ln@m\figdrawarrow
\ctr@ld@f\def\Q@arrowDD[#1,#2]{{\ifCUR@PS\ifGR@cri\s@uvc@ntr@l\et@tpsarrow%
    \PSc@mment{arrowDD [Pt1,Pt2]=[#1,#2]}\Q@s@tfillmode{no}%
    \Q@arrowheadDD[#1,#2]\setc@ntr@l{2}\figdrawline[#1,-3]%
    \PSc@mment{End arrowDD}\resetc@ntr@l\et@tpsarrow\fi\fi}}
\ctr@ld@f\def\Q@arrowTD[#1,#2]{{\ifCUR@PS\ifGR@cri\s@uvc@ntr@l\et@tpsarrowTD%
    \PSc@mment{arrowTD [Pt1,Pt2]=[#1,#2]}\resetc@ntr@l{2}%
    \Figptpr@j-5:/#1/\Figptpr@j-6:/#2/\let\c@lprojSP=\relax\Q@arrowDD[-5,-6]%
    \PSc@mment{End arrowTD}\resetc@ntr@l\et@tpsarrowTD\fi\fi}}
\ctr@ln@m\figdrawarrowhead
\ctr@ld@f\def\Q@arrowheadDD[#1,#2]{{\ifCUR@PS\ifGR@cri\s@uvc@ntr@l\et@tpsarrowheadDD%
    \if@rrowhfill\def\@hangle{-\@rrowheadangle}\else\def\@hangle{\@rrowheadangle}\fi%
    \if@rrowratio%
    \if@rrowhout\def\@hratio{-\@rrowheadratio}\else\def\@hratio{\@rrowheadratio}\fi%
    \PSc@mment{arrowheadDD Ratio=\@hratio, Angle=\@hangle, [Pt1,Pt2]=[#1,#2]}%
    \Ps@rrowhead\@hratio,\@hangle[#1,#2]%
    \else%
    \if@rrowhout\def\@hlength{-\@rrowheadlength}\else\def\@hlength{\@rrowheadlength}\fi%
    \PSc@mment{arrowheadDD Length=\@hlength, Angle=\@hangle, [Pt1,Pt2]=[#1,#2]}%
    \Ps@rrowheadfd\@hlength,\@hangle[#1,#2]%
    \fi%
    \PSc@mment{End arrowheadDD}\resetc@ntr@l\et@tpsarrowheadDD\fi\fi}}
\ctr@ld@f\def\Q@arrowheadTD[#1,#2]{{\ifCUR@PS\ifGR@cri\s@uvc@ntr@l\et@tpsarrowheadTD%
    \PSc@mment{arrowheadTD [Pt1,Pt2]=[#1,#2]}\resetc@ntr@l{2}%
    \Figptpr@j-5:/#1/\Figptpr@j-6:/#2/\let\c@lprojSP=\relax\Q@arrowheadDD[-5,-6]%
    \PSc@mment{End arrowheadTD}\resetc@ntr@l\et@tpsarrowheadTD\fi\fi}}
\ctr@ld@f\def\Ps@rrowhead#1,#2[#3,#4]{\v@leur=#1\p@\maxim@m{\v@leur}{\v@leur}{-\v@leur}%
    \ifdim\v@leur>\Cepsil@n{% Arrow is not degenerated
    \PSc@mment{@rrowhead Ratio=#1, Angle=#2, [Pt1,Pt2]=[#3,#4]}\v@leur=\UNSS@N%
    \v@leur=\CUR@width\v@leur\v@leur=\ptpsT@pt\v@leur\delt@=.5\v@leur% = width / (2 sin(Angle))
    \setc@ntr@l{2}\figvectPDD-3[#4,#3]%
    \Figg@tXY{-3}\v@lX=#1\v@lX\v@lY=#1\v@lY\Figv@ctCreg-3(\v@lX,\v@lY)%
    \vecunit@{-4}{-3}\mili@u=\result@t%
    \ifdim#2pt>\z@\v@lXa=-\C@AHANG\delt@%
     \edef\c@ef{\repdecn@mb{\v@lXa}}\figpttraDD-3:=-3/\c@ef,-4/\fi%
    \edef\c@ef{\repdecn@mb{\delt@}}%
    \v@lXa=\mili@u\v@lXa=\C@AHANG\v@lXa%
    \v@lYa=\ptpsT@pt\p@\v@lYa=\CUR@width\v@lYa\v@lYa=\sDcc@ngle\v@lYa%
    \advance\v@lXa-\v@lYa\gdef\sDcc@ngle{0}%
    \ifdim\v@lXa>\v@leur\edef\c@efendpt{\repdecn@mb{\v@leur}}%
    \else\edef\c@efendpt{\repdecn@mb{\v@lXa}}\fi%
    \Figg@tXY{-3}\v@lmin=\v@lX\v@lmax=\v@lY%
    \v@lXa=\C@AHANG\v@lmin\v@lYa=\S@AHANG\v@lmax\advance\v@lXa\v@lYa%
    \v@lYa=-\S@AHANG\v@lmin\v@lX=\C@AHANG\v@lmax\advance\v@lYa\v@lX%
    \setc@ntr@l{1}\Figg@tXY{#4}\advance\v@lX\v@lXa\advance\v@lY\v@lYa%
    \setc@ntr@l{2}\Figp@intregDD-2:(\v@lX,\v@lY)%
    \v@lXa=\C@AHANG\v@lmin\v@lYa=-\S@AHANG\v@lmax\advance\v@lXa\v@lYa%
    \v@lYa=\S@AHANG\v@lmin\v@lX=\C@AHANG\v@lmax\advance\v@lYa\v@lX%
    \setc@ntr@l{1}\Figg@tXY{#4}\advance\v@lX\v@lXa\advance\v@lY\v@lYa%
    \setc@ntr@l{2}\Figp@intregDD-1:(\v@lX,\v@lY)%
    \ifdim#2pt<\z@\fillm@detrue\figdrawline[-2,#4,-1]% fill
    \else\figptstraDD-3=#4,-2,-1/\c@ef,-4/\s@uvdash{\typ@dash}\Q@s@tdash{\D@FTdash}%
    \figdrawline[-2,-3,-1]\Q@s@tdash{\typ@dash}\fi% no fill
    \ifdim#1pt>\z@\figpttraDD-3:=#4/\c@efendpt,-4/\else\figptcopyDD-3:/#4/\fi%
    \PSc@mment{End @rrowhead}}\fi}
\ctr@ld@f\def\sDcc@ngle{0}% Initialisation
\ctr@ld@f\def\Ps@rrowheadfd#1,#2[#3,#4]{{%
    \PSc@mment{@rrowheadfd Length=#1, Angle=#2, [Pt1,Pt2]=[#3,#4]}%
    \setc@ntr@l{2}\figvectPDD-1[#3,#4]\n@rmeucDD{\v@leur}{-1}\v@leur=\ptT@unit@\v@leur%
    \invers@{\v@leur}{\v@leur}\v@leur=#1\v@leur\edef\R@tio{\repdecn@mb{\v@leur}}%
    \Ps@rrowhead\R@tio,#2[#3,#4]\PSc@mment{End @rrowheadfd}}}
\ctr@ln@m\figdrawarrowBezier
\ctr@ld@f\def\Q@arrowBezierDD[#1,#2,#3,#4]{{\ifCUR@PS\ifGR@cri\s@uvc@ntr@l\et@tpsarrowBezierDD%
    \PSc@mment{arrowBezierDD Control points=#1,#2,#3,#4}\setc@ntr@l{2}%
    \if@rrowratio\c@larclengthDD\v@leur,10[#1,#2,#3,#4]\else\v@leur=\z@\fi%
    \Ps@rrowB@zDD\v@leur[#1,#2,#3,#4]%
    \PSc@mment{End arrowBezierDD}\resetc@ntr@l\et@tpsarrowBezierDD\fi\fi}}
\ctr@ld@f\def\Q@arrowBezierTD[#1,#2,#3,#4]{{\ifCUR@PS\ifGR@cri\s@uvc@ntr@l\et@tpsarrowBezierTD%
    \PSc@mment{arrowBezierTD Control points=#1,#2,#3,#4}\resetc@ntr@l{2}%
    \Figptpr@j-7:/#1/\Figptpr@j-8:/#2/\Figptpr@j-9:/#3/\Figptpr@j-10:/#4/%
    \let\c@lprojSP=\relax\ifnum\CUR@proj<\tw@\Q@arrowBezierDD[-7,-8,-9,-10]%
    \else\f@gnewpath\PSwrit@cmd{-7}{\c@mmoveto}{\fwf@g}%
    \if@rrowratio\c@larclengthDD\mili@u,10[-7,-8,-9,-10]\else\mili@u=\z@\fi%
    \p@rtent=\NBz@rcs\advance\p@rtent\m@ne\subB@zierTD\p@rtent[#1,#2,#3,#4]%
    \f@gstroke%
    \advance\v@lmin\p@rtent\delt@% Initialized in \subB@zierTD
    \v@leur=\v@lmin\advance\v@leur0.33333 \delt@\edef\unti@rs{\repdecn@mb{\v@leur}}%
    \v@leur=\v@lmin\advance\v@leur0.66666 \delt@\edef\deti@rs{\repdecn@mb{\v@leur}}%
    \figptcopyDD-8:/-10/\c@lsubBzarc\unti@rs,\deti@rs[#1,#2,#3,#4]%
    \figptcopyDD-8:/-4/\figptcopyDD-9:/-3/\Ps@rrowB@zDD\mili@u[-7,-8,-9,-10]\fi%
    \PSc@mment{End arrowBezierTD}\resetc@ntr@l\et@tpsarrowBezierTD\fi\fi}}
\ctr@ld@f\def\c@larclengthDD#1,#2[#3,#4,#5,#6]{{\p@rtent=#2\figptcopyDD-5:/#3/%
    \delt@=\p@\divide\delt@\p@rtent\c@rre=\z@\v@leur=\z@\s@mme=\z@%
    \loop\ifnum\s@mme<\p@rtent\advance\s@mme\@ne\advance\v@leur\delt@%
    \edef\T@{\repdecn@mb{\v@leur}}\figptBezierDD-6::\T@[#3,#4,#5,#6]%
    \figvectPDD-1[-5,-6]\n@rmeucDD{\mili@u}{-1}\advance\c@rre\mili@u%
    \figptcopyDD-5:/-6/\repeat\global\result@t=\ptT@unit@\c@rre}#1=\result@t}
\ctr@ld@f\def\Ps@rrowB@zDD#1[#2,#3,#4,#5]{{\Q@s@tfillmode{no}%
    \if@rrowratio\delt@=\@rrowheadratio#1\else\delt@=\@rrowheadlength pt\fi%
    \v@leur=\C@AHANG\delt@\edef\R@dius{\repdecn@mb{\v@leur}}%
    \FigptintercircB@zDD-5::0,\R@dius[#5,#4,#3,#2]%
    \Q@s@tarrowheadlength{\repdecn@mb{\delt@}}\Q@arrowheadDD[-5,#5]%
    \let\n@rmeuc=\n@rmeucDD\figgetdist\R@dius[#5,-3]%
    \FigptintercircB@zDD-6::0,\R@dius[#5,#4,#3,#2]%
    \figptBezierDD-5::0.33333[#5,#4,#3,#2]\figptBezierDD-3::0.66666[#5,#4,#3,#2]%
    \figptscontrolDD-5[-6,-5,-3,#2]\Q@BezierDD1[-6,-5,-4,#2]}}
\ctr@ln@m\figdrawarrowcirc
\ctr@ld@f\def\Q@arrowcircDD#1;#2(#3,#4){{\ifCUR@PS\ifGR@cri\s@uvc@ntr@l\et@tpsarrowcircDD%
    \PSc@mment{arrowcircDD Center=#1 ; Radius=#2 (Ang1=#3,Ang2=#4)}%
    \Q@s@tfillmode{no}\Pscirc@rrowhead#1;#2(#3,#4)%
    \setc@ntr@l{2}\figvectPDD -4[#1,-3]\vecunit@{-4}{-4}%
    \Figg@tXY{-4}\arct@n\v@lmin(\v@lX,\v@lY)%
    \v@lmin=\rdT@deg\v@lmin\v@leur=#4pt\advance\v@leur-\v@lmin%
    \maxim@m{\v@leur}{\v@leur}{-\v@leur}%
    \ifdim\v@leur>\DemiPI@deg\relax\ifdim\v@lmin<#4pt\advance\v@lmin\DePI@deg%
    \else\advance\v@lmin-\DePI@deg\fi\fi\edef\ar@ngle{\repdecn@mb{\v@lmin}}%
    \ifdim#3pt<#4pt\figdrawarccirc#1;#2(#3,\ar@ngle)\else\figdrawarccirc#1;#2(\ar@ngle,#3)\fi%
    \PSc@mment{End arrowcircDD}\resetc@ntr@l\et@tpsarrowcircDD\fi\fi}}
\ctr@ld@f\def\Q@arrowcircTD#1,#2,#3;#4(#5,#6){{\ifCUR@PS\ifGR@cri\s@uvc@ntr@l\et@tpsarrowcircTD%
    \PSc@mment{arrowcircTD Center=#1,P1=#2,P2=#3 ; Radius=#4 (Ang1=#5, Ang2=#6)}%
    \resetc@ntr@l{2}\c@lExtAxes#1,#2,#3(#4)\let\c@lprojSP=\relax%
    \figvectPTD-11[#1,-4]\figvectPTD-12[#1,-5]\c@lNbarcs{#5}{#6}%
    \if@rrowratio\v@lmax=\degT@rd\v@lmax\edef\D@lpha{\repdecn@mb{\v@lmax}}\fi%
    \advance\p@rtent\m@ne\mili@u=\z@%
    \v@leur=#5pt\c@lptellP{#1}{-11}{-12}\Figptpr@j-9:/-3/%
    \f@gnewpath\PSwrit@cmdS{-9}{\c@mmoveto}{\fwf@g}{\X@un}{\Y@un}%
    \edef\C@nt@r{#1}\s@mme=\z@\bcl@rcircTD\f@gstroke%
    \advance\v@leur\delt@\c@lptellP{#1}{-11}{-12}\Figptpr@j-5:/-3/%
    \advance\v@leur\delt@\c@lptellP{#1}{-11}{-12}\Figptpr@j-6:/-3/%
    \advance\v@leur\delt@\c@lptellP{#1}{-11}{-12}\Figptpr@j-10:/-3/%
    \figptscontrolDD-8[-9,-5,-6,-10]%
    \if@rrowratio\c@lcurvradDD0.5[-9,-8,-7,-10]\advance\mili@u\result@t%
    \maxim@m{\mili@u}{\mili@u}{-\mili@u}\mili@u=\ptT@unit@\mili@u%
    \mili@u=\D@lpha\mili@u\advance\p@rtent\@ne\divide\mili@u\p@rtent\fi%
    \Ps@rrowB@zDD\mili@u[-9,-8,-7,-10]%
    \PSc@mment{End arrowcircTD}\resetc@ntr@l\et@tpsarrowcircTD\fi\fi}}
\ctr@ld@f\def\bcl@rcircTD{\relax%
    \ifnum\s@mme<\p@rtent\advance\s@mme\@ne%
    \advance\v@leur\delt@\c@lptellP{\C@nt@r}{-11}{-12}\Figptpr@j-5:/-3/%
    \advance\v@leur\delt@\c@lptellP{\C@nt@r}{-11}{-12}\Figptpr@j-6:/-3/%
    \advance\v@leur\delt@\c@lptellP{\C@nt@r}{-11}{-12}\Figptpr@j-10:/-3/%
    \figptscontrolDD-8[-9,-5,-6,-10]\BdingB@xfalse%
    \PSwrit@cmdS{-8}{}{\fwf@g}{\X@de}{\Y@de}\PSwrit@cmdS{-7}{}{\fwf@g}{\X@tr}{\Y@tr}%
    \BdingB@xtrue\PSwrit@cmdS{-10}{\c@mcurveto}{\fwf@g}{\X@qu}{\Y@qu}%
    \if@rrowratio\c@lcurvradDD0.5[-9,-8,-7,-10]\advance\mili@u\result@t\fi%
    \B@zierBB@x{1}{\Y@un}(\X@un,\X@de,\X@tr,\X@qu)%
    \B@zierBB@x{2}{\X@un}(\Y@un,\Y@de,\Y@tr,\Y@qu)%
    \edef\X@un{\X@qu}\edef\Y@un{\Y@qu}\figptcopyDD-9:/-10/\bcl@rcircTD\fi}
\ctr@ld@f\def\Pscirc@rrowhead#1;#2(#3,#4){{%
    \PSc@mment{circ@rrowhead Center=#1 ; Radius=#2 (Ang1=#3,Ang2=#4)}%
    \v@leur=#2\unit@\edef\s@glen{\repdecn@mb{\v@leur}}\v@lY=\z@\v@lX=\v@leur%
    \resetc@ntr@l{2}\Figv@ctCreg-3(\v@lX,\v@lY)\figpttraDD-5:=#1/1,-3/%
    \figptrotDD-5:=-5/#1,#4/%
    \figvectPDD-3[#1,-5]\Figg@tXY{-3}\v@leur=\v@lX%
    \ifdim#3pt<#4pt\v@lX=\v@lY\v@lY=-\v@leur\else\v@lX=-\v@lY\v@lY=\v@leur\fi%
    \Figv@ctCreg-3(\v@lX,\v@lY)\vecunit@{-3}{-3}%
    \if@rrowratio\v@leur=#4pt\advance\v@leur-#3pt\maxim@m{\mili@u}{-\v@leur}{\v@leur}%
    \mili@u=\degT@rd\mili@u\v@leur=\s@glen\mili@u\edef\s@glen{\repdecn@mb{\v@leur}}%
    \mili@u=#2\mili@u\mili@u=\@rrowheadratio\mili@u\else\mili@u=\@rrowheadlength pt\fi%
    \figpttraDD-6:=-5/\s@glen,-3/\v@leur=#2pt\v@leur=2\v@leur%
    \invers@{\v@leur}{\v@leur}\c@rre=\repdecn@mb{\v@leur}\mili@u% = sin = L/(2R)
    \mili@u=\c@rre\mili@u=\repdecn@mb{\c@rre}\mili@u%
    \v@leur=\p@\advance\v@leur-\mili@u% \v@leur = cos*cos
    \invers@{\mili@u}{2\v@leur}\delt@=\c@rre\delt@=\repdecn@mb{\mili@u}\delt@%
    \xdef\sDcc@ngle{\repdecn@mb{\delt@}}% sin/(2*cos*cos) used in \Ps@rrowhead
    \sqrt@{\mili@u}{\v@leur}\arct@n\v@leur(\mili@u,\c@rre)%
    \v@leur=\rdT@deg\v@leur% \cor@ngle = atan(L/sqrt(4R*R-L*L))
    \ifdim#3pt<#4pt\v@leur=-\v@leur\fi%
    \if@rrowhout\v@leur=-\v@leur\fi\edef\cor@ngle{\repdecn@mb{\v@leur}}%
    \figptrotDD-6:=-6/-5,\cor@ngle/\Q@arrowheadDD[-6,-5]%
    \PSc@mment{End circ@rrowhead}}}
\ctr@ln@m\figdrawarrowcircP
\ctr@ld@f\def\Q@arrowcircPDD#1;#2[#3,#4]{{\ifCUR@PS\ifGR@cri%
    \PSc@mment{arrowcircPDD Center=#1; Radius=#2, [P1=#3,P2=#4]}%
    \s@uvc@ntr@l\et@tpsarrowcircPDD\Ps@ngleparam#1;#2[#3,#4]%
    \ifdim\v@leur>\z@\ifdim\v@lmin>\v@lmax\advance\v@lmax\DePI@deg\fi%
    \else\ifdim\v@lmin<\v@lmax\advance\v@lmin\DePI@deg\fi\fi%
    \edef\@ngdeb{\repdecn@mb{\v@lmin}}\edef\@ngfin{\repdecn@mb{\v@lmax}}%
    \figdrawarrowcirc#1;\r@dius(\@ngdeb,\@ngfin)%
    \PSc@mment{End arrowcircPDD}\resetc@ntr@l\et@tpsarrowcircPDD\fi\fi}}
\ctr@ld@f\def\Q@arrowcircPTD#1;#2[#3,#4,#5]{{\ifCUR@PS\ifGR@cri\s@uvc@ntr@l\et@tpsarrowcircPTD%
    \PSc@mment{arrowcircPTD Center=#1; Radius=#2, [P1=#3,P2=#4,P3=#5]}%
    \figgetangleTD\@ngfin[#1,#3,#4,#5]\v@leur=#2pt%
    \maxim@m{\mili@u}{-\v@leur}{\v@leur}\edef\r@dius{\repdecn@mb{\mili@u}}%
    \ifdim\v@leur<\z@\v@lmax=\@ngfin pt\advance\v@lmax-\DePI@deg%
    \edef\@ngfin{\repdecn@mb{\v@lmax}}\fi\Q@arrowcircTD#1,#3,#5;\r@dius(0,\@ngfin)%
    \PSc@mment{End arrowcircPTD}\resetc@ntr@l\et@tpsarrowcircPTD\fi\fi}}
\ctr@ld@f\def\figdrawaxes#1(#2){{\ifCUR@PS\ifGR@cri\s@uvc@ntr@l\et@tpsaxes%
    \PSc@mment{axes Origin=#1 Range=(#2)}\an@lys@xes#2,:\resetc@ntr@l{2}%
    \ifx\t@xt@\empty\ifTr@isDim\Q@@xes#1(0,#2,0,#2,0,#2)\else\Q@@xes#1(0,#2,0,#2)\fi%
    \else\Q@@xes#1(#2)\fi\PSc@mment{End axes}\resetc@ntr@l\et@tpsaxes\fi\fi}}
\ctr@ld@f\def\an@lys@xes#1,#2:{\def\t@xt@{#2}}
\ctr@ln@m\Q@@xes
\ctr@ld@f\def\Q@@xesDD#1(#2,#3,#4,#5){%
    \figpttraC-5:=#1/#2,0/\figpttraC-6:=#1/#3,0/\Q@arrowDD[-5,-6]%
    \figpttraC-5:=#1/0,#4/\figpttraC-6:=#1/0,#5/\Q@arrowDD[-5,-6]}
\ctr@ld@f\def\Q@@xesTD#1(#2,#3,#4,#5,#6,#7){%
    \figpttraC-7:=#1/#2,0,0/\figpttraC-8:=#1/#3,0,0/\Q@arrowTD[-7,-8]%
    \figpttraC-7:=#1/0,#4,0/\figpttraC-8:=#1/0,#5,0/\Q@arrowTD[-7,-8]%
    \figpttraC-7:=#1/0,0,#6/\figpttraC-8:=#1/0,0,#7/\Q@arrowTD[-7,-8]}
\ctr@ln@m\newGr@FN
\ctr@ld@f\def\newGr@FNPDF#1{\s@mme=\Gr@FNb\advance\s@mme\@ne\xdef\Gr@FNb{\number\s@mme}}
\ctr@ld@f\def\newGr@FNDVI#1{\newGr@FNPDF{}\xdef#1{\jobname GI\Gr@FNb.anx}}
\ctr@ld@f\def\figdrawbegin#1{\newGr@FN\DefGIfilen@me\gdef\@utoFN{0}%
    \def\t@xt@{#1}\relax\ifx\t@xt@\empty\GRupdatem@detrue%
    \gdef\@utoFN{1}\Psb@ginfig\DefGIfilen@me\else\expandafter\Psb@ginfigNu@#1 :\fi}
\ctr@ld@f\def\Psb@ginfigNu@#1 #2:{\def\t@xt@{#1}\relax\ifx\t@xt@\empty\def\t@xt@{#2}%
    \ifx\t@xt@\empty\GRupdatem@detrue\gdef\@utoFN{1}\Psb@ginfig\DefGIfilen@me%
    \else\Psb@ginfigNu@#2:\fi\else\Psb@ginfig{#1}\fi}
\ctr@ln@m\PSfilen@me \ctr@ln@m\auxfilen@me
\ctr@ld@f\def\Psb@ginfig#1{\ifCUR@PS\else%
    \edef\PSfilen@me{#1}\edef\auxfilen@me{\jobname.anx}%
    \ifGRupdatem@de\GR@critrue\else\openin\frf@g=\PSfilen@me\relax%
    \ifeof\frf@g\GR@critrue\else\GR@crifalse\fi\closein\frf@g\fi%
    \CUR@PStrue\c@ldefproj\expandafter\setupd@te\D@FTupdate:%
    \ifGR@cri\initb@undb@x%
    \immediate\openout\fwf@g=\auxfilen@me\initpss@ttings\fi%
    \fi}
\ctr@ld@f\def\Gr@FNb{0}
\ctr@ld@f\def\figforTeXFileno{\Gr@FNb}
\ctr@ld@f\def\figforTeXFigno{0 }
\ctr@ld@f\def\figforTeXnextFigno{1 }
\ctr@ld@f\edef\DefGIfilen@me{\jobname GI.anx}
\ctr@ld@f\def\initpss@ttings{\figreset{altitude,arrowhead,curve,general,flowchart,mesh,trimesh}%
    \Use@llipsefalse}
\ctr@ld@f\def\B@zierBB@x#1#2(#3,#4,#5,#6){{\c@rre=\t@n\epsil@n% Do not reduce this value
    \v@lmax=#4\advance\v@lmax-#5\v@lmax=\thr@@\v@lmax\advance\v@lmax#6\advance\v@lmax-#3%
    \mili@u=#4\mili@u=-\tw@\mili@u\advance\mili@u#3\advance\mili@u#5%
    \v@lmin=#4\advance\v@lmin-#3\maxim@m{\v@leur}{-\v@lmax}{\v@lmax}%
    \maxim@m{\delt@}{-\mili@u}{\mili@u}\maxim@m{\v@leur}{\v@leur}{\delt@}%
    \maxim@m{\delt@}{-\v@lmin}{\v@lmin}\maxim@m{\v@leur}{\v@leur}{\delt@}%
    \ifdim\v@leur>\c@rre\invers@{\v@leur}{\v@leur}\edef\Uns@rM@x{\repdecn@mb{\v@leur}}%
    \v@lmax=\Uns@rM@x\v@lmax\mili@u=\Uns@rM@x\mili@u\v@lmin=\Uns@rM@x\v@lmin%
    \maxim@m{\v@leur}{-\v@lmax}{\v@lmax}\ifdim\v@leur<\c@rre%
    \maxim@m{\v@leur}{-\mili@u}{\mili@u}\ifdim\v@leur<\c@rre\else%
    \invers@{\mili@u}{\mili@u}\v@leur=-0.5\v@lmin%
    \v@leur=\repdecn@mb{\mili@u}\v@leur\m@jBBB@x{\v@leur}{#1}{#2}(#3,#4,#5,#6)\fi%
    \else\delt@=\repdecn@mb{\mili@u}\mili@u\v@leur=\repdecn@mb{\v@lmax}\v@lmin%
    \advance\delt@-\v@leur\ifdim\delt@<\z@\else\invers@{\v@lmax}{\v@lmax}%
    \edef\Uns@rAp{\repdecn@mb{\v@lmax}}\sqrt@{\delt@}{\delt@}%
    \v@leur=-\mili@u\advance\v@leur\delt@\v@leur=\Uns@rAp\v@leur%
    \m@jBBB@x{\v@leur}{#1}{#2}(#3,#4,#5,#6)%
    \v@leur=-\mili@u\advance\v@leur-\delt@\v@leur=\Uns@rAp\v@leur%
    \m@jBBB@x{\v@leur}{#1}{#2}(#3,#4,#5,#6)\fi\fi\fi}}
\ctr@ld@f\def\m@jBBB@x#1#2#3(#4,#5,#6,#7){{\relax\ifdim#1>\z@\ifdim#1<\p@%
    \edef\T@{\repdecn@mb{#1}}\v@lX=\p@\advance\v@lX-#1\edef\UNmT@{\repdecn@mb{\v@lX}}%
    \v@lX=#4\v@lY=#5\v@lZ=#6\v@lXa=#7\v@lX=\UNmT@\v@lX\advance\v@lX\T@\v@lY%
    \v@lY=\UNmT@\v@lY\advance\v@lY\T@\v@lZ\v@lZ=\UNmT@\v@lZ\advance\v@lZ\T@\v@lXa%
    \v@lX=\UNmT@\v@lX\advance\v@lX\T@\v@lY\v@lY=\UNmT@\v@lY\advance\v@lY\T@\v@lZ%
    \v@lX=\UNmT@\v@lX\advance\v@lX\T@\v@lY%
    \ifcase#2\or\v@lY=#3\or\v@lY=\v@lX\v@lX=#3\fi\b@undb@x{\v@lX}{\v@lY}\fi\fi}}
\ctr@ld@f\def\PsB@zier#1[#2]{{\f@gnewpath%
    \s@mme=\z@\def\list@num{#2,0}\extrairelepremi@r\p@int\de\list@num%
    \PSwrit@cmdS{\p@int}{\c@mmoveto}{\fwf@g}{\X@un}{\Y@un}\p@rtent=#1\bclB@zier}}
\ctr@ld@f\def\bclB@zier{\relax%
    \ifnum\s@mme<\p@rtent\advance\s@mme\@ne\BdingB@xfalse%
    \extrairelepremi@r\p@int\de\list@num\PSwrit@cmdS{\p@int}{}{\fwf@g}{\X@de}{\Y@de}%
    \extrairelepremi@r\p@int\de\list@num\PSwrit@cmdS{\p@int}{}{\fwf@g}{\X@tr}{\Y@tr}%
    \BdingB@xtrue%
    \extrairelepremi@r\p@int\de\list@num\PSwrit@cmdS{\p@int}{\c@mcurveto}{\fwf@g}{\X@qu}{\Y@qu}%
    \B@zierBB@x{1}{\Y@un}(\X@un,\X@de,\X@tr,\X@qu)%
    \B@zierBB@x{2}{\X@un}(\Y@un,\Y@de,\Y@tr,\Y@qu)%
    \edef\X@un{\X@qu}\edef\Y@un{\Y@qu}\bclB@zier\fi}
\ctr@ln@m\figdrawBezier
\ctr@ld@f\def\Q@BezierDD#1[#2]{\ifCUR@PS\ifGR@cri%
    \PSc@mment{BezierDD N arcs=#1, Control points=#2}%
    \iffillm@de\PsB@zier#1[#2]%
    \f@gfill%
    \else\PsB@zier#1[#2]\f@gstroke\fi%
    \PSc@mment{End BezierDD}\fi\fi}
\ctr@ln@m\et@tpsBezierTD% ou doubler les {}
\ctr@ld@f\def\Q@BezierTD#1[#2]{\ifCUR@PS\ifGR@cri\s@uvc@ntr@l\et@tpsBezierTD%
    \PSc@mment{BezierTD N arcs=#1, Control points=#2}%
    \iffillm@de\PsB@zierTD#1[#2]%
    \f@gfill%
    \else\PsB@zierTD#1[#2]\f@gstroke\fi%
    \PSc@mment{End BezierTD}\resetc@ntr@l\et@tpsBezierTD\fi\fi}
\ctr@ld@f\def\PsB@zierTD#1[#2]{\ifnum\CUR@proj<\tw@\PsB@zier#1[#2]\else\PsB@zier@TD#1[#2]\fi}
\ctr@ld@f\def\PsB@zier@TD#1[#2]{{\f@gnewpath%
    \s@mme=\z@\def\list@num{#2,0}\extrairelepremi@r\p@int\de\list@num%
    \let\c@lprojSP=\relax\setc@ntr@l{2}\Figptpr@j-7:/\p@int/%
    \PSwrit@cmd{-7}{\c@mmoveto}{\fwf@g}%
    \loop\ifnum\s@mme<#1\advance\s@mme\@ne\extrairelepremi@r\p@intun\de\list@num%
    \extrairelepremi@r\p@intde\de\list@num\extrairelepremi@r\p@inttr\de\list@num%
    \subB@zierTD\NBz@rcs[\p@int,\p@intun,\p@intde,\p@inttr]\edef\p@int{\p@inttr}\repeat}}
\ctr@ld@f\def\subB@zierTD#1[#2,#3,#4,#5]{\delt@=\p@\divide\delt@\NBz@rcs\v@lmin=\z@%
    {\Figg@tXY{-7}\edef\X@un{\the\v@lX}\edef\Y@un{\the\v@lY}%
    \s@mme=\z@\loop\ifnum\s@mme<#1\advance\s@mme\@ne%
    \v@leur=\v@lmin\advance\v@leur0.33333 \delt@\edef\unti@rs{\repdecn@mb{\v@leur}}%
    \v@leur=\v@lmin\advance\v@leur0.66666 \delt@\edef\deti@rs{\repdecn@mb{\v@leur}}%
    \advance\v@lmin\delt@\edef\trti@rs{\repdecn@mb{\v@lmin}}%
    \figptBezierTD-8::\trti@rs[#2,#3,#4,#5]\Figptpr@j-8:/-8/%
    \c@lsubBzarc\unti@rs,\deti@rs[#2,#3,#4,#5]\BdingB@xfalse%
    \PSwrit@cmdS{-4}{}{\fwf@g}{\X@de}{\Y@de}\PSwrit@cmdS{-3}{}{\fwf@g}{\X@tr}{\Y@tr}%
    \BdingB@xtrue\PSwrit@cmdS{-8}{\c@mcurveto}{\fwf@g}{\X@qu}{\Y@qu}%
    \B@zierBB@x{1}{\Y@un}(\X@un,\X@de,\X@tr,\X@qu)%
    \B@zierBB@x{2}{\X@un}(\Y@un,\Y@de,\Y@tr,\Y@qu)%
    \edef\X@un{\X@qu}\edef\Y@un{\Y@qu}\figptcopyDD-7:/-8/\repeat}}
\ctr@ld@f\def\NBz@rcs{2}
\ctr@ld@f\def\c@lsubBzarc#1,#2[#3,#4,#5,#6]{\figptBezierTD-5::#1[#3,#4,#5,#6]%
    \figptBezierTD-6::#2[#3,#4,#5,#6]\Figptpr@j-4:/-5/\Figptpr@j-5:/-6/%
    \figptscontrolDD-4[-7,-4,-5,-8]}
\ctr@ln@m\figdrawcirc
\ctr@ld@f\def\Q@circDD#1(#2){\ifCUR@PS\ifGR@cri\PSc@mment{circDD Center=#1 (Radius=#2)}%
    \Q@arccircDD#1;#2(0,360)\PSc@mment{End circDD}\fi\fi}
\ctr@ld@f\def\Q@circTD#1,#2,#3(#4){\ifCUR@PS\ifGR@cri%
    \PSc@mment{circTD Center=#1,P1=#2,P2=#3 (Radius=#4)}%
    \Q@arccircTD#1,#2,#3;#4(0,360)\PSc@mment{End circTD}\fi\fi}
\ctr@ln@m\p@urcent
{\catcode`\%=12\gdef\p@urcent{%}}
\ctr@ld@f\def\PSc@mment#1{\ifGRdebugm@de\immediate\write\fwf@g{\p@urcent\space#1}\fi}
\ctr@ln@m\acc@louv \ctr@ln@m\acc@lfer
{\catcode`\[=1\catcode`\{=12\gdef\acc@louv[{}}
{\catcode`\]=2\catcode`\}=12\gdef\acc@lfer{}]]
\ctr@ld@f\def\PSdict@{\ifUse@llipse%
    \immediate\write\fwf@g{/ellipsedict 9 dict def ellipsedict /mtrx matrix put}%
    \immediate\write\fwf@g{/ellipse \acc@louv ellipsedict begin}%
    \immediate\write\fwf@g{ /endangle exch def /startangle exch def}%
    \immediate\write\fwf@g{ /yrad exch def /xrad exch def}%
    \immediate\write\fwf@g{ /rotangle exch def /y exch def /x exch def}%
    \immediate\write\fwf@g{ /savematrix mtrx currentmatrix def}%
    \immediate\write\fwf@g{ x y translate rotangle rotate xrad yrad scale}%
    \immediate\write\fwf@g{ 0 0 1 startangle endangle arc}%
    \immediate\write\fwf@g{ savematrix setmatrix end\acc@lfer def}%
    \fi\PShe@der{EndProlog}}
\ctr@ld@f\def\Pssetc@rve#1=#2|{\keln@mun#1|%
    \def\n@mref{r}\ifx\l@debut\n@mref\update@ttr\D@FTroundness\Q@s@troundness{#2}\else% roundness
    \W@rnmesAttr{figset curve}{#1}\fi}
\ctr@ln@m\curv@roundness
\ctr@ld@f\def\Q@s@troundness#1{\edef\curv@roundness{#1}}
\ctr@ld@f\def\D@FTroundness{0.2} % Valeur par defaut
\ctr@ln@m\figdrawcurve
\ctr@ld@f\def\Q@curveDD[#1]{{\ifCUR@PS\ifGR@cri\PSc@mment{curveDD Points=#1}%
    \s@uvc@ntr@l\et@tpscurveDD%
    \iffillm@de\Psc@rveDD\curv@roundness[#1]%
    \f@gfill%
    \else\Psc@rveDD\curv@roundness[#1]\f@gstroke\fi%
    \PSc@mment{End curveDD}\resetc@ntr@l\et@tpscurveDD\fi\fi}}
\ctr@ld@f\def\Q@curveTD[#1]{{\ifCUR@PS\ifGR@cri%
    \PSc@mment{curveTD Points=#1}\s@uvc@ntr@l\et@tpscurveTD\let\c@lprojSP=\relax%
    \iffillm@de\Psc@rveTD\curv@roundness[#1]%
    \f@gfill%
    \else\Psc@rveTD\curv@roundness[#1]\f@gstroke\fi%
    \PSc@mment{End curveTD}\resetc@ntr@l\et@tpscurveTD\fi\fi}}
\ctr@ld@f\def\Psc@rveDD#1[#2]{%
    \def\list@num{#2}\extrairelepremi@r\Ak@\de\list@num%
    \extrairelepremi@r\Ai@\de\list@num\extrairelepremi@r\Aj@\de\list@num%
    \f@gnewpath\PSwrit@cmdS{\Ai@}{\c@mmoveto}{\fwf@g}{\X@un}{\Y@un}%
    \setc@ntr@l{2}\figvectPDD -1[\Ak@,\Aj@]%
    \@ecfor\Ak@:=\list@num\do{\figpttraDD-2:=\Ai@/#1,-1/\BdingB@xfalse%
       \PSwrit@cmdS{-2}{}{\fwf@g}{\X@de}{\Y@de}%
       \figvectPDD -1[\Ai@,\Ak@]\figpttraDD-2:=\Aj@/-#1,-1/%
       \PSwrit@cmdS{-2}{}{\fwf@g}{\X@tr}{\Y@tr}\BdingB@xtrue%
       \PSwrit@cmdS{\Aj@}{\c@mcurveto}{\fwf@g}{\X@qu}{\Y@qu}%
       \B@zierBB@x{1}{\Y@un}(\X@un,\X@de,\X@tr,\X@qu)%
       \B@zierBB@x{2}{\X@un}(\Y@un,\Y@de,\Y@tr,\Y@qu)%
       \edef\X@un{\X@qu}\edef\Y@un{\Y@qu}\edef\Ai@{\Aj@}\edef\Aj@{\Ak@}}}
\ctr@ld@f\def\Psc@rveTD#1[#2]{\ifnum\CUR@proj<\tw@\Psc@rvePPTD#1[#2]\else\Psc@rveCPTD#1[#2]\fi}
\ctr@ld@f\def\Psc@rvePPTD#1[#2]{\setc@ntr@l{2}%
    \def\list@num{#2}\extrairelepremi@r\Ak@\de\list@num\Figptpr@j-5:/\Ak@/%
    \extrairelepremi@r\Ai@\de\list@num\Figptpr@j-3:/\Ai@/%
    \extrairelepremi@r\Aj@\de\list@num\Figptpr@j-4:/\Aj@/%
    \f@gnewpath\PSwrit@cmdS{-3}{\c@mmoveto}{\fwf@g}{\X@un}{\Y@un}%
    \figvectPDD -1[-5,-4]%
    \@ecfor\Ak@:=\list@num\do{\Figptpr@j-5:/\Ak@/\figpttraDD-2:=-3/#1,-1/%
       \BdingB@xfalse\PSwrit@cmdS{-2}{}{\fwf@g}{\X@de}{\Y@de}%
       \figvectPDD -1[-3,-5]\figpttraDD-2:=-4/-#1,-1/%
       \PSwrit@cmdS{-2}{}{\fwf@g}{\X@tr}{\Y@tr}\BdingB@xtrue%
       \PSwrit@cmdS{-4}{\c@mcurveto}{\fwf@g}{\X@qu}{\Y@qu}%
       \B@zierBB@x{1}{\Y@un}(\X@un,\X@de,\X@tr,\X@qu)%
       \B@zierBB@x{2}{\X@un}(\Y@un,\Y@de,\Y@tr,\Y@qu)%
       \edef\X@un{\X@qu}\edef\Y@un{\Y@qu}\figptcopyDD-3:/-4/\figptcopyDD-4:/-5/}}
\ctr@ld@f\def\Psc@rveCPTD#1[#2]{\setc@ntr@l{2}%
    \def\list@num{#2}\extrairelepremi@r\Ak@\de\list@num%
    \extrairelepremi@r\Ai@\de\list@num\extrairelepremi@r\Aj@\de\list@num%
    \Figptpr@j-7:/\Ai@/%
    \f@gnewpath\PSwrit@cmd{-7}{\c@mmoveto}{\fwf@g}%
    \figvectPTD -9[\Ak@,\Aj@]%
    \@ecfor\Ak@:=\list@num\do{\figpttraTD-10:=\Ai@/#1,-9/%
       \figvectPTD -9[\Ai@,\Ak@]\figpttraTD-11:=\Aj@/-#1,-9/%
       \subB@zierTD\NBz@rcs[\Ai@,-10,-11,\Aj@]\edef\Ai@{\Aj@}\edef\Aj@{\Ak@}}}
\ctr@ld@f\def\figdrawend{\ifCUR@PS\ifGR@cri\immediate\closeout\fwf@g%
    \immediate\openout\fwf@g=\PSfilen@me\relax%
    \ifPDFm@ke\PSBdingB@x\else%
    \immediate\write\fwf@g{\p@urcent\string!PS-Adobe-2.0 EPSF-2.0}%
    \PShe@der{Creator\string: TeX (fig4tex.tex)}%
    \PShe@der{Title\string: \PSfilen@me}%
    \PShe@der{CreationDate\string: \the\day/\the\month/\the\year}%
    \PSBdingB@x%
    \PShe@der{EndComments}\PSdict@\fi%
    \immediate\write\fwf@g{\c@mgsave}%
    \openin\frf@g=\auxfilen@me\c@pypsfile\fwf@g\frf@g\closein\frf@g%
    \immediate\write\fwf@g{\c@mgrestore}%
    \PSc@mment{End of file.}\immediate\closeout\fwf@g%
    \immediate\openout\fwf@g=\auxfilen@me\immediate\closeout\fwf@g%
    \immediate\write16{File \PSfilen@me\space created.}\fi\fi\CUR@PSfalse\GR@critrue}
\ctr@ld@f\def\PShe@der#1{\immediate\write\fwf@g{\p@urcent\p@urcent#1}}
\ctr@ld@f\def\PSBdingB@x{{\v@lX=\ptT@ptps\c@@rdXmin\v@lY=\ptT@ptps\c@@rdYmin%
     \v@lXa=\ptT@ptps\c@@rdXmax\v@lYa=\ptT@ptps\c@@rdYmax%
     \PShe@der{BoundingBox\string: \repdecn@mb{\v@lX}\space\repdecn@mb{\v@lY}%
     \space\repdecn@mb{\v@lXa}\space\repdecn@mb{\v@lYa}}}}
\ctr@ld@f\def\figdrawfcconnect[#1]{{\ifCUR@PS\ifGR@cri\PSc@mment{fcconnect Points=#1}%
    \Q@s@tfillmode{no}\s@uvc@ntr@l\et@tpsfcconnect\resetc@ntr@l{2}%
    \fcc@nnect@[#1]\resetc@ntr@l\et@tpsfcconnect\PSc@mment{End fcconnect}\fi\fi}}
\ctr@ld@f\def\fcc@nnect@[#1]{\let\N@rm=\n@rmeucDD\def\list@num{#1}%
    \extrairelepremi@r\Ai@\de\list@num\edef\pr@m{\Ai@}\v@leur=\z@\p@rtent=\@ne\c@llgtot%
    \ifcase\fclin@typ@\edef\list@num{[\pr@m,#1,\Ai@}\expandafter\figdrawcurve\list@num]%
    \else\ifdim\fclin@r@d\p@>\z@\Pslin@conge[#1]\else\figdrawline[#1]\fi\fi%
    \v@leur=\@rrowp@s\v@leur\edef\list@num{#1,\Ai@,0}%
    \extrairelepremi@r\Ai@\de\list@num\mili@u=\epsil@n\c@llgpart%
    \advance\mili@u-\epsil@n\advance\mili@u-\delt@\advance\v@leur-\mili@u%
    \ifcase\fclin@typ@\invers@\mili@u\delt@%
    \ifnum\@rrowr@fpt>\z@\advance\delt@-\v@leur\v@leur=\delt@\fi%
    \v@leur=\repdecn@mb\v@leur\mili@u\edef\v@lt{\repdecn@mb\v@leur}%
    \extrairelepremi@r\Ak@\de\list@num%
    \figvectPDD-1[\pr@m,\Aj@]\figpttraDD-6:=\Ai@/\curv@roundness,-1/%
    \figvectPDD-1[\Ak@,\Ai@]\figpttraDD-7:=\Aj@/\curv@roundness,-1/%
    \delt@=\@rrowheadlength\p@\delt@=\C@AHANG\delt@\edef\R@dius{\repdecn@mb{\delt@}}%
    \ifcase\@rrowr@fpt%
    \FigptintercircB@zDD-8::\v@lt,\R@dius[\Ai@,-6,-7,\Aj@]\Q@arrowheadDD[-5,-8]\else%
    \FigptintercircB@zDD-8::\v@lt,\R@dius[\Aj@,-7,-6,\Ai@]\Q@arrowheadDD[-8,-5]\fi%
    \else\advance\delt@-\v@leur%
    \p@rtentiere{\p@rtent}{\delt@}\edef\C@efun{\the\p@rtent}%
    \p@rtentiere{\p@rtent}{\v@leur}\edef\C@efde{\the\p@rtent}%
    \figptbaryDD-5:[\Ai@,\Aj@;\C@efun,\C@efde]\ifcase\@rrowr@fpt%
    \delt@=\@rrowheadlength\unit@\delt@=\C@AHANG\delt@\edef\t@ille{\repdecn@mb{\delt@}}%
    \figvectPDD-2[\Ai@,\Aj@]\vecunit@{-2}{-2}\figpttraDD-5:=-5/\t@ille,-2/\fi%
    \Q@arrowheadDD[\Ai@,-5]\fi}
\ctr@ld@f\def\c@llgtot{\@ecfor\Aj@:=\list@num\do{\figvectP-1[\Ai@,\Aj@]\N@rm\delt@{-1}%
    \advance\v@leur\delt@\advance\p@rtent\@ne\edef\Ai@{\Aj@}}}
\ctr@ld@f\def\c@llgpart{\extrairelepremi@r\Aj@\de\list@num\figvectP-1[\Ai@,\Aj@]\N@rm\delt@{-1}%
    \advance\mili@u\delt@\ifdim\mili@u<\v@leur\edef\pr@m{\Ai@}\edef\Ai@{\Aj@}\c@llgpart\fi}
\ctr@ld@f\def\Pslin@conge[#1]{\ifnum\p@rtent>\tw@{\def\list@num{#1}%
    \extrairelepremi@r\Ai@\de\list@num\extrairelepremi@r\Aj@\de\list@num%
    \figptcopy-6:/\Ai@/\figvectP-3[\Ai@,\Aj@]\vecunit@{-3}{-3}\v@lmax=\result@t%
    \@ecfor\Ak@:=\list@num\do{\figvectP-4[\Aj@,\Ak@]\vecunit@{-4}{-4}%
    \minim@m\v@lmin\v@lmax\result@t\v@lmax=\result@t%
    \det@rm\delt@[-3,-4]\maxim@m\mili@u{\delt@}{-\delt@}\ifdim\mili@u>\Cepsil@n%
    \ifdim\delt@>\z@\figgetangleDD\Angl@[\Aj@,\Ak@,\Ai@]\else%
    \figgetangleDD\Angl@[\Aj@,\Ai@,\Ak@]\fi%
    \v@leur=\PI@deg\advance\v@leur-\Angl@\p@\divide\v@leur\tw@%
    \edef\Angl@{\repdecn@mb\v@leur}\c@ssin{\C@}{\S@}{\Angl@}\v@leur=\fclin@r@d\unit@%
    \v@leur=\S@\v@leur\mili@u=\C@\p@\invers@\mili@u\mili@u%
    \v@leur=\repdecn@mb{\mili@u}\v@leur%
    \minim@m\v@leur\v@leur\v@lmin\edef\t@ille{\repdecn@mb{\v@leur}}%
    \figpttra-5:=\Aj@/-\t@ille,-3/\figdrawline[-6,-5]\figpttra-6:=\Aj@/\t@ille,-4/%
    \figvectNVDD-3[-3]\figvectNVDD-8[-4]\inters@cDD-7:[-5,-3;-6,-8]%
    \ifdim\delt@>\z@\figdrawarccircP-7;\fclin@r@d[-5,-6]\else\figdrawarccircP-7;\fclin@r@d[-6,-5]\fi%
    \else\figdrawline[-6,\Aj@]\figptcopy-6:/\Aj@/\fi% Points alignes
    \edef\Ai@{\Aj@}\edef\Aj@{\Ak@}\figptcopy-3:/-4/}\figdrawline[-6,\Aj@]}\else\figdrawline[#1]\fi}
\ctr@ld@f\def\figdrawfcnode[#1]#2{{\ifCUR@PS\ifGR@cri\PSc@mment{fcnode Points=#1}%
    \s@uvc@ntr@l\et@tpsfcnode\resetc@ntr@l{2}%
    \def\t@xt@{#2}\ifx\t@xt@\empty\def\g@tt@xt{\setbox\Gb@x=\hbox{\Figg@tT{\p@int}}}%
    \else\def\g@tt@xt{\setbox\Gb@x=\hbox{#2}}\fi%
    \v@lmin=\h@rdfcXp@dd\advance\v@lmin\Xp@dd\unit@\multiply\v@lmin\tw@%
    \v@lmax=\h@rdfcYp@dd\advance\v@lmax\Yp@dd\unit@\multiply\v@lmax\tw@%
    \Figv@ctCreg-8(\unit@,-\unit@)\def\list@num{#1}%
    \delt@=\CUR@width bp\divide\delt@\tw@%
    \fcn@de\PSc@mment{End fcnode}\resetc@ntr@l\et@tpsfcnode\fi\fi}}
\ctr@ld@f\def\d@butn@de{\g@tt@xt\v@lX=\wd\Gb@x%
    \v@lY=\ht\Gb@x\advance\v@lY\dp\Gb@x\advance\v@lX\v@lmin\advance\v@lY\v@lmax}
\ctr@ld@f\def\fcn@deE{%
    \@ecfor\p@int:=\list@num\do{\d@butn@de\v@lX=\unssqrttw@\v@lX\v@lY=\unssqrttw@\v@lY%
    \ifdim\thickn@ss\p@>\z@% Begin thickness
    \v@lXa=\v@lX\advance\v@lXa\delt@\v@lXa=\ptT@unit@\v@lXa\edef\XR@d{\repdecn@mb\v@lXa}%
    \v@lYa=\v@lY\advance\v@lYa\delt@\v@lYa=\ptT@unit@\v@lYa\edef\YR@d{\repdecn@mb\v@lYa}%
    \arct@n\v@leur(\v@lXa,\v@lYa)\v@leur=\rdT@deg\v@leur\edef\@nglde{\repdecn@mb\v@leur}%
    {\c@lptellDD-2::\p@int;\XR@d,\YR@d(\@nglde)}% \v@lmin & \v@lmax modified in \c@lptellDD
    \advance\v@leur-\PI@deg\edef\@nglun{\repdecn@mb\v@leur}%
    {\c@lptellDD-3::\p@int;\XR@d,\YR@d(\@nglun)}%
    \figptstra-6=-3,-2,\p@int/\thickn@ss,-8/\Q@s@tfillmode{yes}%
    \Pss@tspecifSt{color=\DDV@thickcolor}%
    \figdrawline[-2,-3,-6,-5]\figdrawarcell-4;\XR@d,\YR@d(\@nglun,\@nglde,0)%
    \Psrest@reSt{color=\DDV@thickcolor}\fi% End thickness
    \v@lX=\ptT@unit@\v@lX\v@lY=\ptT@unit@\v@lY%
    \edef\XR@d{\repdecn@mb\v@lX}\edef\YR@d{\repdecn@mb\v@lY}%
    \Q@s@tfillmode{yes}\Pss@tspecifSt{color=\fcbgc@lor}%
    \figdrawarcell\p@int;\XR@d,\YR@d(0,360,0)%
    \Q@s@tfillmode{no}\Psrest@reSt{color=\fcbgc@lor}\figdrawarcell\p@int;\XR@d,\YR@d(0,360,0)}}
\ctr@ld@f\def\fcn@deL{\delt@=\ptT@unit@\delt@\edef\t@ille{\repdecn@mb\delt@}%
    \@ecfor\p@int:=\list@num\do{\Figg@tXYa{\p@int}\d@butn@de%
    \ifdim\v@lX>\v@lY\itis@Ktrue\else\itis@Kfalse\fi%
    \advance\v@lXa-\v@lX\Figp@intreg-1:(\v@lXa,\v@lYa)%
    \advance\v@lXa\v@lX\advance\v@lYa-\v@lY\Figp@intreg-2:(\v@lXa,\v@lYa)%
    \advance\v@lXa\v@lX\advance\v@lYa\v@lY\Figp@intreg-3:(\v@lXa,\v@lYa)%
    \advance\v@lXa-\v@lX\advance\v@lYa\v@lY\Figp@intreg-4:(\v@lXa,\v@lYa)%
    \ifdim\thickn@ss\p@>\z@% Begin thickness
    \Figg@tXYa{\p@int}\Q@s@tfillmode{yes}\Pss@tspecifSt{color=\DDV@thickcolor}%
    \c@lpt@xt{-1}{-4}\c@lpt@xt@\v@lXa\v@lYa\v@lX\v@lY\c@rre\delt@%
    \Figp@intregDD-9:(\v@lZ,\v@lYa)\Figp@intregDD-11:(\v@lZa,\v@lYa)%
    \c@lpt@xt{-4}{-3}\c@lpt@xt@\v@lYa\v@lXa\v@lY\v@lX\delt@\c@rre%
    \Figp@intregDD-12:(\v@lXa,\v@lZ)\Figp@intregDD-10:(\v@lXa,\v@lZa)%
    \ifitis@K\figptstra-7=-9,-10,-11/\thickn@ss,-8/\figdrawline[-9,-11,-5,-6,-7]\else%
    \figptstra-7=-10,-11,-12/\thickn@ss,-8/\figdrawline[-10,-12,-5,-6,-7]\fi%
    \Psrest@reSt{color=\DDV@thickcolor}\fi% End thickness
    \Q@s@tfillmode{yes}\Pss@tspecifSt{color=\fcbgc@lor}\figdrawline[-1,-2,-3,-4]%
    \Q@s@tfillmode{no}\Psrest@reSt{color=\fcbgc@lor}\figdrawline[-1,-2,-3,-4,-1]}}
\ctr@ld@f\def\c@lpt@xt#1#2{\figvectN-7[#1,#2]\vecunit@{-7}{-7}\figpttra-5:=#1/\t@ille,-7/%
    \figvectP-7[#1,#2]\Figg@tXY{-7}\c@rre=\v@lX\delt@=\v@lY\Figg@tXY{-5}}
\ctr@ld@f\def\c@lpt@xt@#1#2#3#4#5#6{\v@lZ=#6\invers@{\v@lZ}{\v@lZ}\v@leur=\repdecn@mb{#5}\v@lZ%
    \v@lZ=#2\advance\v@lZ-#4\mili@u=\repdecn@mb{\v@leur}\v@lZ%
    \v@lZ=#3\advance\v@lZ\mili@u\v@lZa=-\v@lZ\advance\v@lZa\tw@#1}
\ctr@ld@f\def\fcn@deR{\@ecfor\p@int:=\list@num\do{\Figg@tXYa{\p@int}\d@butn@de%
    \advance\v@lXa-0.5\v@lX\advance\v@lYa-0.5\v@lY\Figp@intreg-1:(\v@lXa,\v@lYa)%
    \advance\v@lXa\v@lX\Figp@intreg-2:(\v@lXa,\v@lYa)%
    \advance\v@lYa\v@lY\Figp@intreg-3:(\v@lXa,\v@lYa)%
    \advance\v@lXa-\v@lX\Figp@intreg-4:(\v@lXa,\v@lYa)%
    \ifdim\thickn@ss\p@>\z@% Begin thickness
    \Q@s@tfillmode{yes}\Pss@tspecifSt{color=\DDV@thickcolor}%
    \Figv@ctCreg-5(-\delt@,-\delt@)\figpttra-9:=-1/1,-5/%
    \Figv@ctCreg-5(\delt@,-\delt@)\figpttra-10:=-2/1,-5/%
    \Figv@ctCreg-5(\delt@,\delt@)\figpttra-11:=-3/1,-5/%
    \figptstra-7=-9,-10,-11/\thickn@ss,-8/\figdrawline[-9,-11,-5,-6,-7]%
    \Psrest@reSt{color=\DDV@thickcolor}\fi% End thickness
    \Q@s@tfillmode{yes}\Pss@tspecifSt{color=\fcbgc@lor}\figdrawline[-1,-2,-3,-4]%
    \Q@s@tfillmode{no}\Psrest@reSt{color=\fcbgc@lor}\figdrawline[-1,-2,-3,-4,-1]}}
\ctr@ld@f\def\Pssetfl@wchart#1=#2|{\keln@mtr#1|%
    \def\n@mref{arr}\ifx\l@debut\n@mref\expandafter\keln@mtr\l@suite|%
     \def\n@mref{owp}\ifx\l@debut\n@mref\update@ttr\D@FTfcarrowposition\P@setfcarrowposition{#2}\else% arrowposition
     \def\n@mref{owr}\ifx\l@debut\n@mref\update@ttr\D@FTfcarrowrefpt\P@setfcarrowrefpt{#2}\else% arrowrefpt
     \W@rnmesAttr{figset flowchart}{#1}\fi\fi\else%
    \def\n@mref{bgc}\ifx\l@debut\n@mref\update@ttr\D@FTfcbgcolor\P@setfcbgcolor{#2}\else% background color
    \def\n@mref{lin}\ifx\l@debut\n@mref\update@ttr\D@FTfcline\P@setfcline{#2}\else% line
    \def\n@mref{pad}\ifx\l@debut\n@mref\update@ttr\D@FTfcxpadding\P@setfcxpadding{#2}%
                                       \update@ttr\D@FTfcypadding\P@setfcypadding{#2}\else% padding
    \def\n@mref{rad}\ifx\l@debut\n@mref\update@ttr\D@FTfcradius\P@setfcradius{#2}\else% connection radius
    \def\n@mref{sha}\ifx\l@debut\n@mref\update@ttr\D@FTfcshape\P@setfcshape{#2}\else% shape
    \def\n@mref{thi}\ifx\l@debut\n@mref\expandafter\keln@mtr\l@suite|%
     \def\n@mref{ckc}\ifx\l@debut\n@mref\update@ttr\D@FTref\P@setfcthickcolor{#2}\else% thickness color
     \def\n@mref{ckn}\ifx\l@debut\n@mref\update@ttr\D@FTfcthickness\P@setfcthickness{#2}\else% thickness
     \W@rnmesAttr{figset flowchart}{#1}\fi\fi\else%
    \def\n@mref{xpa}\ifx\l@debut\n@mref\update@ttr\D@FTfcxpadding\P@setfcxpadding{#2}\else% xpadding
    \def\n@mref{ypa}\ifx\l@debut\n@mref\update@ttr\D@FTfcypadding\P@setfcypadding{#2}\else% ypadding
    \W@rnmesAttr{figset flowchart}{#1}\fi\fi\fi\fi\fi\fi\fi\fi\fi}
\ctr@ln@m\@rrowp@s
\ctr@ld@f\def\P@setfcarrowposition#1{\edef\@rrowp@s{#1}}
\ctr@ln@m\@rrowr@fpt
\ctr@ld@f\def\P@setfcarrowrefpt#1{\setfcr@fpt#1|}
\ctr@ld@f\def\setfcr@fpt#1#2|{\if#1e\def\@rrowr@fpt{1}\else\def\@rrowr@fpt{0}\fi}
\ctr@ln@m\fcbgc@lor
\ctr@ld@f\def\P@setfcbgcolor#1{\edef\fcbgc@lor{#1}}
\ctr@ln@m\fclin@typ@
\ctr@ld@f\def\P@setfcline#1{\setfccurv@#1|}
\ctr@ld@f\def\setfccurv@#1#2|{\if#1c\def\fclin@typ@{0}\else\def\fclin@typ@{1}\fi}
\ctr@ln@m\fclin@r@d
\ctr@ld@f\def\P@setfcradius#1{\edef\fclin@r@d{#1}}
\ctr@ln@m\fcn@de \ctr@ln@m\fcsh@pe
\ctr@ln@m\h@rdfcXp@dd \ctr@ln@m\h@rdfcYp@dd
\ctr@ld@f\def\P@setfcshape#1{\setfcshap@#1|}
\ctr@ld@f\def\setfcshap@#1#2|{%
    \if#1e\let\fcn@de=\fcn@deE\def\h@rdfcXp@dd{4pt}\def\h@rdfcYp@dd{4pt}%
     \edef\fcsh@pe{ellipse}\else%
    \if#1l\let\fcn@de=\fcn@deL\def\h@rdfcXp@dd{4pt}\def\h@rdfcYp@dd{4pt}%
     \edef\fcsh@pe{lozenge}\else%
          \let\fcn@de=\fcn@deR\def\h@rdfcXp@dd{6pt}\def\h@rdfcYp@dd{6pt}%
     \edef\fcsh@pe{rectangle}\fi\fi}
\ctr@ln@m\DDV@thickcolor
\ctr@ld@f\def\P@setfcthickcolor#1{\edef\DDV@thickcolor{#1}}
\ctr@ln@m\thickn@ss
\ctr@ld@f\def\P@setfcthickness#1{\edef\thickn@ss{#1}}
\ctr@ln@m\Xp@dd
\ctr@ld@f\def\P@setfcxpadding#1{\edef\Xp@dd{#1}}
\ctr@ln@m\Yp@dd
\ctr@ld@f\def\P@setfcypadding#1{\edef\Yp@dd{#1}}
\ctr@ld@f\def\figdrawline[#1]{{\ifCUR@PS\ifGR@cri\PSc@mment{line Points=#1}%
    \let\figdrawlign@=\Pslign@P\Pslin@{#1}\PSc@mment{End line}\fi\fi}}
\ctr@ld@f\def\figdrawlineF#1{{\ifCUR@PS\ifGR@cri\PSc@mment{lineF Filename=#1}%
    \let\figdrawlign@=\Pslign@F\Pslin@{#1}\PSc@mment{End lineF}\fi\fi}}
\ctr@ld@f\def\figdrawlineC(#1){{\ifCUR@PS\ifGR@cri\PSc@mment{lineC}%
    \let\figdrawlign@=\Pslign@C\Pslin@{#1}\PSc@mment{End lineC}\fi\fi}}
\ctr@ld@f\def\Pslin@#1{\iffillm@de\figdrawlign@{#1}%
    \f@gfill%
    \else\figdrawlign@{#1}\ifx\derp@int\premp@int%
    \f@gclosestroke%
    \else\f@gstroke\fi\fi}
\ctr@ld@f\def\Pslign@P#1{\def\list@num{#1}\extrairelepremi@r\p@int\de\list@num%
    \edef\premp@int{\p@int}\f@gnewpath%
    \PSwrit@cmd{\p@int}{\c@mmoveto}{\fwf@g}%
    \@ecfor\p@int:=\list@num\do{\PSwrit@cmd{\p@int}{\c@mlineto}{\fwf@g}%
    \edef\derp@int{\p@int}}}
\ctr@ld@f\def\Pslign@F#1{\s@uvc@ntr@l\et@tPslign@F\setc@ntr@l{2}\openin\frf@g=#1\relax%
    \ifeof\frf@g\message{*** File #1 not found !}\end\else%
    \read\frf@g to\tr@c\edef\premp@int{\tr@c}\expandafter\extr@ctCF\tr@c:%
    \f@gnewpath\PSwrit@cmd{-1}{\c@mmoveto}{\fwf@g}%
    \loop\read\frf@g to\tr@c\ifeof\frf@g\mored@tafalse\else\mored@tatrue\fi%
    \ifmored@ta\expandafter\extr@ctCF\tr@c:\PSwrit@cmd{-1}{\c@mlineto}{\fwf@g}%
    \edef\derp@int{\tr@c}\repeat\fi\closein\frf@g\resetc@ntr@l\et@tPslign@F}
\ctr@ln@m\extr@ctCF
\ctr@ld@f\def\extr@ctCFDD#1 #2:{\v@lX=#1\unit@\v@lY=#2\unit@\Figp@intregDD-1:(\v@lX,\v@lY)}
\ctr@ld@f\def\extr@ctCFTD#1 #2 #3:{\v@lX=#1\unit@\v@lY=#2\unit@\v@lZ=#3\unit@%
    \Figp@intregTD-1:(\v@lX,\v@lY,\v@lZ)}
\ctr@ld@f\def\Pslign@C#1{\s@uvc@ntr@l\et@tPslign@C\setc@ntr@l{2}%
    \def\list@num{#1}\extrairelepremi@r\p@int\de\list@num%
    \edef\premp@int{\p@int}\f@gnewpath%
    \expandafter\Pslign@C@\p@int:\PSwrit@cmd{-1}{\c@mmoveto}{\fwf@g}%
    \@ecfor\p@int:=\list@num\do{\expandafter\Pslign@C@\p@int:%
    \PSwrit@cmd{-1}{\c@mlineto}{\fwf@g}\edef\derp@int{\p@int}}%
    \resetc@ntr@l\et@tPslign@C}
\ctr@ld@f\def\Pslign@C@#1 #2:{{\def\t@xt@{#1}\ifx\t@xt@\empty\Pslign@C@#2:% Discard leading spaces
    \else\extr@ctCF#1 #2:\fi}}
\ctr@ld@f\def\Pssetm@sh#1=#2|{\keln@mde#1|%
    \def\n@mref{co}\ifx\l@debut\n@mref\update@ttr\D@FTref\P@setmeshcolor{#2}\else% color
    \def\n@mref{da}\ifx\l@debut\n@mref\update@ttr\D@FTref\P@setmeshdash{#2}\else% dash
    \def\n@mref{di}\ifx\l@debut\n@mref\update@ttr\D@FTmeshdiag\Q@s@tmeshdiag{#2}\else% diag
    \def\n@mref{wi}\ifx\l@debut\n@mref\update@ttr\D@FTref\P@setmeshwidth{#2}\else% width
    \W@rnmesAttr{figset mesh}{#1}\fi\fi\fi\fi}
\ctr@ln@m\c@ntrolmesh
\ctr@ld@f\def\Q@s@tmeshdiag#1{\edef\c@ntrolmesh{#1}}
\ctr@ld@f\def\D@FTmeshdiag{0}    % Valeur par defaut
\ctr@ln@m\DDV@meshcolor
\ctr@ld@f\def\P@setmeshcolor#1{\edef\DDV@meshcolor{#1}}
\ctr@ln@m\DDV@meshdash
\ctr@ld@f\def\P@setmeshdash#1{\edef\DDV@meshdash{#1}}
\ctr@ln@m\DDV@meshwidth
\ctr@ld@f\def\P@setmeshwidth#1{\edef\DDV@meshwidth{#1}}
\ctr@ld@f\def\figdrawmesh#1,#2[#3,#4,#5,#6]{{\ifCUR@PS\ifGR@cri%
    \PSc@mment{mesh N1=#1, N2=#2, Quadrangle=[#3,#4,#5,#6]}\s@uvc@ntr@l\et@tpsmesh%
    \Pss@tspecifSt{color=\DDV@meshcolor,dash=\DDV@meshdash,width=\DDV@meshwidth}%
    \setc@ntr@l{2}%
    \ifnum#1>\@ne\Psmeshp@rt#1[#3,#4,#5,#6]\fi%
    \ifnum#2>\@ne\Psmeshp@rt#2[#4,#5,#6,#3]\fi%
    \ifnum\c@ntrolmesh>\z@\Psmeshdi@g#1,#2[#3,#4,#5,#6]\fi%
    \ifnum\c@ntrolmesh<\z@\Psmeshdi@g#2,#1[#4,#5,#6,#3]\fi%
    \Psrest@reSt{color=\DDV@meshcolor,dash=\DDV@meshdash,width=\DDV@meshwidth}%
    \figdrawline[#3,#4,#5,#6,#3]\PSc@mment{End mesh}\resetc@ntr@l\et@tpsmesh\fi\fi}}
\ctr@ld@f\def\Psmeshp@rt#1[#2,#3,#4,#5]{{\l@mbd@un=\@ne\l@mbd@de=#1\loop%
    \ifnum\l@mbd@un<#1\advance\l@mbd@de\m@ne\figptbary-1:[#2,#3;\l@mbd@de,\l@mbd@un]%
    \figptbary-2:[#5,#4;\l@mbd@de,\l@mbd@un]\figdrawline[-1,-2]\advance\l@mbd@un\@ne\repeat}}
\ctr@ld@f\def\Psmeshdi@g#1,#2[#3,#4,#5,#6]{\figptcopy-2:/#3/\figptcopy-3:/#6/%
    \l@mbd@un=\z@\l@mbd@de=#1\loop\ifnum\l@mbd@un<#1%
    \advance\l@mbd@un\@ne\advance\l@mbd@de\m@ne\figptcopy-1:/-2/\figptcopy-4:/-3/%
    \figptbary-2:[#3,#4;\l@mbd@de,\l@mbd@un]%
    \figptbary-3:[#6,#5;\l@mbd@de,\l@mbd@un]\Psmeshdi@gp@rt#2[-1,-2,-3,-4]\repeat}
\ctr@ld@f\def\Psmeshdi@gp@rt#1[#2,#3,#4,#5]{{\l@mbd@un=\z@\l@mbd@de=#1\loop%
    \ifnum\l@mbd@un<#1\figptbary-5:[#2,#5;\l@mbd@de,\l@mbd@un]%
    \advance\l@mbd@de\m@ne\advance\l@mbd@un\@ne%
    \figptbary-6:[#3,#4;\l@mbd@de,\l@mbd@un]\figdrawline[-5,-6]\repeat}}
\ctr@ln@m\figdrawnormal
\ctr@ld@f\def\Q@normalDD#1,#2[#3,#4]{{\ifCUR@PS\ifGR@cri%
    \PSc@mment{normal Length=#1, Lambda=#2 [Pt1,Pt2]=[#3,#4]}%
    \s@uvc@ntr@l\et@tpsnormal\resetc@ntr@l{2}\figptendnormal-6::#1,#2[#3,#4]%
    \figptcopyDD-5:/-1/\figdrawarrow[-5,-6]%
    \PSc@mment{End normal}\resetc@ntr@l\et@tpsnormal\fi\fi}}
\ctr@ld@f\def\figreset#1{\trtlis@rg{#1}{\Psreset@}}
\ctr@ld@f\def\Psreset@#1|{\def\t@xt@{#1}\ifx\t@xt@\empty\P@resetg@n% general settings
    \else\keln@mde#1|%
    \def\n@mref{al}\ifx\l@debut\n@mref%
        \figset altitude(blcolor=default,bldash=default,blwidth=default,%
        sqcolor=default,sqdash=default,sqwidth=default)\else% altitude
    \def\n@mref{ar}\ifx\l@debut\n@mref\figresetarrowhead\else% arrowhead
    \def\n@mref{cu}\ifx\l@debut\n@mref\figset curve(roundness=\D@FTroundness)\else% curve
    \def\n@mref{ge}\ifx\l@debut\n@mref\P@resetg@n\else% general settings
    \def\n@mref{fl}\ifx\l@debut\n@mref%
        \figset flowchart(arrowp=\D@FTfcarrowposition,arrowr=\D@FTfcarrowrefpt,%
	bgcolor=\D@FTfcbgcolor,line=\D@FTfcline,radius=\D@FTfcradius,%
	shape=\D@FTfcshape,thickcolor=default,thickness=\D@FTfcthickness,%
	xpadd=\D@FTfcxpadding,ypadd=\D@FTfcypadding)\else% flow chart
    \def\n@mref{me}\ifx\l@debut\n@mref\figset mesh(diag=\D@FTmeshdiag,%
        color=default,dash=default,width=default)\else% mesh
    \def\n@mref{tr}\ifx\l@debut\n@mref%
        \figset trimesh(color=default,dash=default,width=default)\else% trimesh
    \W@rnmeskwd{figreset}{#1}\fi\fi\fi\fi\fi\fi\fi\fi}
\ctr@ld@f\def\P@resetg@n{\figset (color=\D@FTcolor,dash=\D@FTdash,fill=\D@FTfill,%
    join=\D@FTjoin,width=\D@FTwidth)}
\ctr@ld@f\def\figset#1(#2){\def\t@xt@{#1}\ifx\t@xt@\empty\trtlis@rg{#2}{\Pssetg@n}% general settings
    \else\keln@mde#1|%
    \def\n@mref{al}\ifx\l@debut\n@mref\trtlis@rg{#2}{\Psset@lti}\else% altitude
    \def\n@mref{ar}\ifx\l@debut\n@mref\trtlis@rg{#2}{\Psset@rrowhe@d}\else% arrowhead
    \def\n@mref{cu}\ifx\l@debut\n@mref\trtlis@rg{#2}{\Pssetc@rve}\else% curve
    \def\n@mref{fl}\ifx\l@debut\n@mref\trtlis@rg{#2}{\Pssetfl@wchart}\else% flow chart
    \def\n@mref{ge}\ifx\l@debut\n@mref\trtlis@rg{#2}{\Pssetg@n}\else% general settings
    \def\n@mref{me}\ifx\l@debut\n@mref\trtlis@rg{#2}{\Pssetm@sh}\else% mesh
    \def\n@mref{pr}\ifx\l@debut\n@mref\ifCUR@PS\W@rnmesIgn{figset proj(...)}%
     \else\trtlis@rg{#2}{\Figsetpr@j}\fi\else% projection
    \def\n@mref{tr}\ifx\l@debut\n@mref\trtlis@rg{#2}{\Pssettrim@sh}\else% trimesh
    \def\n@mref{wr}\ifx\l@debut\n@mref\let\M@cro=\Figsetwr@te\trtlis@rgtok{#2,|}\else% write
    \W@rnmeskwd{figset}{#1}\fi\fi\fi\fi\fi\fi\fi\fi\fi\fi\ignorespaces}
\ctr@ld@f\def\figsetdefault#1(#2){\ifCUR@PS\W@rnmesIgn{figsetdefault}\else%
    \def\t@xt@{#1}\ifx\t@xt@\empty\trtlis@rg{#2}{\Pssd@g@n}\else\keln@mun#1|% general settings
    \def\n@mref{a}\ifx\l@debut\n@mref\trtlis@rg{#2}{\Pssd@@rrowhe@d}\else% arrowhead
    \def\n@mref{c}\ifx\l@debut\n@mref\trtlis@rg{#2}{\Pssd@c@rve}\else% curve
    \def\n@mref{g}\ifx\l@debut\n@mref\trtlis@rg{#2}{\Pssd@g@n}\else% general settings
    \def\n@mref{f}\ifx\l@debut\n@mref\trtlis@rg{#2}{\Pssd@fl@wchart}\else% flow chart
    \def\n@mref{m}\ifx\l@debut\n@mref\trtlis@rg{#2}{\Pssd@m@sh}\else% mesh
    \W@rnmeskwd{figsetdefault}{#1}\fi\fi\fi\fi\fi\fi\initpss@ttings\fi}
\ctr@ld@f\def\Pssd@g@n#1=#2|{\keln@mun#1|%
    \def\n@mref{c}\ifx\l@debut\n@mref\edef\D@FTcolor{#2}\else% color
    \def\n@mref{d}\ifx\l@debut\n@mref\edef\D@FTdash{#2}\else% line dash
    \def\n@mref{f}\ifx\l@debut\n@mref\edef\D@FTfill{#2}\else% fillmode
    \def\n@mref{j}\ifx\l@debut\n@mref\edef\D@FTjoin{#2}\else% line join
    \def\n@mref{u}\ifx\l@debut\n@mref\edef\D@FTupdate{#2}\Q@s@tupdate{#2}\else% update
    \def\n@mref{w}\ifx\l@debut\n@mref\edef\D@FTwidth{#2}\else% line width
    \W@rnmesAttr{figsetdefault}{#1}\fi\fi\fi\fi\fi\fi}
\ctr@ld@f\def\Pssd@@rrowhe@d#1=#2|{\keln@mun#1|%
    \def\n@mref{a}\ifx\l@debut\n@mref\edef\D@FTarrowheadangle{#2}\else% angle
    \def\n@mref{f}\ifx\l@debut\n@mref\edef\D@FTarrowheadfill{#2}\else% fillmode
    \def\n@mref{l}\ifx\l@debut\n@mref\y@tiunit{#2}\ifunitpr@sent%
     \edef\D@FTh@rdahlength{#2}\else\edef\D@FTh@rdahlength{#2pt}%
     \message{*** \BS@ figsetdefault (..., #1=#2, ...) : unit is missing, pt is assumed.}%
     \fi\else% length
    \def\n@mref{o}\ifx\l@debut\n@mref\edef\D@FTarrowheadout{#2}\else% out
    \def\n@mref{r}\ifx\l@debut\n@mref\edef\D@FTarrowheadratio{#2}\else% ratio
    \W@rnmesAttr{figsetdefault arrowhead}{#1}\fi\fi\fi\fi\fi}
\ctr@ld@f\def\Pssd@c@rve#1=#2|{\keln@mun#1|%
    \def\n@mref{r}\ifx\l@debut\n@mref\edef\D@FTroundness{#2}\else%
    \W@rnmesAttr{figsetdefault curve}{#1}\fi}
\ctr@ld@f\def\Pssd@fl@wchart#1=#2|{\keln@mtr#1|%
    \def\n@mref{arr}\ifx\l@debut\n@mref\expandafter\keln@mtr\l@suite|%
     \def\n@mref{owp}\ifx\l@debut\n@mref\edef\D@FTfcarrowposition{#2}\else% arrowposition
     \def\n@mref{owr}\ifx\l@debut\n@mref\edef\D@FTfcarrowrefpt{#2}\else% arrowrefpt
                     \W@rnmesAttr{figsetdefault flowchart}{#1}\fi\fi\else%
    \def\n@mref{bgc}\ifx\l@debut\n@mref\edef\D@FTfcbgcolor{#2}\else% background color
    \def\n@mref{lin}\ifx\l@debut\n@mref\edef\D@FTfcline{#2}\else% line
    \def\n@mref{pad}\ifx\l@debut\n@mref\edef\D@FTfcxpadding{#2}%
                    \edef\D@FTfcypadding{#2}\else% padding
    \def\n@mref{rad}\ifx\l@debut\n@mref\edef\D@FTfcradius{#2}\else% connection radius
    \def\n@mref{sha}\ifx\l@debut\n@mref\edef\D@FTfcshape{#2}\else% shape
    \def\n@mref{thi}\ifx\l@debut\n@mref\expandafter\keln@mtr\l@suite|%
     \def\n@mref{ckn}\ifx\l@debut\n@mref\edef\D@FTfcthickness{#2}\else% thickness
                     \W@rnmesAttr{figsetdefault flowchart}{#1}\fi\else%
    \def\n@mref{xpa}\ifx\l@debut\n@mref\edef\D@FTfcxpadding{#2}\else% xpadding
    \def\n@mref{ypa}\ifx\l@debut\n@mref\edef\D@FTfcypadding{#2}\else% ypadding
    \W@rnmesAttr{figsetdefault flowchart}{#1}\fi\fi\fi\fi\fi\fi\fi\fi\fi}
\ctr@ld@f\def\D@FTfcarrowposition{0.5}
\ctr@ld@f\def\D@FTfcarrowrefpt{start}
\ctr@ld@f\def\D@FTfcbgcolor{1}
\ctr@ld@f\def\D@FTfcline{polygon}
\ctr@ld@f\def\D@FTfcradius{0}
\ctr@ld@f\def\D@FTfcshape{rectangle}
\ctr@ld@f\def\D@FTfcthickness{0}
\ctr@ld@f\def\D@FTfcxpadding{0}
\ctr@ld@f\def\D@FTfcypadding{0}
\ctr@ld@f\def\Pssd@m@sh#1=#2|{\keln@mun#1|%
    \def\n@mref{d}\ifx\l@debut\n@mref\edef\D@FTmeshdiag{#2}\else%
    \W@rnmesAttr{figsetdefault mesh}{#1}\fi}
\ctr@ln@w{newif}\iffillm@de
\ctr@ld@f\def\Q@s@tfillmode#1{\expandafter\setfillm@de#1:}
\ctr@ld@f\def\setfillm@de#1#2:{\if#1n\fillm@defalse\else\fillm@detrue\fi}
\ctr@ld@f\def\D@FTfill{no}     % Valeur par defaut
\ctr@ln@w{newif}\ifGRupdatem@de
\ctr@ld@f\def\Q@s@tupdate#1{\ifCUR@PS\W@rnmesIgn{figset (update=...)}%
    \else\expandafter\setupd@te#1:\fi}
\ctr@ld@f\def\setupd@te#1#2:{\if#1n\GRupdatem@defalse\else\GRupdatem@detrue\fi}
\ctr@ld@f\def\D@FTupdate{no}     % Valeur par defaut
\ctr@ln@m\CUR@color \ctr@ln@m\CUR@colorc@md
\ctr@ld@f\def\s@uvcolor#1{\edef#1{\CUR@color}}
\ctr@ld@f\def\D@FTcolor{0}       % Valeur par defaut
\ctr@ld@f\def\Pssetc@lor#1{\ifGR@cri\result@tent=\@ne\expandafter\c@lnbV@l#1 :%
    \def\CUR@color{}\def\CUR@colorc@md{}%
    \ifcase\result@tent\or\Q@s@tgray{#1}\or\or\Q@s@trgb{#1}\or\Q@s@tcmyk{#1}\fi\fi}
\ctr@ln@m\CUR@colorc@mdStroke
\ctr@ld@f\def\Q@s@tcmyk#1{\ifGR@cri\def\CUR@color{#1}\def\CUR@colorc@md{\c@msetcmykcolor}%
    \def\CUR@colorc@mdStroke{\c@msetcmykcolorStroke}%
    \ifCUR@PS\PSc@mment{setcmyk Color=#1}\us@primarC@lor\fi\fi}
\ctr@ld@f\def\Q@s@trgb#1{\ifGR@cri\def\CUR@color{#1}\def\CUR@colorc@md{\c@msetrgbcolor}%
    \def\CUR@colorc@mdStroke{\c@msetrgbcolorStroke}%
    \ifCUR@PS\PSc@mment{setrgb Color=#1}\us@primarC@lor\fi\fi}
\ctr@ld@f\def\Q@s@tgray#1{\ifGR@cri\def\CUR@color{#1}\def\CUR@colorc@md{\c@msetgray}%
    \def\CUR@colorc@mdStroke{\c@msetgrayStroke}%
    \ifCUR@PS\PSc@mment{setgray Gray level=#1}\us@primarC@lor\fi\fi}
\ctr@ln@m\fillc@md
\ctr@ld@f\def\us@primarC@lor{\immediate\write\fwf@g{\d@fprimarC@lor}%
    \let\fillc@md=\prfillc@md}
\ctr@ld@f\def\prfillc@md{\d@fprimarC@lor\space\c@mfill}
\ctr@ld@f\def\c@lnbV@l#1 #2:{\def\t@xt@{#1}\relax\ifx\t@xt@\empty\c@lnbV@l#2:% Discard leading spaces
    \else\c@lnbV@l@#1 #2:\fi}
\ctr@ld@f\def\c@lnbV@l@#1 #2:{\def\t@xt@{#2}\ifx\t@xt@\empty%
    \def\t@xt@{#1}\ifx\t@xt@\empty\advance\result@tent\m@ne\fi% Discard trailing spaces
    \else\advance\result@tent\@ne\c@lnbV@l@#2:\fi}
\ctr@ld@f\def\Blackcmyk{0 0 0 1}
\ctr@ld@f\def\Whitecmyk{0 0 0 0}
\ctr@ld@f\def\Cyancmyk{1 0 0 0}
\ctr@ld@f\def\Magentacmyk{0 1 0 0}
\ctr@ld@f\def\Yellowcmyk{0 0 1 0}
\ctr@ld@f\def\Redcmyk{0 1 1 0}
\ctr@ld@f\def\Greencmyk{1 0 1 0}
\ctr@ld@f\def\Bluecmyk{1 1 0 0}
\ctr@ld@f\def\Graycmyk{0 0 0 0.50}
\ctr@ld@f\def\BrickRedcmyk{0 0.89 0.94 0.28} % PANTONE 1805
\ctr@ld@f\def\Browncmyk{0 0.81 1 0.60} % PANTONE 1615
\ctr@ld@f\def\ForestGreencmyk{0.91 0 0.88 0.12} % PANTONE 349
\ctr@ld@f\def\Goldenrodcmyk{ 0 0.10 0.84 0} % PANTONE 109
\ctr@ld@f\def\Marooncmyk{0 0.87 0.68 0.32} % PANTONE 201
\ctr@ld@f\def\Orangecmyk{0 0.61 0.87 0} % PANTONE ORANGE-021
\ctr@ld@f\def\Purplecmyk{0.45 0.86 0 0} % PANTONE PURPLE
\ctr@ld@f\def\RoyalBluecmyk{1. 0.50 0 0} % No PANTONE match
\ctr@ld@f\def\Violetcmyk{0.79 0.88 0 0} % PANTONE VIOLET
\ctr@ld@f\def\Blackrgb{0 0 0}
\ctr@ld@f\def\Whitergb{1 1 1}
\ctr@ld@f\def\Redrgb{1 0 0}
\ctr@ld@f\def\Greenrgb{0 1 0}
\ctr@ld@f\def\Bluergb{0 0 1}
\ctr@ld@f\def\Cyanrgb{0 1 1}
\ctr@ld@f\def\Magentargb{1 0 1}
\ctr@ld@f\def\Yellowrgb{1 1 0}
\ctr@ld@f\def\Grayrgb{0.5 0.5 0.5}
\ctr@ld@f\def\Chocolatergb{0.824 0.412 0.118}
\ctr@ld@f\def\DarkGoldenrodrgb{0.722 0.525 0.043}
\ctr@ld@f\def\DarkOrangergb{1 0.549 0}
\ctr@ld@f\def\Firebrickrgb{0.698 0.133 0.133}
\ctr@ld@f\def\ForestGreenrgb{0.133 0.545 0.133}
\ctr@ld@f\def\Goldrgb{1 0.843 0}
\ctr@ld@f\def\HotPinkrgb{1 0.412 0.706}
\ctr@ld@f\def\Maroonrgb{0.690 0.188 0.376}
\ctr@ld@f\def\Pinkrgb{1 0.753 0.796}
\ctr@ld@f\def\RoyalBluergb{0.255 0.412 0.882}
\ctr@ld@f\def\Pssetg@n#1=#2|{\keln@mun#1|%
    \def\n@mref{c}\ifx\l@debut\n@mref\update@ttr\D@FTcolor\Pssetc@lor{#2}\else% color
    \def\n@mref{d}\ifx\l@debut\n@mref\update@ttr\D@FTdash\Q@s@tdash{#2}\else% line dash
    \def\n@mref{f}\ifx\l@debut\n@mref\update@ttr\D@FTfill\Q@s@tfillmode{#2}\else% fillmode
    \def\n@mref{j}\ifx\l@debut\n@mref\update@ttr\D@FTjoin\Q@s@tjoin{#2}\else% line join
    \def\n@mref{u}\ifx\l@debut\n@mref\update@ttr\D@FTupdate\Q@s@tupdate{#2}\else% update
    \def\n@mref{w}\ifx\l@debut\n@mref\update@ttr\D@FTwidth\Q@s@twidth{#2}\else% line width
    \W@rnmesAttr{figset}{#1}\fi\fi\fi\fi\fi\fi}
\ctr@ln@m\CUR@dash
\ctr@ld@f\def\s@uvdash#1{\edef#1{\CUR@dash}}
\ctr@ld@f\def\D@FTdash{1}        % Valeur par defaut (numero sans espace)
\ctr@ld@f\def\Q@s@tdash#1{\ifGR@cri\edef\CUR@dash{#1}\ifCUR@PS\expandafter\Pssetd@sh#1 :\fi\fi}
\ctr@ld@f\def\Pssetd@shI#1{\PSc@mment{setdash Index=#1}\ifcase#1%
    \or\immediate\write\fwf@g{[] 0 \c@msetdash}%         Index=1
    \or\immediate\write\fwf@g{[6 2] 0 \c@msetdash}%      Index=2
    \or\immediate\write\fwf@g{[4 2] 0 \c@msetdash}%      Index=3
    \or\immediate\write\fwf@g{[2 2] 0 \c@msetdash}%      Index=4
    \or\immediate\write\fwf@g{[1 2] 0 \c@msetdash}%      Index=5
    \or\immediate\write\fwf@g{[2 4] 0 \c@msetdash}%      Index=6
    \or\immediate\write\fwf@g{[3 5] 0 \c@msetdash}%      Index=7
    \or\immediate\write\fwf@g{[3 3] 0 \c@msetdash}%      Index=8
    \or\immediate\write\fwf@g{[3 5 1 5] 0 \c@msetdash}%  Index=9
    \or\immediate\write\fwf@g{[6 4 2 4] 0 \c@msetdash}%  Index=10
    \fi}
\ctr@ld@f\def\Pssetd@sh#1 #2:{{\def\t@xt@{#1}\ifx\t@xt@\empty\Pssetd@sh#2:% Discard leading spaces
    \else\def\t@xt@{#2}\ifx\t@xt@\empty\Pssetd@shI{#1}\else\s@mme=\@ne\def\debutp@t{#1}%
    \an@lysd@sh#2:\ifodd\s@mme\edef\debutp@t{\debutp@t\space\finp@t}\def\finp@t{0}\fi%
    \PSc@mment{setdash Pattern=#1 #2}%
    \immediate\write\fwf@g{[\debutp@t] \finp@t\space\c@msetdash}\fi\fi}}
\ctr@ld@f\def\an@lysd@sh#1 #2:{\def\t@xt@{#2}\ifx\t@xt@\empty\def\finp@t{#1}\else%
    \edef\debutp@t{\debutp@t\space#1}\advance\s@mme\@ne\an@lysd@sh#2:\fi}
\ctr@ln@m\CUR@width
\ctr@ld@f\def\s@uvwidth#1{\edef#1{\CUR@width}}
\ctr@ld@f\def\D@FTwidth{0.4}     % Valeur par defaut
\ctr@ld@f\def\Q@s@twidth#1{\ifGR@cri\edef\CUR@width{#1}\ifCUR@PS%
    \PSc@mment{setwidth Width=#1}\immediate\write\fwf@g{#1 \c@msetlinewidth}\fi\fi}
\ctr@ln@m\CUR@join
\ctr@ld@f\def\s@uvjoin#1{\edef#1{\CUR@join}}
\ctr@ld@f\def\D@FTjoin{miter}   % Valeur par defaut
\ctr@ld@f\def\Q@s@tjoin#1{\ifGR@cri\edef\CUR@join{#1}\ifCUR@PS\expandafter\Pssetj@in#1:\fi\fi}
\ctr@ld@f\def\Pssetj@in#1#2:{\PSc@mment{setjoin join=#1}%
    \if#1r\def\t@xt@{1}\else\if#1b\def\t@xt@{2}\else\def\t@xt@{0}\fi\fi%
    \immediate\write\fwf@g{\t@xt@\space\c@msetlinejoin}}
\ctr@ld@f\def\Pss@tspecifSt#1{\trtlis@rg{#1}{\Pss@tspecifSt@}}
\ctr@ld@f\def\Pss@tspecifSt@#1=#2|{\keln@mun#1|%
    \def\n@mref{c}\ifx\l@debut\n@mref\def\n@mref{#2}\ifx\n@mref\D@FTref\else%
     \s@uvcolor{\typ@color}\Pssetc@lor{#2}\fi\else% color
    \def\n@mref{d}\ifx\l@debut\n@mref\def\n@mref{#2}\ifx\n@mref\D@FTref\else%
     \s@uvdash{\typ@dash}\Q@s@tdash{#2}\fi\else% line dash
    \def\n@mref{j}\ifx\l@debut\n@mref\def\n@mref{#2}\ifx\n@mref\D@FTref\else%
     \s@uvjoin{\typ@join}\Q@s@tjoin{#2}\fi\else% line join
    \def\n@mref{w}\ifx\l@debut\n@mref\def\n@mref{#2}\ifx\n@mref\D@FTref\else%
     \s@uvwidth{\typ@width}\Q@s@twidth{#2}\fi\else% line width
    \W@rnmeskwd{Pss@tspecifSt}{#1}\fi\fi\fi\fi}
\ctr@ld@f\def\Psrest@reSt#1{\trtlis@rg{#1}{\Psrest@reSt@}}
\ctr@ld@f\def\Psrest@reSt@#1=#2|{\keln@mun#1|%
    \def\n@mref{c}\ifx\l@debut\n@mref\def\n@mref{#2}\ifx\n@mref\D@FTref\else%
     \Pssetc@lor{\typ@color}\fi\else% color
    \def\n@mref{d}\ifx\l@debut\n@mref\def\n@mref{#2}\ifx\n@mref\D@FTref\else%
     \Q@s@tdash{\typ@dash}\fi\else% line dash
    \def\n@mref{j}\ifx\l@debut\n@mref\def\n@mref{#2}\ifx\n@mref\D@FTref\else%
     \Q@s@tjoin{\typ@join}\fi\else% line join
    \def\n@mref{w}\ifx\l@debut\n@mref\def\n@mref{#2}\ifx\n@mref\D@FTref\else%
     \Q@s@twidth{\typ@width}\fi\else% line width
    \W@rnmeskwd{Psrest@reSt}{#1}\fi\fi\fi\fi}
\ctr@ld@f\def\Pssettrim@sh#1=#2|{\keln@mde#1|%
    \def\n@mref{co}\ifx\l@debut\n@mref\update@ttr\D@FTref\P@settmeshcolor{#2}\else% color
    \def\n@mref{da}\ifx\l@debut\n@mref\update@ttr\D@FTref\P@settmeshdash{#2}\else% dash
    \def\n@mref{wi}\ifx\l@debut\n@mref\update@ttr\D@FTref\P@settmeshwidth{#2}\else% width
    \W@rnmesAttr{figset trimesh}{#1}\fi\fi\fi}
\ctr@ln@m\DDV@tmeshcolor
\ctr@ld@f\def\P@settmeshcolor#1{\edef\DDV@tmeshcolor{#1}}
\ctr@ln@m\DDV@tmeshdash
\ctr@ld@f\def\P@settmeshdash#1{\edef\DDV@tmeshdash{#1}}
\ctr@ln@m\DDV@tmeshwidth
\ctr@ld@f\def\P@settmeshwidth#1{\edef\DDV@tmeshwidth{#1}}
\ctr@ld@f\def\figdrawtrimesh#1[#2,#3,#4]{{\ifCUR@PS\ifGR@cri%
    \PSc@mment{trimesh Type=#1, Triangle=[#2,#3,#4]}%
    \s@uvc@ntr@l\et@tpstrimesh\ifnum#1>\@ne%
    \Pss@tspecifSt{color=\DDV@tmeshcolor,dash=\DDV@tmeshdash,width=\DDV@tmeshwidth}%
    \setc@ntr@l{2}%
    \Pstrimeshp@rt#1[#2,#3,#4]\Pstrimeshp@rt#1[#3,#4,#2]\Pstrimeshp@rt#1[#4,#2,#3]%
    \Psrest@reSt{color=\DDV@tmeshcolor,dash=\DDV@tmeshdash,width=\DDV@tmeshwidth}%
    \fi\figdrawline[#2,#3,#4,#2]%
    \PSc@mment{End trimesh}\resetc@ntr@l\et@tpstrimesh\fi\fi}}
\ctr@ld@f\def\Pstrimeshp@rt#1[#2,#3,#4]{{\l@mbd@un=\@ne\l@mbd@de=#1\loop\ifnum\l@mbd@de>\@ne%
    \advance\l@mbd@de\m@ne\figptbary-1:[#2,#3;\l@mbd@de,\l@mbd@un]%
    \figptbary-2:[#2,#4;\l@mbd@de,\l@mbd@un]\figdrawline[-1,-2]%
    \advance\l@mbd@un\@ne\repeat}}
\initpr@lim\initpss@ttings\initPDF@rDVI% Initialisation preliminaire
\ctr@ln@w{newbox}\figBoxA
\ctr@ln@w{newbox}\figBoxB
\ctr@ln@w{newbox}\figBoxC
\catcode`\@=12

\allowdisplaybreaks
\DisableLigatures{encoding = *, family = * }
\numberwithin{equation}{section}
\newtheorem{theorem}{Theorem}[section]
\newtheorem{proposition}{Proposition}[section]

\newtheorem{definition}{Definition}[section]
\newtheorem{remark}{Remark}[section]
\numberwithin{equation}{section}
\numberwithin{theorem}{section}
\numberwithin{remark}{section}
\numberwithin{lemma}{section}
\numberwithin{proposition}{section}
\numberwithin{definition}{section}
\newcommand{\norm}[2]{{\left\|#1\right\|}_{#2}}
\newcommand{\fl}[2]{(-d_x^2)^{#1}#2}
\newcommand{\rfl}[2]{A^{#1}_{\Omega}#2}
\newcommand{\hp}[1]{\hphantom{#1}}
\newcommand{\cns}{c_{N,s}}
\newcommand{\ccs}{c_{1,s}}
\newcommand{\ffl}[2]{(-d_x^{\,2})^{#1}#2}
\newcommand{\flh}[2]{\frac{1}{\Gamma(-s)}\int_0^{+\infty}\Big(e^{t\Delta}#2 - #2\Big)\frac{dt}{t^{1+#1}}}
\newcommand{\kernel}[1]{|x-y|^{#1}}
\newcommand{\dkj}{\delta_{kj}}
\newcommand{\intr}[1]{\underset{#1}{\int}}
\newcommand{\Do}[1]{D_{#1}}
\newcommand{\Hs}{H^s_0(\Omega)}
\newcommand{\ue}[1]{#1^{\,\varepsilon}}
\newcommand{\xHdot}[1]{\dot{H}^{#1}}
\newcommand{\ha}[2]{\mathbf{H}_{#1}^{#2}}
\newcommand{\lhi}{\mathcal{L}_i^h}
\newcommand{\NN}{\mathbb{N}}
\newcommand{\ZZ}{\mathbb{Z}}
\newcommand{\RR}{\mathbb{R}}
\newcommand{\CC}{\mathbb{C}}
\newcommand{\TT}{\mathbf{T}}
\newcommand\inter[1]{\llbracket #1\rrbracket}
\newcommand\mesh{\mathfrak{M}}
\usepackage{tikz}
\usepackage{pgfplots}
\usepackage{pgfplotstable}
\usetikzlibrary{matrix,external,fit}
 \pgfplotsset{surface/.style={ %
               xmax=0.34,%
               axis z line=center,%
               axis x line=center,%
               axis y line=center,%
               %axis on top,%
               zmin=-1,%
               clip=false,%
               extra x ticks={1},%
               extra x tick label={$T=1$},%
               xtick={0},%
               ytick=\empty}}
\newcommand{\controldomain}[2]{ \addplot3 [surf,fill=violet,mesh/rows=2] coordinates {(0,#1,0) (1,#1,0) (0,#2,0) (1,#2,0)}; }
\newcommand{\controldomainT}[3]{ \addplot3 [surf,fill=violet,mesh/rows=2] coordinates {(0,#1,0) (#3,#1,0) (0,#2,0) (#3,#2,0)}; }

\newcommand{\couplingdomain}[2]{ \addplot3 [surf,fill=green,mesh/rows=2] coordinates {(0,#1,0) (1,#1,0) (0,#2,0) (1,#2,0)}; }
\newcommand{\couplingdomainT}[3]{ \addplot3 [surf,fill=green,mesh/rows=2] coordinates {(0,#1,0) (#3,#1,0) (0,#2,0) (#3,#2,0)}; }
 \pgfplotsset{erreurs/.style={scale=1,
     legend cell align=left,
     legend pos=outer north east,
     legend plot pos=right,
     legend style={cells={anchor=east},draw=none},
     xlabel=$h$,
    xmin=0.001,xmax=0.05}}
\tikzset{pente/.style={opacity=0.6}}
\pgfplotsset{cout/.style={black,mark=diamond*,mark size=2.5,mark options={fill=gray}}}
\pgfplotsset{cible/.style={black,mark=square*,mark size=2.5,mark options={fill=gray}}}
\pgfplotsset{cibleyT/.style={black,mark=*,mark size=2.5,mark options={fill=gray}}}
\pgfplotsset{CG/.style={black,mark=otimes*,mark size=2.5,mark options={fill=gray}}}
%\pgfplotsset{solex/.style={black,mark=diamond*,mark size=2.5,mark options={fill=gray}}}
\pgfplotsset{solex/.style={black,mark=*,mark size=2.5,mark options={fill=gray}}}
\pgfplotsset{energie/.style={black,mark=triangle*,mark size=2.5,mark options={fill=gray}}}

\newcommand{\MyQuote}[1]{\vspace{0.5cm}%\refstepcounter{quotecount}{1}%
     \parbox{12cm}{\em #1}\hspace*{1cm}($\mathcal{P}$)\\[0.5cm]}
\title[FE approximation of the 1-d fractional Poisson equation]{A Finite Element approximation of the one-dimensional fractional Poisson equation with applications to numerical control}
\author{U.~Biccari}
\address{Umberto Biccari, DeustoTech, University of Deusto, 48007 Bilbao, Basque Country, Spain.}
\address{Umberto Biccari, Facultad Ingenier\'{\i}a, Universidad de Deusto, Avda Universidades 24, 48007 Bilbao, Basque Country, Spain.}
\email{umberto.biccari@deusto.es,u.biccari@gmail.com}
\author{V.~Hern\'andez-Santamar\'ia}
\address{Victor Hern\'andez-Santamar\'ia, DeustoTech, University of Deusto, 48007 Bilbao, Basque Country, Spain.}
\address{Victor Hern\'andez-Santamar\'ia, Facultad Ingenier\'{\i}a, Universidad de Deusto, Avda Universidades 24, 48007 Bilbao, Basque Country, Spain.}
\email{victor.santamaria@deusto.es}
\thanks{The work of Umberto Biccari was partially supported by the Advanced Grant DYCON (Dynamic Control) of the European Research Council Executive Agency, by the MTM2014-52347 Grant of the MINECO (Spain) and by the Air Force Office of Scientific Research under the Award No: FA9550-15-1-0027. The work of V\'ictor Hern\'andez-Santamar\'ia was partially supported by the Advanced Grant DYCON (Dynamic Control) of the European Research Council Executive Agency.}

\begin{document}

\bibliographystyle{acm}

\maketitle

\begin{abstract}
We present a finite element (FE) scheme for the numerical approximation of the solution to a non-local Poisson equation involving the one-dimensional fractional Laplacian $\fl{s}{}$ on the interval $(-L,L)$. In particular, we include the complete computations for obtaining the stiffness matrix, starting from the variational formulation of the problem. The problem being one-dimensional, the values of the matrix can be explicitly calculated, without need of any numerical integration, thus obtaining an algorithm which is very efficient in terms of the computational cost.
As an application, we analyze the corresponding parabolic equation from the point of view of controllability properties, employing the penalized Hilbert Uniqueness Method (HUM) for computing the numerical approximation of the null-control, acting from an open subset $\omega\subset(-L,L)$. In accordance to the theory, our numerical simulations show: (1) that the method solves the elliptic equation with an acceptable approximation in the natural functional setting, (2) that the parabolic problem is null-controllable for $s>1/2$ and (3) that for $s\leq 1/2$ we only have approximate controllability. 
\end{abstract}

\section{Introduction}\label{intro_sec}

In this work, we present a finite element (FE) scheme for the numerical approximation of the solution to the following non-local Poisson equation
\begin{align}\label{PE}
	\begin{cases}
		\fl{s}{u} = f, \quad & x\in(-L,L)
		\\
		u\equiv 0, & x\in\RR\setminus(-L,L).
	\end{cases}
\end{align}

In \eqref{PE}, $f$ is a given function and, for all $s\in(0,1)$, $\fl{s}{}$ denotes the one-dimensional fractional Laplace operator, which is defined as the following singular integral
\begin{align*}
	\fl{s}{u}(x) = \ccs\,P.V.\,\int_{\RR}\frac{u(x)-u(y)}{|x-y|^{1+2s}}\,dy. 
\end{align*}
Here, $\ccs$ is a normalization constant given by
\begin{align*}
	\ccs = \frac{s2^{2s}\Gamma\left(\frac{1+2s}{2}\right)}{\sqrt{\pi}\Gamma(1-s)},
\end{align*}
where $\Gamma$ is the usual Gamma function. 

We have to mention that, for having a completely rigorous definition of the fractional Laplace operator, it is necessary to introduce also the class of functions $u$ for which computing $\fl{s}{u}$ makes sense. We postpone this discussion to the next section.

The analysis of non-local operators and non-local PDEs is a topic in continuous development.
A motivation for this growing interest relies in the large number of possible applications in the modeling of several complex phenomena for which a local approach turns up to be inappropriate or limiting.
Indeed, there is an ample spectrum of situations in which a non-local equation gives a
significantly better description than a PDE of the problem one wants to analyze.
Among others, we mention applications in turbulence (\cite{bakunin2008turbulence}), anomalous transport and diffusion (\cite{bologna2000anomalous,meerschaert2012fractional}), elasticity (\cite{dipierro2015dislocation}), image processing (\cite{gilboa2008nonlocal}), porous media flow (\cite{vazquez2012nonlinear}), wave propagation in heterogeneous high contrast media (\cite{zhu2014modeling}). Also, it is well known that the fractional Laplacian is the generator of s-stable processes, and it is often used in stochastic models with applications, for instance, in mathematical finance (\cite{levendorskii2004pricing,pham1997optimal}).

One of the main differences between these non-local models and classical Partial Differential Equations is that the fulfilment of a non-local equation at a point involves the values of the function far away from that point.

In the recent past, the fractional Laplacian has been widely analyzed also from the point of view of numerical analysis. We refer, for instance, to the work \cite{acosta2017fractional} of Acosta and Borthagaray (see also \cite{acosta2017short}). There, the authors present a FE scheme for implementing the solution of \eqref{PE} in a bounded domain $\Omega\subset\RR^2$. In particular, they provide appropriate quadrature rules in order to solve numerically the variational formulation associated to the problem. Moreover, in \cite{acosta2017fractional} it is also developped an accurate analysis of the efficiency of the FE method, employing several existing results. The techniques of \cite{acosta2017short,acosta2017fractional} have then been applied in \cite{acosta2017finite}, combined with a convolution quadrature approach, for solving evolution equations involving the fractional Laplacian. For the sake of completeness, we also mention \cite{bonito2015numerical}, where it is presented a discretization of the so-called \textit{spectral fractional Laplacian} (see Eq. \eqref{fl_spec}) and its application to the evolutionary case \cite{bonito2017approximation}, and \cite{nochetto2015pde}, where the solution of \eqref{PE} is implemented applying the well known extension of Caffarelli and Silvestre (\cite{caffarelli2007extension}).   

Our method deals with a FE approximation in one space dimension for the fractional Poisson equation. The main novelty of our work, with respect to \cite{acosta2017short,acosta2017fractional}, relies on the fact that, since we are dealing with the one-dimensional case, we will not need any quadrature rule and each entry of the stiffness matrix can be computed explicitly. This has the great advantage of significantly reducing the computational cost of the algorithm and, therefore, our discretization method is suitable for being included in more general applications.  

A natural example is given by the numerical resolution of the following control problem: given any $T>0$, find a control function $g\in L^2((-L,L)\times(0,T))$ such that the corresponding solution to the parabolic problem 
\begin{align}\label{heat_frac}
	\begin{cases}
		z_t + \fl{s}{z} = g\mathbf{1}_{\omega},\quad & (x,t)\in (-1,1)\times(0,T)
		\\
		z=0, & (x,t)\in[\,\RR\setminus (-1,1)\,]\times(0,T)
		\\
		z(x,0)=z_0(x), & x\in (-1,1)
	\end{cases}
\end{align} 
satisfies $z(x,T)=0$. 

The approach that we will employ for solving this control problem is based on the penalized Hilbert Uniqueness Method (\cite{boyer2013penalised}), which relies on some classical works of Glowinski and Lions (\cite{glowinski1995exact,glowinski2008exact}).   

This paper is organized as follows. In Section \ref{theor_sec}, we present some existing theoretical results for the problems that we are going to analyze. In particular, we give a more accurate definition of the fractional Laplace operator and we introduce the variational formulation associated to \eqref{PE} (needed for the development of the FE scheme). Concerning the parabolic problem \eqref{heat_frac}, we present a couple of controllability results, which will help us in the verification of the accuracy of the numerical method. In Section \ref{fe_sec}, we describe our FE method for the elliptic equation \eqref{PE} and we present the algorithm for the penalized HUM, employed for the numerical control of \eqref{heat_frac}. In Section \ref{res_numerical} we present and comment the results of our numerical simulations. Finally, in Appendix \ref{appendix} we include the complete details for computing the stiffness matrix associated to our FE scheme.

\section{Preliminary results}\label{theor_sec}

In this Section, we introduce some preliminary result that will be useful in the remaining of the paper.

\subsection{Elliptic problem}

We start by giving a more rigorous definition of the fractional Laplace operator, as we have anticipated in Section \ref{intro_sec}. Let
\begin{align*}
	\mathcal L^1_s(\RR) :=\left\{ u:\RR\longrightarrow\RR\,:\; u\textrm{ measurable },\;\int_{\RR}\frac{|u(x)|}{(1+|x|)^{1+2s}}\,dx<\infty\right\}.
\end{align*}
For any $u\in\mathcal L_s^1$ and $\varepsilon>0$ we set 
\begin{align*}
	(-d_x^2)^s_{\varepsilon} u(x) = \ccs\,\int_{|x,y|>\varepsilon}\frac{u(x)-u(y)}{|x-y|^{1+2s}}\,dy,\;\;\; x\in\RR.
\end{align*}
The fractional Laplacian is then defined by the following singular integral
\begin{align}\label{fl}
	\fl{s}{u}(x) = \ccs\,P.V.\,\int_{\RR}\frac{u(x)-u(y)}{|x-y|^{1+2s}}\,dy = \lim_{\varepsilon\to 0^+} (-d_x^2)^s_{\varepsilon} u(x), \;\;\; x\in\RR,
\end{align}
provided that the limit exists. 

We notice that if $0<s<1/2$ and $u$ is smooth, for example bounded and Lipschitz continuous on $\RR$, then the integral in \eqref{fl} is in fact not really singular near $x$ (see e.g. \cite[Remark 3.1]{dihitchhiker}). Moreover, $\mathcal L_s^1(\RR)$ is the right space for which $v:= (-d_x^2)^s_{\varepsilon} u$ exists for every $\varepsilon > 0$, $v$ being also continuous at the continuity points of $u$.

Let us now introduce the variational formulation associated to equation \eqref{PE}, which will be the starting point for the development of the FE approximation of the problem we are considering. That is, find $u\in H^s_0(-L,L)$ such that
\begin{align*}
	a(u,v) = \int_{-L}^L fv\,dx,	
\end{align*}
for all $v\in H_0^s(-L,L)$, where the bilinear form $a(\cdot,\cdot):H^s_0(-L,L)\times H^s_0(-L,L)\to \RR$ is given by
\begin{align*}
	a(u,v)=\frac{\ccs}{2} \int_{\RR}\int_{\RR}\frac{(u(x)-u(y))(v(x)-v(y))}{|x-y|^{1+2s}}\,dxdy.	
\end{align*}
Here, $H^s_0(-L,L)$ denotes the space 
\begin{align*}
	H^s_0(-L,L) :=\Big\{\, u\in H^s(\RR)\,:\,u=0 \textrm{ in } \RR\setminus(-L,L)\,\Big\}, 
\end{align*}
while with $H^s(\RR)$ we indicate the classical fractional Sobolev space of order $s$. We refer to \cite{dihitchhiker} for a complete description of these spaces.  

Since the bilinear form $a$ is continuous and coercive, Lax-Milgram Theorem immediately implies existence and uniqueness of solutions to the Dirichlet problem \eqref{PE}. In more detail, if $f\in H^{-s}(-L,L)$, then \eqref{PE} admits a unique weak solution $u\in H_0^s(-L,L)$ (see, e.g., \cite[Proposition 2.1]{biccari2017local}). Here $H^{-s}(-L,L)$ stands for the dual space of $H^s_0(-L,L)$. Furthermore, in the literature it is possible to find improved regularity results for the solution to \eqref{PE}, both in H\"older and Sobolev spaces. The interested reader may refer, for instance, to \cite{acosta2017fractional,biccari2017local,leonori2015basic,ros2014dirichlet,ros2014extremal}.

\subsection{Parabolic problem}

As we mentioned in Section \ref{intro_sec}, the main goal of the present paper is to obtain a FE discretization of the fractional Laplacian. An application for this approximation will then be the numerical resolution of the fractional heat equation \eqref{heat_frac} and the associated control problem. Before doing that, let us recall the following definitions of controllability.

\begin{definition}
System \eqref{heat_frac} is said to be \textit{null-controllable} at time $T$ if, for any $z_0\in L^2(-1,1)$, there exists $g\in L^2(\omega\times(0,T))$ such that the corresponding solution $z$ satisfies 
\begin{align*}
	z(x,T)=0.
\end{align*}

\end{definition}

\begin{definition}
System \eqref{heat_frac} is said to be \textit{approximately controllable} at time $T$ if, for any $z_0,z_T\in L^2(-1,1)$ and any $\delta>0$, there exists $g\in L^2(\omega\times(0,T))$ such that the corresponding solution $z$ satisfies \begin{align*}
	\norm{z(x,T)-z_T}{L^2(-1,1)}<\delta.
\end{align*}
\end{definition}

Therefore, given any initial datum $z_0\in L^2(-1,1)$ we are interested in computing numerically the control function $g$ that drives the solution $z$ to zero in time $T$. Before describing the methodology that we shall adopt, we recall the existing theoretical results on the controllability of the fractional heat equation \eqref{heat_frac}. This will give us a hint about what we should expect from our simulations, and will provide a validation of the accuracy of our numerical method.

First of all, it is worth to mention that the existence, uniqueness and regularity of the solutions to \eqref{heat_frac} has been studied by several authors. Among others, we mention the works \cite{biccari2017parabolic,fernandez2016boundary,leonori2015basic}. 

Concerning now the control problem, we have to mention that, to the best of our knowledge, there are few results in the literature on the null-controllability of the fractional heat equation, and none of them is for a problem involving the fractional Laplacian in its integral form \eqref{fl}. The existing results, instead, deal with the \textit{spectral} definition of the fractional Laplace operator, which is given as follows.

Let $\{\psi_k,\lambda_k\}_{k\in\NN}\subset H_0^1(-1,1)\times\RR^+$ be the set of normalized eigenfunctions and eigenvalues of the Laplace operator in $(-1,1)$ with homogeneous Dirichlet boundary conditions, so that $\{\psi_k\}_{k\in\NN}$ is an orthonormal basis of $L^2(-1,1)$ and         
\begin{align*}
	\begin{cases}
		-d_x^2\psi_k =\lambda_k\psi_k, & x\in (-1,1), 
		\\
		\psi_k(-1)=\psi_k(1)=0.
	\end{cases}
\end{align*}

Then, the \textit{spectral fractional Laplacian} $(-d_x^2)^s_S$ is defined by
\begin{align}\label{fl_spec}
	(-d_x^2)^s_S u(x) = \sum_{k\geq 1}\langle u,\psi_k\rangle \lambda_k^s\psi_k(x),
\end{align}
firstly for $u\in C_0^{\infty}(-1,1)$ and then for $u\in H_0^s(-1,1)$ employing a density argument.

It is important to notice that the spectral fractional Laplacian and the fractional Laplacian defined as in \eqref{fl} are two different operators. Indeed, definition \eqref{fl_spec} depends on the choice of the domain (in this case, $(-1,1)$), while the integral definition does not. For a complete discussion on the differences of these two operators, we refer to \cite{servadei2014spectrum}.

The fractional heat equation involving the operator $(-d_x^2)^s_S$ has been analyzed in \cite{micu2006controllability}, where the authors proved its null controllability, provided that $s>1/2$. For $s\leq 1/2$, instead, null controllability does not hold, not even for $T$ large. This negative result is based on the equivalence (consequence of M\"untz Theorem, see, e.g., \cite[Page 24]{schwartz1958etude}) between the controllability property (more specifically, the possibility of proving an observability inequality), and the following condition for the eigenvalues of the operator considered
\begin{align}\label{eigen_cond}
	\sum_{k\geq 1} \frac{1}{\lambda_k}<\infty,
\end{align} 
which is clearly not satisfied in the case of the spectral fractional Laplacian when $s\leq 1/2$, since in that case the eigenvalues are $\lambda_k = (k\pi)^{2s}$. Finally, in \cite{miller2006controllability}, the same result as in \cite{micu2006controllability} is obtained in a multi-dimensional setting, by means of a  \textit{spectral observability condition} for a negative self-adjoint operator, which allows to prove the null-controllability of the semi-group that it generates.

Even if we are not aware of any controllability result, neither positive nor negative, for the parabolic equation involving the integral fractional Laplacian, at least in the one space dimension, these properties are easily achievable. In more detail, we have the following.

\begin{proposition}
For all $z_0\in L^2(-1,1)$ the parabolic problem \eqref{heat_frac} is null-controllable with a control function $g\in L^2(\omega\times(0,T))$ if and only if $s>1/2$.  
\end{proposition}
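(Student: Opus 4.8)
The plan is to prove the claim by reducing the integral fractional Laplacian to the spectral one through a standard spectral analysis of the operator $\fl{s}{}$ on $(-1,1)$ with exterior Dirichlet conditions, and then invoking the M\"untz-type criterion \eqref{eigen_cond} already discussed in the text. Concretely, let $A := \fl{s}{}$ with domain determined by the form $a(\cdot,\cdot)$ on $H_0^s(-1,1)$. Since $a$ is symmetric, continuous and coercive, and the embedding $H_0^s(-1,1)\hookrightarrow L^2(-1,1)$ is compact, the operator $A$ is self-adjoint, positive and has compact resolvent. Hence there is an orthonormal basis $\{\varphi_k\}_{k\ge1}$ of $L^2(-1,1)$ consisting of eigenfunctions of $A$, with eigenvalues $0<\mu_1\le\mu_2\le\cdots\to+\infty$, and the parabolic problem \eqref{heat_frac} is governed by the analytic semigroup $e^{-tA}$. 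In this setting, the null-controllability of \eqref{heat_frac} from $\omega$ in time $T$ is, by the classical duality/observability argument together with the fact that eigenvalues of a second-order-type elliptic operator in 1D are simple and the eigenfunctions do not vanish identically on any open set (unique continuation), equivalent to the summability condition $\sum_{k\ge1}\mu_k^{-1}<\infty$, exactly as recalled after \eqref{eigen_cond} for the spectral case.

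The second and decisive step is therefore to establish the two-sided spectral asymptotics $\mu_k \asymp k^{2s}$ as $k\to\infty$. This is where the integral nature of the operator matters: one cannot simply read off eigenvalues in closed form as for $(-d_x^2)^s_S$. I would obtain the asymptotics from the variational (min--max) characterization
\begin{align*}
	\mu_k = \min_{\substack{V\subset H_0^s(-1,1)\\ \dim V = k}}\ \max_{0\ne u\in V}\ \frac{a(u,u)}{\norm{u}{L^2(-1,1)}^2},
\end{align*}
combined with the fact that $a(u,u)$ is comparable to the square of the $\dot H^s(\RR)$-seminorm of (the zero extension of) $u$. Comparing with the eigenvalues of the spectral fractional Laplacian via the Sobolev-space interpolation identity (the $H_0^s$ norm and the $D((-d_x^2)^{s/2}_S)$ norm are equivalent on $(-1,1)$ for $s\in(0,1)$, $s\ne 1/2$, and the borderline $s=1/2$ can be handled separately) gives $c\,\lambda_k^s \le \mu_k \le C\,\lambda_k^s$ with $\lambda_k=(k\pi/2)^2$, hence $\mu_k \asymp k^{2s}$. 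One may alternatively cite the known Weyl law and eigenvalue bounds for the restricted fractional Laplacian (e.g.\ from the references already listed: \cite{ros2014dirichlet,servadei2014spectrum}) to the same effect.

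With $\mu_k\asymp k^{2s}$ in hand the conclusion is immediate: $\sum_k \mu_k^{-1}<\infty$ if and only if $\sum_k k^{-2s}<\infty$ if and only if $2s>1$, i.e.\ $s>1/2$. For $s>1/2$ the summability condition and the spectral observability criterion (as used in \cite{miller2006controllability}) yield null-controllability for every $T>0$ and every nonempty open $\omega\subset(-1,1)$; for $s\le 1/2$ the M\"untz theorem obstruction (\cite{schwartz1958etude}) shows the observability inequality fails for all $T$, so null-controllability does not hold. I expect the main obstacle to be the rigorous justification of the spectral equivalence $\mu_k\asymp\lambda_k^s$ — in particular making precise the comparison between the form domain $H_0^s(-1,1)$ and the domain of $(-d_x^2)^{s/2}_S$, and treating the critical exponent $s=1/2$ where these spaces differ by a logarithmic correction; once that comparison is settled, the controllability dichotomy follows from results already quoted in the paper.
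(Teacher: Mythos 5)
Your proposal is correct in substance and follows the same overall strategy as the paper: reduce null-controllability by duality to an observability inequality for the adjoint problem, expand the adjoint solution in the Dirichlet eigenfunctions of the integral fractional Laplacian on $(-1,1)$, and decide the resulting M\"untz/moment condition $\sum_k\lambda_k^{-1}<\infty$ according to the growth of the eigenvalues. The one genuine difference is how that growth is obtained: the paper simply invokes the sharp one-dimensional asymptotics $\lambda_k=\bigl(\tfrac{k\pi}{2}-\tfrac{(1-s)\pi}{4}\bigr)^{2s}+O(1/k)$ from \cite{kulczycki2010spectral,kwasnicki2012eigenvalues}, whereas you derive the two-sided bound $\mu_k\asymp k^{2s}$ by min--max, comparing the form $a(\cdot,\cdot)$ with that of the spectral fractional Laplacian on the common form domain. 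Since only convergence or divergence of $\sum_k\mu_k^{-1}$ matters, this rougher estimate suffices, and your route is more self-contained than citing the delicate asymptotics, at the price of justifying the norm equivalence — including the borderline case, which does go through because at $s=1/2$ both form domains coincide with the Lions--Magenes space $H^{1/2}_{00}(-1,1)$. Two cautions: your parenthetical claim that eigenvalue simplicity is automatic ``as for second-order operators in 1D'' has no ODE-type justification for this nonlocal operator (it is a nontrivial matter, and in fact unnecessary: finite multiplicities can be grouped, and the negative direction of the M\"untz obstruction is insensitive to multiplicity); and, like the paper, you leave the equivalence between observability and $\sum_k\lambda_k^{-1}<\infty$ at the level of a citation, whereas the positive direction really rests on a Fattorini--Russell/biorthogonal-family argument together with nondegeneracy of the eigenfunctions on $\omega$ — but since the paper's own proof is no more detailed on this point, this is not a gap relative to it.
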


\begin{proof}
First of all, multiplying \eqref{heat_frac} by $\varphi$ and integrating over $(-1,1)\times (0,T)$, it is straightforward to check that $z(x,T)=0$ if and only if 
\begin{align}\label{control_id}
	\int_0^T\int_{-1}^1 \varphi(x,t)g(x,t)\mathbf{1}_{\omega}(x)\,dxdt = -\int_{-1}^1 u_0(x)\varphi(x,0)\,dx,
\end{align}
for all $\varphi^T\in L^2(-1,1)$, where $\varphi(x,t)$ is the unique solution to the adjoint system
\begin{align}\label{heat_frac_adj}
	\begin{cases}
		-\varphi_t + \fl{s}{\varphi} = 0, & (x,t)\in (-1,1)\times(0,T)
		\\
		\varphi = 0, & (x,t)\in [\,\RR\setminus(-1,1)\,]\times(0,T)
		\\
		\varphi(x,T) = \varphi^T(x), & x\in (-1,1).
	\end{cases}
\end{align}

In turn, it is classical that \eqref{control_id} is equivalent to the existence of a constant $C>0$ such that the following observability inequality holds
\begin{align}\label{obs}
	\norm{\varphi(x,0)}{L^2(-1,1)}^2\leq C\int_0^T\left|\,\int_{-1}^1 \varphi(x,t)g(x,t)\mathbf{1}_{\omega}(x)\,dx\,\right|^2\,dt,
\end{align}

Notice that $\varphi$ can be written in terms of the basis of eigenfunctions $\{\varrho_k\}_{k\geq 1}$. Namely,
\begin{align}\label{adj_sol}
	\varphi(x,t) = \sum_{k\geq 1} \varphi_ke^{-\lambda_k(T-t)}\varrho_k(x), 
\end{align}
where $\varphi_k = \langle \varphi^T,\varrho_k\rangle$ and, for $k\geq 1$, $\varrho(x)$ are the solutions to the following eigenvalue problem 
\begin{align*}
	\begin{cases}
		\fl{s}{\varrho_k} = \lambda_k\varrho_k, & x\in (-1,1), \;\; k\in\NN
		\\
		\varrho_k = 0, & x\in \,\RR\setminus(-1,1).
	\end{cases}
\end{align*}

Now, plugging \eqref{adj_sol} into \eqref{obs}, using the orthonormality of the eigenfunctions $\varrho_k$ and employing the change of variables $T-t\mapsto t$, the observability inequality becomes 
\begin{align}\label{obs_spectr}
	\sum_{k\geq 1} |\varphi_k|^2e^{-2\lambda_k T} \leq C\int_0^T\left|\,\sum_{k\geq 1} \varphi_kg_k(t)e^{-\lambda_k t}\right|^2\,dt, 
\end{align}
where $g_k = \langle g\mathbf{1}_{\omega},\varrho_k\rangle$. 

By means of M\"untz Theorem, inequalities of the form \eqref{obs_spectr} are well known to be true if and only if \eqref{eigen_cond} holds. On the other hand, according to \cite{kulczycki2010spectral,kwasnicki2012eigenvalues} we have 
\begin{align*}
	\lambda_k = \left(\frac{k\pi}{2}-\frac{(1-s)\pi}{4}\right)^{2s}+O\left(\frac{1}{k}\right).
\end{align*}
 
Therefore, we easily see that the condition \eqref{eigen_cond} is satisfied if and only if $s>1/2$. If $s\leq 1/2$, instead, the series diverges, since it behaves as an harmonic series. In conclusion, the observability inequality \eqref{obs} is proved when $s>1/2$, but it is false when $s\leq 1/2$. This concludes the proof. 
\end{proof}

Even if for $s\leq 1/2$ null controllability for \eqref{heat_frac} fails, we still have the following result of approximate controllability. 

\begin{proposition}
Let $s\in(0,1)$. For all $z_0\in L^2(-1,1)$, there exists a control function $g\in L^2(\omega\times(0,T))$ such that the unique solution $z$ to the parabolic problem \eqref{heat_frac} is approximately controllable.
\end{proposition}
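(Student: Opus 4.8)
The plan is to prove approximate controllability via the classical duality criterion: system \eqref{heat_frac} is approximately controllable at time $T$ if and only if the unique continuation property holds for the adjoint system \eqref{heat_frac_adj}, namely that any solution $\varphi$ with $\varphi\equiv 0$ on $\omega\times(0,T)$ must vanish identically. Since the problem is linear and autonomous, by translating it suffices to drive $z_0$ to an arbitrarily small neighborhood of $0$, and standard functional-analytic arguments (Hahn--Banach, or the characterization in \cite{boyer2013penalised}) reduce approximate controllability precisely to this unique continuation statement.

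First I would expand the adjoint solution in the eigenbasis $\{\varrho_k\}_{k\geq1}$ of the integral fractional Laplacian as in \eqref{adj_sol}, so that the observation $\varphi|_{\omega\times(0,T)}=0$ reads $\sum_{k\geq1}\varphi_k e^{-\lambda_k(T-t)}\varrho_k(x)=0$ for a.e. $(x,t)\in\omega\times(0,T)$. Using the distinctness of the eigenvalues $\lambda_k$ together with the linear independence of the exponential functions $\{e^{-\lambda_k t}\}$ on the interval $(0,T)$ — a Müntz-type argument, or simply an analyticity-in-$t$ argument since $t\mapsto\varphi(\cdot,t)$ extends analytically to a complex neighborhood — I would conclude that each modal coefficient satisfies $\varphi_k\varrho_k(x)=0$ for a.e. $x\in\omega$, hence $\varphi_k=0$ whenever $\varrho_k$ does not vanish identically on $\omega$.

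The remaining point is that no eigenfunction $\varrho_k$ of $\fl{s}{}$ can vanish on an open subset $\omega\subset(-1,1)$. This follows from the strong unique continuation property for the fractional Laplacian: a function $\varrho_k\in H^s_0(-1,1)$ satisfying $\fl{s}{\varrho_k}=\lambda_k\varrho_k$ in $(-1,1)$ and $\varrho_k=0$ on an open set cannot be nontrivial. This can be invoked from the literature on unique continuation for the fractional Laplacian (e.g. via the Caffarelli--Silvestre extension and Carleman estimates), or — in this one-dimensional setting — deduced directly from the real-analyticity of eigenfunctions in the interior of $(-1,1)$. Consequently all $\varphi_k=0$, so $\varphi\equiv 0$, which establishes the unique continuation property and hence the desired approximate controllability.

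The main obstacle is the justification of the interior unique continuation / analyticity of the eigenfunctions $\varrho_k$; everything else is the routine duality machinery. I would therefore state this as the key input, citing the appropriate unique continuation results for the integral fractional Laplacian, and keep the rest of the argument brief. Note that, unlike null controllability, no summability condition on the $\lambda_k$ is needed here, which is why approximate controllability holds for every $s\in(0,1)$.
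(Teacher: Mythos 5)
Your proposal is correct and follows essentially the same route as the paper: duality reduces approximate controllability to unique continuation for the adjoint system, the adjoint solution is expanded in the eigenbasis, the modes are separated in time (the paper makes your exponential-independence/analyticity step rigorous via a Laplace-type integral in $t$, analytic continuation in the dual variable and a contour integral around each pole $-\lambda_\ell$, also allowing for eigenvalue multiplicity), and one concludes with the unique continuation property for eigenfunctions of $\fl{s}{}$, for which the paper cites \cite[Theorem 1.4]{fall2014unique}. The only caveat is your suggested shortcut via interior real-analyticity of the eigenfunctions, which is not an elementary fact for this nonlocal operator and is best replaced by the cited unique continuation theorem.
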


\begin{proof}

The result will be a consequence of the following unique continuation property for the solution to the adjoint equation \eqref{heat_frac_adj}

\MyQuote{
Given $s\in(0,1)$ and $\varphi^T_0\in L^2(-1,1)$, let $\varphi$ be the unique solution to the system \eqref{heat_frac_adj}. Let $\omega\subset (-1,1)$ be an arbitrary open set. If $\varphi = 0$ on $\omega\times(0,T)$, then $\varphi = 0$ on $(-1,1)\times(0,T)$.}

Therefore, we are reduced to the proof of the property ($\mathcal P$). To this end, let us recall that $\varphi$ can be expressed in the form \eqref{adj_sol} and let us assume that 
\begin{align}\label{uc}
	\varphi=0 \textrm{ in } \omega\times(0,T). 
\end{align}

Let $\{\psi_{k_j}\}_{1\leq k\leq m_k}$ be an orthonormal basis of $\ker(\lambda_k-\fl{s}{})$. Then, \eqref{adj_sol} can be rewritten as
\begin{align*}
	\varphi(x,t) = \sum_{k\geq 1} \left(\sum_{j=1}^{m_k} \varphi_{k_j}\psi_{k_j}(x)\right)e^{-\lambda_k(T-t)}, \;\;\; (x,t)\in (-1,1)\times(-\infty, T). 
\end{align*}

Let $z\in\CC$ with $\eta:=\Re(z)>0$ and let $N\in\NN$. Since the functions $\psi_{k_j}$, $1\leq j\leq m_k$, $1\leq k\leq N$ are orthonormal, we have that
\begin{align*}
	\norm{\sum_{k=1}^N \left(\sum_{j=1}^{m_k} \varphi_{k_j}\psi_{k_j}(x)\right)e^{z(t-T)}e^{-\lambda_k(T-t)}}{L^2(-1,1)}^2 & \leq \sum_{k=1}^N \left(\sum_{j=1}^{m_k} |\varphi_{k_j}|^2\right)e^{2\eta(t-T)}e^{-2\lambda_k(T-t)}
	\\
	& \leq \sum_{k\geq 1} \left(\sum_{j=1}^{m_k} |\varphi_{k_j}|^2\right)e^{2\eta(t-T)}e^{-2\lambda_k(T-t)} \leq Ce^{2\eta(t-T)}\norm{\varphi^T}{L^2(-1,1)}^2.
\end{align*}

Hence, letting 
\begin{align*}
w_N(x,t):= \sum_{k=1}^N \left(\sum_{j=1}^{m_k} \varphi_{k_j}\psi_{k_j}(x)\right)e^{z(t-T)}e^{-\lambda_k(T-t)},
\end{align*}
we have shown that $\norm{w_T(x,t)}{L^2(-1,1)}\leq Ce^{\eta(t-T)}\norm{\varphi^T}{L^2(-1,1)}$. Moreover, we have
\begin{align*}
	\int_{-\infty}^T e^{\eta(t-T)}\norm{\varphi^T}{L^2(-1,1)}\,dt = \frac{1}{\eta}\norm{\varphi^T}{L^2(-1,1)}\int_0^{+\infty} e^{-\tau}\,d\tau = \frac{1}{\eta}\norm{\varphi^T}{L^2(-1,1)}.
\end{align*}

Therefore, we can apply the Dominated Convergence Theorem, obtaining
\begin{align}\label{dct_identity}
	\lim_{N\to+\infty}\int_{-\infty}^T w_N(x,t)\,dt &= \int_{-\infty}^T\lim_{N\to+\infty} w_N(x,t)\,dt = \int_{-\infty}^T e^{z(t-T)} \sum_{k=1}^{+\infty} \left(\sum_{j=1}^{m_k} \varphi_{k_j}\psi_{k_j}(x)\right)e^{-\lambda_k(T-t)}\,dt \notag
	\\
	&=\sum_{k=1}^{+\infty}\sum_{j=1}^{m_k} \varphi_{k_j}\psi_{k_j}(x) \int_{-\infty}^T e^{z(t-T)}e^{-\lambda_k(T-t)}\,dt =\sum_{k=1}^{+\infty}\sum_{j=1}^{m_k} \varphi_{k_j}\psi_{k_j}(x) \int_0^{+\infty} e^{-(z+\lambda_k)\tau}\,d\tau \notag
	\\
	&=\sum_{k=1}^{+\infty}\sum_{j=1}^{m_k} \frac{\varphi_{k_j}}{z+\lambda_k}\psi_{k_j}(x), \;\;\; x\in (-1,1)\;\Re(z)>0.
\end{align} 

It follows from \eqref{uc} and \eqref{dct_identity} that 
\begin{align*}
	\sum_{k=1}^{+\infty}\sum_{j=1}^{m_k} \frac{\varphi_{k_j}}{z+\lambda_k}\psi_{k_j}(x)=0, \;\;\; x\in\omega,\;\Re(z)>0.
\end{align*}

This holds for every $z\in\CC\setminus\{-\lambda_k\}_{k\in\NN}$, using the analytic continuation in $z$. Hence, taking a suitable small circle around $-\lambda_{\ell}$ not including $\{-\lambda_k\}_{k\neq\ell}$ and integrating on that circle we get that
\begin{align*}
	w_\ell:=\sum_{j=1}^{m_{\ell}} \varphi_{\ell_j}\psi_{\ell_j}(x)=0, \;\;\; x\in\omega.
\end{align*}

According to \cite[Theorem 1.4]{fall2014unique}, $\fl{s}{}$ has the unique continuation property in the sense that if $\lambda_k$ is an eigenvalue of $\fl{s}{}$ on (-1,1) with Dirichlet boundary conditions, and $(\fl{s}{}-\lambda_k)\varrho_k = 0$ in $(-1,1)$ with $\varrho_k = 0$ in $\omega$, then $\varrho_k = 0$ in $(-1,1)$. 
This can applied to $w_{\ell}$, in order to conclude $w_{\ell} = 0$ in $(-1,1)$ for every $\ell$. Since $\{\psi_{\ell_j}\}_{1\leq j\leq m_{\ell}}$ are linearly independent in $L^2(-1,1)$, we get $\varphi_{\ell_j} = 0$, $1\leq j\leq m_k$, $\ell\in\NN$. It follows that $\varphi^T=0$ and hence, $\varphi=0$ in $(-1,1)\times(0,T)$, meaning that $\varphi$ enjoys the property $(\mathcal{P})$. As an immediate consequence, we have that our original equation \eqref{heat_frac} is approximately controllable. This last fact being classical (see, e.g., \cite[Theorem 2.5]{keyantuo2016interior}), we will leave the details to the reader. Our proof is then concluded. 
\end{proof}

\section{Development of the numerical scheme}\label{fe_sec}

We devote this Section to the description of the numerical scheme that we are going to employ. Let us start with the elliptic case. 
\subsection{Finite element approximation of the elliptic problem}\label{fe_ell_sec}
In order to solve numerically \eqref{PE}, we will develop a finite element scheme on a uniform mesh. To this purpose, let us firstly recall the variational formulation associated to our problem: find $u\in H_0^s(-L,L)$ such that
\begin{align}\label{WF}
	\frac{\ccs}{2} \int_{\RR}\int_{\RR}\frac{(u(x)-u(y))(v(x)-v(y))}{|x-y|^{1+2s}}\,dxdy = \int_{-L}^L fv\,dx,	
\end{align}
for all $v\in H_0^s(-L,L)$. 

Let us introduce a partition of the interval $(-L,L)$ as follows:
\begin{align*}
	-L = x_0<x_1<\ldots <x_i<x_{i+1}<\ldots<x_{N+1}=L\,,
\end{align*}
with $x_{i+1}=x_i+h$, $i=0,\ldots N$. We call $\mathfrak{M}$ the mesh composed by the points $\{x_i\,:\, i=1,\ldots,N\}$, while the set of the boundary points is denoted $\partial\mathfrak{M}:=\{x_0,x_{N+1}\}$. Now, define $K_i:=[x_i,x_{i+1}]$ and consider the discrete space 
\begin{align*}
	V_h :=\Big\{v\in H_0^s(-L,L)\,\big|\, \left. v\,\right|_{K_i}\in \mathcal{P}^1\Big\},
\end{align*} 
where $\mathcal{P}^1$ is the space of the continuous and piece-wise linear functions. Hence, we approximate \eqref{WF} with the following discrete problem: find $u_h\in V_h$ such that
\begin{align*}
	\frac{\ccs}{2} \int_{\RR}\int_{\RR}\frac{(u_h(x)-u_h(y))(v_h(x)-v_h(y))}{|x-y|^{1+2s}}\,dxdy = \int_{-L}^L fv_h\,dx,	
\end{align*}
for all $v_h\in V_h$. If now we indicate with $\big\{\phi_i\big\}_{i=1}^N$ a basis of $V_h$, it will be sufficient that \eqref{WFD} is satisfied for all the functions of the basis, since any element of $V_h$ is a linear combination of them. Therefore the problem takes the following form
\begin{align}\label{WFD}
	\frac{\ccs}{2} \int_{\RR}\int_{\RR}\frac{(u_h(x)-u_h(y))(\phi_i(x)-\phi_i(y))}{|x-y|^{1+2s}}\,dxdy = \int_{-L}^L fv_h\,dx,\;\;\; i=1,\ldots,N.	
\end{align}
Clearly, since $u_h\in V_h$, we have 
\begin{align*}
	u_h(x) = \sum_{j=1}^N u_j\phi_j(x),
\end{align*} 
where the coefficients $u_j$ are, a priori, unknown. In this way, \eqref{WFD} is reduced to solve the linear system $\mathcal A_h u=F$, where the stiffness matrix $\mathcal A_h\in \RR^{N\times N}$ has components
\begin{align}\label{stiffness_nc}
	a_{i,j}=\frac{\ccs}{2} \int_{\RR}\int_{\RR}\frac{(\phi_i(x)-\phi_i(y))(\phi_j(x)-\phi_j(y))}{|x-y|^{1+2s}}\,dxdy,	
\end{align}
while the vector $F\in\RR^N$ is given by $F=(F_1,\ldots,F_N)$ with
\begin{align*}
	F_i = \langle f,\phi_i\rangle = \int_{-L}^L f\phi_i\,dx,\;\;\; i=1,\ldots,N.
\end{align*}

Moreover, the basis $\big\{\phi_i\big\}_{i=1}^N$ that we will employ is the classical one in which each $\phi_i$ is the tent function with $supp(\phi_i)=(x_{i-1},x_{i+1})$ and verifying $\phi_i(x_j)=\delta_{i,j}$. In particular, for $x\in\{x_{i-1},x_i,x_{i+1}\}$ the $i^{th}$ function of the basis is explicitly defined as (see Fig. \ref{basis}) 
\begin{align}\label{basis_fun}
	\phi_i(x)= 1-\frac{|x-x_i|}{h}.
\end{align} 

\begin{figure}[h]
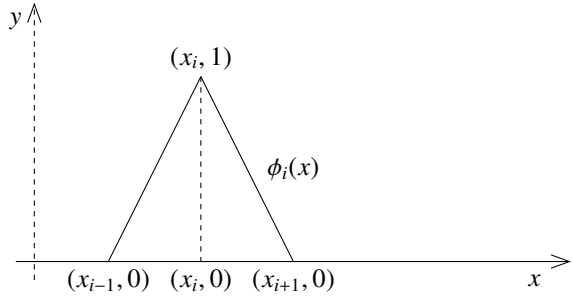

\figinit{0.7pt}
% Axes
\figpt 1:(-100,0) \figpt 2:(200,0)
\figpt 11:(-90,-10) \figpt 12:(-90,140)
%%%%
\figpt 3:(-50,0) \figpt 4:(0,100) 
\figpt 5:(50,0) \figpt 6:(0,0)
%
% Points for writing
\figpt 7:(-50,-10) \figpt 8:(0,110) 
\figpt 9:(50,-10) \figpt 10:(0,-10)

\figpt 13:(-100,130) \figpt 14:(180,-10)
\figpt 15:(50,50)

% 2. Creation of the graphical file
\figdrawbegin{}
\figdrawarrow[1,2]
\figdrawline[3,4]
\figdrawline[4,5]
\figset(dash=4)
\figdrawline[4,6]
\figdrawarrow[11,12]

\figdrawend

% 3. Writing text on the figure
\figvisu{\figBoxA}{}{
\figwritec [7]{$(x_{i-1},0)$}
\figwritec [8]{$(x_i,1)$}
\figwritec [9]{$(x_{i+1},0)$}
\figwritec [10]{$(x_i,0)$}
\figwritec [13]{$y$}
\figwritec [14]{$x$}
\figwritec [15]{$\phi_i(x)$}
}
\centerline{\box\figBoxA}
\caption{Basis function $\phi_i(x)$ on its support $(x_{i-1},x_{i+1})$.}\label{basis}
\end{figure}

%\begin{figure}[h]
%	\centering
%	\includegraphics[scale=1]{figure8.eps}	
%	\caption{Basis function $\phi_i(x)$ on its support $(x_{i-1},x_{i+1})$.}\label{basis}
%\end{figure}

\noindent Let us now describe our algorithm. Before that, we shall make the following preliminary comments.
\begin{remark}\label{rem_prel}
The following fact are worth noticing.
\begin{enumerate}
	\item It is evident from the definition \eqref{stiffness_nc} that $\mathcal A_h$ is symmetric. Therefore, in our algorithm we will only need to compute the values $a_{i,j}$ with $j\geq i$.
	
	\item Due to the non-local nature of the problem, the matrix $\mathcal A_h$ is full. However, while computing its components, we will encounter many simplifications, due to the fact that $supp(\phi_i)\cap supp(\phi_j) =\emptyset$ for $j\geq i+2$.
	
	\item While computing the values $a_{i,j}$, we will only work on the mesh $\mathfrak{M}$, not considering the points of the set $\partial\mathfrak{M}$. In this way, we will ensure that the basis functions $\phi_i$ satisfy the zero Dirichlet boundary conditions. In other words, in our FE approximation we are considering only the functions from $\phi_1$ to $\phi_N$. Instead, if we considered the points $x_0$ and $x_{N+1}$, then we would need to introduce in our discretization also the basis functions $\phi_0$ and $\phi_{N+1}$, which take value one at the boundary, and this would not be consistent with the continuous problem. Fig. \ref{basis_int} provides a graphical explanation of this last discussion.      	
\end{enumerate}
\end{remark}

\begin{figure}[h]
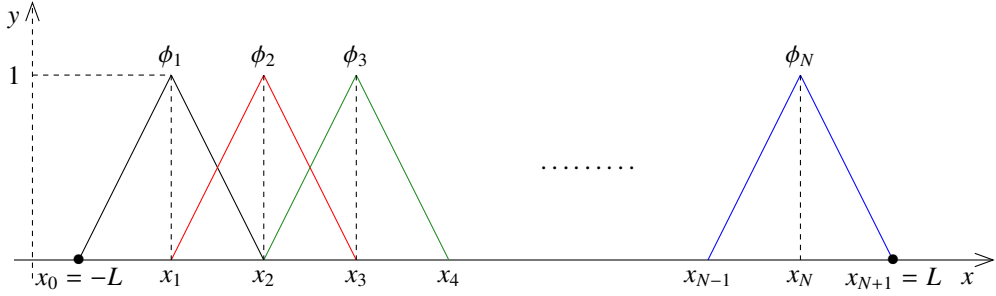

\figinit{0.7pt}
% Axes
\figpt 1:(-110,0) \figpt 2:(420,0)
\figpt 11:(-100,-10) \figpt 12:(-100,140)
%%%%
\figpt 3:(-75,0) \figpt 4:(-25,100) 
\figpt 5:(25,0) \figpt 6:(-25,0)

\figpt 31:(-25,0) \figpt 41:(25,100) 
\figpt 51:(75,0) \figpt 61:(25,0)

\figpt 32:(25,0) \figpt 42:(75,100) 
\figpt 52:(125,0) \figpt 62:(75,0)

\figpt 33:(265,0) \figpt 43:(315,100) 
\figpt 53:(365,0) \figpt 63:(315,0)

\figpt 200:(-100,100)
%
% Points for writing
\figpt 7:(-75,-10) \figpt 8:(-25,110) 
\figpt 9:(25,-10) \figpt 10:(-25,-10)

\figpt 71:(-25,-10) \figpt 81:(25,110) 
\figpt 91:(75,-10) \figpt 101:(25,-10)

\figpt 72:(25,-10) \figpt 82:(75,110) 
\figpt 92:(125,-10) \figpt 102:(75,-10)

\figpt 73:(265,-10) \figpt 83:(315,110) 
\figpt 93:(365,-10) \figpt 103:(315,-10)

\figpt 13:(-110,130) \figpt 14:(405,-10)
\figpt 15:(50,50) \figpt 16:(250,50)

\figpt 201:(-110,100) \figpt 202:(200,50)

% 2. Creation of the graphical file
\figdrawbegin{}
\figdrawarrow[1,2]
\figdrawline[3,4]
\figdrawline[4,5]
\figset (color=\Redrgb)
\figdrawline[31,41]
\figdrawline[41,51]
\figset (color=default)
\figset (color=\ForestGreenrgb)
\figdrawline[32,42]
\figdrawline[42,52]
\figset (color=default)
\figset (color=\Bluergb)
\figdrawline[33,43]
\figdrawline[43,53]
\figset (color=default)
\figset(dash=4)
\figdrawline[4,6]
\figdrawline[200,4]
\figdrawline[41,61]
\figdrawline[42,62]
\figdrawline[43,63]
\figdrawarrow[11,12]

\figdrawend

% 3. Writing text on the figure
\figvisu{\figBoxA}{}{
\figwritec [7]{$x_0=-L$}
\figwritec [8]{$\phi_1$}
\figwritec [9]{$x_2$}
\figwritec [10]{$x_1$}
\figwritec [81]{$\phi_2$}
\figwritec [91]{$x_3$}
\figwritec [82]{$\phi_3$}
\figwritec [92]{$x_4$}
\figwritec [73]{$x_{N-1}$}
\figwritec [83]{$\phi_N$}
\figwritec [93]{$x_{N+1}=L$}
\figwritec [103]{$x_N$}
\figwritec [13]{$y$}
\figwritec [14]{$x$}
\figwritec [201]{$1$}
\figwritec [202]{$\ldots\ldots\ldots$}
\figwritec[3,53]{$\bullet$}
%\figwritec [7,93]
%\figwritec [15]{$\phi_i(x)$}
%\figwritec [16]{$\color{red}\phi_j(x)$}
}
\centerline{\box\figBoxA}
\caption{Basis functions $\phi_i(x)$ on the whole interval $(-L,L)$.}\label{basis_int}
\end{figure}

%\begin{figure}[h]
%	\centering
%	\includegraphics[scale=1]{figure9.eps}
%	\caption{Basis functions $\phi_i(x)$ on the whole interval $(-L,L)$.}\label{basis_int}
%\end{figure}

We now start building the stiffness matrix $\mathcal A_h$. This will be done it in three steps, since the values of the matrix can be computed differentiating among three well defined regions: the upper triangle, corresponding to $j\geq i+2$, the upper diagonal corresponding to $j=i+1$ and the diagonal, corresponding to $j=i$ (see Fig. \ref{matrix_fig}). In fact, as it will be clear during our computations, in each of these regions the intersections among the support of the basis functions are different, thus generating different values of the bilinear form. In what follows, we will briefly present which will be the contributions to the matrix in each of these three steps, including the complete computations as an appendix at the end of the paper. 

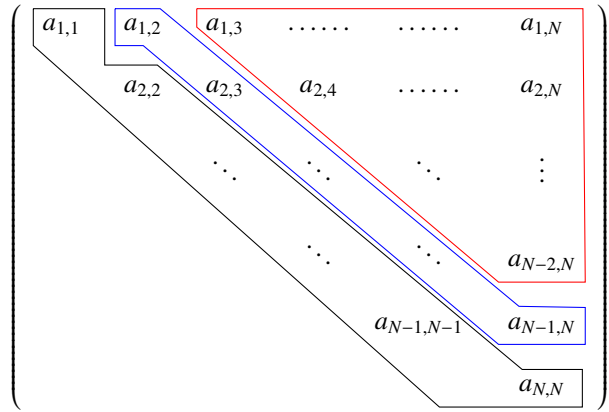
\begin{figure}[!h]

\centering
\begin{tikzpicture}
 \matrix[matrix of math nodes,left delimiter = (,right delimiter = ),row sep=10pt,column sep = 10pt] (m)
 {
 a_{1,1}  & a_{1,2} & a_{1,3} & \ldots\ldots & \ldots\ldots & a_{1,N}\\
   &a_{2,2} & a_{2,3} & a_{2,4} & \ldots\ldots &a_{2,N}\\
   & & \ddots & \ddots & \ddots &\vdots\\
   & & & \ddots & \ddots &a_{N-2,N}\\	   
   & & & & \hspace{-0.25cm}a_{N-1,N-1} &a_{N-1,N}\\
   & & & & & a_{N,N}\\
 };
\draw[color=red] (m-1-3.north west) -- (m-1-6.north east) -- +(0.14,0) -- (m-4-6.south east) -- (m-4-6.south west) -- (m-1-3.west) -- (m-1-3.north west);
\draw[color=blue] (m-1-2.north west) -- +(0.6,0) -- +(5.37,-3.96) -- (m-5-6.north east) -- (m-5-6.south east) -- (m-5-6.south west) -- (m-1-2.south) -- (m-1-2.south west) -- (m-1-2.north west); 
\draw[color=black] (m-1-1.north west) -- (m-1-1.north east) -- +(0.2,0) -- +(0.2,-0.75) -- +(0.9,-0.75) -- +(5.75,-4.8) -- +(6.55,-4.8) -- +(6.55,-5.3) -- +(4.65,-5.3) -- (m-1-1.south west) -- (m-1-1.north west); 
\end{tikzpicture}
\caption{Structure of the stiffness matrix $\mathcal{A}_h$.}\label{matrix_fig}
\end{figure}

%\begin{figure}[!h]
%
%\centering
%\includegraphics[scale=1]{figure1.eps}
%\caption{Structure of the stiffness matrix $\mathcal{A}_h$.}\label{matrix_fig}
%\end{figure}

\subsubsection*{Step 1: $j\geq i+2$}
As we mentioned in Remark \ref{rem_prel}, in this case we have $supp(\phi_i)\cap supp(\phi_j) =\emptyset$ (see also Fig. \ref{basis_upp_tri}). Hence, \eqref{stiffness_nc} is reduced to computing only the integral
\begin{align}\label{elem_noint}
	a_{i,j}=-2 \int_{x_{j-1}}^{x_{j+1}}\int_{x_{i-1}}^{x_{i+1}}\frac{\phi_i(x)\phi_j(y)}{|x-y|^{1+2s}}\,dxdy.
\end{align}

\begin{figure}[h]
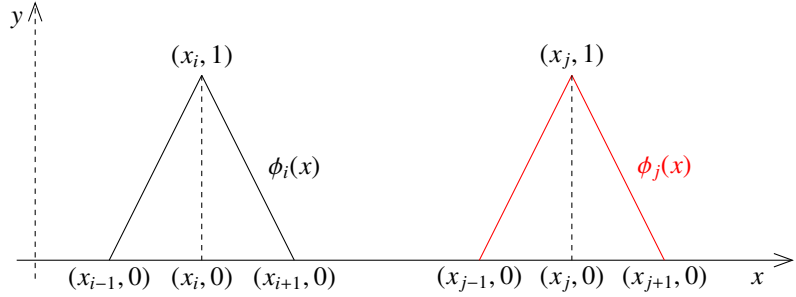

\figinit{0.7pt}
% Axes
\figpt 1:(-100,0) \figpt 2:(320,0)
\figpt 11:(-90,-10) \figpt 12:(-90,140)
%%%%
\figpt 3:(-50,0) \figpt 4:(0,100) 
\figpt 5:(50,0) \figpt 6:(0,0)

\figpt 31:(150,0) \figpt 41:(200,100) 
\figpt 51:(250,0) \figpt 61:(200,0)
%
% Points for writing
\figpt 7:(-50,-10) \figpt 8:(0,110) 
\figpt 9:(50,-10) \figpt 10:(0,-10)

\figpt 71:(150,-10) \figpt 81:(200,110) 
\figpt 91:(250,-10) \figpt 101:(200,-10)

\figpt 13:(-100,130) \figpt 14:(300,-10)
\figpt 15:(50,50) \figpt 16:(250,50)

% 2. Creation of the graphical file
\figdrawbegin{}
\figdrawarrow[1,2]
\figdrawline[3,4]
\figdrawline[4,5]
\figset (color=\Redrgb)
\figdrawline[31,41]
\figdrawline[41,51]
\figset (color=default)
\figset(dash=4)
\figdrawline[4,6]
\figdrawline[41,61]
\figdrawarrow[11,12]

\figdrawend

% 3. Writing text on the figure
\figvisu{\figBoxA}{}{
\figwritec [7]{$(x_{i-1},0)$}
\figwritec [8]{$(x_i,1)$}
\figwritec [9]{$(x_{i+1},0)$}
\figwritec [10]{$(x_i,0)$}
\figwritec [71]{$(x_{j-1},0)$}
\figwritec [81]{$(x_j,1)$}
\figwritec [91]{$(x_{j+1},0)$}
\figwritec [101]{$(x_j,0)$}
\figwritec [13]{$y$}
\figwritec [14]{$x$}
\figwritec [15]{$\phi_i(x)$}
\figwritec [16]{$\color{red}\phi_j(x)$}
}
\centerline{\box\figBoxA}
\caption{Basis functions $\phi_i(x)$ and $\phi_j(x)$ for $j\geq i+1$. In this case, the supports are disjoint.}\label{basis_upp_tri}
\end{figure}

%\begin{figure}[h]
%	\centering
%	\includegraphics[scale=1]{figure10.eps}
%	\caption{Basis functions $\phi_i(x)$ and $\phi_j(x)$ for $j\geq i+1$. In this case, the supports are disjoint.}\label{basis_upp_tri}
%\end{figure}

Taking into account the definition of the basis function \eqref{basis_fun}, from \eqref{elem_noint} we obtain
\begin{align*}
	a_{i,j}=-2 \int_{x_{j-1}}^{x_{j+1}}\int_{x_{i-1}}^{x_{i+1}}\frac{\left(1-\frac{|x-x_i|}{h}\right)\left(1-\frac{|y-x_j|}{h}\right)}{|x-y|^{1+2s}}\,dxdy.
\end{align*}

Finally, this last integral can be computed explicitly employing the following  change of variables:
\begin{align}\label{cv}
	\frac{x-x_i}{h}=\hat{x},\;\;\; \frac{y-x_i}{h}=\hat{y}.
\end{align}

In this way, for the elements $a_{i,j}$, $j\geq i+2$, we get the following values: 
\begin{align*}
	a_{i,j} = \begin{cases}
			\displaystyle -h^{1-2s}\,\frac{4(k+1)^{3-2s} + 4(k-1)^{3-2s}-6k^{3-2s}-(k+2)^{3-2s}-(k-2)^{3-2s}}{2s(1-2s)(1-s)(3-2s)}, & k=j-i,\;\; \displaystyle s\neq\frac{1}{2} 
			\\
			\\
			-4(j-i+1)^2\log(j-i+1)-4(j-i-1)^2\log(j-i-1) &  \displaystyle s=\frac{1}{2},\;\; j> i+2
			\\
			\;\;\;+6(j-i)^2\log(j-i)+(j-i+2)^2\log(j-i+2)+(j-i-2)^2\log(j-i-2), 
			\\
			\\
			56\ln(2)-36\ln(3), & \displaystyle s=\frac{1}{2},\;\; j= i+2.
		\end{cases}	
\end{align*}

\subsubsection*{Step 2: $j= i+1$}
This is the most cumbersome case, since it is the one with the most interactions between the basis functions (see Fig. \ref{basis_upp_dia}). 

\begin{figure}[h]
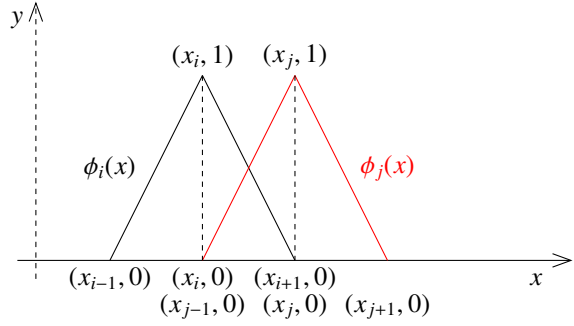

\figinit{0.7pt}
% Axes
\figpt 1:(-100,0) \figpt 2:(200,0)
\figpt 11:(-90,-10) \figpt 12:(-90,140)
%%%%
\figpt 3:(-50,0) \figpt 4:(0,100) 
\figpt 5:(50,0) \figpt 6:(0,0)

\figpt 31:(0,0) \figpt 41:(50,100) 
\figpt 51:(100,0) \figpt 61:(50,0)
%
% Points for writing
\figpt 7:(-50,-10) \figpt 8:(0,110) 
\figpt 9:(50,-10) \figpt 10:(0,-10)

\figpt 71:(0,-25) \figpt 81:(50,110) 
\figpt 91:(100,-25) \figpt 101:(50,-25)

\figpt 13:(-100,130) \figpt 14:(180,-10)
\figpt 15:(-50,50) \figpt 16:(100,50)

% 2. Creation of the graphical file
\figdrawbegin{}
\figdrawarrow[1,2]
\figdrawline[3,4]
\figdrawline[4,5]
\figset (color=\Redrgb)
\figdrawline[31,41]
\figdrawline[41,51]
\figset (color=default)
\figset(dash=4)
\figdrawline[4,6]
\figdrawline[41,61]
\figdrawarrow[11,12]

\figdrawend

% 3. Writing text on the figure
\figvisu{\figBoxA}{}{
\figwritec [7]{$(x_{i-1},0)$}
\figwritec [8]{$(x_i,1)$}
\figwritec [9]{$(x_{i+1},0)$}
\figwritec [10]{$(x_i,0)$}
\figwritec [71]{$(x_{j-1},0)$}
\figwritec [81]{$(x_j,1)$}
\figwritec [91]{$(x_{j+1},0)$}
\figwritec [101]{$(x_j,0)$}
\figwritec [13]{$y$}
\figwritec [14]{$x$}
\figwritec [15]{$\phi_i(x)$}
\figwritec [16]{$\color{red}\phi_j(x)$}
}
\centerline{\box\figBoxA}
\caption{Basis functions $\phi_i(x)$ and $\phi_{i+1}(x)$. In this case, the intersection of the supports is the interval $[x_i,x_{i+1}]$.}\label{basis_upp_dia}
\end{figure}

%\begin{figure}[h]
%	\centering
%	\includegraphics[scale=1]{figure11.eps}
%	\caption{Basis functions $\phi_i(x)$ and $\phi_{i+1}(x)$. In this case, the intersection of the supports is the interval $[x_i,x_{i+1}]$.}\label{basis_upp_dia}
%\end{figure}

According to \eqref{stiffness_nc}, and using the symmetry of the integral with respect to the bisector $y=x$, we have 
	\begin{align*}
	a_{i,i+1}= & \int_{\RR}\int_{\RR}\frac{(\phi_i(x)-\phi_i(y))(\phi_{i+1}(x)-\phi_{i+1}(y))}{|x-y|^{1+2s}}\,dxdy
	\\
	= & \int_{x_{i+1}}^{+\infty}\int_{x_{i+1}}^{+\infty} \ldots\,dxdy + 2\int_{x_{i+1}}^{+\infty}\int_{x_i}^{x_{i+1}} \ldots\,dxdy + 2\int_{x_{i+1}}^{+\infty}\int_{-\infty}^{x_i} \ldots\,dxdy 
	\\
	& + \int_{x_i}^{x_{i+1}}\int_{x_i}^{x_{i+1}} \ldots\,dxdy + 2\int_{x_i}^{x_{i+1}}\int_{-\infty}^{x_i} \ldots\,dxdy + \int_{-\infty}^{x_i}\int_{-\infty}^{x_i} \ldots\,dxdy 
	\\
	:= & Q_1 + Q_2 + Q_3 + Q_4 + Q_5 + Q_6.
\end{align*}

These contributions will be calculated separately, employing changes of variables analogous to \eqref{cv}. After several computations, we obtain
\begin{align*}
	a_{i,i+1} = \begin{cases}
					\displaystyle h^{1-2s}\frac{3^{3-2s}-2^{5-2s}+7}{2s(1-2s)(1-s)(3-2s)}, & \displaystyle s\neq \frac{1}{2}
					\\
					9\ln 3-16\ln 2, & \displaystyle s=\frac{1}{2}.
				\end{cases}	
\end{align*}

\subsubsection*{Step 3: $j= i$}
As a last step, we fill the diagonal of the matrix $\mathcal A_h$, which collects the values corresponding to the case $\phi_i(x)=\phi_j(x)$ (see Fig. \ref{basis_dia}). We have
	\begin{align*}
	a_{i,i}= & \int_{\RR}\int_{\RR}\frac{(\phi_i(x)-\phi_i(y))^2}{|x-y|^{1+2s}}\,dxdy
	\\
	= & \int_{x_{i+1}}^{+\infty}\int_{x_{i+1}}^{+\infty} \ldots\,dxdy + 2\int_{x_{i+1}}^{+\infty}\int_{x_{i-1}}^{x_{i+1}} \ldots\,dxdy + \int_{x_{i+1}}^{+\infty}\int_{-\infty}^{x_{i-1}} \ldots\,dxdy 
	\\
	& + \int_{x_{i-1}}^{x_{i+1}}\int_{x_{i-1}}^{x_{i+1}} \ldots\,dxdy + 2\int_{-\infty}^{x_{i-1}}\int_{x_{i-1}}^{x_{i+1}} \ldots\,dxdy + + \int_{-\infty}^{x_{i-1}}\int_{x{i+1}}^{+\infty} \ldots\,dxdy 
	\\
	& +  \int_{-\infty}^{x_{i-1}}\int_{-\infty}^{x_{i-1}} \ldots\,dxdy := R_1 + R_2 + R_3 + R_4 + R_5 + R_6 + R_7.
\end{align*}

\begin{figure}[h]
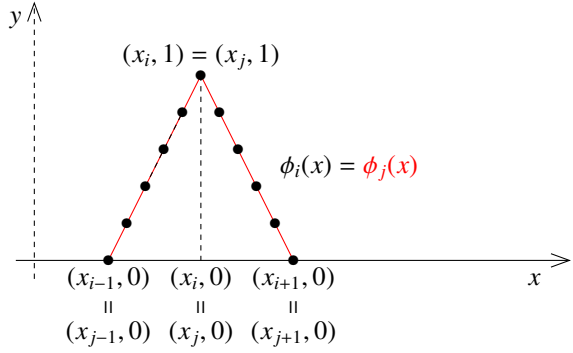

\figinit{0.7pt}
% Axes
\figpt 1:(-100,0) \figpt 2:(200,0)
\figpt 11:(-90,-10) \figpt 12:(-90,140)
%%%%
\figpt 3:(-50,0) \figpt 4:(0,100) 
\figpt 5:(50,0) \figpt 6:(0,0)

\figpt 31:(-40,20) \figpt 41:(-30,40) 
\figpt 51:(-20,60) \figpt 61:(-10,80)

\figpt 32:(40,20) \figpt 42:(30,40) 
\figpt 52:(20,60) \figpt 62:(10,80)
%
% Points for writing
\figpt 7:(-50,-10) \figpt 8:(0,110) 
\figpt 9:(50,-10) \figpt 10:(0,-10)

\figpt 71:(-50,-40) \figpt 81:(50,110) 
\figpt 91:(50,-40) \figpt 101:(0,-40)

\figpt 72:(-50,-25) \figpt 82:(50,110) 
\figpt 92:(50,-25) \figpt 102:(0,-25)

\figpt 13:(-100,130) \figpt 14:(180,-10)
\figpt 15:(-50,50) \figpt 16:(80,50)

% 2. Creation of the graphical file
\figdrawbegin{}
\figdrawarrow[1,2]
\figset (color=\Redrgb)
\figdrawline[3,4]
\figdrawline[4,5]
\figset (width=default,color=default)
%\figdrawline[3,4]
%\figdrawline[4,5]
%\figset (color=default)
\figset(dash=4)
\figdrawline[4,6]
\figdrawline[41,61]
\figdrawarrow[11,12]

\figdrawend

% 3. Writing text on the figure
\figvisu{\figBoxA}{}{
\figwritec [7]{$(x_{i-1},0)$}
\figwritec [8]{$(x_i,1)=(x_j,1)$}
\figwritec [9]{$(x_{i+1},0)$}
\figwritec [10]{$(x_i,0)$}
\figwritec [71]{$(x_{j-1},0)$}
\figwritec [91]{$(x_{j+1},0)$}
\figwritec [101]{$(x_j,0)$}
\figwritec [72,92,102]{$\shortparallel$}
\figwritec [13]{$y$}
\figwritec [14]{$x$}
\figwritec [16]{$\phi_i(x)=\color{red}\phi_j(x)$}
\figset write(mark=$\figBullet$)
\figwritep[3,31,41,51,61,4,32,42,52,62,5]
}
\centerline{\box\figBoxA}
\caption{Basis functions $\phi_i(x)$ and $\phi_j(x)$. In this case, the two functions coincide.}\label{basis_dia}
\end{figure}

%\begin{figure}[h]
%	\centering
%	\includegraphics[scale=1]{figure12.eps}	
%	\caption{Basis functions $\phi_i(x)$ and $\phi_j(x)$. In this case, the two functions coincide.}\label{basis_dia}
%\end{figure}

Once again, the terms $R_i$, $i=1,\ldots,7$ will be computed separately, obtaining
\begin{align*}
	a_{i,i} = \begin{cases}
			\displaystyle h^{1-2s}\,\frac{2^{3-2s}-4}{s(1-2s)(1-s)(3-2s)}, & \displaystyle s\neq\frac{1}{2}
			\\
			\\
			8\ln 2, & \displaystyle s=\frac{1}{2}.			
			\end{cases}	
\end{align*}

\subsubsection*{Conclusion}
Summarizing, we have the following values for the elements of the stiffness matrix $\mathcal{A}_h$: for $s\neq 1/2$
\begin{align*}
	a_{i,j} =  -h^{1-2s} \begin{cases}
			\displaystyle \,\frac{4(k+1)^{3-2s} + 4(k-1)^{3-2s}-6k^{3-2s}-(k+2)^{3-2s}-(k-2)^{3-2s}}{2s(1-2s)(1-s)(3-2s)}, &  \displaystyle k=j-i,\,k\geq 2
			\\
			\\
			\displaystyle\frac{3^{3-2s}-2^{5-2s}+7}{2s(1-2s)(1-s)(3-2s)}, & \displaystyle j=i+1
			\\
			\\
			\displaystyle\frac{2^{3-2s}-4}{s(1-2s)(1-s)(3-2s)}, & \displaystyle j=i.
		\end{cases}	
\end{align*}

For $s=1/2$, instead, we have

\begin{align*}
	a_{i,j} = \begin{cases}
			-4(j-i+1)^2\log(j-i+1)-4(j-i-1)^2\log(j-i-1) 
			\\
			\;\;\;+6(j-i)^2\log(j-i)+(j-i+2)^2\log(j-i+2)+(j-i-2)^2\log(j-i-2), &  \displaystyle j> i+2
			\\
			\\
			56\ln(2)-36\ln(3), & \displaystyle j= i+2.
			\\
			\\
			\displaystyle 9\ln 3-16\ln 2, & \displaystyle j=i+1
			\\
			\\
			\displaystyle 8\ln 2, & \displaystyle j=i.
		\end{cases}	
\end{align*}

\begin{remark}
We point out the following facts:
\begin{enumerate}
	\item The matrix $\mathcal A_h$ has the structure of a $N$-diagonal matrix, meaning that value of its elements remain constant along its diagonals. This is in analogy with the tridiagonal matrix approximating the classical Laplace operator. Notice, however, that in our case we obtain a full matrix. This is consistent with the non-local nature of the operator that we are discretizing.
	
	\item The value of each element $a_{i,j}$ is given explicitly, and it only depends on $i$, $j$, $s$ and $h$. In other words, when approximating the left hand side of \eqref{WFD}, no numerical integration is needed. This significantly improve the efficiency of our method.
	
	\item For $s=1/2$, the elements $a_{i,j}$ do not depend on the value of $h$ which, in turn, is a function of $N$. This implies that, in this particular case, no matter how many points we consider in our mesh, the matrix $\mathcal A_h$ will always have the same entries. 
\end{enumerate}
\end{remark}

In Section \ref{res_numerical} below, we will present the numerical simulations associated to  the elliptic problem \eqref{PE}, discussing in detail the convergence properties of the algorithm. 

\subsection{Control problem for the fractional heat equation}%\label{fe_parabolic}

Let us now give a brief description of the so called penalised Hilbert Uniqueness Method (HUM in what follows) that we shall employ for computing the controls for equation \eqref{heat_frac}. Here, we will mostly refer to the work of Boyer \cite{boyer2013penalised}.

We start recalling the classical HUM, as it has been introduced in the pioneering works \cite{glowinski1995exact,glowinski2008exact}. Let $(E,\langle\cdot,\cdot\rangle)$ be a Hilbert space whose norm is denoted by $\norm{\cdot}{}$. Let $(A,\mathcal D(A))$ be an unbounded operator in $E$ such that $-A$ generates an analytic semi-group in $E$ that we indicate by $t\mapsto e^{-tA}$. Also, we denote $(A^*,\mathcal D(A^*))$ the adjoint of this operator and by $t\mapsto e^{-tA^*}$ the corresponding semi-group. 

Let $(U,[\cdot,\cdot])$ be another Hilbert space whose norm is denoted by $\inter{\cdot}$. Let  $B$ be an unbounded operator from $U$ to $\mathcal D(A^*)'$ and let $B^*:\mathcal D(A^*)\to U$ be its adjoint. Let $T>0$ be given and, for any $y_0\in E$ and $v\in L^2(0,T;U)$, let us consider the non-homogeneous evolution problem  
\begin{align}\label{abstract_pb}
	\begin{cases}
		y_t+Ay=Bv, & t\in[0,T]
		\\
		y(0)=y_0.
	\end{cases}
\end{align}

The well posedness of \eqref{abstract_pb} is guaranteed by \cite[Theorem 2.37]{coron2007control}. From now on, we will refer to the solution as $t\mapsto y_{v,y_0}(t)$.

Notice that we have $y_{0,y_0}(t)= e^{-tA}y_0$. Moreover, for simplicity, the solution at time $T$, which is of particular interest in what follows, will be denoted by 
\begin{align*}
	y_{v,y_0}(T)=\mathcal{L}_T(v|y_0).
\end{align*} 
The linear operator $\mathcal{L}_T(\cdot|\cdot)$ is then continuous from $L^2(0,T;U)\times E$ into $E$.

In the framework of both controllability notions that we introduced in Section \ref{theor_sec}, if one control exists it is certainly not unique. For instance, the classical HUM approach consists in finding the control with the minimal $L^2(0,T;U)$-norm. Nevertheless, even though in this way we can identify a precise control, its computation can be a difficult task due to the nature of the constraints involved (see, e.g., \cite{fernandez2000cost,micu2004introduction}). 

Because of what we just described, it is convenient to deal with a penalized version of the above mentioned optimization problems.

In the penalized version of the HUM, we look for a control that is solution to a different optimization problem. In particular, we for any $\varepsilon>0$, we shall find

\begin{align}\label{min_ve}
	v_\varepsilon=\min_{v\in L^2(0,T;U)} F_\varepsilon (v)
\end{align}
where
\begin{align*}
	F_\varepsilon(v):=\frac{1}{2}\int_0^T\inter{v(t)}^2\,dt+\frac{1}{2\varepsilon}\norm{\mathcal L(v|y_0)}{}^2, \quad \forall v\in L^2(0,T;U).
\end{align*}

Notice that, for any $\varepsilon > 0$, the functional $F_\varepsilon$ has a unique minimiser on $L^2(0,T;U)$  that we denote by $v_\varepsilon$. This is due to the fact that $F_\varepsilon$ is strictly convex, continuous and coercive. 

However, the space $L^2(0,T;U)$ in which one has to minimise $F_\varepsilon$ is a quite big one and it depends on the time $T$. This makes the minimization problem computationally expensive. On the other hand, this issue can be bypassed by considering a different optimization problem, defined on the smaller space $E$. Namely we have to find  

\begin{align}\label{min_je}
	q^T_\varepsilon=\min_{q\in E} J_\varepsilon (q^T)
\end{align}
where
\begin{align}\label{penalized_fun}
	J_\varepsilon(q^T):=\frac{1}{2}\int_0^T\inter{B^*e^{-(T-t)A^*}q^T}^2\,dt+\frac{\varepsilon}{2}\norm{q^T}{}^2 + \left\langle y_0,e^{-TA^*}q^T\right\rangle, \quad \forall q^T\in E.
\end{align}

Notice that \eqref{min_ve} and \eqref{min_je} are equivalent since, according to \cite[Proposition 1.5]{boyer2013penalised}, for any $\varepsilon > 0$, the minimisers $v_\varepsilon$ and $q_\varepsilon^T$ of the functionals $F_\varepsilon$ and $J_\varepsilon$, respectively, are related through the formula
\begin{align*}
	v_\varepsilon = B^*e^{-(T-t)A^*}q_\varepsilon^T, \textrm{ for a.e. } t\in(0,T).
\end{align*} 

Notice also that we can express the approximate and null controllability properties of the system, for a given initial datum $y_0$, in terms of the behaviour of the penalised HUM approach described above. In particular we have 

\begin{theorem}[Theorem 1.7 of \cite{boyer2013penalised}]\label{theorem_hum}
Problem \eqref{abstract_pb} is approximately controllable from the initial datum $y_0$ if and only if we have
\begin{align*}
	\mathcal{L}_T(v_\varepsilon|y_0) = y_{v_\varepsilon,y_0}(T)\rightarrow 0,\;\;\;\textrm{ as }\;\varepsilon\to 0.
\end{align*}
Problem \eqref{abstract_pb} is null-controllable from the initial datum $y_0$ if and only if we have
\begin{align*}
	M_{y_0}^2:=2\sup_{\varepsilon>0}\left( \inf_{L^2(0,T;U)}F_\varepsilon\right)<+\infty.
\end{align*}
In this case, we have 
\begin{align*}
	\inter{v_\varepsilon}_{L^2(0,T;U)}\leq M_{y_0},\nonumber
	\\
	\norm{\mathcal L_T(v_\varepsilon|y_0)}{}\leq M_{y_0}\sqrt{\varepsilon}.
\end{align*} 
\end{theorem}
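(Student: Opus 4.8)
The plan is to treat the statement as a purely variational fact about the functional $F_\varepsilon$ and its minimiser $v_\varepsilon$ (whose existence and uniqueness we may take for granted, since $F_\varepsilon$ was already noted to be strictly convex, continuous and coercive); the dual functional $J_\varepsilon$ and the representation $v_\varepsilon=B^\ast e^{-(T-t)A^\ast}q_\varepsilon^T$ are convenient for actually computing controls but are not needed here. First I would pass to the notation $\Lambda v:=\mathcal{L}_T(v|0)$, so that by linearity of \eqref{abstract_pb} one has $\mathcal{L}_T(v|y_0)=\Lambda v+e^{-TA}y_0$ with $\Lambda:L^2(0,T;U)\to E$ bounded and linear (this boundedness is precisely the well-posedness recalled after \eqref{abstract_pb}). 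In these terms, ``approximately controllable from $y_0$'' means $\inf_{v}\norm{\mathcal{L}_T(v|y_0)}{}=0$, and ``null-controllable from $y_0$'' means there exists $v$ with $\mathcal{L}_T(v|y_0)=0$. The only algebraic ingredient is the identity, immediate from the definition of $F_\varepsilon$,
\begin{align*}
	\inter{v_\varepsilon}_{L^2(0,T;U)}^2+\tfrac1{\varepsilon}\norm{\mathcal{L}_T(v_\varepsilon|y_0)}{}^2=2F_\varepsilon(v_\varepsilon)=2\inf_{L^2(0,T;U)}F_\varepsilon .
\end{align*}

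For the approximate-controllability equivalence, the direction ``$\mathcal{L}_T(v_\varepsilon|y_0)\to0$ $\Rightarrow$ approximately controllable'' is immediate, since $v_\varepsilon$ itself steers $y_0$ within any prescribed $\delta$ of the origin once $\varepsilon$ is small. For the converse I would fix $\eta>0$, pick a competitor $\bar v$ with $\norm{\mathcal{L}_T(\bar v|y_0)}{}\le\eta$, and use minimality: $\tfrac1{2\varepsilon}\norm{\mathcal{L}_T(v_\varepsilon|y_0)}{}^2\le F_\varepsilon(v_\varepsilon)\le F_\varepsilon(\bar v)\le\tfrac12\inter{\bar v}_{L^2(0,T;U)}^2+\tfrac{\eta^2}{2\varepsilon}$, hence $\norm{\mathcal{L}_T(v_\varepsilon|y_0)}{}^2\le\varepsilon\inter{\bar v}_{L^2(0,T;U)}^2+\eta^2$; sending $\varepsilon\to0$ and then $\eta\to0$ gives $\mathcal{L}_T(v_\varepsilon|y_0)\to0$.

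For the null-controllability equivalence, if a genuine null control $\hat v$ exists then $F_\varepsilon(\hat v)=\tfrac12\inter{\hat v}_{L^2(0,T;U)}^2$ does not depend on $\varepsilon$, so $M_{y_0}^2\le\inter{\hat v}_{L^2(0,T;U)}^2<\infty$; substituting $2\inf F_\varepsilon\le M_{y_0}^2$ into the displayed identity delivers at once the two quantitative bounds $\inter{v_\varepsilon}_{L^2(0,T;U)}\le M_{y_0}$ and $\norm{\mathcal{L}_T(v_\varepsilon|y_0)}{}\le M_{y_0}\sqrt\varepsilon$. For the reverse implication, the same identity shows that $M_{y_0}^2<\infty$ forces $\{v_\varepsilon\}_{\varepsilon>0}$ to be bounded in $L^2(0,T;U)$ while $\norm{\mathcal{L}_T(v_\varepsilon|y_0)}{}^2\le\varepsilon M_{y_0}^2\to0$; I would then extract a subsequence $v_{\varepsilon_n}\rightharpoonup v^\ast$ weakly in $L^2(0,T;U)$, pass to the weak limit in $\mathcal{L}_T(v_{\varepsilon_n}|y_0)=\Lambda v_{\varepsilon_n}+e^{-TA}y_0$, and identify the limit with $\mathcal{L}_T(v^\ast|y_0)$, which must be $0$ because the sequence converges strongly to $0$; thus $v^\ast$ is an exact null control.

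The main obstacle — indeed essentially the only nontrivial point — is this last weak-limit passage in the reverse null-controllability implication: one must justify that the bounded linear operator $\Lambda$ maps weakly convergent sequences to weakly convergent ones (so that $\Lambda v_{\varepsilon_n}\rightharpoonup\Lambda v^\ast$ in $E$), and that the strong limit $0$ of $\mathcal{L}_T(v_{\varepsilon_n}|y_0)$ coincides with its weak limit $\mathcal{L}_T(v^\ast|y_0)$, which is where the well-posedness of \eqref{abstract_pb} and the reflexivity of $L^2(0,T;U)$ really enter. A secondary, bookkeeping-type issue is to pin down at the outset the precise meaning of ``(approximately) controllable from $y_0$'' in the abstract framework and its equivalence with the reachable-set reformulations above; once that is fixed, everything else follows from the definition of $F_\varepsilon$ and the minimality of $v_\varepsilon$, with no further use of semigroup smoothing, observability inequalities, or the dual problem.
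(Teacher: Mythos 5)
The paper does not prove this theorem at all: it is quoted verbatim as Theorem 1.7 of the cited work of Boyer on the penalised HUM, so there is no internal proof to compare against. Your argument is correct and is essentially the standard proof from that reference: the identity $\inter{v_\varepsilon}^2_{L^2(0,T;U)}+\varepsilon^{-1}\|\mathcal L_T(v_\varepsilon|y_0)\|^2=2\inf F_\varepsilon$ together with comparison against competitors ($\bar v$ steering within $\eta$ of zero, respectively an exact null control $\hat v$) yields both equivalences and the two quantitative bounds, while the converse null-controllability direction follows from the uniform bound by weak compactness in $L^2(0,T;U)$ and the weak--weak continuity of the bounded linear map $v\mapsto\mathcal L_T(v|0)$, exactly as you describe. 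The only point to fix at the outset is the one you flagged yourself: ``approximately controllable from $y_0$'' must be read as approximate null-controllability of that particular datum (i.e.\ $0$ lies in the closure of the set of reachable states from $y_0$), which is precisely the reading your proof uses.
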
 

Since the fractional Laplacian $\fl{s}{}$ has the properties required for the operator $A$, the penalized HUM that we just described can be applied to the control problem \eqref{heat_frac}. To this purpose, let us now present its numerical implementation. 

Having obtained a FE approximation $\mathcal A_h$ of the operator $\fl{s}{}$, we can compute the fully-discrete version of \eqref{heat_frac}. For any given mesh $\mathfrak M$ and any integer $M>0$, we set $\delta t=T/M$ and we consider an implicit Euler method, with respect to the time variable. More precisely, we consider
\begin{align}\label{frac_heat_num}
	\begin{cases}
		\displaystyle\mathcal M_h \frac{z^{n+1}-z^n}{\delta t}+\mathcal A_h z^{n+1}=\mathbf{1}_\omega v_h^{n+1}, \quad \forall n\in \left\{1,\ldots,M-1\right\}
		\\
		z^0=z_0, 
	\end{cases}
\end{align}
where $z_0\in \mathbb R^\mesh$ and $\mathcal M_h$ is the classical mass matrix with entries $m_{i,j}=\langle \phi_i,\phi_j\rangle$.

Here, $v_{h,\delta t}=(v_h^n)_{1\leq n\leq M}$ is a fully-discrete control function whose cost, that is the discrete $L_{\delta t}^2(0,T;\mathbb R^\mesh)$-norm, is defined by
\begin{align*}
\|v_{\delta t}\|_{L_{\delta t}^2(0,T;\mathbb R^\mesh)}:=\left(\sum_{i=1}^M\delta t |v^n|^2_{L^2(\RR^\mesh)}\right)^{1/2},
\end{align*}
and where $| \cdot |_{L^2(\RR^\mesh)}$ stands for the norm associated to the $L^2$-inner product on $\mathbb{R}^\mathfrak M$
\begin{align*}
(u,v)_{L^2(\RR^\mesh)}=h \sum_{i=1}^N u_i v_i.
\end{align*}

With the above notation and according to the penalized HUM strategy, we introduce, for some penalization parameter $\varepsilon>0$, the following primal fully-discrete functional 
\begin{align*}
F_{\varepsilon,h,\delta t}(v_{\delta t})=\sum_{n=1}^{M}\delta t |v^n|^2_{L^2(\RR^\mesh)}+\frac{1}{2\varepsilon}|z^M|^2_{L^2(\RR^\mesh)}, \quad \forall \, v_{\delta t}\in L_{\delta t}^2(0,T;\mathbb R^\mesh),
\end{align*}
that we wish to minimize onto the whole fully-discrete control space $L_{\delta t}^2(0,T;\mathbb R^\mesh)$ and where $z^M$ is the final value of the controlled problem \eqref{frac_heat_num}. 

We can apply Fenchel-Rockafellar theory results to obtain the corresponding dual functional, which reads as follows
\begin{align}\label{dual_fully}
	J_{\varepsilon,h,\delta t}(\varphi^T)=\frac{1}{2}\sum_{n=1}^{M}\delta t|\mathbf 1_\omega\varphi|^2_{L^2(\RR^\mesh)}+\frac{\varepsilon}{2}|\varphi^T|^2_{L^2(\RR^\mesh)}+(\varphi^1,y_0)_{L^2(\RR^\mesh)}, \quad \forall \varphi^T\in L_{h,\delta t}^2(0,1)
\end{align}
where $\varphi=(\varphi^n)_{1\leq n\leq M+1}$ is solution to the adjoint system
\begin{align}\label{frac_adj_num}
	\begin{cases}
		\displaystyle\mathcal M_h \frac{\varphi^n-\varphi^{n+1}}{\delta t}+\mathcal A_h \varphi^n=0, & \forall n\in\inter{1,M}
		\\
		\varphi^{M+1}=\varphi^T. 
	\end{cases}
\end{align}

Notice that \eqref{dual_fully} is the fully-discrete approximation of \eqref{penalized_fun}. Moreover, it can be readily verified that this functional has a unique minimizer without any additional assumption on the problem. Therefore, by minimizing  \eqref{dual_fully}, and from duality theory, we obtain a control function 
\begin{align*}
	v_{\varepsilon,h,\delta t}=\left(\mathbf{1}_\omega\varphi_{\varepsilon,h,\delta t}^n\right)_{1\leq n\leq M},
\end{align*}
where $\varphi_\epsilon$ is the solution to \eqref{frac_adj_num} evaluated in the optimal datum $\varphi_\varepsilon^T$. 

Thus, the optimal penalized control always exists and is unique. Deducing controllability properties amounts to study the behavior of this control with respect to the penalization parameter $\varepsilon$, in connection with the discretization parameters.  

It is well known that, in general, we cannot expect for a given bounded family of initial data that the fully-discrete controls are uniformly bounded when the discretization parameters $h$, $\delta t$ and the penalization term $\varepsilon$ tend to zero independently. 

Instead, we expect to obtain uniform bounds by taking the penalization parameter $\varepsilon=\phi(h)$ that tends to zero in connection with the mesh size not too fast (see \cite{boyer2013penalised}) and a time step $\delta t$ verifying some weak condition of the kind $\delta t\leq \zeta(h)$ where $\zeta$ tends to zero logaritmically when $h\to 0$ (see \cite{boyer2011uniform}).

These facts will be confirmed by the numerical simulations that we are going to present in Section \ref{control_exp} below, by observing the behavior of the norm of the control, the optimal energy $\inf F_\varepsilon$, and the norm of the solution at time $T$. In this way, as predicted by Theorem \ref{theorem_hum}, we obtain a numerical evidence of the properties of null and approximate controllability for equation \eqref{heat_frac}, in accordance with the theoretical results in Section \ref{theor_sec}. 

\section{Numerical results} \label{res_numerical}
In this Section, we present the numerical simulations corresponding to the algorithm previously described, and we provide a complete discussion of the results obtained. 
First of all, in order to test numerically the accuracy of our method, we use the following problem 
\begin{align}\label{PE_real}
	\left\{\begin{array}{ll}
		\fl{s}{u} = 1, & x\in(-L,L)
		\\
		u\equiv 0, & x\in\RR\setminus(-L,L).
	\end{array}\right.
\end{align}
In this particular case, the solution can be computed exactly and it is given in \cite{getoor1961first}. It reads as follows, 
\begin{align}\label{real_sol}
	u(x)=\frac{2^{-2s}\sqrt{\pi}}{\Gamma\left(\frac{1+2s}{2}\right)\Gamma(1+s)}\Big(L^2-x^2\Big)^s.
\end{align}

In Fig.  \ref{smas12}, we show a comparison for different values of $s$ between the exact solution \eqref{real_sol} and the computed numerical approximation. Here we consider $L=1$ and $N=50$. One can notice that when $s=0.1$ (and also for other small values of s), the computed solution is to a certain extent different from the exact solution. However, one should be careful with such result and a more precise analysis of the error should be carried. 
\begin{figure}[!h]
	\pgfplotstableread{fl_01_50_sym.txt}{\datpu}
	\pgfplotstableread{fl_04_50_sym.txt}{\ddatpu}
	\pgfplotstableread{fl_05_50_sym.txt}{\Ddatpu}
	\pgfplotstableread{fl_08_50_sym.txt}{\Dddatpu}

		\subfloat[$s=0.1$]{
		\begin{tikzpicture}[scale=0.8]
		\begin{axis}[xmin=-1.1, xmax=1.1,legend style={at={(0.77,0.25)}}]
		
			\addplot [color=blue, mark=none,  thick] table[x=0,y=1]{\datpu};	
			\addplot [color=red, mark=x, only marks, thick] table[x=2,y=3]{\datpu};		
			\addlegendentry{Numerical solution}
			\addlegendentry{Real solution}
		\end{axis}
	\end{tikzpicture}
	\label{s01a}
	}
		\hspace{1cm}\subfloat[$s=0.4$]{
		\begin{tikzpicture}[scale=0.8]
		\begin{axis}[xmin=-1.1, xmax=1.1,legend style={at={(0.77,0.25)}}]
			\addplot [color=blue, mark=none, thick] table[x=0,y=1]{\ddatpu};	
			\addplot [color=red, mark=x, only marks, thick] table[x=2,y=3]{\ddatpu};		
		\end{axis}
	\end{tikzpicture}
	}
\\
		\subfloat[$s=0.5$]{
		\begin{tikzpicture}[scale=0.8]
		\begin{axis}[xmin=-1.1, xmax=1.1,legend style={at={(0.77,0.25)}}]
			\addplot [color=blue, mark=none, thick] table[x=0,y=1]{\Ddatpu};	
			\addplot [color=red, mark=x, only marks, thick] table[x=2,y=3]{\Ddatpu};		
		\end{axis}
	\end{tikzpicture}
	}
		\hspace{1cm}\subfloat[$s=0.8$]{
		\begin{tikzpicture}[scale=0.8]
		\begin{axis}[xmin=-1.1, xmax=1.1,legend style={at={(0.77,0.25)}}]
			\addplot [color=blue, mark=none, thick] table[x=0,y=1]{\Dddatpu};	
			\addplot [color=red, mark=x, only marks, thick] table[x=2,y=3]{\Dddatpu};		
		\end{axis}
	\end{tikzpicture}
	}
	\caption{Plot for different values of $s$.}
	\label{smas12}
\end{figure}
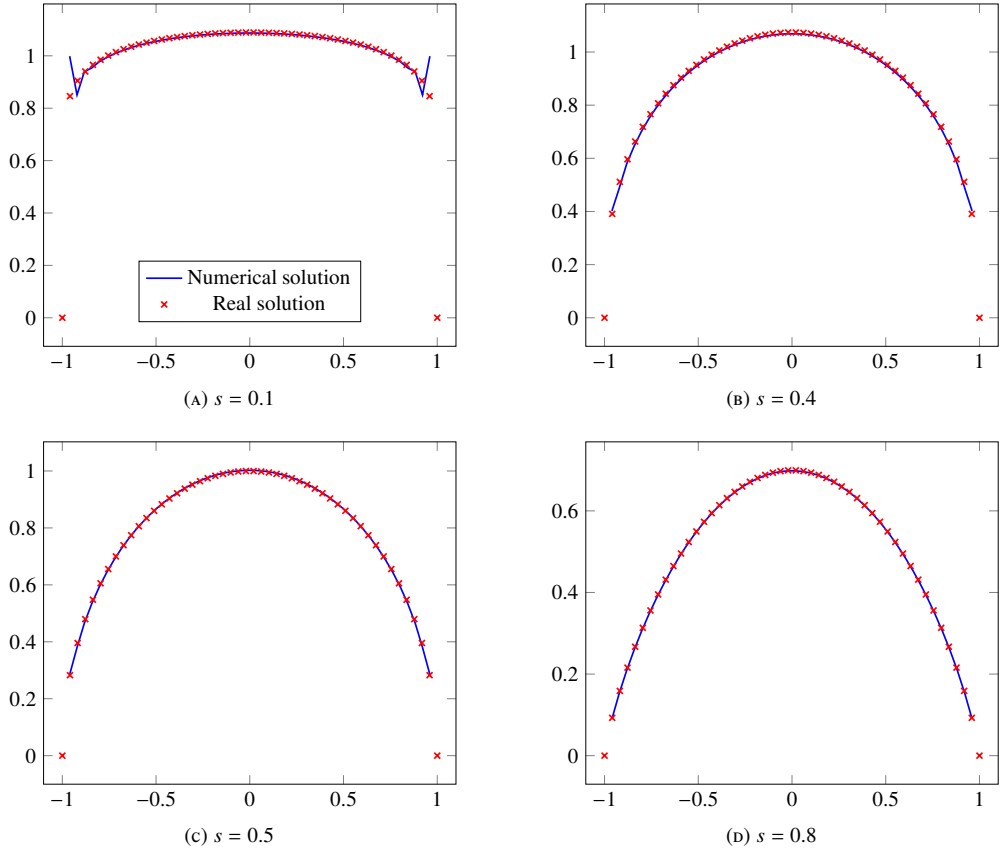

%\begin{figure}[!h]
%	\includegraphics[scale=1]{figure2.eps}	
%	\caption{Plot for different values of $s$.}
%	\label{smas12}
%\end{figure}

In the same spirit as in \cite{acosta2017short}, the computation of the error in the space $H_0^s(-L,L)$ can be readily done by using the definition of the bilinear form, namely
\begin{align*}
\|u-u_h\|^2_{H_0^s(-L,L)}&=a(u-u_h,u-u_h) \\
&=a(u,u-u_h) \\
&=\int_{-L}^{L}f(x)\left(u(x)-u_h(x)\right)dx,
\end{align*}
where have used the orthogonality condition $a(v_h,u-u_h)=0$ $\forall v_h \in V_h$.

For this particular test, since $f\equiv 1$ in $(-L,L)$, the problem is therefore reduced to
\begin{align*}
\|u-u_h\|_{H^s_0(-L,L)}=\left(\int_{-L}^{L}\left( u(x)-u_h(x) \right)\,dx\right)^{1/2}
\end{align*}
where the right-hand side can be easily computed, since we have the closed formula 
\begin{align*}
\int_{-L}^{L}u\,dx= \frac{\pi L^{2s+1}}{2^{2s}\Gamma(s+\frac{1}{2})\Gamma(s+\frac{3}{2})}
\end{align*}
and the term corresponding to $\int_{-L}^{L}u_h$ can be carried out numerically. 

In Fig.  \ref{error}, we present the computational errors evaluated for different values of $s$ and $h$. 

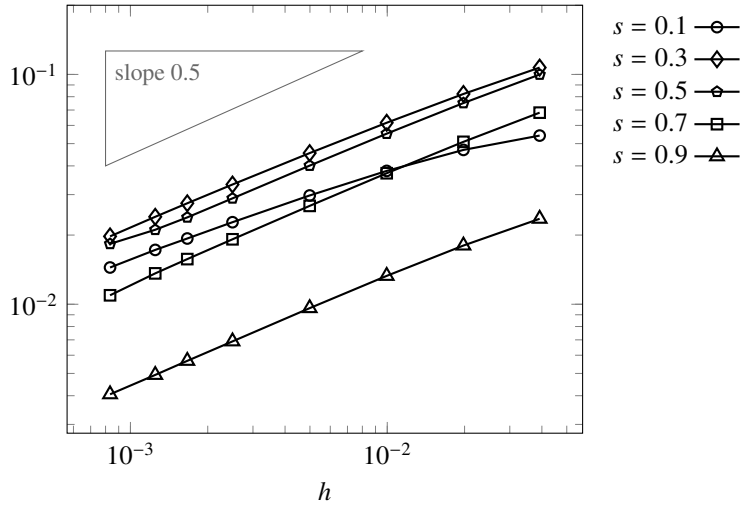
\begin{figure}[!h]
 \centering
  \pgfplotstableread{result_convergence_01.org}{\datpu}
  \pgfplotstableread{result_convergence_03.org}{\datpt}
   \pgfplotstableread{result_convergence_05.org}{\datpcnum}
  \pgfplotstableread{result_convergence_07.org}{\datps}
  \pgfplotstableread{result_convergence_09.org}{\datpn}
   \pgfplotsset{
     legend cell align=left,
     legend pos=outer north east,
     legend plot pos=right,
     legend style={cells={anchor=east},draw=none},
     }
%
  %\subfloat[Convergence of the error]{\label{fig_case_O_subset_omega}
  \begin{tikzpicture}%[scale=1] 
  \label{conv}
  \begin{loglogaxis}[xlabel=$h$, ymax=2e-1]
  	%\addplot [color=blue, very thick] table [x=0, y=2] {\datpt};
	\addplot [color=black, mark=o, thick] table [x=0, y=1] {\datpu};
	\addlegendentry{$s=0.1$};
	\addplot [color=black, mark=diamond, mark size=3 pt, thick] table [x=0, y=1] {\datpt};
	\addlegendentry{${s}=0.3$};
	\addplot [color=black, mark=pentagon, thick] table [x=0, y=1] {\datpcnum};
	\addlegendentry{${s}=0.5$};
	\addplot [color=black, mark=square, thick] table [x=0, y=1] {\datps};
	\addlegendentry{$ s=0.7$};
	\addplot [color=black, mark=triangle, mark size=3 pt, thick] table [x=0, y=1] {\datpn};
	\addlegendentry{$ s=0.9$};
 \draw [pente]  (axis cs: 0.0008,4e-2) -- ++ (axis cs: 1, {10^(0.5)}) -- ++ (axis cs: 10, 1) -- cycle;
 \node at (axis cs:0.0008,10e-2) [right,pente] {\small slope $0.5$};
  \end{loglogaxis}
\end{tikzpicture}%
\caption{Convergence of the error.}
\label{error}
\end{figure}

%\begin{figure}[!h]
%	\centering
%	\includegraphics[scale=1]{figure3.eps}
%	\caption{Convergence of the error.}\label{error}
%\end{figure}

The rates of convergence shown are of order (in $h$) of $1/2$. This is in accordance with the following result: 
\begin{theorem}[Theorem 4.6 of \cite{acosta2017short}]
For the solution $u$ of \eqref{WF} and its FE approximation $u_h$ given by \eqref{WFD}, if $h$ is sufficiently small, the following estimates hold
\begin{align*}
&\|u-u_h\|_{H^s_0(-L,L)}\leq C h^{1/2}|\!\ln h|\,\|f\|_{C^{1/2-s}(-L,L)}, \quad \textnormal{if}\quad s<1/2, \\
&\|u-u_h\|_{H^s_0(-L,L)}\leq C h^{1/2} |\!\ln h|\, \|f\|_{L^\infty(-L,L)}, \quad \textnormal{if}\quad  s=1/2 \\
&\|u-u_h\|_{H^s_0(-L,L)}\leq \tfrac{C}{2s-1} h^{1/2} \sqrt{|\!\ln h|}\, \|f\|_{C^\beta(-L,L)}, \quad \textnormal{if} \quad s>1/2,
\end{align*}
where $C$ is a positive constant not depending on $h$. 
\end{theorem}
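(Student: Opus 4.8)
The statement is Theorem~4.6 of \cite{acosta2017short}, so the plan is to reproduce the finite element convergence analysis for the integral fractional Laplacian developed there and in \cite{acosta2017fractional}. The first step is \emph{quasi-optimality}. Since the bilinear form $a(\cdot,\cdot)$ in \eqref{WF} is symmetric, continuous and coercive on $H^s_0(-L,L)$, the Galerkin solution $u_h$ of \eqref{WFD} is the $a$-orthogonal projection of $u$ onto $V_h$; combining the Galerkin orthogonality $a(u-u_h,v_h)=0$ for all $v_h\in V_h$ with the norm equivalence $a(v,v)^{1/2}\simeq\norm{v}{H^s_0(-L,L)}$ gives
\begin{align*}
\norm{u-u_h}{H^s_0(-L,L)}\leq C\inf_{v_h\in V_h}\norm{u-v_h}{H^s_0(-L,L)}.
\end{align*}
Hence the theorem reduces to a best-approximation estimate for $u$ in the finite element space $V_h$.

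The second step is to quantify the regularity of $u$. Writing $\omega(x)=\operatorname{dist}(x,\{-L,L\})$, the Dirichlet problem \eqref{PE} has the boundary behaviour $u/\omega^s\in C^{\gamma}([-L,L])$ with $\gamma$ controlled by the Hölder class of $f$ (see \cite{ros2014dirichlet,biccari2017local}); in the Sobolev scale this means $u$ belongs to the \emph{borderline} regularity class just below $H^{s+1/2}(-L,L)$, in the sense that for every small $\theta>0$ one has $u\in\widetilde{H}^{s+1/2-\theta}(-L,L)$ with a norm that blows up as $\theta\to0^+$ --- like $\theta^{-1/2}$ times $\norm{f}{C^\beta}$ when $s>1/2$, and like $\theta^{-1}$ times $\norm{f}{C^{1/2-s}}$ (resp.\ $\norm{f}{L^\infty}$ when $s=1/2$) when $s\leq1/2$. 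This is precisely where the three distinct right-hand side norms of the statement enter, and where the degeneration of the constant as $s\to(1/2)^+$ originates.

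The third step is the best-approximation bound itself. One takes $v_h$ to be the nodal Lagrange interpolant of $u$ when $s>1/2$, where $H^s_0(-L,L)\hookrightarrow C^0$, and a Scott--Zhang type quasi-interpolant (stable in $L^2$ and reproducing $V_h$) when $s\leq1/2$. Because the $H^s$-seminorm is non-local, the interpolation error must be estimated through a dyadic decomposition of $(-L,L)$ into a boundary layer of width $\sim h$ and annuli $S_j$ at distance $\sim 2^jh$ from the endpoints, controlling both the local contribution on each $S_j$ (via the weighted $H^2$ bounds $|u|_{H^2(S_j)}\lesssim(2^jh)^{s-3/2}$ inherited from the $\omega^s$ profile) and the long-range cross-interactions between distant annuli; this yields the clean interpolation inequality $\norm{u-v_h}{H^s_0(-L,L)}\lesssim h^{1/2-\theta}\,\norm{u}{\widetilde{H}^{s+1/2-\theta}(-L,L)}$. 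Inserting the step-two estimate and optimizing the free parameter $\theta\sim1/|\ln h|$ then produces the leading order $h^{1/2}$ together with a logarithmic loss: a factor $\sqrt{|\ln h|}$ in the regime $s>1/2$ (with the constant $(2s-1)^{-1}$ tracking the value of the convergent dyadic/regularity bound there) and a full factor $|\ln h|$ when $s\leq1/2$, each multiplied by the appropriate norm of $f$.

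The main obstacle, and the technical heart of the argument, is the interaction of the two non-standard features: the sharp weighted regularity of $u$ (the membership $u\in\widetilde{H}^{s+1/2-\theta}$ with the explicit dependence on $f$ and on the blow-up rate in $\theta$, hence on $s-1/2$) and the non-locality of the energy norm in which the error is measured, which forbids a naive element-by-element interpolation estimate and forces the dyadic bookkeeping of long-range interactions. Once these are in place, applying the localized interpolation inequalities, summing the dyadic series, and invoking the quasi-optimality of the first step is routine.
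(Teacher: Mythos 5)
This theorem is not proved in the paper at all: it is quoted verbatim from Acosta--Borthagaray (Theorem 4.6 of \cite{acosta2017short}) purely to explain the observed $h^{1/2}$ convergence rate, so there is no internal proof to compare against. Your sketch is, in outline, a faithful reconstruction of the argument in the cited source: Galerkin quasi-optimality (C\'ea) in the energy norm, the sharp Sobolev regularity $u\in \widetilde{H}^{s+1/2-\theta}(-L,L)$ with an explicit blow-up of the norm as $\theta\to 0^+$ and with the three different norms of $f$ ($C^{1/2-s}$, $L^\infty$, $C^\beta$) and the $(2s-1)^{-1}$ factor entering exactly there, a fractional interpolation estimate on the quasi-uniform mesh, and the final optimization $\theta\sim 1/|\ln h|$ which converts $h^{1/2-\theta}$ times the blow-up factor into $h^{1/2}|\ln h|$ (resp.\ $h^{1/2}\sqrt{|\ln h|}$). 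The bookkeeping you propose is consistent with the stated rates.

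One remark on the route: the dyadic decomposition into boundary annuli with weighted $H^2$ bounds of the form $|u|_{H^2(S_j)}\lesssim (2^j h)^{s-3/2}$ is, in Acosta--Borthagaray, the machinery behind the \emph{graded-mesh} estimates (order close to $h$), not the quasi-uniform-mesh theorem quoted here. For the statement at hand their proof goes more directly: the weighted H\"older estimates of Ros-Oton--Serra give $u\in\widetilde{H}^{s+1/2-\theta}$ with the quantified blow-up, and the interpolation error in the non-local $H^s$-norm is handled by a localization of the fractional seminorm into overlapping patches (Faermann-type), after which standard (Scott--Zhang or nodal) interpolation estimates apply elementwise. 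Your version would also work, but it imports more machinery than the quoted result needs; also be aware that the exact split of the $\theta^{-1/2}$ versus $\theta^{-1}$ factors between the regularity estimate and the interpolation estimate differs slightly in the original papers, though the product — and hence the final logarithmic powers — comes out the same.
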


Moreover, Fig.  \ref{error} shows that the convergence rate is maintained also for small values of $s$. This confirms that the behavior shown in Fig. \ref{smas12} (A) is not in contrast with the known theoretical results. Indeed, since it is well-known that the notion of trace is not defined for the spaces $H^s(-L,L)$ with $s\leq 1/2$ (see \cite{lions1968problemes,tartar2007introduction}), it is somehow natural that we cannot expect a point-wise convergence in this case.  

As a further validation of this fact, in Fig.  \ref{linfty_error} we plot the behavior of the $L^{\infty}$-norm of the difference between the real and the numerical solution to \eqref{PE_real}. It is shown that, increasing the number of point of discretization, this norm is decreasing with a rate (in $h$) of $0.1$. This confirms that, refining the mesh, also for small values of $s$ the numerical method gives an acceptable approximation of the real solution to the model problem considered.   

\begin{figure}[!h]
\centering
  \pgfplotstableread{res.org}{\datpu}
     \pgfplotsset{
     legend cell align=left,
     legend pos=outer north east,
     legend plot pos=right,
     legend style={cells={anchor=east},draw=none},
     }
%%
%  %\subfloat[Convergence of the error]{\label{fig_case_O_subset_omega}
  \begin{tikzpicture}%[scale=1] 
  \label{conv}
  \begin{loglogaxis}%[xlabel=$h$, ymax=2e-1]
%  	%\addplot [color=blue, very thick] table [x=0, y=2] {\datpt};
	\addplot [color=black, mark=o, thick] table [x=0, y=1] {\datpu};
 \draw [pente]  (axis cs: 0.0004,0.8) -- ++ (axis cs: 1, {10^(0.1)}) -- ++ (axis cs: 10, 1) -- cycle;
 \node at (axis cs:0.0004,0.98) [right,pente] {\small slope $0.1$};
  \end{loglogaxis}
\end{tikzpicture}%

\caption{Convergence of the error in the norm $L^\infty$.}\label{linfty_error}
\end{figure}
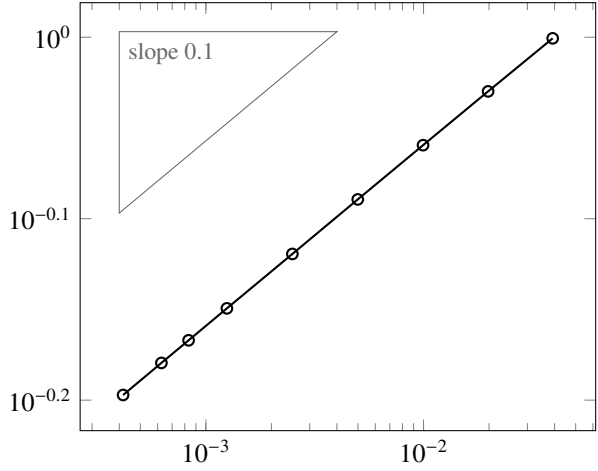

%\begin{figure}[!h]
%	\centering
% 	\includegraphics[scale=1]{figure4.eps}
%	\caption{Convergence of the error in the norm $L^\infty$.}\label{linfty_error}
%\end{figure}

\subsection{Control experiments}\label{control_exp}

To address the actual computation of fully-discrete controls for a given problem, we use the methodology described, for instance, in \cite{glowinski2008exact}. We apply an optimization algorithm to the dual functional \eqref{dual_fully}. Since these functionals are quadratic and coercive, the conjugate gradient is a natural and quite simple choice.

In the same spirit as \cite{boyer2013penalised}, the computation of the gradient at each iteration amounts to solve first the homogeneous equation 

\begin{align*}
	\begin{cases}
		\displaystyle\mathcal M_h \frac{\varphi^n-\varphi^{n+1}}{\delta t}+\mathcal A_h \varphi^n=0, & \forall n\in\inter{1,M}
		\\
		\varphi^{M+1}=\varphi^T. 
	\end{cases}
\end{align*}

Then, set $v^n=\mathbf{1}_\omega\varphi^n$ and finally solve 

\begin{align*}
	\begin{cases}
		\displaystyle\mathcal M_h \frac{z^{n+1}-z^n}{\delta t}+\mathcal A_h z^{n+1}=\mathbf{1}_\omega \varphi_h^{n+1}, \quad \forall n\in \left\{1,\ldots,M-1\right\}
		\\
		z^0=0. 
	\end{cases}
\end{align*}

In this way, the procedure to compute the control for a given problem basically requires to solve two parabolic equations: a homogenous backward equation associated with the final data $\varphi^T$, and a non-homogeneous forward problem with zero initial data. 

We present now some results obtained with the described methodology. In accordance with the discussion in Section \ref{fe_ell_sec}, we use the finite-element approximation of $\fl{s}{}$ for the space discretization and the implicit Euler scheme in the time variable. We denote by $N$ the number of points in the mesh and by $M$ the number of time intervals. As discussed in \cite{boyer2013penalised}, the results in this kind of problems does not depend too much in the time step, as soon as it is chosen to ensure at least the same accuracy as the space discretization. The same remains true here, and therefore we always take $M=2000$ in order to concentrate the discussion on the dependency of the results with respect to the mesh size $h$ and the parameter $s$.

As we mentioned, we choose the penalization term $\varepsilon$ as a function of $h$. A reasonable practical rule (\cite{boyer2013penalised}) is to systematically choose $\phi(h)\sim h^{2p}$ where $p$ is the order of accuracy in space of the numerical method employed for the discretization of the spatial operator involved (in this case the fractional Laplacian \eqref{fl}). We recall that for the elliptic problem that we are considering, this order of convergence is $1/2$. Thus, hereinafter we always assume $\varepsilon=\phi(h)=h$.

We begin by plotting on Fig.  \ref{sol_surf} the time evolution of the uncontrolled solution as well as the controlled solution. Here, we set $s=0.8$, $\omega=(-0.3,0.8)$ and $T=0.3$, and as an initial condition we take $z_0(x) = \sin(\pi x)$. The control domain is represented as highlighted zone on the plane $(t,x)$. As expected, we observe that the uncontrolled solution is damped with time, but does not reach zero at time $T$, while the controlled solution does. 

\begin{figure}[ht]
%%  Datos del experimento 
%%  T=0.3, epsilon=0.01*h, s=0.8
 \centering
\subfloat[The adjoint][Uncontrolled solution]{
\begin{tikzpicture}
\begin{axis}[surface,zmax=1.0]
  
\addplot3 [mesh/ordering=y varies, surf, shader=flat,draw=black,opacity=0.6] file {soly_s08_sc.dat};
\node at (axis cs:0.34,0,0) [below] {$T=0.3$};

\end{axis}
\end{tikzpicture}}
\hspace{1 cm}
  \subfloat[The state][Controlled solution ({\tikz \fill [green] (0,0) rectangle (0.2,0.2);}=control domain)]{
\begin{tikzpicture}
\begin{axis}[surface, zmax=1.0]

  %\controldomainT{-0.3}{0.8}{0.3};
  \couplingdomainT{-0.3}{0.8}{0.3};
  
\addplot3 [mesh/ordering=y varies, surf, shader=flat,draw=black,opacity=0.6] file {soly_s08_cc.dat};
\node at (axis cs:0.34,0,0) [below] {$T=0.3$};

\end{axis}
\end{tikzpicture}}%
\caption{Time evolution of system \eqref{frac_heat_num}.}\label{fig_heat_frac}
\label{sol_surf}
\end{figure}
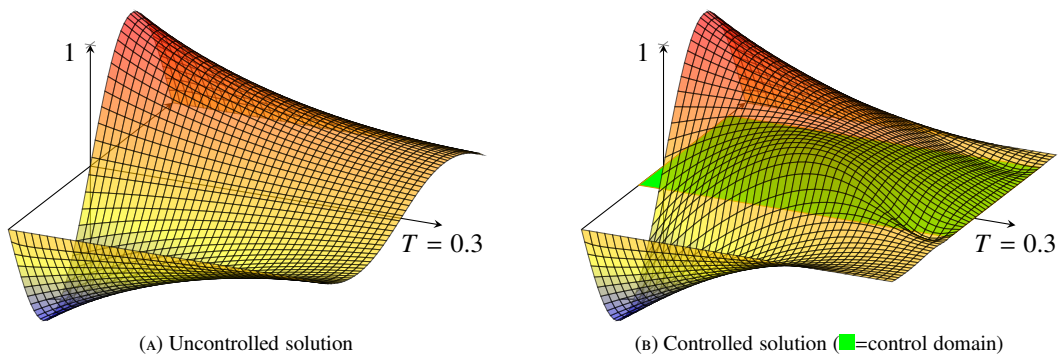

%\begin{figure}[ht]
%	\centering
%	\includegraphics[scale=1]{figure5.eps}
%	\caption{Time evolution of system \eqref{frac_heat_num}.}\label{fig_heat_frac}
%\label{sol_surf}
%\end{figure}

In figure \ref{figure_case1}, we present the computed values of various quantities of interest when the mesh size goes to zero. More precisely, we observe that the control cost $\|v_{\delta t}\|_{L^2_{\delta t}(0,T;\mathbb R^\mesh)}$ and the optimal energy $\inf F_{\phi(h),h,\delta t}$ remain bounded as $h\to 0$. On the other hand, we see that 
\begin{align}\label{control_norm_behavior}
	|y^M|_{L^2(\RR^\mesh)}\,\sim\,C\sqrt{\phi(h)}=Ch^{1/2}. 
\end{align}

We know that, for $s=0.8$, system \eqref{heat_frac} is null controllable. This is now confirmed by \eqref{control_norm_behavior}, according to Theorem \ref{theorem_hum}.  In fact, the same experiment can be repeated for different values of $s>1/2$, obtaining the same conclusions. 
\begin{figure}
  \centering
\begin{tikzpicture}
  \begin{loglogaxis}[erreurs, ymin=5e-4,ymax=0.2e1,title={$s=0.8$}]
 
    \pgfplotstableread[ignore chars={|},skip first n=2]{heat_frac_s=08.org}\resultats
    %% Original file  resultats_09-06-2017_11h18.org

    \addplot[cout] table[x=dx,y=Nv] \resultats;
    \addlegendentry{Cost of the control};
    \addplot[cible] table[x=dx,y=NyT] \resultats;
    \addlegendentry{Size of $y^M$};
    \addplot[energie] table[x=dx,y=Inf_eps(F_eps)] \resultats;
    \addlegendentry{Optimal energy};
    \draw [pente]  (axis cs: 0.003,84e-4) -- ++ (axis cs: 1, 10^0.5) -- ++ (axis cs: 10, 1) -- cycle;
    \node at (axis cs:0.003,20e-3) [right,pente] {\small slope $0.5$};
    
  \end{loglogaxis}
  
\end{tikzpicture}
\caption{Convergence properties of the method for controllability of the fractional heat equation. }\label{figure_case1}
\end{figure}
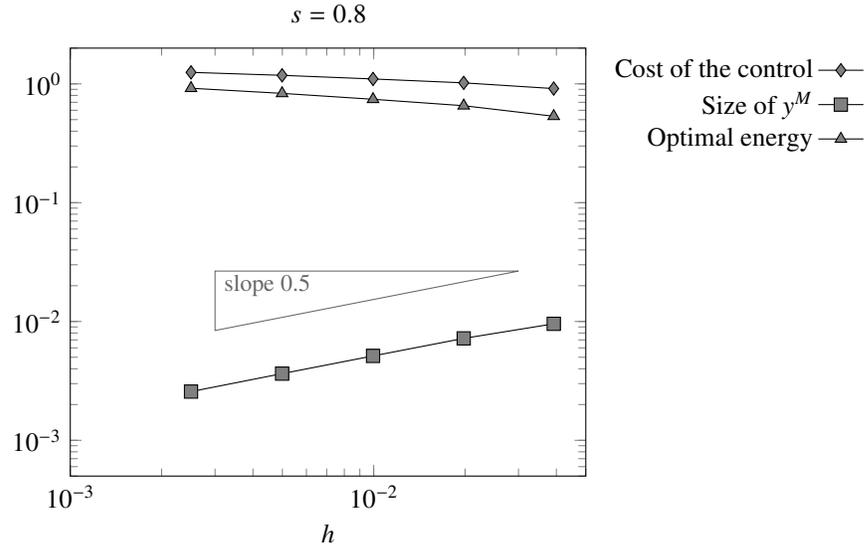

%\begin{figure}
%	\centering
%	\includegraphics[scale=0.85]{figure6.eps}	
%	\caption{Convergence properties of the method for controllability of the fractional heat equation.}\label{figure_case1}
%\end{figure}

According to the discussion in Section \ref{theor_sec}, one can prove that null controllability does not hold for system \eqref{heat_frac} in the case $s\leq 1/2$. However approximate controllability can be proved by means of the unique continuation property of the operator $\fl{s}{}$. We would like to illustrate this property in Fig. \ref{figure_case3}.

%\begin{figure}[h]
%	\centering
%	\includegraphics[scale=0.85]{figure7.eps}
%	\caption{Convergence properties of the method for $s<1/2$. Same legend as in Fig.  \ref{figure_case1}}\label{figure_case3}
%\end{figure} 

We observe that the results are different from what we obtained in Fig.  \ref{figure_case1}. In fact, the cost of the control and the optimal energy increase in both cases, while the target $y^M$ tends to zero with a slower rate than $h^{1/2}$. This seems to confirm that a uniform observability estimate for \eqref{heat_frac} does not hold and that we can only expect to have approximate controllability (see Theorem \ref{theorem_hum}).
 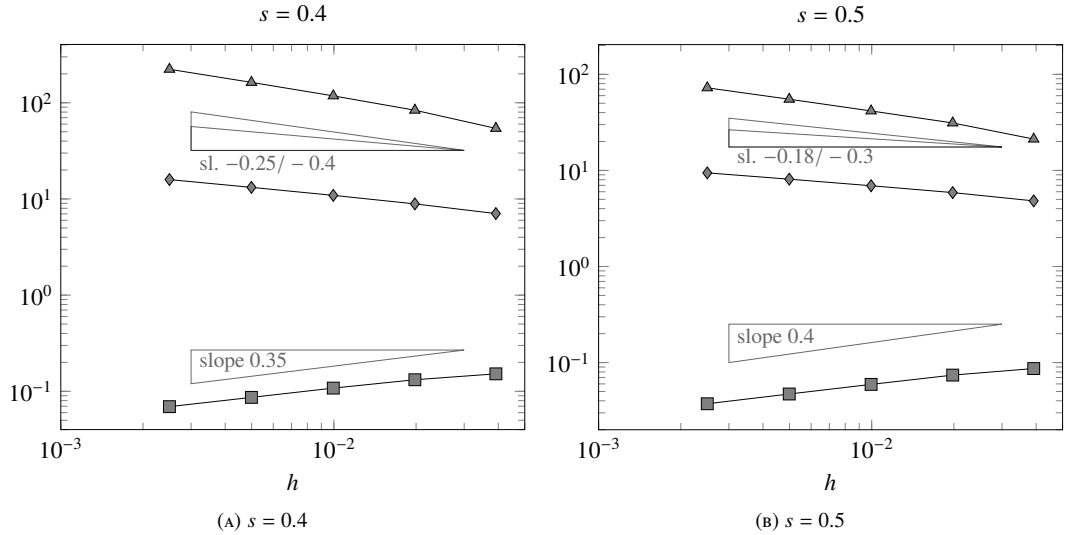
\begin{figure}
 \centering
 \subfloat[$s=0.4$]{
 \begin{tikzpicture}[scale=0.9]
  \begin{loglogaxis}[erreurs, ymin=4e-2,ymax=40.4e1,title={$s=0.4$}]
 
    \pgfplotstableread[ignore chars={|},skip first n=2]{heat_frac_s=04.org}\resultats
    %% Original file  resultats_09-06-2017_11h47.org

    \addplot[cout] table[x=dx,y=Nv] \resultats;
    %\addlegendentry{Cost of the control};
    \addplot[cible] table[x=dx,y=NyT] \resultats;
    %\addlegendentry{Size of $y^M$};
    \addplot[energie] table[x=dx,y=Inf_eps(F_eps)] \resultats;
    %\addlegendentry{Optimal energy};
    \draw [pente]  (axis cs: 0.003,1.2e-1) -- ++ (axis cs: 1, 10^0.35) -- ++ (axis cs: 10, 1) -- cycle;
    \node at (axis cs:0.003,20e-2) [right,pente] {\small slope $0.35$};
    
    \draw [pente]  (axis cs: 0.003,80.4e0) -- ++ (axis cs: 1, {10^(-0.4)}) -- ++ (axis cs: 10, 1) -- cycle;
    \draw [pente]  (axis cs: 0.003,56.7e0) -- ++ (axis cs: 1, {10^(-0.25)}) -- ++ (axis cs: 10, 1) -- cycle;
    \node at (axis cs:0.003,23e0) [right,pente] {\small sl. $-0.25/-0.4$};
    
  \end{loglogaxis}
  \end{tikzpicture}
  }
 \subfloat[$s=0.5$]{
  \begin{tikzpicture}[scale=0.9]
  \begin{loglogaxis}[erreurs, ymin=2e-2,ymax=20.4e1,title={$s=0.5$}]
    \pgfplotstableread[ignore chars={|},skip first n=2]{heat_frac_s=05.org}\resultats
    %% Original file  resultats_09-06-2017_11h07.org

    \addplot[cout] table[x=dx,y=Nv] \resultats;
    %\addlegendentry{Cost of the control};
    \addplot[cible] table[x=dx,y=NyT] \resultats;
    %\addlegendentry{Size of $y^M$};
    \addplot[energie] table[x=dx,y=Inf_eps(F_eps)] \resultats;
    %\addlegendentry{Optimal energy};
    \draw [pente]  (axis cs: 0.003,10e-2) -- ++ (axis cs: 1, 10^0.4) -- ++ (axis cs: 10, 1) -- cycle;
    \node at (axis cs:0.003,18e-2) [right,pente] {\small slope $0.4$};
    
    \draw [pente]  (axis cs: 0.003,2.65e1) -- ++ (axis cs: 1, {10^(-0.18)}) -- ++ (axis cs: 10, 1) -- cycle;
    \draw [pente]  (axis cs: 0.003,3.5e1) -- ++ (axis cs: 1, {10^(-0.3)}) -- ++ (axis cs: 10, 1) -- cycle;
    \node at (axis cs:0.003,1.35e1) [right,pente] {\small sl. $-0.18/-0.3$};
    
  \end{loglogaxis}
\end{tikzpicture}
}
\caption{Convergence properties of the method for $s<1/2$. Same legend as in Fig.  \ref{figure_case1}}\label{figure_case3}
\end{figure}

{\appendix

\section{Explicit computations of the elements of the matrix $\mathcal A_h$}\label{appendix}

We present here the explicit computations for each element $a_{i,j}$ of the stiffness matrix, completing the discussion that we started in Section \ref{fe_sec}.

\subsubsection*{Step 1: $j\geq i+2$}
We recall that, in this case, the value of $a_{i,j}$ is given by the integral
\begin{align}\label{elem_noint_app}
	a_{i,j}=-2 \int_{x_{j-1}}^{x_{j+1}}\int_{x_{i-1}}^{x_{i+1}}\frac{\phi_i(x)\phi_j(y)}{|x-y|^{1+2s}}\,dxdy.
\end{align}

In Fig. \ref{upp_tri}, we give a scheme of the region of interaction (marked in grey) between the basis functions in this case. 
\begin{figure}[h]
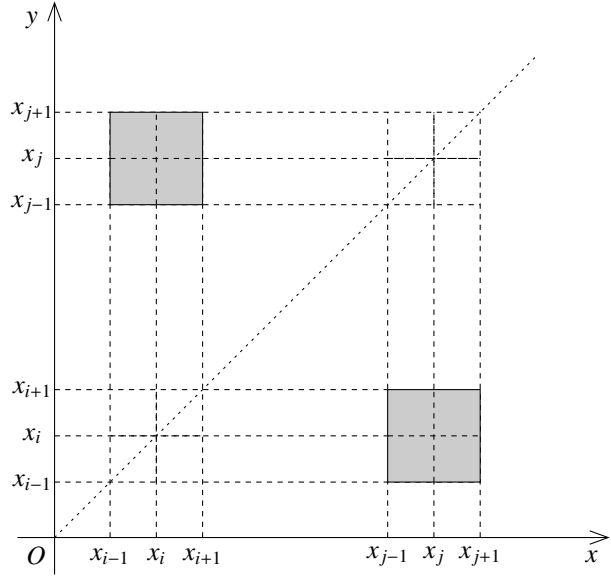

\figinit{0.7pt}
% Axes
\figpt 0:(-90,-10)
\figpt 1:(-100,0) \figpt 2:(220,0)
\figpt 3:(-80,-20) \figpt 4:(-80,290)

%%%%
\figpt 5:(-50,0) \figpt 6:(-50,230)
\figpt 7:(-25,0) \figpt 8:(-25,230)
\figpt 9:(0,0) \figpt 10:(0,230)
\figpt 11:(-80,30) \figpt 12:(150,30)
\figpt 13:(-80,55) \figpt 14:(150,55)
\figpt 15:(-80,80) \figpt 16:(150,80)
\figpt 17:(100,0) \figpt 18:(100,230)
\figpt 19:(125,0) \figpt 20:(125,230)
\figpt 21:(150,0) \figpt 22:(150,230)
\figpt 23:(-80,180) \figpt 24:(150,180)
\figpt 25:(-80,205) \figpt 26:(150,205)
\figpt 27:(-80,230) \figpt 28:(150,230)

% Grey squares 
\figpt 29:(-50,30) \figpt 30:(0,30)
\figpt 31:(-50,80) \figpt 32:(0,80)

\figpt 33:(-50,180) \figpt 34:(0,180)
\figpt 35:(-50,230) \figpt 36:(0,230)

\figpt 37:(100,180) \figpt 38:(150,180)
\figpt 39:(100,230) \figpt 40:(150,230)

\figpt 41:(100,30) \figpt 42:(150,30)
\figpt 43:(100,80) \figpt 44:(150,80)

% Dotted crosses
\figpt 45:(-50,55) \figpt 46:(0,55)
\figpt 47:(-25,30) \figpt 48:(-25,80)

\figpt 49:(-50,205) \figpt 50:(0,205)
\figpt 51:(-25,180) \figpt 52:(-25,230)

\figpt 53:(100,205) \figpt 54:(150,205)
\figpt 55:(125,180) \figpt 56:(125,230)

\figpt 57:(100,55) \figpt 58:(150,55)
\figpt 59:(125,30) \figpt 60:(125,80)

% Points for writing
\figpt 61:(-50,-10) \figpt 62:(-25,-10) \figpt 63:(0,-10) 
\figpt 64:(-92,30) \figpt 65:(-92,55) \figpt 66:(-92,80) 
\figpt 67:(100,-10) \figpt 68:(125,-10) \figpt 69:(150,-10) 
\figpt 70:(-92,180) \figpt 71:(-92,205) \figpt 72:(-92,230) 

\figpt 73:(-92,280) \figpt 74:(210,-10)

% Diagonal

\figpt 75:(-80,0) \figpt 76:(180,260)

% 2. Creation of the graphical file
\figdrawbegin{}
\figdrawarrow[1,2]
\figdrawarrow[3,4]
\figset(dash=4)
\figdrawline[5,6]
\figdrawline[7,8]
\figdrawline[9,10]
\figdrawline[11,12]
\figdrawline[13,14]
\figdrawline[15,16]
\figdrawline[17,18]
\figdrawline[19,20]
\figdrawline[21,22]
\figdrawline[23,24]
\figdrawline[25,26]
\figdrawline[27,28]
\figset(dash=default)

\figset(fillmode=yes, color=0.8)
%\figdrawline[29,30,32,31,29]
\figdrawline[33,34,36,35,33]
%\figdrawline[37,38,40,39,37]
\figdrawline[41,42,44,43,41]
\figset (fillmode=no, color=default)
%\figdrawline[29,30,32,31,29]
\figdrawline[33,34,36,35,33]
%\figdrawline[37,38,40,39,37]
\figdrawline[41,42,44,43,41]

\figset(dash=4)
\figdrawline[45,46]
\figdrawline[47,48]
\figdrawline[49,50]
\figdrawline[51,52]
\figdrawline[53,54]
\figdrawline[55,56]
\figdrawline[57,58]
\figdrawline[59,60]

\figset(dash=5)
\figdrawline[75,76]

\figdrawend

% 3. Writing text on the figure
\figvisu{\figBoxA}{}{
\figwritec [0]{$O$}
\figwritec [61,64]{$x_{i-1}$}
\figwritec [62,65]{$x_i$}
\figwritec [63,66]{$x_{i+1}$}
\figwritec [67,70]{$x_{j-1}$}
\figwritec [68,71]{$x_j$}
\figwritec [69,72]{$x_{j+1}$}
\figwritec [73]{$y$}
\figwritec [74]{$x$}
}
\centerline{\box\figBoxA}
\caption{Interactions between the basis function $\phi_i$ and $\phi_j$ when $j\geq i+2$.}\label{upp_tri}
\end{figure}

%\begin{figure}[h]
%	\centering
%	\includegraphics[scale=1]{figure13.eps}
%	\caption{Interactions between the basis function $\phi_i$ and $\phi_j$ when $j\geq i+2$.}\label{upp_tri}
%\end{figure}

Now, taking into account the definition of the basis function \eqref{basis_fun}, the integral \eqref{elem_noint_app} becomes
\begin{align*}
	a_{i,j}=-2 \int_{x_{j-1}}^{x_{j+1}}\int_{x_{i-1}}^{x_{i+1}}\frac{\left(1-\frac{|x-x_i|}{h}\right)\left(1-\frac{|y-x_j|}{h}\right)}{|x-y|^{1+2s}}\,dxdy.
\end{align*}

Let us introduce the following change of variables:
\begin{align*}
	\frac{x-x_i}{h}=\hat{x},\;\;\; \frac{y-x_i}{h}=\hat{y}.
\end{align*}

Then, rewriting (with some abuse of notations since there is no possibility of confusion) $\hat{x}=x$ and $\hat{y}=y$, we get 
\begin{align}\label{elem_noint_cv}
	a_{i,j}=-2h^{1-2s} \int_{-1}^1\int_{-1}^1\frac{(1-|x\,|\,)(1-|y\,|\,)}{|x-y+i-j\,|^{1+2s}}\,dxdy.
\end{align}

The integral \eqref{elem_noint_cv} can be computed explicitly in the following way. First of all, for simplifying the notation, let us define $k=j-i$. We have 
\begin{align*}
	a_{i,j} = &\, -2h^{1-2s} \int_{-1}^1\int_{-1}^1\frac{(1-|x\,|\,)(1-|y\,|\,)}{|x-y+i-j\,|^{1+2s}}\,dxdy =-2h^{1-2s} \int_{-1}^1\int_{-1}^1\frac{(1-|x\,|\,)(1-|y\,|\,)}{|x-y-k\,|^{1+2s}}\,dxdy
	\\
	= &\, -2h^{1-2s} \int_0^1\int_0^1\frac{(1-x)(1-y)}{(y-x+k)^{1+2s}}\,dxdy - 2h^{1-2s} \int_0^1\int_{-1}^0\frac{(1+x)(1-y)}{(y-x+k)^{1+2s}}\,dxdy 
	\\
	&- 2h^{1-2s} \int_{-1}^0\int_0^1\frac{(1-x)(1+y)}{(y-x+k)^{1+2s}}\,dxdy - 2h^{1-2s} \int_{-1}^0\int_{-1}^0\frac{(1+x)(1+y)}{(y-x+k)^{1+2s}}\,dxdy
	\\
	= &\, - 2h^{1-2s}(B_1 + B_2 + B_3 + B_4).
\end{align*}

These terms $B_i$, $i=1,2,3,4$, can be computed integrating by parts several times. In more detail, we have
\begin{align*}
	& B_1 = \frac{1}{4s(1-2s)}\left[2k^{1-2s}-\frac{(k+1)^{2-2s}-(k-1)^{2-2s}}{1-s}-\frac{2k^{3-2s}-(k+1)^{3-2s}-(k-1)^{3-2s}}{(1-s)(3-2s)}\right]
	\\
	& B_2 = \frac{1}{4s(1-2s)}\left[-2k^{1-2s}+\frac{2(k+1)^{2-2s}-2k^{2-2s}}{1-s}+\frac{2(k+1)^{3-2s}-k^{3-2s}-(k+2)^{3-2s}}{(1-s)(3-2s)}\right]
	\\
	& B_3 = \frac{1}{4s(1-2s)}\left[-2k^{1-2s}+\frac{2k^{2-2s}-2(k-1)^{2-2s}}{1-s}+\frac{2(k-1)^{3-2s}-k^{3-2s}-(k-2)^{3-2s}}{(1-s)(3-2s)}\right]
	\\
	& B_4 = \frac{1}{4s(1-2s)}\left[2k^{1-2s}-\frac{(k+1)^{2-2s}-(k-1)^{2-2s}}{1-s}-\frac{2k^{3-2s}-(k+1)^{3-2s}-(k-1)^{3-2s}}{(1-s)(3-2s)}\right].
\end{align*} 

Therefore, we obtain
\begin{align*}
	a_{i,j} = - h^{1-2s}\,\frac{4(k+1)^{3-2s} + 4(k-1)^{3-2s}-6k^{3-2s}-(k+2)^{3-2s}-(k-2)^{3-2s}}{2s(1-2s)(1-s)(3-2s)}.
\end{align*} 

We notice that, when $s=1/2$, both the numerator and the denominator of the expression above are zero. Hence, in this particular case, it would not be possible to introduce the value that we just encountered in our code. Nevertheless, this difficulty can be overcome noting that we can easily compute
\begin{align*}
	\lim_{s\to\frac{1}{2}} &- h^{1-2s}\,\frac{4(k+1)^{3-2s} + 4(k-1)^{3-2s}-6k^{3-2s}-(k+2)^{3-2s}-(k-2)^{3-2s}}{2s(1-2s)(1-s)(3-2s)}
	\\
	& = -4(k+1)^2\log(k+1)-4(k-1)^2\log(k-1)+6k^2\log(k)+(k+2)^2\log(k+2)+(k-2)^2\log(k-2),
\end{align*} 
if $k\neq 2$. When $k=2$, instead, since 
\begin{align*}
	\lim_{k\to 2} (k-2)^2\log(k-2) =0,
\end{align*}
the corresponding value $a_{i,j}=a_{i,i+2}$ if given by 
\begin{align*}
	a_{i,i+2} = 56\ln(2)-36\ln(3).
\end{align*}

\subsubsection*{Step 2: $j= i+1$}
This is the most cumbersome case, since it is the one with the most interactions between the basis functions (see Fig. \ref{basis_upp_dia}). According to \eqref{stiffness_nc}, and using the symmetry of the integral with respect to the bisector $y=x$, we have 
	\begin{align*}
	a_{i,i+1}= & \int_{\RR}\int_{\RR}\frac{(\phi_i(x)-\phi_i(y))(\phi_{i+1}(x)-\phi_{i+1}(y))}{|x-y|^{1+2s}}\,dxdy
	\\
	= & \int_{x_{i+1}}^{+\infty}\int_{x_{i+1}}^{+\infty} \ldots\,dxdy + 2\int_{x_{i+1}}^{+\infty}\int_{x_i}^{x_{i+1}} \ldots\,dxdy + 2\int_{x_{i+1}}^{+\infty}\int_{-\infty}^{x_i} \ldots\,dxdy 
	\\
	& + \int_{x_i}^{x_{i+1}}\int_{x_i}^{x_{i+1}} \ldots\,dxdy + 2\int_{x_i}^{x_{i+1}}\int_{-\infty}^{x_i} \ldots\,dxdy + \int_{-\infty}^{x_i}\int_{-\infty}^{x_i} \ldots\,dxdy 
	\\
	:= & Q_1 + Q_2 + Q_3 + Q_4 + Q_5 + Q_6.
\end{align*}

In Fig. \ref{upp_dia}, we give a scheme of the regions of interactions between the basis functions $\phi_i$ and $\phi_{i+1}$ enlightening the domain of integration of the $Q_i$. The regions in grey are the ones that produce a contribution to $a_{i,i+1}$, while on the regions in white the integrals will be zero.
\begin{figure}[h]
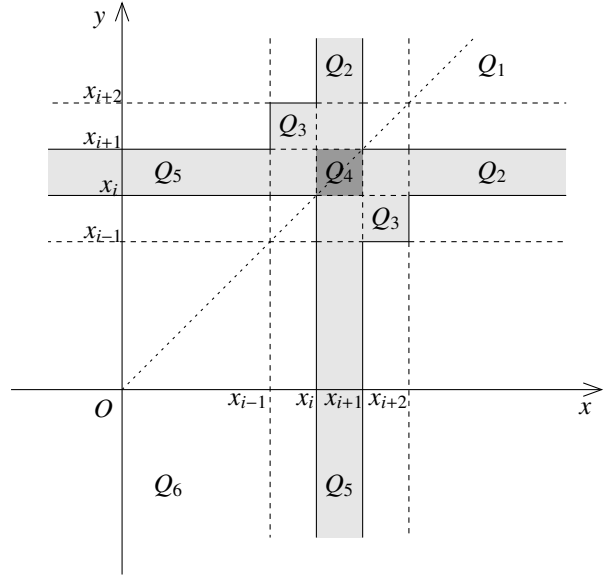

\figinit{0.7pt}
% Axes
\figpt 0:(-50,70)
\figpt 1:(-100,80) \figpt 2:(220,80)
\figpt 3:(-40,-20) \figpt 4:(-40,290)

%%%%
\figpt 5:(40,0) \figpt 6:(40,270)
\figpt 7:(65,0) \figpt 8:(65,270)
\figpt 9:(90,0) \figpt 10:(90,270)
\figpt 11:(-80,160) \figpt 12:(200,160)
\figpt 13:(-80,185) \figpt 14:(200,185)
\figpt 15:(-80,210) \figpt 16:(200,210)
%

% Grey part 
\figpt 29:(65,210) \figpt 30:(90,210)
\figpt 31:(65,235) \figpt 32:(40,235) 
\figpt 33:(40,210) \figpt 34:(90,185)
\figpt 35:(115,185) \figpt 36:(115,160)
\figpt 37:(90,160) \figpt 38:(65,185)
\figpt 39:(40,185) \figpt 40:(65,160) 
\figpt 41:(115,0) \figpt 42:(115,270) 
\figpt 43:(115,210) \figpt 44:(-80,235) 
\figpt 45:(90,235) \figpt 46:(200,235) 

% Diagonal 
\figpt 47:(-40,80) \figpt 48:(150,270)

% Points for writing
\figpt 49:(-50,164) \figpt 50:(-47,189) 
\figpt 51:(-50,215) \figpt 52:(-50,240) 

\figpt 53:(28,74) \figpt 54:(59,74) 
\figpt 55:(80,74) \figpt 56:(104,74) 

\figpt 73:(-52,280) \figpt 74:(210,70)

\figpt 75:(160,255) \figpt 76:(77.5,255)
\figpt 77:(160,197.5) \figpt 78:(52.5,222.5)
\figpt 79:(102.5,172.5) \figpt 80:(77.5,197.5)
\figpt 81:(77.5,28) \figpt 82:(-15,197.5)
\figpt 83:(-15,28)

% 2. Creation of the graphical file
\figdrawbegin{}

\figset(fillmode=yes, color=0.9)
\figdrawline[29,30,10,8,29]
\figdrawline[29,31,32,33,29]
\figdrawline[30,34,14,16,30]
\figdrawline[35,36,37,34,35]
\figdrawline[34,38,7,9,34]
\figdrawline[38,29,15,13,38]
\figset(fillmode=yes, color=0.6)
\figdrawline[29,30,34,38,29]
\figset(fillmode=no, color=0)
\figdrawarrow[1,2]
\figdrawarrow[3,4]
\figdrawline[8,31]
\figdrawline[10,30]
\figdrawline[30,16]
\figdrawline[35,14]
\figdrawline[35,36]
\figdrawline[36,37]
\figdrawline[37,9]
\figdrawline[38,7]
\figdrawline[38,13]
\figdrawline[33,15]
\figdrawline[32,33]
\figdrawline[32,31]
\figset(dash=4)
\figdrawline[6,32]
\figdrawline[39,5]
\figdrawline[40,11]
\figdrawline[36,12]
\figdrawline[36,41]
\figdrawline[42,43]
\figdrawline[32,44]
\figdrawline[45,46]

\figdrawline[33,30]
\figdrawline[31,45]
\figdrawline[38,35]
\figdrawline[40,37]
\figdrawline[33,39]
\figdrawline[31,38]
\figdrawline[30,37]
\figdrawline[43,35]
\figset(dash=5)
\figdrawline[47,48]
\figdrawend

% 3. Writing text on the figure
\figvisu{\figBoxA}{}{
\figwritec [0]{$O$}
\figwritec [49,53]{$x_{i-1}$}
\figwritec [50,54]{$x_i$}
\figwritec [51,55]{$x_{i+1}$}
\figwritec [52,56]{$x_{i+2}$}
\figwritec [73]{$y$}
\figwritec [74]{$x$}
\figwritec [75]{$Q_1$}
\figwritec [76,77]{$Q_2$}
\figwritec [78,79]{$Q_3$}
\figwritec [80]{$Q_4$}
\figwritec [81,82]{$Q_5$}
\figwritec [83]{$Q_6$}
}
\centerline{\box\figBoxA}
\caption{Interactions between the basis function $\phi_i$ and $\phi_{i+1}$.}\label{upp_dia}
\end{figure}

%\begin{figure}[h]
%	\centering
%	\includegraphics[scale=1]{figure14.eps}
%	\caption{Interactions between the basis function $\phi_i$ and $\phi_{i+1}$.}\label{upp_dia}
%\end{figure}

Le us now compute the terms $Q_i$, $i=1,\ldots,6$, separately. 
\subsubsection*{Computation of $Q_1$}
Since $\phi_i = 0$ on the domain of integration we have
\begin{align*}
	Q_1 &= \int_{x_{i+1}}^{+\infty}\int_{x_{i+1}}^{+\infty} \frac{\phi_{i+1}(x)-\phi_{i+1}(y)}{|x-y|^{1+2s}}\,dxdy 
	\\
	&= \int_{x_{i+1}}^{+\infty}\int_{x_{i+1}}^{+\infty} \frac{\phi_{i+1}(x)}{|x-y|^{1+2s}}\,dxdy - \int_{x_{i+1}}^{+\infty}\int_{x_{i+1}}^{+\infty} \frac{\phi_{i+1}(y)}{|x-y|^{1+2s}}\,dxdy = 0.
\end{align*}

\subsubsection*{Computation of $Q_2$}
We have
\begin{align*}
	Q_2 &= 2\int_{x_{i+1}}^{+\infty}\int_{x_i}^{x_{i+1}} \frac{\phi_i(x)(\phi_{i+1}(x)-\phi_{i+1}(y))}{|x-y|^{1+2s}}\,dxdy. 
\end{align*}

Now, using Fubini's theorem we can exchange the order of the integrals, obtaining 
\begin{align*}
	Q_2 &= 2\int_{x_i}^{x_{i+1}}\phi_i(x)\phi_{i+1}(x)\left(\int_{x_{i+1}}^{+\infty} \frac{dy}{|x-y|^{1+2s}}\right)\,dx - 2\int_{x_{i+1}}^{x_{i+2}}\int_{x_i}^{x_{i+1}} \frac{\phi_i(x)\phi_{i+1}(y)}{|x-y|^{1+2s}}\,dxdy 
	\\
	&= \frac{1}{s}\int_{x_i}^{x_{i+1}}\frac{\phi_i(x)\phi_{i+1}(x)}{(x_{i+1}-x)^{2s}}\,dx - 2\int_{x_{i+1}}^{x_{i+2}}\int_{x_i}^{x_{i+1}} \frac{\phi_i(x)\phi_{i+1}(y)}{|x-y|^{1+2s}}\,dxdy
	\\
	&= \frac{1}{s}\int_{x_i}^{x_{i+1}}\frac{\left(1-\frac{|x-x_i|}{h}\right)\left(1-\frac{|x-x_{i+1}|}{h}\right)}{(x_{i+1}-x)^{2s}}\,dx - 2\int_{x_{i+1}}^{x_{i+2}}\int_{x_i}^{x_{i+1}} \frac{\left(1-\frac{|x-x_i|}{h}\right)\left(1-\frac{|y-x_{i+1}|}{h}\right)}{|x-y|^{1+2s}}\,dxdy:= Q_2^1 + Q_2^2.
\end{align*}

The two integrals above can be computed explicitly. Indeed, employing the change of variables
\begin{align*}
	\frac{x_{i+1}-x}{h}=\hat{x},
\end{align*}
and then renaming $\hat{x}=x$, $R_2^1$ becomes
\begin{align*}
	Q_2^1=\frac{h^{1-2s}}{s}\int_0^1 x^{1-2s}(1-x)\,dx = \frac{h^{1-2s}}{s(2-2s)(3-2s)}.
\end{align*}

For computing $Q_2^2$, instead, we introduce the change of variables
\begin{align}\label{cv3}
	\frac{x_i-x}{h}=\hat{x},\;\;\;\frac{y-x_{i+1}}{h}=\hat{y},
\end{align}
and we obtain
\begin{align*}
	Q_2^2 = -2h^{1-2s}\int_0^1\int_0^1\frac{(1-x)(1-y)}{(y-x+1)^{1+2s}}\,dxdy = h^{1-2s}\frac{2^{1-2s}+s-2}{s(1-s)(3-2s)}.
\end{align*}

Adding the two contributions, we get the following expression for the term $R_2$
\begin{align*}
	Q_2 = h^{1-2s}\frac{2^{2-2s}+2s-3}{s(2-2s)(3-2s)}.
\end{align*}

\subsubsection*{Computation of $Q_3$}
In this case, we simply take into account the intervals in which the basis functions are supported, so that we obtain
\begin{align*}
	Q_3 &= -2\int_{x_{i+1}}^{x_{i+2}}\int_{x_{i-1}}^{x_i} \frac{\phi_i(x)\phi_{i+1}(y)}{|x-y|^{1+2s}}\,dxdy = - 2\int_{x_{i+1}}^{x_{i+2}}\int_{x_{i-1}}^{x_i} \frac{\left(1-\frac{|x-x_i|}{h}\right)\left(1-\frac{|y-x_{i+1}|}{h}\right)}{|x-y|^{1+2s}}\,dxdy.
\end{align*}

This integral can be computed applying again \eqref{cv3}, and we get
\begin{align}\label{Q3}
	Q_3 = -2h^{1-2s}\int_0^1\int_{-1}^0 \frac{(1+x)(1-y)}{(y-x+1)^{1+2s}}\,dxdy = h^{1-2s}\frac{13-5\cdot 2^{3-2s}+3^{3-2s}+s(2^{4-2s}-14)+4s^2}{2s(1-2s)(1-s)(3-2s)},
\end{align}
if $s\neq 1/2$. If $s=1/2$, instead, we have 
\begin{align*}
	Q_3 &= -2\int_0^1\int_{-1}^0 \frac{(1+x)(1-y)}{(y-x+1)^2}\,dxdy = 1+9\ln 3-16\ln(2).
\end{align*}

Notice that this last value could have been computed directly from \eqref{Q3}, by taking the limit as $s\to 1/2$ in that expression, being this limit exactly $1+9\ln 3-16\ln(2)$.

\subsubsection*{Computation of $Q_4$}
In this case, we are in the intersection of the supports of $\phi_i$ and $\phi_{i+1}$. Therefore, we have
\begin{align*}
	Q_4 &= \int_{x_i}^{x_{i+1}}\int_{x_i}^{x_{i+1}} \frac{(\phi_i(x)-\phi_i(y))(\phi_{i+1}(x)-\phi_{i+1}(y))}{|x-y|^{1+2s}}\,dxdy. 
\end{align*}

Moreover, we notice that, this time, it is possible that $x=y$, meaning that $Q_4$ could be a singular integral. To deal with this difficulty, we will exploit the explicit definition of the basis function. We have (see also Fig. \ref{basis2})
\begin{align*}
	\phi_i(x) = 1-\frac{x-x_i}{h}, \;\;\; x\in (x_i,x_{i+1}),
	\\
	\phi_{i+1}(x) = \frac{x_{i+1}-x}{h}, \;\;\; x\in (x_i,x_{i+1}).
\end{align*}
\begin{figure}[h]
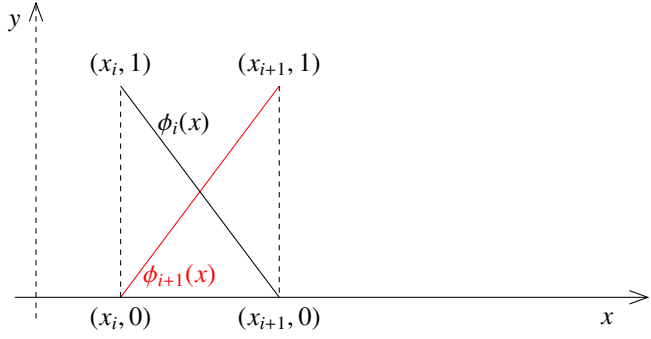

\figinit{0.8pt}
% Axes
\figpt 1:(-100,0) \figpt 2:(200,0)
\figpt 11:(-90,-10) \figpt 12:(-90,140)
%%%%
\figpt 3:(-50,0) \figpt 4:(25,100) 
\figpt 5:(-50,100) \figpt 6:(25,0)
%
% Points for writing
\figpt 7:(-50,-10) \figpt 8:(25,110) 
\figpt 9:(-50,110) \figpt 10:(25,-10)

\figpt 13:(-100,130) \figpt 14:(180,-10)
\figpt 15:(-20,82) \figpt 16:(-22,10)

% 2. Creation of the graphical file
\figdrawbegin{}
\figdrawarrow[1,2]
\figset (color=\Redrgb)
\figdrawline[3,4]
\figset (color=default)
\figdrawline[5,6]
\figset(dash=4)
\figdrawline[4,6]
\figdrawarrow[11,12]
\figdrawline[3,5]
\figdrawend

% 3. Writing text on the figure
\figvisu{\figBoxA}{}{
\figwritec [7]{$(x_i,0)$}
\figwritec [8]{$(x_{i+1},1)$}
\figwritec [9]{$(x_i,1)$}
\figwritec [10]{$(x_{i+1},0)$}
\figwritec [13]{$y$}
\figwritec [14]{$x$}
\figwritec [15]{$\phi_i(x)$}
\figwritec [16]{$\color{red}\phi_{i+1}(x)$}
}
\centerline{\box\figBoxA}
\caption{Functions $\phi_i(x)$ and $\phi_{i+1}(x)$ on the interval $(x_i,x_{i+1})$.}\label{basis2}
\end{figure}

%\begin{figure}[h]
%	\centering
%	\includegraphics[scale=1]{figure15.eps}
%	\caption{Functions $\phi_i(x)$ and $\phi_{i+1}(x)$ on the interval $(x_i,x_{i+1})$.}\label{basis2}
%\end{figure}
$\newline$
Therefore, 
\begin{align*}
	(\phi_i(x)-\phi_i(y))(\phi_{i+1}(x)-\phi_{i+1}(y)) = \left(\frac{y-x}{h}\right)\left(\frac{x-y}{h}\right) = -\frac{|x-y|^2}{h^2},
\end{align*}
and the integral becomes
\begin{align*}
	Q_4 &= -\int_{x_i}^{x_{i+1}}\int_{x_i}^{x_{i+1}} |x-y|^{1-2s}\,dxdy = -\frac{h^{1-2s}}{(1-s)(3-2s)}. 
\end{align*}

\subsubsection*{Computation of $Q_5$}
Here the procedure is analogous to the one for $Q_2$ before. Using again Fubini's theorem we have
\begin{align*}
	Q_5 &= 2\int_{x_i}^{x_{i+1}}\phi_i(y)\phi_{i+1}(y)\left(\int_{-\infty}^{x_i} \frac{dx}{|x-y|^{1+2s}}\right)dy - 2\int_{x_i}^{x_{i+1}}\int_{x_{i-1}}^{x_i} \frac{\phi_i(x)\phi_{i+1}(y)}{|x-y|^{1+2s}}\,dxdy 
	\\
	&= \frac{1}{s}\int_{x_i}^{x_{i+1}}\frac{\phi_i(y)\phi_{i+1}(y)}{(y-x_i)^{2s}}\,dy - 2\int_{x_i}^{x_{i+1}}\int_{x_{i-1}}^{x_i} \frac{\phi_i(x)\phi_{i+1}(y)}{|x-y|^{1+2s}}\,dxdy. 
\end{align*}
Applying again \eqref{cv3}, it is now easy to check that $Q_5=Q_2$.

\subsubsection*{Computation of $Q_6$}
In analogy with what we did for $Q_1$, we can show that also $Q_6=0$.

\subsubsection*{Conclusion}
The elements $a_{i,i+1}$ are now given by the sum $2Q_2+Q_3+Q_4$, according to the corresponding values that we computed. In particular, we have
\begin{align*}
	a_{i,i+1} = \begin{cases}
					\displaystyle h^{1-2s}\frac{3^{3-2s}-2^{5-2s}+7}{2s(1-2s)(1-s)(3-2s)}, & \displaystyle s\neq \frac{1}{2}
					\\
					9\ln 3-16\ln 2, & \displaystyle s=\frac{1}{2}.
				\end{cases}	
\end{align*}

\subsubsection*{Step 3: $j= i$}
As a last step, we fill the diagonal of the matrix $\mathcal A_h$. In this case we have
	\begin{align*}
	a_{i,i}= & \int_{\RR}\int_{\RR}\frac{(\phi_i(x)-\phi_i(y))^2}{|x-y|^{1+2s}}\,dxdy
	\\
	= & \int_{x_{i+1}}^{+\infty}\int_{x_{i+1}}^{+\infty} \ldots\,dxdy + 2\int_{x_{i+1}}^{+\infty}\int_{x_{i-1}}^{x_{i+1}} \ldots\,dxdy + \int_{x_{i+1}}^{+\infty}\int_{-\infty}^{x_{i-1}} \ldots\,dxdy 
	\\
	& + \int_{x_{i-1}}^{x_{i+1}}\int_{x_{i-1}}^{x_{i+1}} \ldots\,dxdy + 2\int_{-\infty}^{x_{i-1}}\int_{x_{i-1}}^{x_{i+1}} \ldots\,dxdy + + \int_{-\infty}^{x_{i-1}}\int_{x{i+1}}^{+\infty} \ldots\,dxdy 
	\\
	& +  \int_{-\infty}^{x_{i-1}}\int_{-\infty}^{x_{i-1}} \ldots\,dxdy := R_1 + R_2 + R_3 + R_4 + R_5 + R_6 + R_7.
\end{align*}

In Fig. \ref{upp_dia}, we give a scheme of the regions of interactions between the basis functions $\phi_i(x)$ and $\phi_i(y)$ enlightening the domain of integration of the $R_i$. The regions in grey are the ones that produce a contribution to $a_{i,i}$, while on the regions in white the integrals will be zero.
\begin{figure}
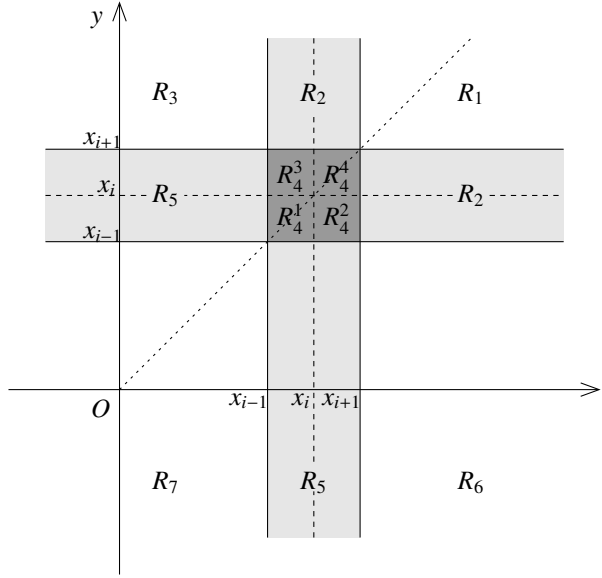

\figinit{0.7pt}
% Axes
\figpt 0:(-50,70)
\figpt 1:(-100,80) \figpt 2:(220,80)
\figpt 3:(-40,-20) \figpt 4:(-40,290)

%%%%
\figpt 5:(40,0) \figpt 6:(40,270)
\figpt 7:(65,0) \figpt 8:(65,270)
\figpt 9:(90,0) \figpt 10:(90,270)
\figpt 11:(-80,160) \figpt 12:(200,160)
\figpt 13:(-80,185) \figpt 14:(200,185)
\figpt 15:(-80,210) \figpt 16:(200,210)

\figpt 53:(65,250) \figpt 54:(65,250)
\figpt 55:(65,230) \figpt 56:(65,25)
\figpt 57:(65,41) \figpt 58:(200,210)

\figpt 59:(-25,185) \figpt 60:(140,185)
\figpt 61:(-5,185) \figpt 62:(160,185)
\figpt 63:(65,230) \figpt 64:(65,25)

%

% Grey part 
\figpt 29:(40,210) \figpt 30:(90,210)
\figpt 31:(40,160) \figpt 32:(90,160)

% Diagonal 
\figpt 33:(-40,80) \figpt 34:(150,270)

% Points for writing
\figpt 35:(-49,164) \figpt 36:(-47,189) \figpt 37:(-49,215) 
\figpt 38:(30,74) \figpt 39:(58,74) \figpt 40:(80,74) 

\figpt 41:(150,240) \figpt 42:(65,240) \figpt 43:(150,185) 
\figpt 44:(-15,240) \figpt 45:(52.5,172.5) \figpt 46:(77.5,172.5) 
\figpt 47:(52.5,195.5) \figpt 48:(77.5,195.5) \figpt 49:(-15,185) 
\figpt 50:(65,30) \figpt 51:(150,30) \figpt 52:(-15,30) 

\figpt 73:(-52,280) \figpt 74:(210,70)

% 2. Creation of the graphical file
\figdrawbegin{}

\figset(fillmode=yes, color=0.9)
\figdrawline[29,30,10,6,29]
\figdrawline[30,32,12,16,30]
\figdrawline[32,31,5,9,32]
\figdrawline[31,11,15,29,31]
\figset(fillmode=yes, color=0.6)
\figdrawline[29,30,32,31,29]
\figset(fillmode=no, color=0)
\figdrawarrow[1,2]
\figdrawarrow[3,4]
\figdrawline[5,6]
\figdrawline[9,10]
\figdrawline[11,12]
\figdrawline[15,16]
\figset(dash=4)
\figdrawline[7,56]
\figdrawline[57,55]
\figdrawline[53,8]
\figdrawline[13,59]
\figdrawline[60,61]
\figdrawline[62,14]
\figset(dash=5)
\figdrawline[33,34]
\figdrawend

% 3. Writing text on the figure
\figvisu{\figBoxA}{}{
\figwritec [0]{$O$}
\figwritec [35,38]{$x_{i-1}$}
\figwritec [36,39]{$x_i$}
\figwritec [37,40]{$x_{i+1}$}
\figwritec [73]{$y$}
\figwritec [41]{$R_1$}
\figwritec [42,43]{$R_2$}
\figwritec [44]{$R_3$}
\figwritec [45]{$R_4^1$}
\figwritec [46]{$R_4^2$}
\figwritec [47]{$R_4^3$}
\figwritec [48]{$R_4^4$}
\figwritec [49,50]{$R_5$}
\figwritec [51]{$R_6$}
\figwritec [52]{$R_7$}
}
\centerline{\box\figBoxA}
\caption{Interactions between the basis function $\phi_i(x)$ and $\phi_i(y)$.}\label{dia}
\end{figure}
%\begin{figure}
%	\centering
%	\includegraphics[scale=1]{figure16.eps}
%	\caption{Interactions between the basis function $\phi_i(x)$ and $\phi_i(y)$.}\label{dia}
%\end{figure}

Le us now compute the terms $R_i$, $i=1,\ldots,7$, separately. First of all, according to Fig. \ref{dia} we have that $R_1=R_3=R_6=R_7=0.$ This is due to the fact that the corresponding regions are all away from the support of the basis functions. 
\subsubsection*{Computation of $R_2$}
Since $\phi_i(y) = 0$ on the domain of integrations we have
\begin{align*}
	R_2 &= 2\int_{x_{i+1}}^{+\infty}\int_{x_{i-1}}^{x_{i+1}} \frac{\phi_i^2(x)}{|x-y|^{1+2s}}\,dxdy = 2\int_{x_{i-1}}^{x_{i+1}}\phi_i^2(x)\left(\int_{x_{i+1}}^{+\infty} \frac{dy}{|x-y|^{1+2s}}\right)\,dx = \frac{1}{s}\int_{x_{i-1}}^{x_{i+1}}\frac{\phi_i^2(x)}{(x_{i+1}-x)^{2s}}\,dxdy.
\end{align*}

This integral is computed employing \eqref{cv}, and we obtain
\begin{align*}
	R_2 = \frac{h^{1-2s}}{s}\int_{-1}^1 \frac{(1-|x|\,)^2}{(1-x)^{2s}}\,dx = h^{1-2s}\frac{4s-6+2^{3-2s}}{s(1-2s)(1-s)(3-2s)}, 
\end{align*}
if $s\neq 1/2$. If $s=1/2$, instead, we have
\begin{align*}
	R_2 = 2\int_{-1}^1 \frac{(1-|x|\,)^2}{1-x}\,dx = 2\ln 16-4.
\end{align*}

\subsubsection*{Computation of $R_4$}
In this case, we are in the intersection of the supports of $\phi_i(x)$ and $\phi_i(y)$. Therefore, we have
\begin{align*}
	R_4 &= \int_{x_{i-1}}^{x_{i+1}}\int_{x_{i-1}}^{x_{i+1}} \frac{(\phi_i(x)-\phi_i(y))^2}{|x-y|^{1+2s}}\,dxdy. 
\end{align*}

In order to compute this integral, we divide it in four components as follows:
\begin{align*}
	R_4 &= \int_{x_{i-1}}^{x_i}\int_{x_{i-1}}^{x_i} \ldots\,dxdy + \int_{x_{i-1}}^{x_i}\int_{x_i}^{x_{i+1}} \ldots\,dxdy +
	\int_{x_i}^{x_{i+1}}\int_{x_{i-1}}^{x_i} \ldots\,dxdy +  \int_{x_i}^{x_{i+1}}\int_{x_i}^{x_{i+1}} \ldots\,dxdy
	\\
	&= R_4^1 + R_4^2 + R_4^3 + R_4^4.
\end{align*}

Moreover, we notice that, due to symmetry reason, we have $R_4^2 = R_4^3$. Therefore, we can compute only one of this two terms and add its value twice when building the matrix $\mathcal A_h$. Also, notice that in these two region it cannot happen that $x=y$. On the other hand, $R_4^1$ and $R_4^4$ may be singular integrals, and we shall deal with them as we did before. 

\subsubsection*{Computation of $R_4^1$}
Using again the explicit expression of the basis functions we find 
\begin{align*}
	(\phi_i(x)-\phi_i(y))^2 = \frac{|x-y|^2}{h^2},
\end{align*}
and the integral becomes
\begin{align*}
	R_4^1 &= \int_{x_{i-1}}^{x_i}\int_{x_{i-1}}^{x_i} |x-y|^{1-2s}\,dxdy = \frac{h^{1-2s}}{(1-s)(3-2s)}. 
\end{align*}

\subsubsection*{Computation of $R_4^2$}
In this case, we simply have
\begin{align*}
	R_4^2 &= \int_{x_{i-1}}^{x_i}\int_{x_i}^{x_{i+1}} \frac{(\phi_i(x)-\phi_i(y))^2}{|x-y|^{1+2s}}\,dxdy.
\end{align*}
Employing \eqref{cv} we obtain
\begin{align*}
	R_4^2 = h^{1-2s}\int_{-1}^0\int_0^1 \frac{(x+y)^2}{(x-y)^{1+2s}}\,dxdy = h^{1-2s}\frac{2s^2-5s+4-2^{2-2s}}{s(1-2s)(1-s)(3-2s)}, 
\end{align*}
if $s\neq 1/2$. If $s=1/2$, instead, we get
\begin{align*}
	R_4^2 = \int_{-1}^0\int_0^1 \frac{(x+y)^2}{(x-y)^2}\,dxdy = 3-4\ln 2.
\end{align*}
\subsubsection*{Computation of $R_4^4$}
Also in this case we can use the explicit expression of the basis functions and the integral becomes
\begin{align*}
	R_4^4 &= \int_{x_i}^{x_{i+1}}\int_{x_i}^{x_{i+1}} |x-y|^{1-2s}\,dxdy = \frac{h^{1-2s}}{(1-s)(3-2s)}=R_4^1. 
\end{align*}
Adding the values that we just computed, we therefore obtain
\begin{align*}
	R_4 = 2(R_4^1+R_4^2) = \begin{cases}
					\displaystyle h^{1-2s}\frac{8-8s-2^{3-2s}}{2s(1-2s)(1-s)(3-2s)}, & \displaystyle s\neq \frac{1}{2}
					\\
					8\ln 3-8\ln 2, & \displaystyle s=\frac{1}{2}.
				\end{cases}	
\end{align*}

\subsubsection*{Computation of $R_5$}
Since, once again, $\phi_i(y) = 0$ on the domain of integration we have
\begin{align*}
	R_5 &= 2\int_{-\infty}^{x_{i-1}}\int_{x_{i-1}}^{x_{i+1}} \frac{\phi_i^2(x)}{|x-y|^{1+2s}}\,dxdy = 2\int_{x_{i-1}}^{x_{i+1}}\phi_i^2(x)\left(\int_{-\infty}^{x_{i-1}} \frac{dy}{|x-y|^{1+2s}}\right)\,dx = \frac{1}{s}\int_{x_{i-1}}^{x_{i+1}}\frac{\phi_i^2(x)}{(x-x_{i-1})^{2s}}\,dxdy.
\end{align*}
Employing one last time \eqref{cv}, we get
\begin{align*}
	R_5 = \frac{h^{1-2s}}{s}\int_{-1}^1 \frac{(1-|x|\,)^2}{(1+x)^{2s}}\,dx = h^{1-2s}\frac{4s-6+2^{3-2s}}{s(1-2s)(3-2s)(1-s)}=R_2,
\end{align*}
if $s\neq 1/2$. If $s=1/2$, instead, we have
\begin{align*}
	R_5 = 2\int_{-1}^1 \frac{(1-|x|\,)^2}{1+x}\,dx = 8\ln 2-4.
\end{align*}

\subsubsection*{Conclusion}
The elements $a_{i,i}$ are now given by the sum $2R_2+R_4$, according to the corresponding values that we computed. In particular, we have 
\begin{align*}
	a_{i,i} = \begin{cases}
			\displaystyle h^{1-2s}\,\frac{2^{3-2s}-4}{s(1-2s)(1-s)(3-2s)}, & \displaystyle s\neq\frac{1}{2}
			\\
			\\
			8\ln 2, & \displaystyle s=\frac{1}{2}.			
			\end{cases}	
\end{align*}

}

\section*{Acknowledgements}
The authors wish to acknowledge F. Boyer (Institut de Math\'ematiques de Toulouse) for his contribution in the development of the numerical implementation of the control problem. Moreover, a special thanks goes to J. Loh\'eac (Laboratoire de Sciences Num\'eriques de Nantes), for helping with some computations in Section \ref{fe_sec}.  

\bibliography{biblio}

\end{document}